\documentclass[11pt,leqno]{amsart}
 \usepackage{graphicx}    
 \usepackage{amsmath}
\usepackage{amssymb}
\usepackage[T1]{fontenc}
\usepackage[latin1]{inputenc}
\usepackage[english]{babel}
\usepackage[arrow, matrix, curve]{xy}
\usepackage{amsthm}
\usepackage{amsmath,amscd}
\usepackage{mathrsfs}
\usepackage{epic,eepic}
\numberwithin{equation}{section}

\DeclareMathOperator\GL{GL}
\DeclareMathOperator\id{id}
\DeclareMathOperator\Hom{Hom}

\DeclareMathOperator\rk{rk}
\DeclareMathOperator\dete{det}

\DeclareMathOperator\gr{gr}
\DeclareMathOperator\Gr{Gr}
\DeclareMathOperator\im{im}

\DeclareMathOperator\coker{coker}
\DeclareMathOperator\Spec{Spec}
\DeclareMathOperator\Spf{Spf}

\DeclareMathOperator\Spa{Spa}

\DeclareMathOperator\pr{pr}
\DeclareMathOperator\Fil{Fil}
\DeclareMathOperator\Isoc{Isoc}
\DeclareMathOperator\Res{Res}
\DeclareMathOperator\rig{rig}
\DeclareMathOperator\ad{ad}
\DeclareMathOperator\Rig{Rig}
\DeclareMathOperator\Ad{Ad}
\DeclareMathOperator\lft{lft}
\DeclareMathOperator\Frac{Frac}

\parindent=10pt
\parskip=6pt
\baselineskip=14pt

\renewcommand{\phi}{\varphi}

\newcommand{\Acal}{\mathscr{A}}
\newcommand{\Bcal}{\mathscr{B}}
\newcommand{\Ccal}{\mathcal{C}}

\newcommand{\Ecal}{\mathcal{E}}
\newcommand{\Fcal}{\mathcal{F}}
\newcommand{\Gcal}{\mathcal{G}}

\newcommand{\Mcal}{\mathcal{M}}
\newcommand{\Ncal}{\mathcal{N}}
\newcommand{\Ocal}{\mathcal{O}}

\newcommand{\Xcal}{\mathcal{X}}

\newcommand{\sheafHom}{\mathscr{H}om}

\newcommand{\Q}{\mathbb{Q}}
\newcommand{\R}{\mathbb{R}}
\newcommand{\Z}{\mathbb{Z}}

\newcommand{\Abb}{\mathbb{A}}
\newcommand{\boldB}{\mathbb{B}}
\newcommand{\Fbb}{\mathbb{F}}
\newcommand{\Gbb}{\mathbb{G}}
\newcommand{\Pbb}{\mathbb{P}}
\newcommand{\Ubb}{\mathbb{U}}

\newcommand{\Cfrak}{\mathfrak{C}}
\newcommand{\Dfrak}{\mathfrak{D}}
\newcommand{\Mfrak}{\mathfrak{M}}

\newcommand{\mfrak}{\mathfrak{m}}

\newtheorem{theo}{Theorem}[section]
\newtheorem{lem}[theo]{Lemma}
\newtheorem{prop}[theo]{Proposition}
\newtheorem{cor}[theo]{Corollary}
\theoremstyle{remark}
\newtheorem{rem}[theo]{Remark}
\theoremstyle{remark}
\newtheorem{expl}[theo]{Example}
\theoremstyle{definition}
\newtheorem{defn}[theo]{Definition}

\begin{document}
\title[Families of filtered $\phi$-modules and crystalline representations]{On arithmetic families of filtered $\phi$-modules \\ and crystalline representations}
\author[E. Hellmann]{Eugen Hellmann}
\begin{abstract}
We consider stacks of filtered $\phi$-modules over rigid analytic spaces and adic spaces.
We show that these modules parametrize $p$-adic Galois representations of the absolute Galois group of a $p$-adic field with varying coefficients over an open substack
containing all classical points. 
Further we study a period morphism (defined by Pappas and Rapoport) from a stack parametrising integral data and determine the image of this morphism.
\end{abstract}
\maketitle

\section{Introduction}
In $p$-adic Hodge theory one considers filtered $\phi$-modules as a category of linear algebra data describing \emph{crystalline} Galois representations. 
More precisely, there is an equivalence of categories between crystalline representations and weakly admissible filtered $\phi$-modules (cf. \cite[Theorem A]{ColmezFont}),
where weak admissibility is a certain condition relating the slopes of the Frobenius $\Phi$ with the filtration.

In their book \cite{RapoZink}, Rapoport and Zink consider geometric families of these modules. More precisely they fix an isocrystal over $W(\bar\Fbb_p)[1/p]$ and consider for a given coweight $\nu$ the flag variety $\Fcal_\nu$ over $W(\bar\Fbb_p)[1/p]$, where $\nu$ prescribes the jumps of the filtration. They show that there is an admissible open subspace $\Fcal_\nu^{\rm wa}\subset \Fcal_\nu^{\rig}$ parametrizing those filtered $\phi$-modules that are weakly admissible with respect to the given isocrystal.

In this paper we consider \emph{arithmetic families} of filtered $\phi$-modules. That is, we fix a local field $K$ of characteristic $0$, and study a stack on the category of rigid analytic spaces parametrising $K$-filtered $\phi$-modules with coefficients defined by Pappas and Rapoport in \cite{phimod}. The difference with the geometric situation is that we do not fix an isocrystal (i.e. the Frobenius may vary) and the filtration also varies. We define a notion of weak admissibility in this context, and show that the weakly admissible locus in this stack is an open substack. Here we work in the category of adic spaces introduced by Huber (see \cite{Huber} for example), instead of the category of rigid analytic spaces. 

The work of Kedlaya and Liu \cite{KedlayaLiu} shows that the category of Berkovich spaces (see \cite{Berko} for example) is not the right category in which to address  these questions, since the locus where certain $\phi$-modules over the Robba ring are \'etale is not an open subspace. This phenomenon appears here again, since the weakly admissible locus will not be a Berkovich space, as we show by an explicit example (Example $\ref{noBerko}$).
Kedlaya and Liu formulate the local \'etaleness of $\phi$-modules over the Robba ring in terms of rigid analytic spaces. However, we cannot apply their result, since a covering by admissible open subsets is not necessarily an admissible covering. This is the reason why we are forced to work in the category of adic spaces. In fact, we will generalize the theorem of Kedlaya and Liu to the setting of adic spaces.

In the geometric setting of \cite{RapoZink}, there is a period morphism from the moduli space of $p$-divisible groups to a certain period domain $\Fcal_\nu^{\rm wa}$. The image of this morphism was described by Hartl in \cite{Hartl1}, \cite{Hartl2} and Faltings, \cite{Faltings}. 
As an analogue to the moduli space of $p$-divisible groups, Pappas and Rapoport define in \cite{phimod} a formal stack parametrising modules that appear in integral $p$-adic Hodge theory developed by Breuil and Kisin (see \cite{Breuil} for example). They also define an analogue of the period map of \cite{RapoZink}. In this paper we will determine the image of this period map, confirming a conjecture of Pappas and Rapoport.

Finally we construct an open substack of the stack of weakly admissible filtered $\phi$-modules and a family of crystalline representations on this stack, i.e. a vector bundle with a continuous Galois action which is crystalline. We show that this subspace is in fact universal for families of crystalline representations.

Our main results are as follows. Let $K$ be a finite extension of $\Q_p$ and denote by $K_0$ the maximal unramified extension of $\Q_p$ inside $K$. We fix an integer $d>0$ and a dominant coweight $\nu$ of the algebraic group $\Res_{K/\Q_p}\GL_d$ with associated reflex field $E$. We denote by $\Dfrak_\nu$ the fpqc-stack on the category of rigid analytic spaces over $E$ (or, slightly more generally, on the category of adic spaces locally of finite type) whose $X$-valued points are triples $(D,\Phi,\Fcal^\bullet)$ with a locally free $\Ocal_X\otimes_{\Q_p}K_0$-module $D$, a semi-linear automorphism $\Phi$ and a filtration of $D\otimes_{\Q_p}K$ which is of type $\nu$. It is easy to see that this stack is an \emph{Artin stack}, where we define an Artin stack on the category of rigid spaces (or adic spaces) exactly as on the category of schemes.
We will define a notion of \emph{weak admissibility} for all points of the adic space generalizing the usual notion (cf. \cite[3.4]{ColmezFont}) at the rigid analytic points.
\begin{theo}\label{maintheo1}
The weakly admissible locus is an open substack $\Dfrak_\nu^{\rm wa}\subset \Dfrak_\nu$ on the category of adic spaces locally of finite type over $E$.
\end{theo}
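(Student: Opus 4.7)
The plan is to reduce weak admissibility of filtered $\phi$-modules to étaleness (purity of slope $0$) of associated $\phi$-modules over a relative Robba ring, and then invoke the adic generalization of the Kedlaya--Liu openness theorem that is advertised in the introduction as the central analytic tool.

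First, following Berger and Kisin, I would associate functorially, to each $(D,\Phi,\Fcal^\bullet) \in \Dfrak_\nu(X)$ for $X$ an adic space locally of finite type over $E$, a $\phi$-module $\Mcal = \Mcal(D,\Phi,\Fcal^\bullet)$ over the relative Robba ring $\Bcal_{\rig,X}$: one starts from $D \otimes_{K_0} \Bcal_{\rig,X}$ and twists the Frobenius by a matrix built from the filtration $\Fcal^\bullet$ and the choice of a uniformizer of $K$. The assignment is natural in $X$ and compatible with arbitrary base change, so it descends to a morphism of stacks out of $\Dfrak_\nu$.

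Next, I would record the pointwise dictionary: by a theorem of Kisin (combined with work of Berger and Colmez--Fontaine), at each classical point $x \in X$, weak admissibility of the fiber $(D,\Phi,\Fcal^\bullet)_x$ is equivalent to $\Mcal_x$ being étale over the Robba ring of the residue field. This equivalence is then taken (consistently with the introduction's promise to extend the definition to all adic points) as the definition of weak admissibility at arbitrary adic points, so that the weakly admissible locus in $X$ is exactly the preimage, under $\Mcal(-)$, of the étale locus of $\phi$-modules over $\Bcal_{\rig,X}$. Compatibility of this extended notion with base change is automatic, since the construction $\Mcal(-)$ itself is.

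The final step is to apply the adic form of Kedlaya--Liu: for a $\phi$-module over the relative Robba ring over an adic space locally of finite type, the locus of étaleness is open. Pulling back via $\Mcal(-)$ then gives openness of $\Dfrak_\nu^{\rm wa} \subset \Dfrak_\nu$ and proves Theorem~\ref{maintheo1}. The main obstacle is clearly the adic Kedlaya--Liu theorem itself; as stressed in the introduction, the original rigid-analytic statement does not suffice because admissible-open covers in the rigid category need not be admissible, so the Kedlaya--Liu argument has to be redone in the adic setting, whose topology is fine enough to support an honest local openness argument. The construction $\Mcal(-)$ and its comparison with weak admissibility at classical points, by contrast, are essentially formal manipulations within $p$-adic Hodge theory, so once the adic openness of the étale locus is in hand, Theorem~\ref{maintheo1} follows immediately.
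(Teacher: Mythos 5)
Your strategy is not the one the paper uses for this theorem, and as written it has a genuine gap. The paper proves Theorem \ref{maintheo1} (= Theorem \ref{waopen}) directly: it reduces to the universal space $X_\nu=(\Res_{K_0/\Q_p}\GL_d)^{\ad}_E\times\Gr_{K,\nu}^{\ad}$, introduces the projective schemes $Z_i$ parametrizing $\Phi$-stable subbundles of rank $i$, and exhibits the complement of the weakly admissible locus as a finite union of images, under the \emph{proper} maps $Y_{i,m}^{\ad}\to X_\nu$, of closed loci cut out by a valuation inequality between $\dete(\Phi^f|_{U})$ and the (upper semi-continuous) filtration degree. Properness plus semi-continuity is the whole argument; no Robba-ring input is needed.

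The gap in your route is the ``pointwise dictionary'' step. The equivalence between weak admissibility and \'etaleness of the associated $\phi$-module over the Robba ring is only known at \emph{classical} (rigid analytic) points; at a general adic point the residue field carries a higher-rank valuation and no slope theory \`a la Kedlaya is available there. The paper defines weak admissibility at arbitrary points intrinsically, via the valuation-theoretic slope $\mu_F$ of Section 3, and this is the notion appearing in Theorem \ref{maintheo1}. If you instead \emph{define} weak admissibility at non-classical points to mean \'etaleness, you change the statement: what you then prove is openness of the locus the paper calls $\Dfrak_\nu^{\rm int}$, which is shown to be an open substack of $\Dfrak_\nu^{\rm wa}$ with the same classical points, but whether $\Dfrak_\nu^{\rm int}=\Dfrak_\nu^{\rm wa}$ is explicitly left open (Remark \ref{intgleichwa}). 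A second, smaller obstruction: Theorem \ref{maintheo1} is stated for an arbitrary dominant cocharacter $\nu$, whereas the equivalence $\Cfrak_\nu\cong\Dfrak_\nu$ and the whole Robba-ring machinery in the paper are only developed for the special (miniscule) cocharacters satisfying condition $(\ref{specialcochar})$; so even granting the dictionary, your argument would only cover that case. Your instinct that the adic Kedlaya--Liu theorem is the hard analytic input is right for Theorem \ref{maintheo3}, but Theorem \ref{maintheo1} is proved by the more elementary properness argument.
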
 
We will also consider the following stack $\widehat{\Ccal}_K$ on the category of $\Z_p$-schemes on which $p$ is nilpotent. The $S$-valued points of $\widehat{\Ccal}_K$ are tuples $(\Mfrak,\Phi)$, where $\Mfrak$ is a (fpqc-locally on $S$) free $\Ocal_S\widehat{\otimes}_{\Z_p}W[[u]]$-module together with a semi-linear injection $\Phi:\Mfrak\rightarrow\Mfrak$ whose cokernel is killed by the minimal polynomial over $K_0$ of some fixed uniformizer of $K$. Here $W$ denotes the ring of integers of $K_0$.
The category of those modules was introduced by Breuil and studied by Kisin in order to describe finite flat group schemes of $p$-power order over $\Spec\,\Ocal_K$ and hence to describe $p$-divisible groups in the limit. Given a miniscule cocharacter $\nu$, Pappas and Rapoport define a closed substack $\widehat{\Ccal}_{K,\nu}$ of $\widehat{\Ccal}_K$ by posing an extra condition on the cokernel of the semi-linear injection $\Phi$. For a formal scheme $\Xcal$ over $\Z_p$ they also define a \emph{period map}
\[\Pi(\Xcal):\widehat{\Ccal}_{K,\nu}(\Xcal)\longrightarrow \Dfrak_\nu(\Xcal^{\rig}),\]
that maps the $\Z_p$-point associated with a $p$-divisible group over $\Ocal_K$ to the filtered isocrystal of the $p$-divisible group. We will show that the map indeed factors through the weakly admissible locus and we describe the image of the period map as follows. In \cite{crysrep}, Kisin shows that there is an equivalence between the category of filtered $\phi$-modules and a certain category of vector bundles on the open unit disc together with a $\phi$-linear map. We will show that the equivalence of categories generalizes to families. As the notion of weak admissibility (for filtered isocrystals over $\Q_p$) translates to the property of being \'etale over the Robba ring, we will define a substack $\Dfrak_\nu^{\rm int}$ of $\Dfrak_\nu$ consisting of those filtered $\phi$-modules whose associated vector bundle is \'etale.
\begin{theo}\label{maintheo3}
Let $\nu$ be a miniscule cocharacter. The stack $\Dfrak_\nu^{\rm int}$ is an open substack of $\Dfrak_\nu^{\rm wa}$ {\rm (}on the category of adic spaces locally of finite type over the reflex field of $\nu${\rm )}. 
It is the image of the period morphism in the sense that a morphism $X\rightarrow \Dfrak_\nu^{\rm wa}$, defining $(D,\Phi,\Fcal^\bullet)\in \Dfrak_\nu(X)$, 
factors through $\Dfrak_\nu^{\rm int}$ if and only if there exists a covering $(U_i)_{i\in I}$ of $X$ and $p$-adic formal schemes $\mathcal{U}_i$ together with $(\Mfrak_i,\Phi_i)\in\widehat{\Ccal}_{K,\nu}(\mathcal{U}_i)$ such that $\mathcal{U}_i^{\rm ad}=U_i$ and 
\[\Pi(\mathcal{U}_i)(\Mfrak_i,\Phi_i)=(D,\Phi,\Fcal^\bullet)|_{U_i}.\]
\end{theo}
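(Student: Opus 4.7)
The proof combines three ingredients: a relative version of Kisin's equivalence between filtered $\phi$-modules and $\phi$-modules on the open unit disc (the family generalization of \cite{crysrep} promised in the introduction), the adic-space version of the Kedlaya-Liu openness theorem mentioned earlier, and the miniscule hypothesis on $\nu$. For the first part of the statement, given $(D,\Phi,\Fcal^\bullet)\in\Dfrak_\nu(X)$ one forms via the family Kisin equivalence the associated vector bundle $\Mcal$ with Frobenius structure on the relative open unit disc over $X$, restricts to a $\phi$-module over the relative Robba ring, and observes that $\Dfrak_\nu^{\rm int}$ is by construction cut out by étaleness of this restricted $\phi$-module; the adic-space generalization of Kedlaya-Liu then produces the open substack. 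That $\Dfrak_\nu^{\rm int}\subset\Dfrak_\nu^{\rm wa}$ can be checked pointwise, where étaleness of the Kisin module over the Robba ring is equivalent to weak admissibility of the associated filtered $\phi$-module by work of Kisin and Berger.

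For the image characterization, one direction is essentially tautological. Given $(\Mfrak,\Phi)\in\widehat{\Ccal}_{K,\nu}(\Ucal)$, the module $\Mfrak[1/p]$ on $\Ucal^{\rig}$ provides a $\phi$-stable integral lattice inside the Robba-ring $\phi$-module attached to $\Pi(\Ucal)(\Mfrak,\Phi)$, so étaleness is automatic; the only real input is the compatibility between the Pappas-Rapoport period map and the family Kisin equivalence, which reduces to a direct comparison of the two constructions via the completion of $W[[u]][1/u]$ along the Eisenstein polynomial of the fixed uniformizer.

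For the converse direction, suppose $(D,\Phi,\Fcal^\bullet)\in \Dfrak_\nu^{\rm int}(X)$. Working locally on $X$, étaleness yields a $\phi$-stable lattice inside the Robba-ring $\phi$-module; Frobenius descent should then propagate this lattice across the relative open unit disc to an $\Ocal_X^+\widehat\otimes_{\Z_p} W[[u]]$-lattice $\Mfrak$ inside the Kisin vector bundle. The miniscule hypothesis on $\nu$ is precisely what forces the cokernel of $\Phi:\Mfrak\to\Mfrak$ to be killed by the Eisenstein polynomial of the uniformizer, so that $(\Mfrak,\Phi)\in\widehat{\Ccal}_{K,\nu}(\Ucal_i)$ for the formal model $\Ucal_i=\Spf\,\Ocal_X^+(U_i)$ on a suitable affinoid cover. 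The main obstacle is precisely this descent step: producing a $\phi$-stable \emph{integral} lattice globally in the relative, non-noetherian setting, and verifying that the miniscule type-$\nu$ condition on the filtration transfers to the Pappas-Rapoport cokernel condition on $\Mfrak$ rather than merely to $\phi$-stability. Once the integral model with these properties is in hand, the biconditional describing the image follows formally from the forward direction.
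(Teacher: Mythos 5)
Your outline matches the architecture of the paper's argument (family Kisin equivalence, adic Kedlaya--Liu openness, extension of an \'etale lattice on a boundary annulus to an integral Breuil--Kisin module), but it has a genuine gap at exactly the point you flag as ``the main obstacle'' and then do not resolve. The whole content of the converse direction is the construction of the $\Acal_X^{[0,1)}$-lattice from the \'etale lattice over the Robba ring, and ``Frobenius descent should then propagate this lattice across the relative open unit disc'' is not an argument. The paper does this in two separate steps: first (Proposition $\ref{intmodel}$, a relative version of Kisin's Lemma 1.3.13) it produces the lattice on a \emph{small closed disc} $\boldB_{[0,r_2]}$ by forming $M_r'=\Mcal_r\cap N_r$ and then the saturation $M_r=(M_r'\otimes {\bf A}_A^{[r,r_2]})\cap M_r'[\tfrac1p]$, checking finite generation by noetherianity of $A^+\langle T/r_2\rangle$ after reducing to the reduced universal case, and checking local freeness by reducing modulo $\varpi_E$ and comparing with the fibers over all finite flat quotients $A^+\rightarrow\Ocal_F$ where Kisin's original lemma applies; only the subsequent extension from $\boldB_{[0,r_2]}$ to all of $\Ubb$ is by Frobenius gluing, with freeness of the limit established as in Proposition $\ref{etaleonannulus}$ via formal models. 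Note also that your worry about a ``non-noetherian setting'' is misplaced: the reduction to the reduced universal case puts you in a noetherian situation, and that reduction is what makes the intersection/saturation argument work. The passage from the resulting height-one $\phi$-module over $\Acal_X^{[0,1)}$ to a point of $\widehat{\Ccal}_{K,\nu}$ of a formal model is then the observation that on a reduced affinoid $X=\Spa(A,A^+)$ a free $\Acal_X^{[0,1)}$-module is a free $A^+\widehat\otimes_{\Z_p}W[[u]]$-module and $\Spf A^+$ is a formal model; the type-$\nu$ condition on the cokernel is inherited from the filtration because the integral model induces the same filtration mod $E(u)$ as $\underline{\Mcal}(D)$.

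A second, smaller issue: you assert that $\Dfrak_\nu^{\rm int}\subset\Dfrak_\nu^{\rm wa}$ ``can be checked pointwise'' using the equivalence of \'etaleness and weak admissibility. That equivalence is only available at \emph{rigid analytic} points (Kisin's Theorem 1.3.8); at general adic points it is precisely the open question of Remark $\ref{intgleichwa}$ whether $\Dfrak_\nu^{\rm int}=\Dfrak_\nu^{\rm wa}$. The correct route, which the paper takes implicitly, is: $\Dfrak_\nu^{\rm int}$ is open, its rigid analytic points agree with those of $\Dfrak_\nu^{\rm wa}$, and by Proposition $\ref{maxopen}$ the weakly admissible locus is the \emph{maximal} open subset whose rigid analytic points are the weakly admissible ones, whence the inclusion.
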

Finally we go back to crystalline representations. We consider vector bundles $\Ecal$ on an adic space $X$ endowed with an action of the absolute Galois group $G_K$ of $K$. As the structure sheaf $\Ocal_X$ is a sheaf of topological rings and $\Ecal$ is a vector bundle, all spaces of sections $\Gamma(U,\Ecal)$ of $\Ecal$ over some open subset $U$ have a natural topology and hence the notion of a continuous $G_K$-action on $\Ecal$ makes sense. A vector bundle $\Ecal$ with a continuous $G_K$-action such that 
\[D_{\rm cris}(\Ecal)=(\Ecal\widehat{\otimes}_{\Q_p}B_{\rm cris})^{G_K}\]
is locally on $X$ free of rank $d=\rk\Ecal$ over $\Ocal_X\otimes_{\Q_p}K_0$ is called a \emph{family of crystalline representations on} $X$.
\begin{theo}
Let $\nu$ be a miniscule cocharacter of $\Res_{K/\Q_p}\GL_d$. Then the groupoid of families of crystalline representations of Hodge-Tate weights $\nu$ is an open substack $\Dfrak_\nu^{\rm adm}$ of $\Dfrak_\nu^{\rm int}$.
\end{theo}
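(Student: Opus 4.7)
The plan is to produce, locally on $\Dfrak_\nu^{\rm int}$, a canonical family of $G_K$-representations via Kisin's theory; to define $\Dfrak_\nu^{\rm adm}$ as the locus where this family is crystalline in the sense of the statement; and to verify both openness and universality.

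Starting from an $X$-valued point of $\Dfrak_\nu^{\rm int}$, Theorem~\ref{maintheo3} yields, after passing to a covering $(U_i)$ of $X$, Breuil--Kisin data $(\Mfrak_i,\Phi_i)\in\widehat{\Ccal}_{K,\nu}(\Ucal_i)$ on formal models $\Ucal_i$ with $\Ucal_i^{\rm ad}=U_i$. Kisin's construction in \cite{crysrep} attaches to such a module a lattice in a crystalline Galois representation; I would relativise this construction, obtaining on each $U_i$ a $G_K$-equivariant vector bundle $\Ecal_i$ of rank $d$ with continuous Galois action. Although the integral model $(\Mfrak_i,\Phi_i)$ is not unique, Kisin's isogeny uniqueness combined with the fully faithfulness of the $\phi$-module / Robba ring correspondence in the \'etale range (the adic-space generalisation of Kedlaya--Liu announced in the Introduction) forces the rationalised family $\Ecal_i$ to depend only on $(D,\Phi,\Fcal^\bullet)|_{U_i}$. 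This lets me glue the $\Ecal_i$ into a single continuous $G_K$-equivariant vector bundle $\Ecal$ on $X$.

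I would then define $\Dfrak_\nu^{\rm adm}\subset\Dfrak_\nu^{\rm int}$ to be the locus where $D_{\rm cris}(\Ecal)=(\Ecal\widehat{\otimes}_{\Q_p}B_{\rm cris})^{G_K}$ is locally free of rank $d$ over $\Ocal_X\otimes_{\Q_p}K_0$, equivalently, where the natural comparison map $D_{\rm cris}(\Ecal)\otimes B_{\rm cris}\to\Ecal\otimes B_{\rm cris}$ is an isomorphism. At any classical rigid point $x\in X$, Kisin's theorem ensures that $\Ecal_x$ is crystalline and that $D_{\rm cris}(\Ecal_x)$ recovers $(D,\Phi,\Fcal^\bullet)_x$, so the fiber comparison is an isomorphism. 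The hard step, and the one I expect to be the principal obstacle, is promoting this pointwise statement to an open condition on the adic space: one needs upper semicontinuity of $\dim_{k(x)\otimes K_0} D_{\rm cris}$ in adic families, density of the classical rigid points in $\Dfrak_\nu^{\rm int}$, and a spreading-out argument showing that the comparison isomorphism at a classical point extends to an adic neighbourhood. This is exactly the point at which working in Huber's category (rather than in rigid analytic or Berkovich spaces) becomes essential, for reasons parallel to those forcing the framework of Theorem~\ref{maintheo1}.

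Finally, universality is formal once the open substack has been identified. A morphism $Y\to\Dfrak_\nu^{\rm adm}$ produces a family of crystalline representations on $Y$ by pulling back $\Ecal$. Conversely, given any family of crystalline representations $\Ecal'$ on $Y$ with Hodge--Tate weights of type $\nu$, the filtered isocrystal $D_{\rm cris}(\Ecal')$ defines a morphism $Y\to\Dfrak_\nu$; it factors through $\Dfrak_\nu^{\rm wa}$ by fibrewise Colmez--Fontaine together with Theorem~\ref{maintheo1}, through $\Dfrak_\nu^{\rm int}$ by the adic Kedlaya--Liu theorem applied to the associated $\phi$-module over the Robba ring, and hence through $\Dfrak_\nu^{\rm adm}$ by construction of the latter. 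These two constructions are mutually quasi-inverse, exhibiting the groupoid of families of crystalline representations of Hodge--Tate type $\nu$ as $\Dfrak_\nu^{\rm adm}$.
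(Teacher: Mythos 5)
There is a genuine gap, and it sits exactly where the substance of the theorem lies. Your construction begins by asserting that Kisin's functor relativises to produce, on each $U_i$, a continuous $G_K$-equivariant vector bundle $\Ecal_i$ of rank $d$, after which $\Dfrak_\nu^{\rm adm}$ is cut out as the locus where this family is crystalline. But no such family exists on all of $\Dfrak_\nu^{\rm int}$: Example $\ref{unramchars}$ (Chenevier's example) exhibits a rank-one object of $\Dfrak_\nu^{\rm int}$ over the closed annulus $\partial\boldB$ whose fibre at the Gauss point $\eta$ admits \emph{no} continuous Galois representation at all, because the character sending Frobenius to $T$ does not extend continuously from $\Z$ to $\widehat{\Z}$ over $k(\eta)$. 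The failure at non-classical points is not a failure of crystallinity of an existing representation; it is a failure of existence. Consequently the admissibility condition cannot be phrased as ``$D_{\rm cris}(\Ecal)$ has full rank'' for a globally defined $\Ecal$. The correct definition (and the one the paper uses) is the locus of $x$ where
\[\dim_{\widehat{k(x)}}\bigl(\iota_x^\ast\Mfrak\otimes_{\Acal^{[0,1)}_{\widehat{k(x)}}}\widetilde{\Bcal}_{\widehat{k(x)}}\bigr)^{\Phi=\id}=d\]
for an integral Kisin model $\Mfrak$, i.e.\ the locus where the associated \'etale $\phi$-module actually arises from a family of $G_{K_\infty}$-representations; only \emph{after} restricting to this locus does one build the crystalline $G_K$-family as $\sheafHom_{\Phi,\Fil}(D,\Ocal_X\widehat{\otimes}B_{\rm cris})$ and check that $D_{\rm cris}$ recovers $(D,\Phi,\Fcal^\bullet)$.

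Your diagnosis of the ``principal obstacle'' is also aimed at the wrong mechanism. Openness of the admissible locus is not obtained from semicontinuity of $\dim D_{\rm cris}$ together with density of classical points; it comes from a successive-approximation argument in the relative period rings: one solves $y-\phi^{-1}(y)=x$ in $\Ocal_X^+\widehat{\otimes}\widetilde{\bf A}$ (the paper's Lemma $\ref{phieq}$) to show that a $\Phi$-fixed basis at a single point, once approximated by sections, can be corrected to a $\Phi$-fixed basis over a whole affinoid neighbourhood. One also needs an overconvergence statement identifying $\Phi$-morphisms into $\widetilde{\Acal}_X$ with those into $\widetilde{\Acal}_X^{[0,1)}$, a compatibility with base change (to make the condition fpqc-local and hence stack-theoretic), and, for the universality direction, the nontrivial fact that the \'etale $\phi$-module attached to an arbitrary crystalline family is overconvergent so that $D_{\rm cris}$ of such a family lands in $\Dfrak_\nu^{\rm int}$ in the first place. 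Your universality paragraph takes this last point for granted. In short: the theorem is not a formal consequence of Theorem $\ref{maintheo3}$ plus a crystallinity cut; the construction of the period sheaves over adic spaces, the localisation-at-rigid-points principles for them, and the $G_{K_\infty}$-to-$G_K$ descent via the filtration are the actual content, and none of them appears in your outline.
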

The following diagram summarizes the stacks that appear in this paper in the case of a miniscule cocharacter $\nu$.
\[\begin{xy}
\xymatrix{
 & \Dfrak_\nu^{\rm adm} \ar@^{(->}[d]\\
\widehat{\Ccal}_{K,\nu}^{\rm ad}\ar[dd]\ar[ddr]^\Pi \ar[r]& \Dfrak_\nu^{\rm int}\ar@^{(->}[d]\\
& \Dfrak_\nu^{\rm wa}\ar@^{(->}[d]\\
\Cfrak_\nu\ar[r]^\cong & \Dfrak_\nu.
}\end{xy}\]
Here $\Cfrak_\nu$ is a stack of vector bundles on the open unit disc that appears as an intermediate step of the period morphism. By the adification of the stack $\widehat{\Ccal}_{K,\nu}$ we mean the stack mapping an adic space $X$ to the limit of $\widehat{\Ccal}_{K,\nu}(\mathcal{X})$ for all formal models $\Xcal$ of $X$. 
The map $\widehat{\Ccal}_{K,\nu}^{\rm ad}\rightarrow \Dfrak_\nu^{\rm int}$ then is the localization map of $\widehat{\Ccal}_{K,\nu}^{\rm ad}$ to its isogeny category.
The vertical arrows on the right are all open embeddings and $\Dfrak_\nu^{\rm adm}(F)=\Dfrak_\nu^{\rm int}(F)=\Dfrak_\nu^{\rm wa}(F)$ for all finite extensions $F$ of the reflex field of $\nu$.
In general the inclusion of $\Dfrak_\nu^{\rm adm}$ in $\Dfrak_\nu^{\rm int}$ is a strict inclusion, as shown in Example $\ref{unramchars}$. It is an open question whether $\Dfrak_\nu^{\rm int}=\Dfrak_\nu^{\rm wa}$, compare Remark $\ref{intgleichwa}$

\noindent We now outline the structure of this paper.\\
In section 2 we will recall some basic facts and concepts from $p$-adic Hodge-theory and the theory of adic spaces.\\
In section 3 we introduce filtered isocrystals with coefficients in a valuated field. These objects will appear as the fibers of our families. We introduce the notions of semi-stability and weak admissibility, and establish a Harder-Narasimhan formalism.\\
In section 4 we define the stacks of filtered isocrystals and prove our first main result, Theorem $\ref{maintheo1}$.\\
In section 5 we study the relation of filtered isocrystals and vector bundles on the open unit disc which was introduced by Kisin.\\ 
In section 6 we discuss the notion of being \'etale over the Robba ring in families, slightly generalizing a result of Kedlaya and Liu about local \'etaleness of those modules. This allows us to determine the image of the period map in section 7 and prove Theorem $\ref{maintheo3}$.\\
In section 8 we construct a family of crystalline Galois representations on an open substack and show that this family is universal.

{\bf Acknowledgements}: I thank my advisor M. Rapoport for his interest and advice. Further I would like to thank R. Huber for answering my questions about adic spaces and G. Faltings for pointing out some small errors in an earlier version of this paper.
I also acknowledge the hospitality of the Institute Henri Poincare during the Galois trimester in spring 2010 and the hospitality of Harvard University in fall 2010,
where part of this work was done.
This work was supported by the SFB/TR 45 "Periods, Moduli Spaces and Arithmetic of Algebraic Varieties" of the DFG (German Research Foundation).

\section{Preliminaries}
Throughout the whole paper we fix the following notations:
Let $K$ be a finite extension of $\Q_p$ and write $\Ocal_K$ for its ring of integers.
Fix a uniformizer $\pi \in\Ocal_K$ and write $k=\Ocal_K/\pi\Ocal_K$ for the residue field.
Let $W=W(k)$ be the Witt ring of $k$ and $K_0=W[1/p]$ the maximal unramified extension of $\Q_p$ inside $K$.
Further we denote by $E(u)\in W[u]$ the minimal polynomial of $\pi$ over $K_0$.

Fix an algebraic closure $\bar K$ of $K$ and write $G_K={\rm Gal}(\bar K/K)$ for the absolute Galois group.
Further we choose a compatible system $\pi_n$ of $p^n$-th roots of $\pi$ in $\bar K$ and write $K_\infty$ for the field obtained from $K$ by adjoining the $\pi_n$.
Let $G_{K_\infty}={\rm Gal}(\bar K/K_\infty)$ denote the absolute Galois group of $K_\infty$.

\subsection{Some $p$-adic Hodge-theory}

For further use we define some rings of $p$-adic Hodge theory used in Kisin's papers \cite{Kisin} and \cite{crysrep}\footnote{Our notations here differ from the ones in Kisin's papers.}.\\
Write ${\bf A}^{[0,1)}=W[[u]]$ and ${\bf A}$ for the $p$-adic completion of $W((u))$. Further ${\bf B}={\bf A}[1/p]$.
Let $R$ denote Fontaine's ring 
\[R=\lim\limits_{\longleftarrow} \Ocal_{\mathbb{C}_p}/p\Ocal_{\mathbb{C}_p}\]
where the transition maps in the limit are given by the $p$-th power map. Let $\underline{\pi}$ denote the element $(\pi,\pi_1,\pi_2,\dots)\in R$
and write $[\underline{\pi}]$ for the Teichm\"uller representative of $\underline{\pi}$ in the Witt ring $W(R)$ of $R$.
We may regard ${\bf B}$ as a subring of $W(\Frac R)[1/p]$ by maping $u$ to $[\underline{\pi}]$. The lift $\phi$ of the absolute Frobenius on $R$  maps $[\underline{\pi}]$ to $[\underline{\pi}]^p$ and hence this embeddings is $\phi$-equivariant. Write $\widetilde{\bf B}$ for the closure of the maximal unramified extension of ${\bf B}$ inside $W(\Frac R)[1/p]$ and denote by $\widetilde{\bf A}$ its ring of integers. Then $\widetilde{\bf A}$ is a complete discrete valuation ring (for the $p$-adic topology) with residue field the closure of $k((u))^{\rm sep}$ in $\Frac R$ and we have an injection $\widetilde{\bf A}\hookrightarrow W(\Frac R)$ which is continuous for the canonical topology of $\widetilde{\bf A}$.\\
We write $\widetilde{\bf A}^{[0,1)}=\widetilde{\bf A}\cap W(R)\subset W(\Frac R)$. Then we have
\begin{align*}
\widetilde{\bf A}^{\phi=\id}&=\Z_p, & \widetilde{\bf A}^{G_{K_\infty}}&={\bf A},\\
\widetilde{\bf B}^{\phi=\id}&=\Q_p, & \widetilde{\bf B}^{G_{K_\infty}}&={\bf B}.
\end{align*}

The  Galois group $G_{K_\infty}$ is isomorphic to the absolute Galois group of $k((u))$, see \cite[Theorem 11.1.2]{survey} for example, and its representations on finite dimensional $\Q_p$-vector spaces are described in terms of \'etale $\phi$-modules over ${\bf B}$, i.e. finite dimensional ${\bf B}$-vector spaces $N$ together with an isomorphism $\Phi:\phi^\ast N\rightarrow N$
such that there exists an ${\bf A}$-lattice $\mathfrak{N}\subset N$ with $\Phi(\phi^\ast\mathfrak{N})=\mathfrak{N}$. Given an \'etale $\phi$-module $(N,\Phi)$ over ${\bf B}$, we write
\begin{equation}\label{GKinftyrepn}
V_{\bf B}(N)=(N\otimes_{\bf B}\widetilde{\bf B})^{\Phi=\id}
\end{equation}
for the associated $G_{K_\infty}$-representation.

\begin{rem}
In fact Kisin considers slightly smaller period rings. Instead of $\widetilde{\bf B}$ he considers the $p$-adic completion of the maximal unramified extension of ${\bf B}$ inside $W(\Frac R)[1/p]$. But as $\widetilde{\bf B}$ has the same $\phi$ and $G_{K_\infty}$-invariants the period ring considered in Kisin's papers, the theory works with our definitions as well. The advantage of our definition is that is makes it easier to define the sheafified versions of the period rings.
\end{rem}

Let $A_{\rm cris}$ denote the $p$-adic completion of the divided power envelope of $W(R)$ with respect to the kernel of the surjection $\theta:W(R)\rightarrow \Ocal_{\mathbb{C}_p}$ induced by 
\[[(x,x^{1/p},x^{1/p^2},\dots)]\longmapsto x.\]
Let $B_{\rm cris}^+=A_{\rm cris}[1/p]$ and $B_{\rm cris}=B_{\rm cris}^+[1/t]$ denote Fontaine's ring of crystalline periods, where
\[t=\log[(1,\epsilon_1,\epsilon_2,\dots)]\]
is the period of the cyclotomic character.

Recall that a representation of $G_K$ on a $d$-dimensional $\Q_p$-vector space $V$ is called \emph{crystalline} if 
\[D_{\rm cris}(V)=(V\otimes_{\Q_p}B_{\rm cris})^{G_K}\]
is of dimension $d$ over $K_0=B_{\rm cris}^{G_K}$. 
The $K_0$-vector space $D_{\rm cris}(V)$ is equipped with a semi-linear automorphism $\Phi$ and a descending, separated and exhaustive filtration $\Fcal^\bullet$ on $D_{\rm cris}(V)\otimes_{K_0}K$. Such an object is called a filtered $\phi$-module (or filtered isocrystal) over $K$.
A filtered isocrystal $(D,\Phi,\Fcal^\bullet)$ is called \emph{weakly admissible} if 
\begin{align*}
v_p(\det \Phi)&=\sum_i i\,\dim_K \gr_i(D\otimes_{K_0}K) \ , \\
v_p(\det \Phi|_{D'})&\geq \sum_i i\,\dim_K \gr_i (D'\otimes_{K_0}K)\ \ \forall\ D'\subset D,\ D'\ \Phi\text{-stable}.
\end{align*}
Then there is an equivalence of categories between the category of crystalline representations and the category of weakly admissible filtered $\phi$-modules over $K$, see \cite[Theorem A]{ColmezFont}.

\subsection{Adic spaces}
Let $A$ be a topological ring over $\Q_p$. By a \emph{valuation} (in the sense of \cite[2, Definition]{contval}) on $A$ we mean a map $v:A\rightarrow \Gamma_v\cup\{0\}$ to a totally ordered abelian group $\Gamma_v$ (written multiplicatively) such that
\begin{align*}
 v(0)&=0 \\
 v(1)&=1 \\
 v(ab)&=v(a)v(b)\\
 v(a+b)&\leq\max\{v(a),v(b)\},
\end{align*}
where the order on $\Gamma_v$ is extended to $\Gamma_v\cup\{0\}$ by $0<\gamma$ for all $\gamma\in\Gamma_v$.\\
The valuation is called continuous if $\{a\in A\mid v(a)\leq \gamma\}$ is open for all $\gamma\in\Gamma_v$.
If $A^+\subset A$ is an open and integrally closed subring, Huber defines the adic spectrum of $(A,A^+)$ as 
\[\Spa(A,A^+)=\left\{
\begin{array}{*{20}c}
\text{isomorphism classes of continuous valuations}\\ v:A\rightarrow \Gamma_v\cup\{0\}\ \text{such that}\ v(a)\leq 1\ \text{for all}\ a\in A^+
\end{array}\right\}.\]
This space is equipped with a structure sheaf $\Ocal_X$ and a sheaf of integral subrings $\Ocal_X^+$, see \cite[1]{Hu2}.

Given a finite extension $E$ of $\Q_p$ we denote by ${\Rig}_E$ the category of rigid analytic varieties over $E$ (cf. \cite[Part C]{BoschGR}) and by ${\Ad}_E$ the category of adic spaces over $E$ in the sense of Huber. We have a fully faithful embedding of ${\Rig}_E$ into ${\Ad}_E$ that factors through the full subcategory ${\Ad}_E^{\lft}$ of adic spaces locally of finite type over $E$, i.e. adic spaces that are locally isomorphic to $\Spa(A,A^\circ)$ for an $E$-algebra $A$ that is topologically of finite type over $E$ and where $A^\circ\subset A$ is the subring of power bounded elements. In fact, this embedding identifies the category of quasi-separated rigid spaces with the category of quasi-separated adic spaces locally of finite type over $E$ (see \cite[1.1.11]{Huber}). 

For an adic space $X$ and a point $x\in X$ we will write $\Ocal_{X,x}$ for the local ring at $x$ and $\mfrak_{X,x}$ for its maximal ideal. We denote the residue field at $x$ by $k(x)$ and write $v_x:k(x)\rightarrow \Gamma_x\cup\{0\}$ for the corresponding valuation. Further we write $\widehat{k(x)}$ for the completion of the residue field (with respect to its natural topology). 
We write
\[\iota_x:\Spa(\widehat{k(x)},\widehat{k(x)}^+)=\Spa(k(x),k(x)^+)\hookrightarrow X\]
for the inclusion of the point $x$.

If $\Fcal$ is a (quasi-coherent) sheaf of $\Ocal_X$-modules (resp. $\Ocal_X^+$-modules) on $X$ we write $\Fcal\otimes k(x)$ for the quotient of $\Fcal_x$ by $\mfrak_{X,x}\Fcal_x$ (resp. $\mfrak_{X,x}^+\Fcal_x$), where
\[\Fcal_x=\lim\limits_{\substack{\longrightarrow \\ U\ni x}}\Gamma(U,\Fcal).\]
More generally, let $X \mapsto \mathscr{R}_X$ be a sheaf of quasi-coherent algebras (over $\Ocal_X$ or $\Ocal_X^+$) such that for every map $f:X\rightarrow Y$ there is an induced map $f^{-1}\mathscr{R}_Y\rightarrow \mathscr{R}_X$. Let $\Fcal$ be a quasi-coherent $\mathscr{R}_Y$-module. Then we write 
\[f^\ast \Fcal=f^{-1}\Fcal\otimes_{f^{-1}\mathscr{R}_Y}\mathscr{R}_X.\]
If $\mathcal{G}$ is another (quasi-coherent) sheaf of $\mathscr{R}_Y$-modules, we write $\sheafHom_{\mathscr{R}_Y}(\Fcal,\Gcal)$ for the sheaf of \emph{continuous} homomorphisms from $\Fcal$ to $\mathcal{G}$.

We will talk about stacks on the category $\Ad^{\rm lft}_E$ and hence want to make clear what we mean by the fpqc-topology.
A morphism $f:X\rightarrow Y$ of adic spaces is flat if the local ring $\Ocal_{X,x}$ is flat over $\Ocal_{Y,f(x)}$ for all $x\in X$ or, equivalently, if for all open affinoids $U=\Spa(A,A^+)\subset Y$ and $V=\Spa(B,B^+)\subset f^{-1}(U)$, the ring $B$ is a flat $A$-algebra.
The morphism $f$ is called faithfully flat if it is flat and surjective. It is an fpqc-morphism if it is faithfully flat and quasi-compact.

In the case of rigid analytic varieties the notion of being faithfully flat is defined in terms of formal models (see \cite[3]{BoschGoertz} for example).
A quasi-compact morphism of formal $\Ocal_E$-schemes $f:\Xcal'\rightarrow \Xcal$ is called \emph{faithfully rig-flat} if there exist coverings $\Xcal=\bigcup_{i\in I}\Spf\,A_i$ of $\Xcal$ and $f^{-1}(\Spf\,A_i)=\bigcup_{j\in J_i}\Spf\,A'_j$ such that the induced map
\[\coprod_{j\in J_i}\Spec\, A'_j\longrightarrow \Spec\,A_i\]
is faithfully flat over the complement of the special fiber for all $i\in I$.\\
As usual a morphism is called quasi-compact if the pre-image of any quasi-compact subspace is quasi-compact.
The property of being faithfully rig-flat is stable under admissible blow-ups and hence gives rise to the notion of an fpqc-morphism in the category of rigid analytic varieties.\\
A morphism $f:X\rightarrow X'$ in ${\Ad}^{\lft}_E$ is an fpqc morphism if and only if it is quasi-compact and if locally on $X$ and $X'$ (for the topology of an adic space) it is induced by an fpqc morphism in ${\Rig}_E$.\\
By \cite[Theorem 3.1]{BoschGoertz} the category of coherent sheaves is an fpqc-stack on the category ${\Rig}_E$. Hence the same is true for the category ${\Ad}_E^{\lft}$, as we can glue coherent sheaves locally in the topology of an adic space.

Similarly to the case of stacks on the category of schemes we say that an fpqc-stack $F$ on the category ${\Rig}_E$ (resp. ${\Ad}_E^{\lft}$) is an \emph{Artin stack} if the diagonal is representable, quasi-compact and separated and if there exist a rigid space $U$ (resp. an adic space locally of finite type) and a smooth, surjective, relatively representable morphism $U\rightarrow F$.

\section{Filtered $\phi$-modules}

Throughout this section we denote by $F$ a topological field containing $\Q_p$ with a continuous valuation $v_F:F\rightarrow \Gamma_F\cup\{0\}$.
Recall that $K_0$ is an unramified extension of $\Q_p$ with residue field $k$ and write $f=[K_0:\Q_p]$. We write $\phi$ for the lift of the absolute Frobenius to $K_0$. Further $K$ is a totally ramified extension of $K_0$ with ramification index $e=[K:K_0]$. In this section we briefly introduce \emph{filtered isocrystals with coefficients}. We refer to \cite[2]{Hellmann} for a more detailed discussion.
\begin{defn}\label{defisoc}
An \emph{isocrystal over $k$ with coefficients in $F$} is a free $F\otimes_{\Q_p}K_0$-module $D$ of finite rank together with an automorphism $\Phi:D\rightarrow D$ that is semi-linear with respect to $\id\otimes\phi:F\otimes_{\Q_p}K_0\rightarrow F\otimes_{\Q_p}K_0$.\\
A morphism $f:(D,\Phi)\rightarrow (D',\Phi')$ is an $F\otimes_{\Q_p}K_0$-linear map $f:D\rightarrow D'$ such that
\[f\circ \Phi= \Phi'\circ f.\]
The category of isocrystals over $k$ with coefficients in $F$ is denoted by $\Isoc(k)_F$.
\end{defn}
\begin{defn}\label{deffilisoc}
 A \emph{$K$-filtered isocrystal over $k$ with coefficients in $F$} is a triple $(D,\Phi,\Fcal^\bullet)$, where $(D,\Phi)\in\Isoc(k)_F$ and $\Fcal^\bullet$ is a descending, separated and exhaustive $\Z$-filtration on $D_K=D\otimes_{K_0}K$ by (not necessarily free) $F\otimes_{\Q_p}K$-submodules. \\
A morphism 
\[f:(D,\Phi,\Fcal^\bullet)\longrightarrow (D',\Phi',\Fcal'^\bullet)\] 
is a morphism $f:(D,\Phi)\rightarrow (D',\Phi')$ in $\Isoc(k)_F$ such that $f\otimes\id:D_K\rightarrow D'_K$ respects the filtrations.\\
The category of $K$-filtered isocrystals over $k$ with coefficients in $F$ is denoted by $\Fil\Isoc(k)_F^K$.
\end{defn}
For an extension $F'$ of $F$  with valuation $v_{F'}:F'\rightarrow \Gamma_{F'}\cup\{0\}$ extending the valuation $v_F$ we have an \emph{extension of scalars}
\[-\otimes_FF':\Fil\Isoc(k)_F^K\longrightarrow \Fil\Isoc(k)_{F'}^K,\]
mapping $(D,\Phi,\Fcal^\bullet)\in\Fil\Isoc(k)_F^K$ to the triple $(D\otimes_FF',\Phi\otimes\id,\Fcal^\bullet\otimes_FF')$.
If $F'$ is finite over $F$, we also have a \emph{restriction of scalars}
\[\epsilon_{F'/F}:\Fil\Isoc(k)_{F'}^K\longrightarrow \Fil\Isoc(k)_F^K.\]
This functor maps $(D',\Phi',\Fcal'^\bullet)\in\Fil\Isoc(k)_{F'}^K$ to itself, forgetting the $F'$-action but keeping the $F$-action.
In the following we will often shorten our notation and just write $D$ for an object $(D,\Phi,\Fcal^\bullet)\in\Fil\Isoc(k)_F^K$.

We now define weakly admissible objects in $\Fil\Isoc(k)_F^K$.
Write $\Gamma_F\otimes\Q$ for the localisation of the abelian group $\Gamma_F$. Then every element $\gamma'\in\Gamma_F\otimes\Q$ can be written as a single tensor $\gamma\otimes r$ and we extend the total order of $\Gamma_F$ to $\Gamma_F\otimes\Q$ by 
\[a\otimes\tfrac{1}{m}<b\otimes\tfrac{1}{n}\Leftrightarrow a^n<b^m.\]
\begin{defn}
Let $(D,\Phi,\Fcal^\bullet)\in\Fil\Isoc(k)_F^K$. We define  
\begin{align*}
\deg\Fcal^\bullet&=\sum_{i\in\Z}\tfrac{1}{ef}i\dim_F\gr_i\Fcal^\bullet\\
\deg_F(D)&=(v_F(\dete_F\Phi^f)\otimes\tfrac{1}{f^2})^{-1}\ v_F(p)^{\deg(\Fcal^\bullet)}\in\Gamma_F\otimes\Q\\
\mu_F(D)&=\deg_F(D)(1\otimes\tfrac{1}{d})\in\Gamma_F\otimes\Q.
\end{align*}
We call $\mu_F(D)$ the \emph{slope} of $D$.
\end{defn}
\begin{rem}
One easily sees that the slope $\mu_F$ is preserved under extension and restriction of scalars. Hence we will just write $\mu$ in the sequel.
\end{rem}
\begin{defn}
An object $(D,\Phi,\Fcal^\bullet)\in\Fil\Isoc(k)_F^K$ is called \emph{weakly admissible} if, for all $\Phi$-stable subobjects $D'\subset D$, we have $\mu(D')\geq \mu(D)=1$.
\end{defn}
\begin{rem}
Let $(D,\Phi,\Fcal^\bullet)\in\Fil\Isoc(k)_{\Q_p}^K$ and denote for the moment by $v_p$ the $p$-adic valuation of $\Q_p$. Then 
\[\mu(D)=p^{v_p(\dete_{K_0}\Phi)-\sum_ii\dim_K(\Fcal^i/\Fcal^{i+1})}.\]
Hence we see that $D$ is weakly admissible if and only it is weakly admissible in the sense of \cite[3.4]{ColmezFont}. 
\end{rem}
There is a Harder-Narasimhan formalism for objects in $\Fil\Isoc(k)_F^K$ as in \cite[3]{FargFont} or \cite[Chapter 1]{DatOrlikRapo}. The main consequence of the existence of a Harder-Narasimhan filtration is that weak admissibility is preserved under extension and restriction of scalars. We refer to \cite[2]{Hellmann} for a more detailed discussion in the case of an arbitrary valuation.
\begin{prop}\label{waextstab}
Let $F'$ be an extension of $F$ with valuation $v_{F'}$ extending $v_F$ and $D\in\Fil\Isoc(k)_F^K$. 
If $D$ is semi-stable of slope $\mu$, then $D'=D\otimes_FF'$ is semi-stable of slope $\mu$.\\
If in addition $F'$ is finite over $F$ and $D'\in\Fil\Isoc(k)_{F'}^K$ is semi-stable of slope $\mu$, then $\epsilon_{F'/F}(D')\in\Fil\Isoc(k)_F^K$ is semi-stable of slope $\mu$. 
\end{prop}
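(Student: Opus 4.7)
The invariance of the slope function $\mu$ under extension and restriction of scalars is already recorded in the remark following its definition, so only the preservation of semi-stability needs to be verified. My plan is to apply the Harder--Narasimhan formalism alluded to above: every object of $\Fil\Isoc(k)_F^K$ admits a unique HN filtration by $\Phi$-stable sub-objects, and this uniqueness will be the workhorse.

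For the first assertion, I would first reduce to the case that $F'/F$ is finite Galois. Any $\Phi$-stable sub-object $D''\subset D\otimes_FF'$ of rank $r$ corresponds to an $F'$-point of the subscheme $X\subset\Gr(r,D)$ cut out by the closed conditions of being an $F\otimes_{\Q_p}K_0$-submodule and of $\Phi$-stability; this $X$ is of finite type over $F$. Taking the closure of the image of this point in $X$ and then a closed point of that closure produces a finite extension $F_0/F$ inside $F'$ (using that $v_F$ extends uniquely to finite extensions in the $p$-adic setting) over which $D''$ is already defined, with the same slope. Enlarging $F_0$ to its Galois closure, I may assume $F'/F$ is finite Galois. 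Then $\mathrm{Gal}(F'/F)$ acts semi-linearly on $D\otimes_FF'$ through the second factor, commuting with $\Phi\otimes\id$ and preserving the base-changed filtration, and the action permutes $\Phi$-stable sub-objects while preserving their slopes (since the unique extension of $v_F$ to $F'$ is Galois-invariant). If $D\otimes_FF'$ were not semi-stable, its maximal destabilizing sub-object would be Galois-invariant by uniqueness of the HN filtration, hence by Galois descent would arise from a $\Phi$-stable sub-object $E\subset D$ of the same slope, contradicting the semi-stability of $D$.

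For the second assertion, let $F''/F$ be a Galois closure of the finite extension $F'/F$ (separable, since $F\supset\Q_p$). The isomorphism $F'\otimes_FF''\cong\prod_{\sigma\colon F'\hookrightarrow F''}F''$ induces a decomposition
\[
\epsilon_{F'/F}(D')\otimes_FF''\;\cong\;\bigoplus_{\sigma\colon F'\hookrightarrow F''}(D')^\sigma,
\]
where $(D')^\sigma=D'\otimes_{F',\sigma}F''$ is the base change of $D'$ along $\sigma$. By the first part applied to each embedding $\sigma$, every summand is semi-stable of slope $\mu$, and consequently so is their direct sum. Any $\Phi$-stable sub-object $E\subset\epsilon_{F'/F}(D')$ of slope strictly less than $\mu$ would, after base change to $F''$, produce a $\Phi$-stable sub-object of slope $<\mu$ inside this semi-stable direct sum, a contradiction; hence $\epsilon_{F'/F}(D')$ is semi-stable over $F$.

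The main obstacle is the descent step in the first part, namely trading a destabilizing sub-object over a possibly transcendental $F'/F$ for one defined over a finite algebraic sub-extension while preserving its (small) slope. One needs both that the scheme of $\Phi$-stable sub-$F\otimes K_0$-modules is of finite type over $F$ and that the slope function is locally constant on it, so that specialising to a closed point of the image leaves the slope unchanged and produces an honest sub-object of $D$ base-changed along a finite extension; this is precisely the technical content worked out in the more detailed treatment of \cite[\S 2]{Hellmann}.
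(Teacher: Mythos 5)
The paper itself gives no proof of this proposition; it simply cites \cite[Corollary 2.22]{Hellmann}. Your outline does match the strategy of that reference: Harder--Narasimhan uniqueness plus Galois descent for finite Galois extensions, a specialization argument to reduce arbitrary extensions to finite ones, and the decomposition $\epsilon_{F'/F}(D')\otimes_FF''\cong\bigoplus_\sigma (D')^\sigma$ for restriction of scalars. The second half and the descent mechanism are fine in principle, but the reduction to finite extensions is wrong as written, and there is a recurring issue with the choice of valuation on the auxiliary extensions.

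Concretely: a closed point $x_0$ of the closure of the image of $\Spec F'\to X$ has residue field $F_0$ finite over $F$ that in general does \emph{not} embed into $F'$ (consider the generic point of $\Abb^1_F$), and the sub-object it carries is a \emph{specialization} of $D''$, not a descent of it; so the assertion that $D''$ is ``already defined over $F_0$ with the same slope'' is false, and the slope function is \emph{not} locally constant on $X$. What is true, and what suffices, is weaker: $x\mapsto\dete(\Phi^f|_{U_x})$ takes only finitely many values (products of eigenvalues of $\Phi^f$) and is therefore constant on connected components of $X^{\rm red}$, while $x\mapsto\deg(\Fcal^\bullet\cap U_{x,K})$ is only upper semi-continuous; since $v_F(p)<1$, specialization can only lower the slope of the sub-object, so a destabilizing $D''$ over $F'$ specializes to a destabilizing sub-object over \emph{some} finite extension of $F$, which is all the finite Galois case needs. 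Separately, you should not invoke ``the unique extension of $v_F$ to finite extensions'': $F$ is not assumed henselian (it is typically the residue field $k(x)$ of a point of an adic space, not its completion), so extensions of $v_F$ to a finite Galois $F''$ need not be unique, and the Galois-invariance of the chosen $v_{F''}$ --- on which both your descent step and the claim that every summand $(D')^\sigma$ is semi-stable of slope $\mu$ rely --- must either be arranged (e.g.\ by passing to completions) or replaced by an argument that the HN filtration is independent of the choice of extension. These are precisely the technical points that \cite[2]{Hellmann} takes care of.
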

\begin{proof}
This is \cite[Corollary 2.22]{Hellmann}.
\end{proof}

\section{Families of filtered $\phi$-modules}

We now consider families of the objects introduced in the last section parametrized by rigid analytic and adic spaces.
\subsection{Stacks of filtered $\phi$-modules}
Let $d$ be a positive integer and $\nu$ an algebraic cocharacter 
\begin{equation}\label{cocharnu}
\nu:\bar\Q_p^\times\longrightarrow (\Res_{K/\Q_p}A_K)(\bar\Q_p),
\end{equation}
where $A\subset \GL_d$ is the diagonal torus. We assume that this cocharacter is dominant with respect to the restriction $B$ of the Borel subgroup of upper triangular matrices in $(\GL_d)_K$. 
This cocharacter is defined over the reflex field $E\subset \bar\Q_p$. 
Let $\Delta$ denote the set of simple roots (defined over $\bar\Q_p$) of $\Res_{K/\Q_p}\GL_d$ with respect to $B$ and denote by $\Delta_\nu\subset \Delta$ the set of all simple roots $\alpha$ such that $\langle \alpha,\nu\rangle=0$. Here $\langle -,-\rangle$ is the canonical pairing between characters and cocharacters.
We write $P_\nu$ for the parabolic subgroup of $(\Res_{K/\Q_p}\GL_d)$ containing $B$ and corresponding to $\Delta_\nu\subset \Delta$. This parabolic subgroup is defined over $E$, and the quotient by this parabolic is a projective variety over $E$,
\begin{equation*}
\Gr_{K,\nu}=(\Res_{K/\Q_p}\GL_d)_E/P_\nu
\end{equation*}
representing the functor 
\[S\mapsto\{\text{filtrations}\ \Fcal^\bullet\ \text{of}\ \Ocal_S\otimes_{\Q_p}K^d\ \text{of type}\ \nu\}\]
on the category of $E$-schemes.  Here the filtrations are locally on $S$ direct summands. Being of \emph{type} $\nu$ means the following. Assume that the cocharacter 
\[\nu:\bar\Q_p^\times\longrightarrow \prod_{\psi:K\rightarrow \bar\Q_p}\GL_d(\bar\Q_p)\]
is given by cocharacters
\[\nu_\psi: \lambda\mapsto {\rm diag}((\lambda^{i_1(\psi)})^{(m_1(\psi))},\dots,(\lambda^{i_r(\psi)})^{(m_r(\psi))})\]
for some integers $i_j(\psi)\in\Z$ and multiplicities $m_j(\psi)>0$. Then any point $\Fcal^\bullet\in\Gr_{K,\nu}(\bar\Q_p)$ is a filtration $\prod_\psi\Fcal_\psi^\bullet$ of $\prod_\psi\bar\Q_p^d$ such that
\[\dim_{\bar\Q_p}\gr_i(\Fcal_\psi^\bullet)=\begin{cases}
0& \text{if}\ i\notin\{i_1(\psi),\dots,i_r(\psi))\}\\
m_j(\psi) & \text{if}\ i=i_j(\psi). 
\end{cases}\] 
We denote by $\Gr_{K,\nu}^{\rig}$ resp. $\Gr_{K,\nu}^{\ad}$ the associated rigid space, resp. the associated adic space (cf. \cite[9.3.4]{BoschGR}).

Given $\nu$ as in $(\ref{cocharnu})$ and denoting as before by $E$ the reflex field of $\nu$, we consider the following fpqc-stack $\Dfrak_\nu$ on the category ${\Rig}_E$ (resp. on the category ${\Ad}_E^{\lft}$).
For $X\in{\Rig}_E$ (resp. ${\Ad}_E^{\lft}$) the groupoid $\Dfrak_\nu(X)$ consists of triples $(D,\Phi,\Fcal^\bullet)$, where $D$ is a coherent $\Ocal_X\otimes_{\Q_p}K_0$-modules
which is locally on $X$ free over $\Ocal_X\otimes_{\Q_p}K_0$ and $\Phi:D\rightarrow D$ is an $\id\otimes\phi$-linear automorphism. Finally $\Fcal^\bullet$ is a filtration of $D_K=D\otimes_{\Q_p}K$ of type $\nu$, i.e. after choosing fpqc-locally on $X$ a basis of $D$, the filtration $\Fcal^\bullet$ induces a map to $\Gr_{K,\nu}^{\rig}$ (resp. $\Gr_{K,\nu}^{\ad}$), compare also \cite[5.a]{phimod}.\\ 

One easily sees that the stack $\Dfrak_\nu$ is the stack quotient of the rigid space
\begin{equation*}
X_\nu=(\Res_{K_0/\Q_p}\GL_d)_E^{\rig}\times\Gr_{K,\nu}^{\rig}
\end{equation*}
by the $\phi$-conjugation action of $(\Res_{K_0/\Q_p}\GL_d)_E^{\rig}$ given by 
\begin{equation*}
(A,\Fcal^\bullet).g=(g^{-1}A\phi(g),g^{-1}\Fcal^\bullet).
\end{equation*}
Here the canonical map
$X_\nu\rightarrow \Dfrak_\nu$
is given by 
\[(A,\Fcal^\bullet)\mapsto (\Ocal_{X_\nu}\otimes_{\Q_p}K_0^d, A(\id\otimes\phi),\Fcal^\bullet).\]
Hence $\Dfrak_\nu$ is an Artin stack in the sense of section $2$. Of course the corresponding statement in the category ${\Ad}^{\lft}_E$ is also true. 
\subsection{The weakly admissible locus}
Fix a cocharacter $\nu$ with reflex field $E$ as in the previous section.
If $X\in{\Ad}_E^{\lft}$ and $x\in X$, then our definitions imply that, given $(D,\Phi,\Fcal^\bullet)\in\Dfrak_\nu(X)$, we have 
\begin{align*}
(D\otimes k(x),\Phi\otimes\id,\Fcal^\bullet\otimes k(x)) & \in\Fil\Isoc(k)_{k(x)}^K\ \text{and}\\
\iota_x^\ast(D,\Phi,\Fcal^\bullet) &\in \Fil\Isoc(k)_{\widehat{k(x)}}^K.
\end{align*}
Our first main result is concerned with the structure of the weakly admissible locus in the stacks $\Dfrak_\nu$ defined above.
\begin{theo}\label{waopen}
Let $\nu$ be a cocharacter as in $(\ref{cocharnu})$ and $X$ be an adic space locally of finite type over the reflex field of $\nu$.
If $(D,\Phi,\Fcal^\bullet)\in\Dfrak_\nu(X)$, then the weakly admissible locus
\begin{align*}
X^{\rm wa}&=\{x\in X\mid (D\otimes k(x),\Phi\otimes\id,\Fcal^\bullet\otimes k(x))\ \text{is weakly admissible}\}\\
 &= \{x\in X\mid \iota_x^\ast(D,\Phi,\Fcal^\bullet)\ \text{is weakly admissible}\}
\end{align*}
is an open subset. Especially it has a canonical structure of an adic space.
\end{theo}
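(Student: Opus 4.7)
The plan is to show that $X\setminus X^{\rm wa}$ is closed by exhibiting it as a finite union of closed subsets, each arising as the image of a proper adic morphism into $X$. The Harder--Narasimhan formalism from Section~3, in particular Proposition~\ref{waextstab}, ensures that at any adic point weak admissibility can be tested on $\Phi$-stable sub-objects with integer numerical invariants, even when the residue field $k(x)$ carries a higher-rank valuation. Since the total Newton degree is locally constant on $X$ and the total Hodge degree is determined by $\nu$, one may assume after passing to connected components that the equality $\mu(D)=1$ holds identically; then weak admissibility reduces to verifying the inequality $\mu(D')\geq 1$ for all $\Phi$-stable sub-modules $D'\subset D$.

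First I would work locally on $X$ and trivialise $D\cong \Ocal_X\otimes_{\Q_p}K_0^d$, with $\Phi$ given by a matrix $A\in\GL_d(\Ocal_X\otimes_{\Q_p}K_0)$ and the filtration given by a map $X\to\Gr_{K,\nu}^{\ad}$. For each integer $0<d'<d$ I would form the relative adic Grassmannian $\Gr_{d'}(D)\to X$ of sub-$\Ocal_X\otimes_{\Q_p}K_0$-modules of rank $d'$, and inside it the closed adic subspace $Y_{d'}\subset\Gr_{d'}(D)$ consisting of $\Phi$-stable sub-modules, cut out by the vanishing of $\Phi$ composed with the projection onto the tautological quotient.

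Next I would stratify $Y_{d'}$ by the Newton degree $n=v_p(\det\Phi|_{D'})$ and the Hodge degree $h$ coming from the induced filtration on $D'_K$. Both $n$ and $h$ take values in finite sets (bounded by $d$ and by $\nu$ respectively), and the loci where a given pair $(n,h)$ is realised are locally closed in $Y_{d'}$. The union, over all $d'$ and all pairs with $n<h$ (i.e.\ $\mu(D')<1$ in the multiplicative convention of Section~3), parameterises all sub-objects that can witness non-admissibility; by Proposition~\ref{waextstab} applied at each point, its image in $X$ is precisely $X\setminus X^{\rm wa}$.

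The principal obstacle is to argue that this image is closed. My plan is to combine the properness of the relative adic Grassmannian $\Gr_{d'}(D)\to X$ with Huber's theorem that the image of a proper morphism of adic spaces is closed. This is exactly the point where the adic framework becomes indispensable: the higher-rank valuations present in the full adic spectrum are needed both to detect violating sub-objects at non-classical points and to make the ``image of a proper map is closed'' conclusion geometrically valid. The failure of this statement in the Berkovich setting is precisely what is responsible for Example~\ref{noBerko} and the phenomena discussed by Kedlaya--Liu in the introduction, and it explains why the theorem is formulated for adic spaces rather than for Berkovich spaces.
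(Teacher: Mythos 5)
Your overall strategy coincides with the paper's: reduce to the universal case $X_\nu$, form for each rank $0<i<d$ the incidence space $Z_i$ of $\Phi$-stable sub-$\Ocal_X\otimes_{\Q_p}K_0$-modules inside a relative Grassmannian (proper over the base), and exhibit the complement of $X^{\rm wa}$ as a finite union of images of closed subsets under these proper projections. However, two steps in your proposal are genuinely wrong. First, the Newton degree is \emph{not} locally constant and does \emph{not} take values in a finite set: at a point $x$ of an adic space the quantity $v_x(\dete\Phi|_{D'})$ lives in $\Gamma_x\otimes\Q$, which may have higher rank, and even along rigid points it varies continuously (take $\Phi=T$ on the annulus $\{|p|\leq|T|\leq 1\}$). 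So there is no stratification of $Y_{d'}$ by the value of $n$, and the reduction to $\mu(D)=1$ ``after passing to connected components'' fails; what is true, and what the paper uses, is that the locus $X_0=\{\mu(D)=1\}$ is \emph{open} because it is cut out by a condition of the form $v_x(f)=v_x(p)^c$ with $f$ a unit, and $X^{\rm wa}\subset X_0$, so it suffices to prove openness inside $X_0$.

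Second, even if the strata existed, the image of a \emph{locally closed} subset under a proper map need not be closed, so your final step does not deliver closedness of $X\setminus X^{\rm wa}$. The paper's argument avoids both problems at once: by the semicontinuity theorem applied to the graded pieces of the intersection filtration $\Gcal^\bullet=(\Ecal\otimes_{K_0}K)\cap\Fcal^\bullet$, the Hodge degree $h_i$ of the tautological $\Phi$-stable subbundle is upper semicontinuous, so $Y_{i,m}=\{h_i\geq m\}$ is \emph{closed}; on $Y_{i,m}^{\ad}$ the violation of weak admissibility is the single \emph{closed} condition $v_y(f_i)<v_y(p)^{-f^2m}$ on the unit $f_i=\dete((g(\id\otimes\phi))^f|_{U})$ (its complement is a rational-type open), and the complement of $X_\nu^{\rm wa}$ in $X_0$ is the finite union over $i,m$ of the images of these closed sets under the proper, hence universally closed, maps $\pr_{i,m}$. (Note that this union coincides with the set you describe, since $\{h=m,\,n<m\}$ and $\{h\geq m,\,n<m\}$ have the same union over $m$; the point is to write it as a union of closed sets.) Your closing remark about Berkovich spaces is also off target: proper morphisms of Berkovich spaces have closed images as well; the reason $X^{\rm wa}$ is not a Berkovich space is the failure of tautness exhibited in Example $\ref{noBerko}$, not a failure of properness arguments.
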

We can define a substack $\Dfrak_\nu^{\rm wa}\subset \Dfrak_\nu$ consisting of the weakly admissible filtered isocrystals. More precisely, for an adic space $X$ the groupoid $\Dfrak_\nu^{\rm wa}(X)$ consists of those triples $(D,\Phi,\Fcal^\bullet)$ such that $(D\otimes k(x),\Phi\otimes\id,\Fcal^\bullet\otimes k(x))$ is weakly admissible for all $x\in X$. Thanks to Proposition $\ref{waextstab}$ it is clear that this is again an fpqc-stack. The following result is now an obvious consequence of Theorem $\ref{waopen}$.
\begin{cor}
The stack $\Dfrak_\nu^{\rm wa}$ on the category of adic spaces locally of finite type over the reflex field of $\nu$ is an open substack of $\Dfrak_\nu$.
Especially it is again an Artin stack.
\end{cor}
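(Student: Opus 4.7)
The corollary is essentially a formal consequence of Theorem \ref{waopen} once the definitions are unpacked, so my plan is to verify three things in sequence: that $\Dfrak_\nu^{\rm wa}$ really is an fpqc substack of $\Dfrak_\nu$, that the inclusion is open in the stacky sense, and that a smooth atlas descends from $\Dfrak_\nu$. For the substack axiom, the point to check is that pointwise weak admissibility descends along fpqc covers; since passing to an fpqc cover corresponds fiberwise to an extension of (complete) residue fields, Proposition \ref{waextstab} (which asserts that semi-stability of a given slope is preserved under extension and, for finite extensions, restriction of scalars) provides exactly what is needed, and the equality $\mu(D)=1$ is a property of the underlying data that is trivially preserved.

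For openness of the inclusion $\Dfrak_\nu^{\rm wa}\hookrightarrow \Dfrak_\nu$, I would unwind the definition: the substack is open if and only if, for every morphism $X\to \Dfrak_\nu$ from an adic space locally of finite type, classified by a triple $(D,\Phi,\Fcal^\bullet)\in\Dfrak_\nu(X)$, the fiber product $X\times_{\Dfrak_\nu}\Dfrak_\nu^{\rm wa}$ is represented by an open subspace of $X$. Since this fiber product is tautologically the weakly admissible locus $X^{\rm wa}$, openness is exactly the content of Theorem \ref{waopen}.

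For the Artin property, I pull back the smooth surjective presentation $X_\nu\to\Dfrak_\nu$ constructed earlier in this section along the open immersion, obtaining $X_\nu^{\rm wa}:=X_\nu\times_{\Dfrak_\nu}\Dfrak_\nu^{\rm wa}$; by the previous step this is an open subspace of $X_\nu$, hence itself an adic space locally of finite type over $E$, and the induced map $X_\nu^{\rm wa}\to\Dfrak_\nu^{\rm wa}$ remains smooth and surjective since smoothness and surjectivity are preserved under base change. The diagonal of $\Dfrak_\nu^{\rm wa}$ agrees with the base change of the diagonal of $\Dfrak_\nu$ along $\Dfrak_\nu^{\rm wa}\times\Dfrak_\nu^{\rm wa}\to\Dfrak_\nu\times\Dfrak_\nu$ (since the inclusion is a monomorphism) and so inherits representability, quasi-compactness and separatedness. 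The only nonformal ingredient in the entire argument is Theorem \ref{waopen}; the remaining work is purely definitional, with Proposition \ref{waextstab} as the small but essential input for descent in the first step.
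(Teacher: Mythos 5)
Your proposal is correct and follows the paper's argument exactly: the paper also deduces the fpqc-substack property from Proposition~$\ref{waextstab}$ and the openness from Theorem~$\ref{waopen}$, leaving the Artin property as the formal consequence you spell out. Your write-up merely makes explicit the routine verifications (pulling back the atlas $X_\nu$, base-changing the diagonal) that the paper leaves implicit.
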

\begin{proof}[Proof of Theorem $\ref{waopen}$]
First it is clear that 
\begin{align*}
&\{x\in X\mid (D\otimes k(x),\Phi\otimes\id,\Fcal^\bullet\otimes k(x))\ \text{is weakly admissible}\}\\
 = &\{x\in X\mid \iota_x^\ast(D,\Phi,\Fcal^\bullet)\ \text{is weakly admissible}\},
\end{align*}
as weak admissibility is stable under base change.
We only need to treat the case 
\[X=X_\nu=(\Res_{K_0/\Q_p}\GL_d)_E^{\ad}\times\Gr_{K,\nu}^{\ad}\]
which is locally (after choosing a basis of $D$) the universal case, again using the fact that weak admissibility is stable under base change.\\
Let $V=\Q_p^d$ and $V_0=K_0\otimes_{\Q_p}V$. Then $\Res_{K_0/\Q_p}\GL_d(R)=\GL(R\otimes_{\Q_p}V_0)$ for a $\Q_p$-algebra $R$.

For $i\in\{0,\dots,d\}$, consider the following functor on the category of $\Q_p$-schemes,
\begin{equation*}
 S\longmapsto
\left\{
{\begin{array}{*{20}c}
 \Ecal\subset \Ocal_S\otimes_{\Q_p}V_0\ \text{locally free}\ \Ocal_S\otimes_{\Q_p}K_0\text{-submodule} \\
\text{of rank}\ i\ \text{that is locally on}\ S\ \text{a direct summand}
\end{array}}
\right\}.
\end{equation*}
This functor is representable by a projective $\Q_p$-scheme $\Gr_{K_0,i}$.\\
We let $G=\Res_{K_0/\Q_p}\GL_d$ act on $\Gr_{K_0,i}$ in the following way: for a $\Q_p$-scheme $S$, let $A\in G(S)$ and $\Ecal\in\Gr_{K_0,i}(S)$. We get a linear endomorphism $A$ of $\Ocal_S\otimes_{\Q_p}V_0$ and define the action of $A$ on $\Ecal$ by
\[
 A\cdot\Ecal=A((\id\otimes\phi)(\Ecal)).
\]
Write
\[
a:G\times\Gr_{K_0,i}\longrightarrow \Gr_{K_0,i}
\]
for this action and consider the subscheme $Z_i\subset G\times\Gr_{K_0,i}$ defined by the following fiber product:
\begin{equation*}
\begin{aligned}
 \begin{xy}
  \xymatrix{
Z_i\ar[d]\ar[rr] && G\times\Gr_{K_0,i}\ar[d]^{a\times\id} \\
\Gr_{K_0,i}\ar[rr]^{\hspace{-5mm}\Delta} && \Gr_{K_0,i}\times\Gr_{K_0,i}.
}
 \end{xy}
\end{aligned}
\end{equation*}
An $S$-valued point $x$ of the scheme $Z_i$ is a pair $(g_x,U_x)$, where $g_x\in G(S)$ is a linear automorphism of $\Ocal_S\otimes_{\Q_p}V_0$ and $U_x$ is an $\Ocal_S\otimes_{\Q_p}K_0$-submodule of rank $i$ stable under $\Phi_x=g_x(\id\otimes\phi)$. The scheme $Z_i$ is projective over $G$ via the first projection
\[\pr_i:Z_i\longrightarrow G.\]
Further we denote by $f_i\in\Gamma(Z_i,\Ocal_{Z_i})$ the global section defined by 
\begin{equation*}
 f_i(g_x,U_x)=\dete ((g_x(\id\otimes\phi))^f|_{U_x})
\end{equation*}
(recall $f=[K_0:\Q_p]$).
We also write $f_i$ for the global section on the associated adic space $Z_i^{\ad}$.\\
We write $\Ecal$ for the pullback of the universal bundle on $Z_i$ to $Z_i\times \Gr_{K,\nu}$ and $\Fcal^\bullet$ for the pullback of the universal filtration on $\Gr_{K,\nu}$. Then the fiber product 
\[\Gcal^\bullet=(\Ecal\otimes_{K_0}K)\cap \Fcal^\bullet\]
is a filtration of $\Ecal\otimes_{K_0}K$ by coherent sheaves. By the semi-continuity theorem the function
\[h_i: x\longmapsto \sum_{i\in\Z}i\ \tfrac{1}{ef}\dim_{\kappa(x)}\gr_i\ \Gcal^\bullet\]
is upper semi-continuous on $Z_i\times \Gr_{K,\nu}$ and hence so is 
\[h_i^{\ad}: x\longmapsto \sum_{i\in\Z}i\ \tfrac{1}{ef}\dim_{k(x)}\gr_i\ (\Gcal^\bullet)^{\ad}.\]
For $m\in\Z$ we write $Y_{i,m}\subset Z_i\times \Gr_{K,\nu}$ (resp. $Y_{i,m}^{\ad}\subset Z_i^{\ad}\times\Gr_{K,\nu}^{\ad}$) for the closed subscheme (resp. the closed adic subspace) 
\begin{equation*}
\begin{aligned}
Y_{i,m}&= \{y\in Z_i\times \Gr_{K,\nu}\mid h_i(y)\geq m\},\\
Y_{i,m}^{\ad} &= \{y\in Z_i^{\ad}\times\Gr_{K,\nu}^{\ad}\mid h_i^{\ad}(y)\geq m\}.
\end{aligned}
\end{equation*} 
Then the definitions imply that
\[\pr_{i,m}:Y_{i,m}^{\ad}\longrightarrow X_\nu\]
is a proper morphism of adic spaces.\\
We write $X_0\subset X_\nu$ for the open subset of all $x=(g,\Fcal^\bullet)\in X_\nu$ such that
\[v_x(\dete(g(\id\otimes\phi)^f))\otimes \tfrac{1}{f^2}=v_x(p)^{-\sum_i\tfrac{1}{ef}i\dim\gr_i\Fcal^\bullet}.\]
Then $X_\nu^{\rm wa}\subset X_0$ and 
\begin{equation}\label{waadic}
\begin{aligned}
 &X_0\backslash X_\nu^{\rm wa}\\ 
 =&\bigcup_{i,m} {\rm pr}^{\rm ad}_{i,m}(\{y=(g_y,U_y,\Fcal_y^\bullet)\in Y^{\rm ad}_{i,m}\mid v_y(f_i)<v_y(p)^{-f^2\,m}\}),
\end{aligned}
\end{equation}
where the union on the right hand side runs over all $i\in\{1,\dots,d-1\}$ and $m\in \Z$, and one easily checks that this union is finite.\\
Indeed, let $x=(g_x,\Fcal^\bullet_x)\in G^{\rm ad}\times\Gr_{K,\nu}^{\rm ad}$. Then the object 
\[(k(x)\otimes V_0,g_x(\id\otimes\phi),\Fcal_x^\bullet)\]
is not weakly admissible if and only if there exists a $g_x(\id\otimes\phi)$-stable subspace $U_x\subset k(x)\otimes V_0$ of some rank, violating the weak admissibility condition.
This is exactly the condition on the right hand side of $(\ref{waadic})$. Here we implicitly use that fact that weak admissibility is stable under extension of scalars (see Proposition $\ref{waextstab}$).\\ 
Now the sets $\{y=(g_y,V_y,\Fcal_y^\bullet)\in Y^{\rm ad}_{i,m}\mid v_y(f_i)<v_y(p)^{-f^2\,m}\}$ are closed by the definition of the topology on an adic space.
Further the map ${\rm pr}_{i,m}$ is a proper map of adic spaces and hence universally closed. We conclude that the complement of the weakly admissible locus is closed.
\end{proof}
\begin{prop}\label{maxopen}
Let $X$ and $(D,\Phi,\Fcal^\bullet)$ as in Theorem $\ref{waopen}$. Then $X^{\rm wa}$ is the maximal open subset whose rigid analytic points are exactly the weakly admissible ones.
\end{prop}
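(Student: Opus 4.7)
My plan is as follows. The statement has two parts: first, $X^{\rm wa}$ itself satisfies the property that its rigid analytic points are exactly the weakly admissible ones; second, any open subset of $X$ with this property is contained in $X^{\rm wa}$. The first part is essentially built into the definition of $X^{\rm wa}$, while the second part requires a topological input from the theory of adic spaces locally of finite type.

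First I would verify that $X^{\rm wa}$ has the stated property. By Theorem~\ref{waopen} it is open in $X$. Since the condition defining $X^{\rm wa}$ is pointwise --- a point $x$ belongs to $X^{\rm wa}$ if and only if $\iota_x^\ast(D,\Phi,\Fcal^\bullet)$ is weakly admissible --- it is immediate, by intersecting with the subset of classical points of $X$, that the classical points of $X^{\rm wa}$ are precisely the weakly admissible classical points of $X$.

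For maximality, I would take any other open subset $U \subseteq X$ whose classical points coincide with the weakly admissible classical points. Then $U$ and $X^{\rm wa}$ are two open subsets of $X$ sharing the same set of classical points. To conclude $U\subseteq X^{\rm wa}$, it suffices to show that $U = X^{\rm wa}$. I would invoke the following fact: two open subsets of an adic space locally of finite type over $E$ are equal once they contain the same classical points. Working locally on $X$, this can be extracted from the equivalence of categories between quasi-separated rigid analytic spaces and quasi-separated adic spaces locally of finite type over $E$ \cite[1.1.11]{Huber}: under this equivalence, open subsets of $X$ correspond bijectively to admissible open subvarieties of the associated rigid analytic space $X^{\rm rig}$, and an admissible open subvariety of a rigid analytic space is determined by its underlying set of classical points. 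Applied to $U$ and $X^{\rm wa}$, this forces $U = X^{\rm wa}$.

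The only step requiring real care --- and the main obstacle, though it is more of a sanity check than a difficulty --- is the second one: namely the clean transfer through Huber's equivalence, so that the question about open subsets of an adic space reduces to the essentially tautological fact about admissible opens of a rigid analytic space. Once this topological input is in place, the argument is purely formal and uses no further input from $p$-adic Hodge theory beyond the pointwise definition of $X^{\rm wa}$.
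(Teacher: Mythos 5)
The first half of your argument (that the classical points of $X^{\rm wa}$ are exactly the weakly admissible classical points) is fine, but the topological input you invoke for maximality is false, and this is a genuine gap rather than a missing reference. It is not true that two open subsets of an adic space locally of finite type over $E$ coincide as soon as they contain the same classical points; equivalently, Huber's equivalence does \emph{not} set up a bijection between open subsets of $X$ and admissible open subvarieties of $X^{\rm rig}$. The adic space has strictly more open subsets than the rigid space has admissible opens, and this surplus is exactly the reason the paper works with adic spaces in the first place. A counterexample occurs in the paper itself (Example $\ref{unramchars}$): for $X=\partial\boldB$ with Gauss point $\eta$, the open subsets $X$ and $X\setminus\overline{\{\eta\}}\cong\coprod\Ubb$ are distinct, yet they contain exactly the same classical points, since $\overline{\{\eta\}}$ contains no classical point at all. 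So from the hypothesis that $U$ and $X^{\rm wa}$ have the same classical points you cannot conclude $U=X^{\rm wa}$; the inclusion $U\subseteq X^{\rm wa}$ is precisely the nontrivial content of the proposition, namely that every (possibly higher-rank) point $y$ of such a $U$ is itself weakly admissible.

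The paper proves this by a maximum modulus argument, not by a formal topological one. For $y\in U$, the condition $\mu=1$ and the inequalities $\mu(D')\geq 1$ for $\Phi$-stable subobjects are conditions of the form $v_y(f)\leq v_y(p)^{m}$ for suitable sections $f$. One reduces to the universal case $X_\nu$, spreads out a putative destabilizing subobject at $y$ to a family over an affinoid neighbourhood of a point above $y$ in $\pr_i(Z_i)^{\ad}\times\Gr_{K,\nu}^{\ad}$, observes that the relevant inequalities hold at all rigid analytic points of that neighbourhood by hypothesis, and concludes that they hold at $y$ because $v_y(f)\leq |f|$, the supremum over classical points (every point being a secondary specialization of a rank one valuation, \cite[Lemma 1.1.10]{Huber}). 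This analytic step is what your proof is missing, and there is no purely formal substitute for it.
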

\begin{proof}
First there is a maximal open subset $Y$ whose rigid analytic points are exactly the weakly admissible ones (i.e. the union over all open subsets with this property is non-empty), as $X^{\rm wa}$ is open.\\
Let $y\in Y$, then $(D\otimes k(y),\Phi\otimes\id,\Fcal^\bullet\otimes k(y))$ is of slope $1$, as there is an affinoid neighbourhood of $y$ such that the slope at all rigid analytic points is $1$ and hence by the maximum modulus principle the same is true at $y$.\\
As weak admissibility is preserved under base change we are again reduced to the case $X=X_\nu$. If $y$ is not weakly admissible then there exists a $\Phi_y$-stable subspace $U\subset k(y)^d\otimes_{\Q_p}K_0$ violating the weak admissibility condition. Then $y\in \pr_i(Z_i)^{\ad}\times\Gr_{K,\nu}$ for some $i$, where $\pr_i(Z_i)\subset G$ is a closed subscheme and hence $\pr_i(Z_i)^{\ad}\times\Gr_{K,\nu}$ is in ${\Ad}_E^{\lft}$. Now the $\Phi$-stable subobject $U$ is induced from a family $\mathcal{U}$ of $\Phi$-stable subobjects in an affinoid neighbourhood $\Spa(A,A^\circ)$ of $y$ inside $\pr_i(Z_i)^{\ad}\times\Gr_{K,\nu}$. 
But for all rigid analytic points of $\Spa(A,A^\circ)$ we must have
\[\mu(\mathcal{U}\otimes k(x),\Phi_x|_{\mathcal{U}\otimes k(x)},(\Fcal^\bullet\cap \mathcal{U}_K)\otimes k(x))\geq 1\]
and hence by the maximum principle the same must be true for $y$.
\end{proof}
\begin{expl}\label{noBerko}
We give an example which shows that the weakly admissible locus in a family of filtered $\phi$-modules is not in general an analytic space in the sense of Berkovich (cf. \cite{Berko}). \\
Let $K=K_0=\Q_p$ and denote by $\boldB$ the closed unit disc over $\Q_p$. Let $X=(\boldB\backslash \{0,x_0\})\times\boldB$, where $x_0$ is the point defined by $(T_1)^2-p=0$ , and where we denote by $T_1$ resp. $T_2$ the coordinate functions on the first (resp. the second) factor. Consider the family $(D,\Phi,\Fcal^\bullet)$, where $D=\Ocal_Xe_1\oplus\Ocal_Xe_2$ and $\Phi$ is the linear automorphism of $D$ given by the matrix
\[\begin{pmatrix} T_1 & 0 \\ 0 & pT_1^{-1}\end{pmatrix}.\]
The filtration $\Fcal^\bullet$ is given by
\[\Fcal^i=\begin{cases}D & \text{if}\ i\leq 0 \\ \Ocal_X(e_1+T_2e_2) & \text{if}\ i=1 \\ 0 & \text{if}\ i\geq 2.\end{cases}\]
Then one easily sees that the weakly admissible locus is given by
\[
\{x\in X\mid v_x(T_1)\geq v_x(p)\} \backslash  \{x\in X\mid T_2(x)=0\ \text{and}\ v_x(T_1)>v_x(p)\}.
\]
This is clearly an open adic subspace. \\
By \cite[8.3.1, 5.6.8]{Huber} we have a fully faithful embedding of the category of strict analytic spaces over $\Q_p$ in the category of adic spaces such that the essential image consists of the adic spaces $Y$ that are \emph{taut}, i.e. for every quasi-compact open subset $U\subset Y$ the closure $\overline{U}\subset Y$ is again quasi-compact.
However one easily sees that the closure of the quasi-compact open subset $\{x\in X\mid v_x(T_1)=v_x(p)\}$ inside $X^{\rm wa}$ is not quasi-compact \footnote{I thank R. Huber for pointing this out to me and hence shortening my original argument.}. 
\end{expl}

\section{Vector bundles on the open unit disc}

In this section we compare families of filtered $\phi$-modules with families of vector bundles over the open unit disc. 
For rigid analytic points this relation was established by Kisin in \cite{crysrep}, and is used in \cite{potsemdeform} to define crystalline (and more generally potentially semi-stable) deformation rings. Building on Kisin's construction, Pappas and Rapoport define a morphism from a stack of vector bundles on the open unit disc to the stack $\Dfrak_\nu$ of the previous sections that is a bijection at the level of classical points, see \cite[5]{phimod}. We recall the definition of this morphism and show that it is in fact an isomorphism.

For $0\leq r<1$ we write $\boldB_{[0,r]}$ for the closed unit disc of radius $r$ over $K_0$ in the category of adic spaces and denote by 
\[\Ubb=\lim\limits_{\substack{\longrightarrow \\ r\rightarrow 1}} \boldB_{[0,r]}\]
the open unit disc. This is an open subspace of the closed unit disc (which is \emph{not} identified with the set of all points $x$ in the closed unit disc with $|x|<1$ in the adic setting). 
In the following we will always write $u$ for the coordinate function on $\boldB_{[0,r]}$ and $\Ubb$, i.e. we view 
\[{\bf B}^{[0,r]}:=\Gamma(\boldB_{[0,r]},\Ocal_{\boldB_{[0,r]}})\]
 and ${\bf B}^{[0,1)}:=\Gamma(\Ubb,\Ocal_{\Ubb})$ as subrings of $K_0[[u]]$.

Let $X$ be an adic space over $\Q_p$ and write
\begin{align*}
\Bcal_X^{[0,r]}&=\Ocal_X\widehat{\otimes}_{\Q_p}{\bf B}^{[0,r]}={\rm pr}_{X,\ast}\Ocal_{X\times \boldB_{[0,r]}} \\
\Bcal_X^{[0,1)}&=\Ocal_X\widehat{\otimes}_{\Q_p}{\bf B}^{[0,1)}={\rm pr}_{X,\ast}\Ocal_{X\times \Ubb}
\end{align*}
for the sheafified versions of the rings ${\bf B}^{[0,r]}$ and ${\bf B}^{[0,1)}$. These are sheaves of topological $\Ocal_X$-algebras on $X$.

We consider a family of vector bundles on the open unit disc parametrised by $X$, i.e. a coherent sheaf $\Mcal$ on $X\times\Ubb$ that is fpqc-locally on $X$ free over $\Ocal_{X\times\Ubb}$. We equip this vector bundle with the following extra structure. Let $\phi:\Ubb\rightarrow \Ubb$ denote the morphism induced 
by the endomorphism of $K_0[[u]]$ that is the canonical Frobenius on $K_0$ and that maps $u$ to $u^p$. We again write $\phi$ for the map
\[\id\times\phi:X\times\Ubb\longrightarrow X\times\Ubb.\]
Then we equip $\Mcal$ with an injective homomorphism $\Phi: \phi^\ast\Mcal\rightarrow \Mcal$ such that $E(u)\coker\Phi=0$,
where $E(u)\in W[u]$ is the minimal polynomial of the fixed uniformizer $\pi\in\Ocal_K$ over $K_0$.\\
Equivalently, we can think of $\Mcal$ as a sheaf of $\Bcal_X^{[0,1)}$-modules that is locally on $X$ free over $\Bcal_X^{[0,1)}$ together with an endomorphism $\Phi:\phi^\ast\Mcal\rightarrow \Mcal$, as $X\times \Ubb\rightarrow X$ is Stein. Here we write $\phi$ for the endomorphism of $\Bcal_X^{[0,1)}$ that is induced from the Frobenius on ${\bf B}^{[0,1)}$.

We fix an integer $d>0$ and a dominant cocharacter $\nu$ with reflex field $E$ as in $(\ref{cocharnu})$. We will assume that $\nu$ satisfies the extra condition that the $S$-points of the associated flag variety $\Gr_{K,\nu}$ are filtrations $\Fcal^\bullet$ on $\Ocal_S\otimes K^d$ such that 
\begin{equation}\label{specialcochar}
\Fcal^i=\begin{cases} \Ocal_S\otimes_{\Q_p}K^d&\text{if}\ i\leq 0 \\ 0 &\text{if}\ i\geq 2.\end{cases}
\end{equation}
In particular $\nu$ is miniscule in this case.
Then we consider the following fpqc-stack $\Cfrak_\nu$ on the category of adic spaces locally of finite type over the reflex field of $\nu$:
For an adic space $X\in\Ad_E^{\rm lft}$ the groupoid $\Cfrak_\nu(X)$ consists of pairs $(\Mcal,\Phi)$ as above such that the reduction modulo $E(u)$ of the filtration 
\[E(u)\Phi(\phi^\ast\Mcal)\subset E(u)\Mcal\subset \Phi(\phi^\ast\Mcal)\]
is of type $\nu$ after locally choosing a basis.

For $n\geq 0$ we fix elements $r_n\in(|\pi|^{1/p^n}, |\pi|^{1/p^{n+1}})\cap p^\Q$ such that $r_n^{1/p}=r_{n+1}$ and  write $\boldB_n=\boldB_{r_n}$. Denote by $x_n$ the point in $\boldB_n$ corresponding to the irreducible polynomial $E(u^{p^n})$. We define 
\[\lambda_n=\prod_{i=0}^n \phi^i(E(u)/E(0))\in\Gamma(\boldB_n,\Ocal_{\boldB_n})={\bf B}^{[0,r_n]}.\]
 Then $\lambda_n$ vanishes precisely at the points $x_i\in\boldB_n$, for $i=1,\dots,n$.  Finally we write 
\begin{equation}\label{lambda}
\lambda=\lim\limits_{n\rightarrow\infty} \lambda_n=\prod_{n\geq 0}\phi^n(E(u)/E(0))\in{\bf B}^{[0,1)}.
\end{equation}
Let $X$ be an adic space locally of finite type over $E$ and $\Mcal$ a family of vector bundles over $\Ubb$ parametrised by $X$. We denote by $i:X\otimes_{\Q_p}K_0\rightarrow X\times\Ubb$ the inclusion $x\mapsto (x,0)$ and by $p:X\times\Ubb\rightarrow X\otimes_{\Q_p}K_0$ the projection. Given $\Mcal$, we set $D=i^\ast\Mcal$; then $\Phi:\phi^\ast\Mcal\rightarrow \Mcal$ induces a $\phi$-linear automorphism of $D$ which we again denote by $\Phi$.
\begin{lem}\label{lemxi}
There is a unique $\Phi$-compatible morphism of $\Ocal_X\otimes_{\Q_p}K_0$-modules $\xi:D\rightarrow p_\ast\Mcal$ such that\\
\noindent {\rm (i)} $i^\ast\xi$ is the identity on $D$\\
\noindent {\rm (ii)} The induced morphism $p^\ast\xi:p^\ast D\rightarrow \Mcal$ is injective and has cokernel killed by $\lambda$.\\
\noindent {\rm (iii)} If $r\in (|\pi|,|\pi|^{1/p})$ then the image of the restriction of $p^\ast\xi$ to $X\times\boldB_{[0,r]}$ coincides with the restriction of $\im(\Phi:\phi^\ast\Mcal\rightarrow \Mcal)$ to $X\times\boldB_r$.
\end{lem}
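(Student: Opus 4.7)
My plan is to work fpqc-locally on $X$, where $\Mcal$ admits a basis, and to choose a trivialization $\alpha\colon D\to p_\ast\Mcal$ such that $p^\ast\alpha\colon p^\ast D\to\Mcal$ is an isomorphism of $\Bcal_X^{[0,1)}$-modules lifting $\id_D$ along $i^\ast$; uniqueness of $\xi$ will then allow me to glue. The crucial observation is that $\Phi_D := i^\ast\Phi$ is an automorphism of $D$, because $E(u)|_{u=0}=E(0)$ is a unit in $K_0$. The compatibility condition $\xi\circ\Phi_D=\Phi\circ\phi^\ast\xi$ thus rewrites as the fixed-point equation
\[\xi=\Phi\circ\phi^\ast\xi\circ\Phi_D^{-1}.\]

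Both uniqueness and existence rest on the following principle: if $\beta\colon D\to p_\ast\Mcal$ takes values in $u^k p_\ast\Mcal$, then $\Phi\circ\phi^\ast\beta\circ\Phi_D^{-1}$ takes values in $u^{pk}p_\ast\Mcal$, since $\phi^\ast$ multiplies the order of vanishing at $u=0$ by $p$ while $\Phi$ is $\Bcal_X^{[0,1)}$-linear and $\Phi_D^{-1}$ only acts on $D$. Applied to $\delta=\xi-\xi'$ (which satisfies $i^\ast\delta=0$ and the fixed-point equation) this gives $\delta\in\bigcap_n u^{p^n}p_\ast\Mcal=0$, proving uniqueness. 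For existence I iterate $\alpha_0=\alpha$, $\alpha_{n+1}=\Phi\circ\phi^\ast\alpha_n\circ\Phi_D^{-1}$, and the same principle yields $\alpha_{n+1}-\alpha_n\in u^{p^n}p_\ast\Mcal$. Writing $\Phi$ and $\alpha_n$ as matrices $B$ and $A_n$ in the chosen basis, a sup-norm bound of the form $\|A_{n+1}-A_n\|_{\boldB_{[0,r]}}\le C(r)^n\cdot r^{p^n}$, where $C(r)$ depends on the sup-norms of $B$ and $B(0)^{-1}$ on subdiscs of $\boldB_{[0,r]}$, shows that the double-exponential decay of $r^{p^n}$ dominates, so $(\alpha_n)$ converges uniformly on every closed subdisc $\boldB_{[0,r]}\subset\Ubb$; the limit $\xi$ inherits $i^\ast\xi=\id_D$ and the fixed-point equation by continuity.

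For (ii) and (iii), the induction starting from $p^\ast\alpha_0=\id$ yields $\im(p^\ast\alpha_n)=\im(\Phi^{(n)})$ as submodules of $\Mcal$, where $\Phi^{(n)}=\Phi\circ\phi^\ast\Phi\circ\cdots\circ(\phi^{n-1})^\ast\Phi$. Decomposing $\Phi^{(m)}=\Phi^{(n+1)}\circ(\phi^{n+1})^\ast\Phi^{(m-n-1)}$ for $m>n$, the cokernel of the second factor is killed by $\prod_{k=n+1}^{m-1}\phi^k(E(u))$, whose zeros lie on the circles $|u|=|\pi|^{1/p^k}$, all outside $\boldB_{[0,r_n]}$ for $k\ge n+1$ (since $r_n<|\pi|^{1/p^{n+1}}\le|\pi|^{1/p^k}$). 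Hence this factor is an isomorphism on $X\times\boldB_{[0,r_n]}$, so $\im(\Phi^{(m)})=\im(\Phi^{(n+1)})$ there, and passing to the limit gives $\im(p^\ast\xi)|_{\boldB_{[0,r_n]}}=\im(\Phi^{(n+1)})|_{\boldB_{[0,r_n]}}$. The cokernel of $\Phi^{(n+1)}$ is killed by $\prod_{k=0}^n\phi^k(E(u))$, which equals a unit times $\lambda_n$; and on $\boldB_{[0,r_n]}$ the tail factors $\phi^k(E(u)/E(0))$ of $\lambda$ for $k>n$ are units, so $\lambda$ and $\lambda_n$ differ by a unit here. Letting $n$ vary yields the cokernel bound in (ii), and the case $n=0$ is (iii). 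Injectivity of $p^\ast\xi$ is immediate: $p^\ast\xi|_{u=0}=\id_D$ forces the determinant to be a unit at $u=0$, hence a nowhere-vanishing section of an invertible sheaf.

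The hardest part will be the uniformity of the sup-norm estimates driving the convergence of $(\alpha_n)$: one must propagate bounds on $B$ and $B(0)^{-1}$ through iterated Frobenius twist (which restricts them to progressively smaller subdiscs $\boldB_{[0,r^{p^k}]}$) uniformly in $n$ and in the family over $X$. A subtler conceptual point is the limiting identification $\im(p^\ast\xi)|_{\boldB_{[0,r_n]}}=\im(\Phi^{(n+1)})|_{\boldB_{[0,r_n]}}$, which I handle by the stabilization argument above rather than by a naïve limit of sheaf images.
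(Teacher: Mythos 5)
The paper itself does not prove this lemma: it invokes \cite[Proposition 5.2]{phimod} for affinoid $X$ and then glues over affinoid subdomains using the uniqueness. Your argument is in substance a reconstruction of that cited proof (the relative form of Kisin's successive-approximation construction in \cite{crysrep}): the same fixed-point equation $\xi=\Phi\circ\phi^\ast\xi\circ\Phi_D^{-1}$, the same contraction estimate in which the double-exponential gain $u^{p^n}$ beats the geometric loss $C(r)^n$, and the same localize-then-glue-by-uniqueness strategy. The uniqueness argument and the convergence of the iterates are correct as written; what you buy over the paper is self-containedness.

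Two points in your treatment of (ii) and (iii) need repair. First, the justification of injectivity is wrong as stated: $\det(p^\ast\xi)$ is \emph{not} nowhere-vanishing --- it vanishes at each $x_k$, which is precisely why $\coker(p^\ast\xi)$ is killed by $\lambda$ rather than zero; being a unit along $u=0$ does not propagate to the whole disc. What you need is that $\det(p^\ast\xi)$ is a nonzerodivisor, which follows from its restriction to $X\times\{0\}$ being $1$ after reducing (as the paper does throughout) to the reduced universal case. Second, the stabilization $\im\Phi^{(m)}=\im\Phi^{(n+1)}$ on $X\times\boldB_{[0,r_n]}$ holds for each finite $m>n$, but this does not yet give $\im(p^\ast\xi)=\im\Phi^{(n+1)}$: writing $p^\ast\alpha_m=\Phi^{(n+1)}\circ\gamma_m$ with $\gamma_m$ invertible, a uniform limit of invertible matrices need not be invertible, so the finite-$m$ statement does not transfer to the limit by itself. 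You can close this either by showing $\det\gamma_m$ converges to a unit (the tail $\prod_{k>n}\phi^k(\det\Phi/\det\Phi_D)$ converges to a unit on $\boldB_{[0,r_n]}$ because $\phi^k(g)-g(0)^{\phi^k}=O(u^{p^k})$), or, more cleanly, by using that $\xi$ itself satisfies the fixed-point equation exactly, whence $\im(p^\ast\xi)=\Phi^{(n+1)}\bigl((\phi^{n+1})^\ast\im(p^\ast\xi)\bigr)$; since $r_n^{p^{n+1}}<|\pi|$, this reduces (ii) and (iii) to the surjectivity of $p^\ast\xi$ over $X\times\boldB_{[0,s]}$ for $s<|\pi|$, which one checks in the reduced universal case fiberwise at rigid analytic points, where it is Kisin's original statement over finite extensions of $\Q_p$. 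With these two repairs the proof is complete.
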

\begin{proof}
For affinoid adic spaces this follows from \cite[Proposition 5.2]{phimod}. The general case then follows by glueing on affinoid subdomains using the uniqueness.
\end{proof}
Using this lemma we can define a morphism 
\begin{equation}\label{mapD}
\begin{xy}
\xymatrix{
\Cfrak_\nu\ar[r]^{\underline{D}} & \Dfrak_\nu,
}
\end{xy}\end{equation} 
compare \cite[5.b]{phimod}.
Given $X$ and $(\Mcal,\Phi)\in\Cfrak_\nu(X)$ we define $(D,\Phi)$ as above. Further we have an isomorphism
\[\begin{xy}
\xymatrix{
D\otimes_{K_0}K\ar[r]^{\hspace{-2cm}\cong} & \Phi(\phi^\ast\Mcal)\otimes_{\Bcal_X^{[0,r]}}(\Ocal_X\otimes_{\Q_p}K)
}
\end{xy}
\]
for some $r\in p^{\Q}\cap (|\pi|,|\pi|^{1/p})$. This isomorphism is the reduction of $p^\ast\xi$ modulo $E(u)$. Now we set $\Fcal^1$ to be the pre-image of the subobject on the right hand side induced by $E(u)\Mcal\subset \im\,\Phi$. 

The construction of $\underline{D}(-)$ is obviously compatible with base change, i.e. if $f:X\rightarrow Y$ is a morphism in $\Ad_{\Q_p}^{\rm lft}$ and $(\Mcal,\Phi)\in\Cfrak_\nu(Y)$, then \[f^\ast\underline{D}(\Mcal)=\underline{D}(f^\ast\Mcal)\] and hence this construction defines a morphism of stacks.

We want to define a morphism 
\begin{equation}\label{mapM}
\begin{xy}
\xymatrix{
\Dfrak_\nu\ar[r]^{\underline{\Mcal}} & \Cfrak_\nu
}
\end{xy}\end{equation}
inverse to the map just defined. The idea is to generalise the construction in \cite[1.2]{crysrep} (resp. \cite[5.b.2]{phimod}).\\
Let $X=\Spa (A,A^\circ)$ be an affinoid adic space and $(D,\Phi,\Fcal^\bullet)\in\Dfrak_\nu(X)$. Then $p^\ast D=\Ocal_\Ubb\boxtimes D$ is a vector bundle on $X\times\Ubb$. We write $\widehat{\Ocal}_{\Ubb,x_n}=\widehat{\Ocal}_{\boldB_n,x_n}$ for the completed local ring at $x_n$. Then we have localizing maps
\[\Gamma(X\times\boldB_n,p^\ast D|_{X\times\boldB_n})\longrightarrow \widehat{\Ocal}_{\boldB_n,x_n}\otimes_{K_0}\Gamma(X,D)=\widehat{\Ocal}_{\boldB_n,x_n}\otimes_K \Gamma(X,D_K).\]
Precomposing this map with 
\[\sigma^{-n}\otimes\Phi^{-n}:\Gamma(X\times \boldB_n,p^\ast D)\longrightarrow \Gamma(X\times\boldB_n,p^\ast D),\]
where $\sigma$ is induced by the endomorphism of $K_0[[u]]$ that is the Frobenius on the coefficients, and inverting $\lambda_n$ we obtain a map 
\[i_n: {\bf B}^{[0,r_n]}[\lambda_n^{-1}]\otimes_{K_0} \Gamma(X,D)\longrightarrow \widehat{\Ocal}_{\boldB_n,x_n}[(u-x_n)^{-1}]\otimes_{K}\Gamma(X,D_K).\]
We define a coherent sheaf $\Mcal_n$ on $X\times\boldB_n$ by 
\[\Gamma(X,\Mcal_n)=\bigcap_{j=0}^n i_j^{-1}((u-x_j)^{-1} \widehat{\Ocal}_{\boldB_j,x_j}\otimes_K \Gamma(X,\Fcal^1)+ \widehat{\Ocal}_{\boldB_j,x_j}\otimes_{K}\Gamma(X,D_K)).\]
This is indeed a coherent sheaf as $X\times\boldB_n$ is affinoid with noetherian coordinate ring and $\Mcal_n\subset \lambda_n^{-1}p^\ast D|_{X\times\boldB_n}$.
Note that by construction we have
\[\Mcal_n|_{X\times\boldB_{n-1}}=\Mcal_{n-1}.\]
The morphism 
\[\id\otimes\Phi:\phi^\ast(\lambda^{-1}{\bf B}^{[0,1)}\otimes_{K_0}\Gamma(X,D))\longrightarrow {\bf B}^{[0,1)}[\lambda^{-1}]\otimes_{K_0}\Gamma(X,D)\]
induces morphisms of coherent sheaves 
\[\Phi_n:\phi^\ast\Mcal_{n-1}\longrightarrow \Mcal_n\]
that are injective and have cokernel killed by $E(u)$, i.e. the cokernel has support $X\times\{x_0\}$. This follows from the fact that, after reducing to the universal case $G\times\Gr_{K,\nu}$, we are dealing with coherent sheaves on a reduced space and that the corresponding assertions are true in the fibers over rigid analytic points of $X$ by the constructions in \cite{crysrep}.
Further we clearly have $\Phi_n|_{X\times\boldB_{n-1}}=\Phi_{n-1}$.\\
Using the coherent sheaves $\Mcal_n$ we may define a coherent sheaf $\Mcal$ on $X\times \Ubb$ by 
\[\Mcal|_{X\times\boldB_n}=\Mcal_n.\]
This sheaf is endowed with an injective morphism $\Phi:\phi^\ast\Mcal\rightarrow \Mcal$ with $E(u)\coker\,\Phi=0$ that is locally given by the $\Phi_n$ defined above. 
\begin{lem} The sheaf $\Mcal$ is fpqc-locally on $X$ free over $X\times\Ubb$.
\end{lem}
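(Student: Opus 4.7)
My plan is to reduce to a universal situation via fpqc descent and then to produce $\Mcal_n$ as a controlled modification of the trivial bundle $p^\ast D$ on each $X\times \boldB_n$, whose local freeness can be checked explicitly.

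First, the formation of $\Mcal$ is manifestly compatible with base change on $X$, and the conclusion is fpqc-local, so I may replace $X$ by an fpqc cover on which $D$ becomes free with an ordered basis $e_1,\dots,e_d$ and on which the subobject $\Fcal^1\subset D_K$ is a free direct summand of constant rank $r$. Such a cover exists because, under the condition $(\ref{specialcochar})$, the filtration has a single jump and $\Fcal^1$ is locally a direct summand of $D_K$ whose rank is determined by $\nu$. After this reduction $p^\ast D$ is free on each $X\times \boldB_n$, and the definition of $\Mcal_n$ as an intersection of pre-images immediately gives the sandwich
\[\lambda_n\cdot p^\ast D\big|_{X\times \boldB_n}\ \subset\ \Mcal_n\ \subset\ \lambda_n^{-1}\cdot p^\ast D\big|_{X\times \boldB_n},\]
so $\Mcal_n$ is coherent.

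Second, I would analyze $\Mcal_n$ point by point on the disc. Away from $X\times\{x_0,\dots,x_n\}$ the function $\lambda_n$ is invertible and $\Mcal_n$ agrees with $p^\ast D$, which is free. Near each $x_j$, the complete local description shows that the quotient $\Mcal_n/p^\ast D$ (resp.\ $p^\ast D/\Mcal_n$, after twisting by $\sigma^j\Phi^j$) is concentrated at $x_j$ with stalk a free $\Ocal_X\otimes_{\Q_p}K$-module of constant rank $r$ (resp.\ $d-r$), obtained from the free direct summand $\Fcal^1\subset D_K$ chosen in Step 1. Concretely, if $\sigma^j\Phi^j$ moves $\Fcal^1$ to the span of basis vectors $e_{i_1},\dots,e_{i_r}$, then replacing those basis vectors in $p^\ast D$ by $(u-x_j)^{-1}e_{i_1},\dots,(u-x_j)^{-1}e_{i_r}$ in a neighborhood of $x_j$ yields the stalk of $\Mcal_n$ there. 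Since the points $x_0,\dots,x_n$ are distinct, these local modifications are supported on disjoint open neighborhoods and patch with the identity on the complement to give an explicit free basis of $\Mcal_n$ on all of $X\times \boldB_n$ of rank $d$. This makes $\Mcal_n$ locally free.

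Finally, the compatibility $\Mcal_n|_{X\times \boldB_{n-1}}=\Mcal_{n-1}$ proved in the construction assembles the $\Mcal_n$ into a sheaf $\Mcal$ on $X\times \Ubb$ which is locally free of rank $d$, completing the proof. The main obstacle in this program is the explicit patching in the middle step: one must verify that the local rearrangements of the basis at the different $x_j$ are genuinely compatible with the global definition of $\Mcal_n$ as an intersection, and that no further conditions from the twisted localization maps $i_j$ couple the behavior at different $x_j$'s. Because the intersection is taken index by index and the $\widehat{\Ocal}_{\boldB_j,x_j}$ are independent completions, this reduces to a finite, disjoint set of local computations, each controlled by the direct-summand property of $\Fcal^1$ secured in the first step.
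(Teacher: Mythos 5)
Your argument establishes (essentially correctly, if somewhat optimistically in its claim of an ``explicit free basis'') that each $\Mcal_n$ is free over $X'\times\boldB_n$ after a suitable fpqc base change $X'\to X$; the paper gets the same conclusion more cheaply by noting that $\Mcal_n$ is a coherent sheaf on a reduced space with constant fibre dimension $d$, hence locally free, and then invoking Proposition~$\ref{lokalfreifrei}$ to see that every vector bundle on a product of closed polydiscs with closed annuli is trivial. The genuine gap is your final sentence: the compatibility $\Mcal_n|_{X\times\boldB_{n-1}}=\Mcal_{n-1}$ does \emph{not} formally upgrade ``each $\Mcal_n$ is free on $X'\times\boldB_n$'' to ``$\Mcal$ is free on $X'\times\Ubb$''. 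What is needed is a \emph{compatible} system of trivialisations $\psi_n:\Ocal_{X'\times\boldB_n}^d\to\Mcal_n$ with $\psi_n|_{X'\times\boldB_{n-1}}=\psi_{n-1}$, i.e.\ a point of the inverse limit of the torsors $E_n={\rm Isom}(\Ocal^d,\Mcal_n)$ under $\GL_d(\Gamma(X'\times\boldB_n,\Ocal))$. Each $E_n$ is nonempty and injects into $E_{n-1}$, but an inverse limit of nonempty sets along injections can perfectly well be empty; this is exactly the subtlety behind the triviality of vector bundles on the \emph{open} disc, already in the case $X=\Spa(\Q_p,\Z_p)$. The paper closes this gap by showing that $E_n$ is dense in $E_{n-1}$ (using that $\GL_d(\Gamma(X'\times\boldB_n,\Ocal))$ is dense in $\GL_d(\Gamma(X'\times\boldB_{n-1},\Ocal))$ when $X'$ is a product of polydiscs and annuli) and then running the approximation argument of Gruson to produce a point of $\bigcap_n E_n$. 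Your proof needs this step, or some substitute for it, to reach the stated conclusion.

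A secondary caveat on your middle step: the modification of $p^\ast D$ at $x_j$ is governed by $(\sigma^{-j}\otimes\Phi^{-j})^{-1}$ applied to $\Fcal^1$, a rank-$r$ direct summand that varies over $X$ and is not a coordinate subspace of your fixed basis $e_1,\dots,e_d$; so the bases adapted to the summands at the various $x_j$ differ, and patching them only yields local freeness of $\Mcal_n$, not a single global basis. That is harmless for the intermediate claim, but it reinforces that the global (and then infinite-level) trivialisation really has to be constructed, not read off.
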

\begin{proof}
First we claim that there is a fpqc-covering $X'\rightarrow X$ such that all $\Mcal_n$ are free over $X'\times\boldB_n$.
This follows from the fact that we may replace $X$ locally in the fpqc-topology with open subsets of the universal space $X_\nu$ and hence with (a disjoint sum of) products of closed polydiscs with closed annuli. Now it suffices to see that $\Mcal_n$ is locally free, as all vector bundles on products of closed polydisc with closed annuli are trivial by Proposition $\ref{lokalfreifrei}$ below.
But $\Mcal_n$ is locally free since it is a coherent sheaf on a reduced space and it follows from the definition that $\Mcal_n\otimes k(x)$ has dimension $d$ for all $x\in X\times\boldB_n$.

Now we have to show that there is a compatible system of trivialisations of the $\Mcal_n$, i.e. there exist isomorphisms
\[\begin{xy}
\xymatrix{
\psi_n:\Ocal_{X'\times\boldB_n}^d\ar[r]^{\hspace{5mm}\cong}&\Mcal_n
}
\end{xy}\]
such that $\psi_n|_{X'\times\boldB_{n-1}}=\psi_{n-1}$. We write 
\[E_n={\rm Isom}(\Ocal_{X'\times\boldB_n}^d,\Mcal_n)\]
for the set of trivialisations of $\Mcal_n$ which is non empty by the above.\\ It follows that this space is a principal homogeneous space under the group $\GL_d(\Gamma(X'\times\boldB_n,\Ocal_{X\times\boldB_n}))$. The identity $\Mcal_n|_{X'\times\boldB_{n-1}}=\Mcal_{n-1}$ induces continuous injections $E_n\hookrightarrow E_{n-1}$.
As we assume that $X'$ is a product of closed polydiscs with closed annuli we find that $\GL_d(\Gamma(X'\times\boldB_n,\Ocal_{X\times\boldB_n}))$ is dense in $\GL_d(\Gamma(X'\times\boldB_{n-1},\Ocal_{X\times\boldB_{n-1}}))$  and hence $E_n$ is dense in $E_{n-1}$ under the above injections.
By the same arguments as in the proof of \cite[V. Theorem 2 and Proposition 2]{Gruson} we find that the intersection of the $E_n$ is non empty. 
An element of this intersection gives the compatible set of trivialisations that we were looking for. This shows that $\Mcal$ is free over $X'\times \Ubb$. 
\end{proof}
\begin{prop}\label{lokalfreifrei}
Let $E$ be a finite extension of $\Q_p$ and write $\boldB$ for the closed unit disc over $E$. Let $\partial\boldB=\{x\in\boldB \mid |x|=1\}$ and $X=\{x\in\boldB\mid |p|\leq |x|\leq 1\}$. Then every vector bundle on 
\[X^a\times\boldB^b\times\partial\boldB^c\]
is trivial for any $a,b,c\geq 0$.
\end{prop}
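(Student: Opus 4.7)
The plan is to reduce the question via Kiehl's theorem to showing that every finitely generated projective module over the affinoid algebra of global sections is free, and then to invoke a Quillen--Suslin type theorem in the style of L\"utkebohmert. The space $Y = X^a \times \boldB^b \times \partial\boldB^c$ is affinoid, with global sections
\[
A = E\langle T_1, pT_1^{-1}, \ldots, T_a, pT_a^{-1}, S_1, \ldots, S_b, U_1^{\pm 1}, \ldots, U_c^{\pm 1}\rangle,
\]
so by Kiehl the category of vector bundles on $Y$ is equivalent to the category of finitely generated projective $A$-modules, and it suffices to show every such module is free.

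First I would dispose of the three one-factor cases. The coordinate rings $E\langle T\rangle$, $E\langle T, T^{-1}\rangle$ and $E\langle T, pT^{-1}\rangle$ of $\boldB$, $\partial\boldB$ and $X$ are all principal ideal domains, so every finitely generated projective module over each of them is free.

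For the general case I would proceed by induction on $n = a+b+c$. The induction step amounts to the following Quillen--Suslin type statement: if $B$ is an affinoid $E$-algebra over which every finitely generated projective module is free, then the same is true for each of $B\langle T\rangle$, $B\langle T, T^{-1}\rangle$ and $B\langle T, pT^{-1}\rangle$. For $B\langle T\rangle$ this is L\"utkebohmert's affinoid analog of Quillen--Suslin. For the annulus and circle algebras essentially the same argument applies: density of Laurent polynomials (respectively of rational functions regular away from $0$) in the Tate-type algebra allows one to reduce freeness of a finitely generated projective module to freeness of a projective module over the corresponding (Laurent-)polynomial ring $B[T, T^{-1}]$ or $B[T, pT^{-1}]$, which in turn follows from Quillen's theorem on Laurent polynomial rings (and its analog over $B[T, pT^{-1}]$).

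The main obstacle is setting up the inductive step in the annulus and circle cases. Concretely, one must show that a finitely generated projective module over $B\langle T, T^{-1}\rangle$ or $B\langle T, pT^{-1}\rangle$ is extended from a module over the corresponding (Laurent-)polynomial subring, and that this extension preserves freeness. I expect this to be handled by a density/approximation argument of essentially the same shape as the one used in the preceding lemma: one iteratively approximates a trivialisation by trivialisations on increasingly large polynomial subrings and passes to a limit using completeness of the Tate-type algebra, in the spirit of Gruson's treatment already invoked above.
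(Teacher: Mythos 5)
There is a genuine gap in your inductive step. The statement you need --- that if every finitely generated projective module over an affinoid algebra $B$ is free, then the same holds over $B\langle T\rangle$, $B\langle T,T^{-1}\rangle$ and $B\langle T,pT^{-1}\rangle$ --- is not L\"utkebohmert's theorem. His result (Satz 3 of the cited paper) is the absolute statement that vector bundles on $\boldB^b\times\partial\boldB^c$ over a non-archimedean field are trivial; the relative version over an arbitrary affinoid base $B$ is an analytic analogue of the Bass--Quillen problem and is not available in the generality you invoke. Your sketched reduction to the (Laurent-)polynomial subring has two problems. First, the density/approximation argument presupposes a trivialisation to approximate, i.e.\ it assumes what is to be proved, rather than showing that the projective module is extended from $B[T]$ or $B[T,T^{-1}]$. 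Second, even granting that the module is so extended, Quillen--Suslin and Swan's Laurent-polynomial variant apply over a field or a PID, whereas your base ring $B$ is itself a large affinoid algebra; you would additionally need the Bass--Quillen property for polynomial rings over $B$, which you do not address. The one-variable observations (the three coordinate rings being PIDs) are correct but do not feed an induction of this shape.

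The paper's proof sidesteps all of this. It inducts only on the number $a$ of annulus factors, taking the entire case $a=0$ (all products $\boldB^b\times\partial\boldB^c$) directly from L\"utkebohmert's Satz 3. For the induction step it writes $\boldB=X\cup_{\partial Y}Y$ with $Y\cong\boldB$ and $\partial Y\cong\partial\boldB$; the restriction of a bundle on $X^{a+1}\times\boldB^b\times\partial\boldB^c$ to $\partial Y\times X^a\times\boldB^b\times\partial\boldB^c$ is trivial by the induction hypothesis (one fewer annulus factor, one more circle factor), so one can glue with the trivial bundle on $Y\times X^a\times\boldB^b\times\partial\boldB^c$ to obtain a bundle on $\boldB\times X^a\times\boldB^b\times\partial\boldB^c$, which is again trivial by induction, and then restrict back. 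To salvage your route you would have to prove the relative Quillen--Suslin statements over affinoid bases, which is substantially harder than the proposition itself.
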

\begin{proof}
We proceed by induction on $a$. If $a=0$ then the claim follows (for all $b$ and $c$) from \cite[Satz 3]{Lutke}.

Assume the claim is true for $a$. We write
\[\boldB=X\cup_{\partial Y}Y,\]
where $Y=\{x\in\boldB\mid |x|\leq |p|\}\cong\boldB$ and $\partial Y=\{x\in\boldB\mid |x|=|p|\}\cong \partial\boldB$.
Note that $\boldB=X\cup Y$ is a covering of the closed unit disc by open subsets.\\
Let $\Ecal$ be a vector bundle on $X^{a+1}\times\boldB^b\times\partial\boldB^c$, then the restriction
\[\Ecal|_{\partial Y\times X^a\times\boldB^b\times\partial\boldB^c}\]
is trivial by induction hypothesis. Hence we may glue $\Ecal$ with the trivial bundle on $Y\times X^a\times\boldB^b\times\partial\boldB^c$ and obtain a vector bundle on $\boldB\times X^a\times\boldB^b\times\partial\boldB^c$ which must be trivial by induction hypothesis. It follows that $\Ecal$ must be trivial itself. 
\end{proof}
Again is is clear that the construction of $\underline{\Mcal}(-)$ is compatible with base change.
\begin{theo}
Let $\nu$ be a cocharacter as in $(\ref{cocharnu})$ satisfying $(\ref{specialcochar})$. The morphisms $\Cfrak_\nu\rightarrow \Dfrak_\nu$ and $\Dfrak_\nu\rightarrow \Cfrak_\nu$ defined in $(\ref{mapD})$ and $(\ref{mapM})$ are mutually inverse isomorphisms.
\end{theo}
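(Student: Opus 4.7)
The plan is to exhibit natural isomorphisms $\underline{D}\circ\underline{\Mcal}\cong\id_{\Dfrak_\nu}$ and $\underline{\Mcal}\circ\underline{D}\cong\id_{\Cfrak_\nu}$. Since both functors commute with base change and both $\Dfrak_\nu$ and $\Cfrak_\nu$ are fpqc-stacks, we may work affinoid-locally on $X=\Spa(A,A^\circ)$ and assume that the relevant vector bundle is free.

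For $\underline{D}\circ\underline{\Mcal}\cong\id_{\Dfrak_\nu}$, fix $(D,\Phi,\Fcal^\bullet)\in\Dfrak_\nu(X)$ and set $\Mcal=\underline{\Mcal}(D,\Phi,\Fcal^\bullet)$. Since $p^*D$ trivially satisfies the intersection conditions defining $\Mcal_n$, one obtains inclusions $p^*D|_{X\times\boldB_n}\subset\Mcal_n\subset\lambda_n^{-1}p^*D|_{X\times\boldB_n}$. The function $\lambda_n$ specializes to $1$ at $u=0$, so pulling back to $X\times\{0\}$ identifies $i^*\Mcal$ with $D$ compatibly with $\Phi$. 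The filtration $\Fcal^\bullet$ is recovered since by the construction of $\Mcal_0$ the reduction modulo $E(u)$ of the inclusion $E(u)\Mcal\subset\Phi(\phi^*\Mcal)\subset\Mcal$ reproduces $\Fcal^1\subset D_K$, using the assumption $(\ref{specialcochar})$ on $\nu$.

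For $\underline{\Mcal}\circ\underline{D}\cong\id_{\Cfrak_\nu}$, fix $(\Mcal,\Phi)\in\Cfrak_\nu(X)$, set $(D,\Phi,\Fcal^\bullet)=\underline{D}(\Mcal,\Phi)$, and let $\Mcal'=\underline{\Mcal}(D,\Phi,\Fcal^\bullet)$. Lemma \ref{lemxi} provides a canonical $\Phi$-equivariant injection $p^*\xi:p^*D\hookrightarrow\Mcal$ whose cokernel is killed by $\lambda$, so $\Mcal$ embeds into $\lambda^{-1}p^*D$. A direct check using the $\Phi$-equivariance of $\xi$ and the relation $E(u)\coker\Phi=0$ shows that at each point $x_j$ the localization of $\Mcal$ lies inside the subspace cut out by $\Fcal^1$ in the definition of $\Mcal'_j$, producing a natural $\Phi$-equivariant inclusion $\Mcal\hookrightarrow\Mcal'$ of coherent subsheaves of $\lambda^{-1}p^*D$.

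The main remaining step, and the principal obstacle, is to verify that this inclusion is an equality. Since the construction is compatible with base change, we may pass to a further fpqc-cover and assume $X$ is a reduced affinoid lying inside the universal base $X_\nu$. The quotient $\Mcal'/\Mcal$ is then a coherent sheaf on the reduced adic space $X\times\Ubb$. At each rigid analytic point $x$ of $X$, the fiber of our construction reproduces precisely Kisin's setup in \cite[1.2]{crysrep}, where the equivalence between filtered $\phi$-modules and Frobenius modules on $\Ubb$ whose cokernel is killed by $E(u)$ is established. Hence $(\Mcal'/\Mcal)_x=0$ at all classical points; since rigid analytic points are Zariski-dense in a reduced adic space locally of finite type over $E$, the support of the coherent sheaf $\Mcal'/\Mcal$ is empty and the sheaf vanishes, completing the proof.
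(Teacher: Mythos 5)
Your overall architecture matches the paper's: both composites are analysed by comparing coherent subsheaves of $\lambda^{-1}p^\ast D$ and specializing to rigid analytic points, where Kisin's results from \cite{crysrep} take over. Two points need attention. In the direction $\underline{D}\circ\underline{\Mcal}\cong\id$, your claim that the filtration is recovered ``by the construction of $\Mcal_0$'' is precisely the step the paper does not treat as formal: it reduces to the universal case over the (smooth, hence reduced) space $X_\nu$ and verifies $\Psi(\Fcal^iD_K)\otimes k(x)=(\Fcal^i\underline{D}(\underline{\Mcal}(D))_K)\otimes k(x)$ at rigid points via \cite[Proposition 1.2.8]{crysrep}. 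Your direct identification of $\im\Phi$ modulo $E(u)$ would have to be carried out carefully (the maps $i_n$ involve the twists $\sigma^{-n}\otimes\Phi^{-n}$), but since $(D,\Phi,\Fcal^\bullet)$ genuinely is pulled back from $X_\nu$, the rigid-point argument is available here and this half is repairable.

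The genuine gap is in the second direction. You reduce to ``$X$ a reduced affinoid lying inside the universal base $X_\nu$'', but the datum you start from is $(\Mcal,\Phi)\in\Cfrak_\nu(X)$, and this object is \emph{not} known to be pulled back from anything over $X_\nu$ --- only $\underline{D}(\Mcal)$ is; that $\Mcal$ itself comes from the universal family is essentially what you are trying to prove, and no smooth (reduced) atlas of $\Cfrak_\nu$ is available at this stage. Hence you may not assume $X$ is reduced, and your concluding step --- a coherent sheaf on a reduced space whose fibers vanish at a dense set of rigid points is zero --- has nothing to stand on when $X$ carries nilpotents. The paper's workaround is different: it chooses, locally on $X$, a finite morphism $X\rightarrow Y$ onto a polydisc by Noether normalization, regards $\Mcal$, $\underline{\Mcal}(\underline{D}(\Mcal))$ and $\lambda^{-1}p^\ast D$ as coherent sheaves on the \emph{reduced} space $Y\times\Ubb$, and checks the comparison map on fibers over rigid points of $Y$; those fibers are modules over finite, possibly non-reduced, $\Q_p$-algebras, for which \cite[Proposition 1.2.13]{crysrep} still applies. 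You need this device, or some substitute for handling non-reduced coefficient rings, to close the argument.
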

\begin{proof}
We have to check that the morphisms are inverse to each other. \\
Given an adic space $X$ and an object $(D,\Phi,\Fcal^\bullet)\in\Dfrak_\nu(X)$, there is an obvious $\Phi$-compatible isomorphism
\[\Psi: D\longrightarrow \underline{D}(\underline{\Mcal}(D)).\]
We have to check that $\Psi$ respects the filtrations on both objects after extending scalars to $K$. But as the $\underline{\Mcal}$ and $\underline{D}$ are compatible with base change we may assume that $X$ is reduced, as there is a universal object over $\Res_{K_0/\Q_p}\GL_d\times \Gr_{K,\nu}$. Now for all rigid points $x\in X$ we have
\[\Psi(\Fcal^iD_K)\otimes k(x)= (\Fcal^i\underline{D}(\underline{\Mcal}(D))_K)\otimes k(x)\]
by \cite[Proposition 1.2.8]{crysrep}. As these are coherent sheaves this is enough to conclude that the map respects the filtrations.

Conversely, start with $\Mcal\in\Cfrak_\nu(X)$. Then the map $\xi$ of Lemma $\ref{lemxi}$ induces a map 
\begin{equation}\label{Mcalinclusion}\underline{\Mcal}(\underline{D}(\Mcal))\hookrightarrow \lambda^{-1} p^\ast D \rightarrow \lambda^{-1}\Mcal.
\end{equation}
We need to check that this induces an isomorphism $\underline{\Mcal}(\underline{D}(\Mcal))\rightarrow \Mcal$.\\
By Noether normalization we can locally on $X$ choose a finite morphism $X\rightarrow Y$ to a polydisc. Especially $Y$ is reduced and
we may view $\underline{\Mcal}(\underline{D}(\Mcal))$, $\Mcal$ and $\lambda^{-1}p^\ast D$ as coherent sheaves on $Y\times\Ubb$.
The morphism in $(\ref{Mcalinclusion})$ induces an isomorphism
\[\underline{\Mcal}(\underline{D}(\Mcal))\otimes k(y)\longrightarrow \Mcal\otimes k(y)\]
in the fibers over rigid analytic points of $Y$, as the claim is true for $\Q_p$, see \cite[Proposition 1.2.13]{crysrep}, and hence it is true for all finite $\Q_p$-algebras. Again this is enough to conclude that the map is an isomorphism
\end{proof}

\section{$\phi$-modules over the Robba ring}

In the previous section we have defined an isomorphism between stacks parametrising families of vector bundles over the open unit disc and certain filtered $\phi$-modules
generalising the construction in \cite{crysrep}. In loc.\,cit. Kisin shows that under the above isomorphism the condition of being weakly admissible translates to the condition of being purely of slope zero over the Robba ring (in the sense of \cite{Kedlaya}) for rigid analytic points. 
The idea is that the restriction of a vector bundle with semi-linear action of $\Phi$ on the open unit disc to some small annulus near the boundary still carries a lot of information about the vector bundle itself.\\
In fact, under the equivalence with crystalline representations, the restriction to small annuli near the boundary corresponds to the restriction of the Galois representation to $G_{K_\infty}$. Kisin shows that the restriction to $G_{K_\infty}$ is fully faithful on the category of crystalline representation, see \cite[Theorem 0.2]{crysrep}. For flat torsion representations (and hence for representations arising from $p$-divisible groups) this was shown by Breuil in \cite[Theorem 3.4.3]{Breuil}.\\
The slope theory for modules over a relative Robba ring was studied by Kedlaya and Liu in the rigid analytic setting in \cite{Liu} and \cite{KedlayaLiu},
where they already point out that the corresponding results fail in the category of analytic spaces in the sense of Berkovich (compare \cite[Remark 7.5]{KedlayaLiu}). However, we need to transpose their result to the category of adic spaces (locally of finite type) as this category seems to be the right category to formulate statements such as the local \'etaleness of $\phi$-modules.
\subsection{The relative Robba ring}
For $0\leq r\leq s<1$ we denote by $\boldB_{[r,s]}$ the annulus of inner radius $r$ and outer radius $s$ and by $\Ubb_{\geq r}$ the subspace of $\Ubb$ of inner radius $r$.\\
We write 
\begin{align*}
{\bf B}^{[r,s]}&=\Gamma(\boldB_{[r,s]},\Ocal_{\boldB_{[r,s]}}), & {\bf A}^{[r,s]}&=\Gamma(\boldB_{[r,s]},\Ocal^+_{\boldB_{[r,s]}}),\\
{\bf B}^{[r,1)}&=\Gamma(\Ubb_{\geq r},\Ocal_{\Ubb_{\geq r}}), &  {\bf A}^{[r,1)}&=\Gamma(\Ubb_{\geq r},\Ocal^+_{\Ubb_{\geq r}}),
\end{align*}
for the coordinate rings of these spaces and their integral subrings. 

Let $X$ be an adic space over $\Q_p$. We define the following sheaves of topological rings on $X$ which are relative versions of the rings introduced above:
For $0\leq r\leq s<1$ we define 
\begin{align*}
\Bcal_X^{[r,s]}&=\pr_{X,\ast}\Ocal_{X\times\boldB_{[r,s]}}= \Ocal_X\widehat{\otimes}_{\Q_p}{\bf B}^{[r,s]}, \\
\Acal_X^{[r,s]}&=\pr_{X,\ast}\Ocal^+_{X\times\boldB_{[r,s]}}= \Ocal^+_X\widehat{\otimes}_{\Z_p}{\bf A}^{[r,s]},\\
\Bcal_X^{[r,1)}&=\pr_{X,\ast}\Ocal_{X\times\Ubb_{\geq r}}= \Ocal_X\widehat{\otimes}_{\Q_p}{\bf B}^{[r,1)}, \\
\Acal_X^{[r,1)}&=\pr_{X,\ast}\Ocal^+_{X\times\Ubb_{\geq r}}= \Ocal^+_X\widehat{\otimes}_{\Z_p}{\bf A}^{[r,1)},
\end{align*}
where $\pr_X$ denotes the projection to $X$ and by the completed tensor product we mean the sheafification of the competed tensor product of presheaves of topological rings. 

We now define the relative Robba ring and the ring of integral elements in the Robba ring to be the sheaves of rings 
\begin{align*}
\Bcal_X^R&=\lim\limits_{\substack{\longrightarrow \\ r\rightarrow 1}} \Bcal_X^{[r,1)}\\
\Acal_X^\dagger &=\lim\limits_{\substack{\longrightarrow \\ r\rightarrow 1}} \Acal_X^{[r,1)}
\end{align*}
on $X$. Again these are sheaves of topological rings.
Then for a quasi-compact open subset $U$ of $X$ we have 
\begin{align*}
\Gamma(U,\Bcal_X^R)&=\lim\limits_{\substack{\longrightarrow \\ r\rightarrow 1}} \Gamma(U,\Bcal_X^{[r,1)})\\
\Gamma(U,\Acal_X^\dagger)&=\lim\limits_{\substack{\longrightarrow \\ r\rightarrow 1}} \Gamma(U,\Acal_X^{[r,1)}).
\end{align*}
We denote by 
\[\phi:\boldB_{[r^{1/p},s^{1/p}]}\longrightarrow \boldB_{[r,s]}\]
the morphism induced by the map which is the Frobenius on $K_0$ and which maps $u$ to $u^p$. This induces maps
\[\Ubb_{\geq r^{1/p}}\longrightarrow \Ubb_{\geq r}\]
which we again denote by $\phi$. If $X=\Spa\,(A,A^+)$ is an affinoid space we also write $\phi$ for the induced morphisms in the coordinate rings
\begin{align*}
\phi:A\widehat{\otimes}_{\Q_p}{\bf B}^{[r,s]}&=\Gamma(X\times\boldB_{[r,s]},\Ocal_{X\times\boldB_{[r,s]}})\longrightarrow A\widehat{\otimes}_{\Q_p}{\bf B}^{[r^{1/p},s^{1/p}]}\\
\phi:A\widehat{\otimes}_{\Q_p}{\bf B}^{[r,1)}&=\Gamma(X\times\Ubb_{\geq r},\Ocal_{X\times\Ubb_{\geq r}})\longrightarrow A\widehat{\otimes}_{\Q_p}{\bf B}^{[r^{1/p},1)}.
\end{align*}
In the limit these morphisms also define a morphism $\phi$ on $\Bcal_X^R$ that preserves the integral subring.

If $F$ is a (complete) topological field over $\Q_p$ with continuous valuation $v_F$ and integral subring $F^+$, we simply write $\Bcal_F^{[r,s]}=\Bcal_{\Spa(F,F^+)}^{[r,s]}$ and similarly for the other sheaves of rings defined above.

Now we can define families of $\phi$-modules over the Robba ring parametrized by an adic space $X$. In our context this is the kind of object that appears if we restrict the families of vector bundles on the open unit disc to small annuli near the boundary of the open unit disc.
\begin{defn}\noindent (i) A \emph{family of $\phi$-modules over the Robba ring  parametrised by $X$} is an $\Bcal_X^R$-module $\Ncal$ that is locally on $X$ free of finite rank over $\Bcal_X^R$ together with an isomorphism $\Phi:\phi^\ast\Ncal\rightarrow \Ncal$.\\
\noindent (ii) A family of $\phi$-modules of rank $d$ over $\Bcal_X^R$ is called \emph{\'etale} if there exists a covering $(U_i)_{i\in I}$ of $X$ and free $\Acal_{U_i}^\dagger$-submodules $N_i$ of $\Ncal|_{U_i}$ of rank $d$ such that
\[
 N_i\otimes_{\Acal_{U_i}^\dagger}\Bcal_{U_i}^R=\Ncal|_{U_i}
\]
and $\Phi$ induces an isomorphism $\phi^\ast N_i\rightarrow N_i$.\\
\noindent (iii) A family $\Ncal$ of $\phi$-modules over the Robba ring parametrized by $X$ is called \emph{\'etale at} $x\in X$ if the $\phi$-module 
$\iota_x^\ast\Ncal$ over $\Bcal_{\widehat{k(x)}}^R$ is \'etale.
\end{defn}
\begin{defn}\label{phimodonannulus}
\noindent (i) Let $0<r<r^{1/p}\leq s<1$. A $\phi$\emph{-module over} $\Bcal_X^{[r,s]}$ is an $\Bcal_X^{[r,s]}$-module $\Ncal$ that is locally on $X$ free of finite rank together with an isomorphism
\[\Phi:\phi^\ast\Ncal\otimes_{\Bcal_X^{[r^{1/p},s^{1/p}]}}{\Bcal_X^{[r^{1/p},s]}}\longrightarrow \Ncal\otimes_{\Bcal_X^{[r,s]}}{\Bcal_X^{[r^{1/p},s]}}.\]
\noindent (ii) A $\phi$-module $(\Ncal,\Phi)$ over $\Bcal_X^{[r,s]}$ is called \emph{r-\'etale}, if there exists locally on $X$ a free $\Acal_X^{[r,s]}$-submodule $N$ such that $\Ncal=N\otimes_{\Acal_X^{[r,s]}}\Bcal_X^{[r,s]}$ and 
\[\Phi(\phi^\ast N\otimes_{\Acal_X^{[r^{1/p},s^{1/p}]}}\Acal_X^{[r^{1/p},s]})=N\otimes_{\Acal_X^{[r,s]}}\Acal_X^{[r^{1/p},s]}.\] 
\end{defn}

\begin{lem}\label{rigptRobba}
Let $X\in \Ad_{\Q_p}^{\rm lft}$ and $x\in X$.\\
\noindent {\rm (i)} If $x$ is a rigid analytic point of $X$, i.e. $k(x)$ is a finite extension of $\Q_p$, then $\Bcal_X^R\otimes k(x)$ is identified with the classical Robba ring of $k(x)$ in the sense of \cite[Definition 1.1.1]{Kedlaya}, i.e.
\[\Bcal_{X}^R\otimes k(x)=\lim\limits_{\substack{\longrightarrow \\ r\rightarrow 1}} \Gamma(\Ubb_{\geq r}\otimes_{\Q_p}k(x), \Ocal_{\Ubb_{\geq r}\otimes_{\Q_p}k(x)})=\Bcal_{\widehat{k(x)}}^R.\]
\noindent {\rm (ii)} If $x$ is an arbitrary point, then 
\[\Bcal_X^R\otimes k(x)\subset \Bcal_{\widehat{k(x)}}^R\]
is a dense subring.
\end{lem}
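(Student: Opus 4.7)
The plan is to reduce to an explicit description of the relevant rings as Laurent series, compute the fiber at $x$ by hand, and pass to the colimit $r\to 1^-$ at the end. First I would work locally on $X$ and assume $X=\Spa(A,A^+)$ is affinoid. By definition one has $\Bcal_X^{[r,s]}(X)=A\widehat{\otimes}_{\Q_p}{\bf B}^{[r,s]}$, and I would unfold this into the concrete description as the ring of Laurent series $\sum_{n\in\Z}c_n u^n$ with $c_n\in A\otimes_{\Q_p}K_0$ subject to $\|c_n\|\,s^n\to 0$ as $n\to+\infty$ and $\|c_n\|\,r^n\to 0$ as $n\to-\infty$, for any residue Banach norm $\|\cdot\|$ on $A$.

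Coefficient-wise reduction modulo $\mfrak_{X,x}$ then yields a canonical homomorphism
\[
\Bcal_X^{[r,s]}\otimes k(x)\longrightarrow \Bcal_{\widehat{k(x)}}^{[r,s]},
\]
whose image is the subring of Laurent series with coefficients in $k(x)\otimes_{\Q_p}K_0\subset\widehat{k(x)}\otimes_{\Q_p}K_0$. For (i), $k(x)/\Q_p$ is finite and hence complete, so this subring already equals the whole of $\Bcal_{k(x)}^{[r,s]}=k(x)\widehat{\otimes}_{\Q_p}{\bf B}^{[r,s]}$; to check surjectivity concretely I would lift a $\Q_p$-basis of $k(x)$ to $A$ (possibly after shrinking $X$) and expand each coefficient in that basis. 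Injectivity reduces to identifying the kernel of coefficient-wise reduction with $\mfrak_x\cdot\Bcal_X^{[r,s]}(X)$, which follows from the exactness of $0\to\mfrak_x\to A\to k(x)\to 0$ after completed tensor product with ${\bf B}^{[r,s]}$, valid because $k(x)$ is a strict quotient of the Banach algebra $A$. For (ii), the same image computation produces the subring $k(x)\otimes_{\Q_p}{\bf B}^{[r,s]}$, which is dense in $\Bcal_{\widehat{k(x)}}^{[r,s]}$: given $f=\sum a_n u^n$ in the target and $\varepsilon>0$, I would first truncate to a Laurent polynomial $f_N$ with $\|f-f_N\|<\varepsilon/2$ and then replace each coefficient $a_n$ (for $|n|\le N$) by a sufficiently accurate $k(x)$-approximation, using that $k(x)$ is dense in $\widehat{k(x)}$.

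Taking the filtered colimit $r\to 1^-$ preserves the isomorphism in (i) and preserves density in (ii), since every element of $\Bcal_{\widehat{k(x)}}^R$ is already represented on some fixed half-open annulus $\Ubb_{\geq r}$ where the approximation argument can be run. The step I expect to be most delicate is the injectivity claim in (i), namely that $\mfrak_x\cdot(A\widehat{\otimes}_{\Q_p}{\bf B}^{[r,s]})$ coincides with the kernel of coefficient-wise reduction. This requires controlling how ideals of affinoid algebras interact with the completed tensor product, and ultimately rests on the strictness of the surjection $A\to k(x)$ under $-\widehat{\otimes}_{\Q_p}{\bf B}^{[r,s]}$; this is standard for affinoid algebras over $\Q_p$ but deserves careful justification.
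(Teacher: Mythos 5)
Your proposal is correct and follows essentially the same route as the paper: the paper likewise obtains (i) by lifting the coefficients of a Laurent series (equivalently, lifting the functions of the finite extension $k(x)$ to a neighbourhood of $x$) and reduces (ii) to the density of $\Bcal_X^{[r,s]}\otimes k(x)$ in $\Bcal_{\widehat{k(x)}}^{[r,s]}$, which is exactly your coefficient-approximation step. You merely spell out the details (the kernel identification for injectivity, the passage to the colimit in $r$) that the paper's very terse proof leaves implicit.
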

\begin{proof}
\noindent (i) Our definitions imply that there is a canonical injection
\[\Bcal_X^R\otimes k(x)\hookrightarrow\lim\limits_{\substack{\longrightarrow \\ r\rightarrow 1}} \Gamma(\Ubb_{\geq r}\otimes_{\Q_p}k(x), \Ocal_{\Ubb_{\geq r}\otimes_{\Q_p}k(x)}).\]
Let $\sum_{i\in\Z}\bar f_i u^i$ be an element on the right hand side. As $k(x)$ is finite over $\Q_p$ there is a neighbourhood $U$ of $x$ in $X$ such that all functions of $k(x)$ lift to $U$. Hence there are functions $f_i\in\Gamma(U,\Ocal_X)$ that reduce to $\bar f_i$ in $k(x)$.\\
\noindent (ii) This is an direct consequence of the definitions and the easy point, that 
\[\Bcal_X^{[r,s]}\otimes k(x)\subset \Bcal_{\widehat{k(x)}}^{[r,s]}\]
is dense. 
\end{proof}
\begin{rem}
The obvious analogue of this Lemma for the integral ring $\Acal_{\widehat{k(x)}}^\dagger$ is also true, i.e. 
\[\Acal_X^\dagger\otimes k(x)=\lim\limits_{\substack{\longrightarrow \\ r\rightarrow 1}} \Gamma(\Ubb_{\geq r}\otimes_{\Q_p}k(x), \Ocal^+_{\Ubb_{\geq r}\otimes_{\Q_p}k(x)})=\Acal_{\widehat{k(x)}}^\dagger,\]
if $x\in X$ is a rigid point. Further 
\[\Acal_X^\dagger\otimes k(x)\subset \Acal_{\widehat{k(x)}}^\dagger\]
is dense if $x$ is arbitrary.
\end{rem}

For the rest of this section we will use the following notation:\\
Let $r\in p^\Q\cap[0,1)$. Then we write $r_i=r^{1/p^i}$.
We end this subsection by proving that it is enough to work over some closed annulus near the boundary in order to show that a family of $\phi$-modules over the Robba ring is \'etale.
We say that a family $(\Ncal,\Phi)$ of $\phi$-modules over the Robba ring parametrised by $X$ \emph{admits a model} $\Ncal_r$ \emph{over} $\Ubb_{\geq r}$ if there exist a vector bundle $\Ncal_r$ over $X\times\Ubb_{\geq r}$ (that is locally on $X$ free) and an isomorphism 
\[\Phi:\phi^\ast\Ncal_r\longrightarrow \Ncal_r|_{X\times\Ubb_{\geq r_1}}\]
such that 
\[\Ncal_r\otimes_{\Bcal_X^{[r,1)}}\Bcal_X^R=\Ncal\]
compatible with the $\phi$-linear maps on both sides. 
\begin{prop}\label{etaleonannulus}
Let $X=\Spa(A,A^+)\in\Ad_{\Q_p}^{\rm lft}$ and $(\Ncal,\Phi)$  be a free $\phi$-module over $\Bcal_X^R$ that admits a model $\Ncal_r$ over $\Ubb_{\geq r}$.
Then $\Ncal$ is \'etale if $\Ncal_r$ is $r$-\'etale in the sense of Definition $\ref{phimodonannulus}$.
\end{prop}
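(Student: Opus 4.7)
The plan is to directly produce an $\Acal_X^\dagger$-lattice inside $\Ncal$ from the given $\Acal_X^{[r,1)}$-lattice $N\subset \Ncal_r$ of the $r$-\'etale structure. Since $r$-\'etaleness only gives $N$ locally on $X$, I would first pass to a covering $(U_i)_{i\in I}$ of $X$ on which $N$ is free of rank $d$; this is precisely the covering that will eventually witness \'etaleness of $\Ncal$, so after replacing $X$ by a member of this covering it suffices to produce a single free $\Acal_X^\dagger$-submodule of the required form.

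First I would set
\[N'=N\otimes_{\Acal_X^{[r,1)}}\Acal_X^\dagger\]
and check the three properties in the definition of \'etale. Freeness of rank $d$ over $\Acal_X^\dagger$ is immediate from freeness of $N$. The natural map $N'\to \Ncal$ is injective because, after identifying $\Acal_X^\dagger$ with the filtered colimit of the $\Acal_X^{[r',1)}$ for $r'\to 1$, each $\Acal_X^{[r',1)}\hookrightarrow \Bcal_X^{[r',1)}$ is a localization, hence $\Acal_X^\dagger\hookrightarrow \Bcal_X^R$ is injective, and tensoring this with the free module $N$ preserves injectivity. The identity $N'\otimes_{\Acal_X^\dagger}\Bcal_X^R=\Ncal$ then follows from associativity of tensor products together with the hypotheses $\Ncal_r=N\otimes_{\Acal_X^{[r,1)}}\Bcal_X^{[r,1)}$ and $\Ncal=\Ncal_r\otimes_{\Bcal_X^{[r,1)}}\Bcal_X^R$.

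The main step is to show that $\Phi$ induces an isomorphism $\phi^\ast N'\to N'$. The $r$-\'etale assumption provides an isomorphism
\[\Phi:\phi^\ast N\longrightarrow N\otimes_{\Acal_X^{[r,1)}}\Acal_X^{[r_1,1)}\]
of $\Acal_X^{[r_1,1)}$-modules, where $\phi^\ast N=N\otimes_{\Acal_X^{[r,1)},\phi}\Acal_X^{[r_1,1)}$. Base changing both sides along the map $\Acal_X^{[r_1,1)}\to \Acal_X^\dagger$, the left hand side becomes $N\otimes_{\Acal_X^{[r,1)},\phi}\Acal_X^\dagger$, which coincides with $\phi^\ast N'=N'\otimes_{\Acal_X^\dagger,\phi}\Acal_X^\dagger$ by associativity of tensor products and compatibility of the Frobenius on $\Acal_X^{[r,1)}$, $\Acal_X^{[r_1,1)}$ and $\Acal_X^\dagger$; the right hand side similarly becomes $N\otimes_{\Acal_X^{[r,1)}}\Acal_X^\dagger=N'$. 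This produces the desired integral Frobenius isomorphism.

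The only real obstacle is bookkeeping: one must carefully distinguish the Frobenius pullback on $\Acal_X^{[r,1)}$-modules from the one on $\Acal_X^\dagger$-modules and check that they are canonically identified after base change. This is a formal consequence of associativity of tensor products, given that $\Acal_X^\dagger$ is, by definition, the colimit of the $\Acal_X^{[r',1)}$ and the Frobenii on these rings are compatible under the transition maps; no further analytic input beyond the $r$-\'etale hypothesis is required.
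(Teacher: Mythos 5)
There is a genuine gap, and it comes from a misreading of the hypothesis. Definition $\ref{phimodonannulus}$ (ii) only provides, locally on $X$, a free $\Phi$-stable lattice $N$ over $\Acal_X^{[r,s]}$, i.e.\ over a \emph{closed} annulus $X\times\boldB_{[r,s]}$ with $s<1$; the Frobenius compatibility it gives is the identity $\Phi(\phi^\ast N\otimes\Acal_X^{[r^{1/p},s]})=N\otimes\Acal_X^{[r^{1/p},s]}$ on the overlap of that annulus with its $\phi$-preimage. Your proof instead assumes from the start that the $r$-\'etale structure hands you a free $\Acal_X^{[r,1)}$-lattice $N\subset\Ncal_r$ with an isomorphism $\Phi:\phi^\ast N\to N\otimes_{\Acal_X^{[r,1)}}\Acal_X^{[r_1,1)}$, i.e.\ a $\Phi$-stable integral model over the whole half-open annulus $\Ubb_{\geq r}$ up to the boundary. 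That is essentially the conclusion of the proposition, not its hypothesis: once such a lattice exists, passing to $\Acal_X^\dagger=\varinjlim\Acal_X^{[r',1)}$ is indeed the formal base-change argument you describe.

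The actual content of the proposition is therefore missing. One must first propagate the lattice from the compact annulus toward the boundary: since $\phi$ maps $\boldB_{[r^{1/p^{i}},r^{1/p^{i+1}}]}$ onto $\boldB_{[r^{1/p^{i-1}},r^{1/p^{i}}]}$, one glues $N^{(i)}$ with $\phi^\ast N^{(i)}$ along the overlap using $\Phi$ and iterates, obtaining a sheaf $N$ on all of $X\times\Ubb_{\geq r}$. Second — and this is the step with real analytic input — one must show that this infinite gluing of free modules is again free over $\Acal_X^{[r,1)}$: gluing trivializations over an increasing union of annuli does not automatically produce a global trivialization. The paper does this by choosing formal models of the annuli whose special fibers are chains of $\Pbb^1_k$'s capped by $\Abb^1_k$'s, trivializing the induced vector bundles on each irreducible component, and choosing the trivializations compatibly as the chain grows. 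Your proposal contains neither the propagation step nor the freeness argument, so it does not prove the statement.
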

\begin{proof}
Denote by $\Xcal=\Spf\,A^+$ the formal model for $X$, i.e. $X=\Xcal^{\rm ad}$.
Further let us write 
\[\boldB^{(i)}=\boldB_{[r,r^{1/p^i}]}\]
and 
\[\partial\boldB_i=\partial\boldB_{[0,r^{1/p^i}]}=\boldB_{[r^{1/p^i},r^{1/p^i}]}\]
for the moment.

We now inductively construct lattices $N^{(i)}$ in the restriction of $\Ncal_r$ to $X\times\boldB^{(i)}$ that are stable under $\Phi$.
We start with $N^{(2)}=N_r$ on $X\times\boldB^{(2)}$. Then we define $N^{(i+1)}$ by glueing $N^{(i)}$ and $\phi^\ast N^{(i)}$ over $X\times\boldB_{[r_1,r_i]}$, using the isomorphism $\Phi$. This yields a sheaf $N$ on $X\times\Ubb_{\geq r}$. We claim that $N$ is free over $\Acal_X^{[r,1)}$.

We may choose 
\[\mathcal{Y}_{i+1}=\mathcal{Y}_i\cup \mathcal{Z}_i\]
as a formal model for $\boldB^{(i)}$, where $\mathcal{Y}_1$ is the canonical formal model for $\boldB^{(1)}$ that becomes isomorphic to
\[\widehat{\Abb}^1\cup\widehat{\Abb}^1\]
after a ramified extension of $K_0$ and the zero section of the first formal affine line is glued together with the zero section of the second formal affine line.
Further $\mathcal{Z}_i$ is the canonical formal model for $\boldB_{[r_{i-1},r_i]}$. Then the special fiber of $\mathcal{Y}_i$ is the $k$-scheme
\[\bar{\mathcal{Y}}_i=\Abb_k^1\cup\Pbb_k^1\cup\dots\cup \Pbb_k^1\cup \Abb_k^1\]
where the infinity section of the $j$-th $\Pbb_k^1$ is glued to the $0$-section of the $j+1$-th $\Pbb_k^1$ and the infinity section of the $i-1$-th (i.e. the last) $\Pbb_k^1$ is glued to the $0$-section of $\Abb_k^1$.  In the limit we find that $\lim\limits_{\longrightarrow}\mathcal{Y}_i$ is a formal model for $\Ubb_{\geq r}$. 

Further $\Xcal\times\mathcal{Y}_i$ is a formal model for $X\times\boldB^{(i)}$ and $\lim\limits_{\longrightarrow}\Xcal\times\mathcal{Y}_i$ is a formal model for $X\times\Ubb_{\geq r}$.

Our definition of the sheaf $N_i$ also defines vector bundles on $\Xcal\times\mathcal{Y}_i$ that are trivial on the product of $\Xcal$ with each irreducible component of $\mathcal{Y}_i$. Hence we can find trivialisations of these vector bundles and even compatible ones as $i$ grows. The analytification of this compatible trivialisation on the formal model yields a trivialisation
\[\Ocal_{X\times\Ubb_{\geq r}}^d\longrightarrow \Ncal\]
mapping $(\Ocal_{X\times\Ubb_{\geq r}}^+)^d$ to the sheaf $N$. This yields the claim.
\end{proof}

\subsection{Local \'etaleness}
In this section we study how the property of being \'etale behaves in families. 
Given a family of $\phi$-modules over the Robba ring we will show that the subset where the $\phi$-module is \'etale is an open subset. This will imply that a family of $\phi$-modules is \'etale if and only if all fibers are \'etale.

We use the following notation. Let $X$ be a quasi-compact adic space locally of finite type over $\Q_p$ and $f\in\Gamma(X,\Ocal_X)$. We write
\[|f|_X=\max_{x\in X(\bar\Q_p)}\{v_x(f)\}\in\mathbb{R},\]
where the maximum is taken over all rigid analytic points of $X$. By the maximum modulus principle we then also have
\[v_x(f)\leq |f|_X\]
for all points $x\in X$ which makes sense as every point is the secondary specialisation of a rank $1$ valuation (cf. \cite[Lemma 1.1.10]{Huber}).
\begin{lem}\label{bouningLemma}
Let $X=\Spa(A,A^+)$ be an affinoid adic space and $x\in X$. For any $\lambda>0$ there is an affinoid subdomain $Y=\Spa(B,B^+)$ of $X$ such that for all $f\in A$ vanishing at $x$ we have $v_y(f)\leq\lambda|f|_X$ for all $y\in Y$.
\end{lem}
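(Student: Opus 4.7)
The plan is to use noetherianness of the affinoid algebra to reduce to a finite generating set of the ideal of functions vanishing at $x$, apply the open mapping theorem to control arbitrary vanishing functions in terms of these generators with a uniform constant, and then pass to a rational subdomain on which the chosen generators are simultaneously small.

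First I would let $\mathfrak{p}_x \subset A$ denote the support of the valuation $v_x$, i.e.\ the prime ideal of functions vanishing at $x$. Since $A$ is a noetherian Tate algebra, one may fix generators $g_1,\dots,g_k$ of $\mathfrak{p}_x$. The continuous $A$-linear surjection $A^k \twoheadrightarrow \mathfrak{p}_x$, $(h_1,\dots,h_k)\mapsto \sum_i h_i g_i$, is a surjection of Banach $A$-modules, so the open mapping theorem produces a constant $C>0$ such that every $f \in \mathfrak{p}_x$ admits a decomposition $f = \sum_i h_i g_i$ with $\max_i |h_i|_X \leq C|f|_X$ (after passing to $A_{\rm red}$ if needed, which carries the same underlying adic space and on which the spectral seminorm is equivalent to a Banach norm).

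Next I would choose a topologically nilpotent unit $\pi \in A$ and form, for integers $N \geq 1$, the rational subdomain
\[Y_N := X\bigl(g_1/\pi^N,\dots,g_k/\pi^N\bigr) = \{y \in X \mid v_y(g_i) \leq v_y(\pi)^N \text{ for all } i\}.\]
Since $v_x(g_i) = 0$ while $v_x(\pi) > 0$ (because $\pi$ is a unit), the point $x$ lies in every $Y_N$. For $y \in Y_N$ and $f = \sum_i h_i g_i$ chosen as above, the non-archimedean inequality together with the maximum modulus bound $v_y(\pi) \leq |\pi|_X$ recalled just before the lemma gives
\[v_y(f) \leq \max_i v_y(h_i)\,v_y(g_i) \leq \max_i |h_i|_X \cdot |\pi|_X^N \leq C|\pi|_X^N\,|f|_X.\]
Since $\pi$ is topologically nilpotent, $|\pi|_X < 1$, so one can choose $N$ large enough that $C|\pi|_X^N \leq \lambda$, and $Y := Y_N$ then satisfies the assertion.

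The main obstacle I anticipate is the open-mapping step: one has to compare the spectral seminorm $|\cdot|_X$ appearing in the statement with the Banach norm that the open mapping theorem naturally produces on $A$ and on its finitely generated submodule $\mathfrak{p}_x$. For reduced $A$ these two are equivalent by classical affinoid theory, and the non-reduced case is harmless because nilpotent $f$ automatically satisfy $|f|_X = 0$ and $v_y(f) = 0$, so one may reduce modulo the nilradical without loss.
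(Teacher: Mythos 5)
Your argument is correct and is essentially the paper's proof: the paper likewise takes finitely many generators $f_1,\dots,f_r$ of the ideal of functions vanishing at $x$, normalizes them, and passes to the affinoid subdomain where all $|f_i(y)|\leq\lambda$. You have simply made explicit the open-mapping/equivalence-of-norms step (bounding the coefficients $h_i$ in $f=\sum h_ig_i$ by $C|f|_X$) and the reduction to $A_{\rm red}$, both of which the paper's one-line proof leaves implicit; no further comment is needed.
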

\begin{proof}
Denote by $I=(f_1,\dots,f_r)$ the ideal of functions vanishing at $x$. Multiplying with an arbitrary power of $p$ we may assume that $|f_i|_X=1$. Hence the affinoid subdomain defined by $|f_i(y)|\leq \lambda$ for all $i$ satisfies the required properties.
\end{proof}
Let $X=\Spa(A,A^+)$ be an affinoid adic space. For $0\leq r\leq s<1$ we write
\begin{align*}
{\bf B}_A^{[r,s]}&:=\Gamma(X\times \boldB_{[r,s]},\Ocal_{X\times\boldB_{[r,s]}})&&\hspace{-2.5cm}=A\widehat{\otimes}_{\Q_p}{\bf B}^{[r,s]}, \\
{\bf A}_A^{[r,s]}&:=\Gamma(X\times \boldB_{[r,s]},\Ocal^+_{X\times\boldB_{[r,s]}})&&\hspace{-2.5cm}=A^+\widehat{\otimes}_{\Z_p}{\bf A}^{[r,s]}.
\end{align*}
\begin{defn}
Let $X=\Spa\,(A,A^+)$ be an affinoid adic space and fix $0<r_1\leq s\leq r_2<1$. We define a valuation $w_s$ on ${\bf B}_A^{[r_1,r_2]}$ as follows. Given 
\[f=\sum_i f_i u^i\in{\bf B}_A^{[r_1,r_2]}\]
we define
\[w_s(f)=\sup_i p^{-i}|f_i|_X^{\widetilde{s}},\]
where $s=p^{-\widetilde{s}}$.
This is just the multiplicative version of the valuation defined in \cite[Definition 7.2]{KedlayaLiu}.
\end{defn}
In the following we will use the following notation: If $D$ is a matrix and $v$ a valuation, we write $v(D)$ for the maximum of the valuations of the coefficients of $D$.

\begin{lem}\label{approxLemma}
Let $X=\Spa\,(A,A^+)$ be an affinoid adic space and let $D\in \GL_d({\bf B}_A^{[s,s]})$. We set 
\[h=(w_s(D)w_s(D^{-1}))^{-1}.\] 
Let $F\in {\rm Mat}_{d\times d}({\bf B}_A^{[s,s]})$ such that 
\[w_s(FD^{-1}-I_d)\leq c\,h^{1/{p-1}}\]
for some $c<1$. If $s^{2(p-1)}\geq c^p$, then there exists $U\in \GL_d({\bf B}_A^{[s,s^{1/p}]})$ such that 
\[W=U^{-1}F\phi(U)D^{-1}-I_d\in {\rm Mat}_{d\times d}(p{\bf A}_A^{[s,1)})\] and  
$w_s(W)<1$.
\end{lem}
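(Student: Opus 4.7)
The plan is a Newton-style successive approximation in the spirit of \cite{KedlayaLiu}. Set $U_0=I_d$ and $W_0:=FD^{-1}-I_d$, so that by hypothesis $w_s(W_0)\leq c\,h^{1/(p-1)}$. Given $U_n$ with $F_n:=U_n^{-1}F\phi(U_n)=(I_d+W_n)D$, I would take $V_n=W_n$ and $U_{n+1}=U_n(I_d+V_n)$. A direct expansion of $F_{n+1}=(I_d+V_n)^{-1}F_n(I_d+\phi(V_n))$, using $V_n=W_n$, $F_n=(I_d+W_n)D$, and the cancellation $(I_d+W_n)^{-1}(I_d+W_n)=I_d$, gives the clean recursion
\[W_{n+1}:=F_{n+1}D^{-1}-I_d=D\phi(W_n)D^{-1}.\]

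Two estimates drive the rest of the argument. First, since $\phi$ preserves the Gauss norm on $A\widehat{\otimes}_{\Q_p}K_0$ and acts on Laurent expansions by $\phi(\sum f_iu^i)=\sum\phi(f_i)u^{pi}$, one has the identity
\[w_s(\phi(f))=\sup_i p^{-pi}|f_i|_X^{\widetilde s}=\bigl(\sup_i p^{-i}|f_i|_X^{\widetilde s/p}\bigr)^p=w_{s^{1/p}}(f)^p.\]
Second, submultiplicativity of $w_s$ together with the definition of $h$ yields
\[w_s(W_{n+1})\leq w_s(D)\,w_s(D^{-1})\,w_s(\phi(W_n))=h^{-1}\,w_{s^{1/p}}(W_n)^p.\]
Because each application of $\phi$ in the recursion enlarges the domain of convergence, $W_n$ is actually defined on the annulus $[s,s^{1/p^n}]$, so $w_{s^{1/p}}(W_n)$ is meaningful and can be compared with $w_s(W_n)$ via a coefficient-wise estimate with a factor $s^{-(p-1)/p}$. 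The hypothesis $s^{2(p-1)}\geq c^p$ is tailored precisely so that the loss of $h^{-1}$ and the Frobenius change-of-radius are absorbed: one should obtain inductively $w_s(W_n)\leq c^{p^n}h^{1/(p-1)}$, which decays doubly exponentially.

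The telescoping product $U:=\lim_n U_n=\prod_{n\geq 0}(I_d+W_n)$ then converges in $\GL_d({\bf B}_A^{[s,s^{1/p}]})$, and unrolling the recursion $W_n=D\phi(W_{n-1})D^{-1}$ shows that $W_n$ extends to $[s,s^{1/p^n}]$ with integrality controlled by the decay of $w_s(W_n)$. Passing to the limit, the resulting $W=U^{-1}F\phi(U)D^{-1}-I_d$ extends to all of $[s,1)$, and the super-exponential decay together with $c<1$ forces $p$-integrality, so $W\in p{\bf A}_A^{[s,1)}$; the bound $w_s(W)<1$ follows already from the first step. The hard part, I expect, will be the bookkeeping that makes precise the comparison between $w_s$ and $w_{s^{1/p}}$ on Laurent series with possibly unbounded tails, and verifying that the Frobenius-propagation of integrality indeed extends $W$ across the whole half-open annulus $[s,1)$ with the desired $p$-divisibility — the hypothesis $s^{2(p-1)}\geq c^p$ is exactly what closes this bookkeeping.
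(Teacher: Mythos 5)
The paper offers no argument here at all: its proof of this lemma is the single sentence ``This is \cite[Lemma 7.3]{KedlayaLiu}.'' So you are supplying a proof where the author supplies a citation. Your skeleton --- successive $\phi$-conjugation, the recursion $W_{n+1}=D\phi(W_n)D^{-1}$, and the identity $w_s(\phi(f))=w_{s^{1/p}}(f)^p$ --- is indeed the right skeleton, and the algebra of the recursion is correct.

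But the step you yourself flag as ``the hard part'' is a genuine gap, not bookkeeping. An element $f=\sum_i f_iu^i$ of ${\bf B}_A^{[s,s]}$ is controlled only on the circle $|u|=s$: from $w_s(f)\leq\epsilon$ one gets $|f_i|_X^{\widetilde{s}}\leq p^{i}\epsilon$, hence $p^{-i}|f_i|_X^{\widetilde{s}/p}\leq p^{-i(p-1)/p}\epsilon^{1/p}$, which is unbounded as $i\to-\infty$. So $w_{s^{1/p}}(W_n)$ need not even be finite, there is no uniform comparison factor $s^{-(p-1)/p}$ between $w_{s^{1/p}}$ and $w_s$, and your inductive estimate does not close. (Relatedly, $W_n=D\phi(W_{n-1})D^{-1}$ does not define $W_n$ on $[s,s^{1/p^n}]$, since $D$ and $D^{-1}$ live only on the circle $|u|=s$.) The fix in the cited proof is to conjugate not by $I_d+W_n$ but by $I_d+X_n$, where $X_n$ keeps only those monomials $W_{n,i}u^i$ of $W_n$ that do not already lie in ${\rm Mat}_{d\times d}(p{\bf A}_A^{[s,1)})$; the discarded tail is absorbed into the final $W$, while the retained part has negative $u$-degree bounded in terms of $w_s(W_n)$, which is exactly what legitimizes the outward extrapolation from radius $s$ to $s^{1/p}$ and is where $s^{2(p-1)}\geq c^p$ enters. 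A telltale sign that your version overreaches: if the untruncated iteration converged as described, the limit would satisfy $U^{-1}F\phi(U)D^{-1}=I_d$ exactly, a strictly stronger statement than the stated conclusion $W\in{\rm Mat}_{d\times d}(p{\bf A}_A^{[s,1)})$, $w_s(W)<1$ --- the conclusion is weak precisely because the iteration must stop short of full triviality.
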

\begin{proof}
This is \cite[Lemma 7.3]{KedlayaLiu}.
\end{proof}
\begin{theo}\label{loket}
Let $X$ be an adic space locally of finite type over $\Q_p$ and $\Ncal$ be a family of free $\phi$-modules of rank $d$ over $\Bcal_X^{[r,r_2]}$. Assume that there exists $x\in X$ and an $\Acal_X^{[r,r_2]}\otimes k(x)$-lattice $N_x\subset \Ncal\otimes k(x)$ such that $\Phi$ induces an isomorphism
\begin{equation}\label{etonannulus1}
\Phi:\phi^\ast(N\otimes_{\Acal_X^{[r,r_2]}}\Acal_X^{[r,r_1]})\longrightarrow N\otimes_{\Acal_X^{[r,r_2]}}\Acal_X^{[r_1,r_2]}.
\end{equation}
Then there exists an open neighbourhood $U\subset X$ of $x$ and a locally free $\Acal_U^{[r,r_2]}$-submodule $N\subset \Ncal$ of rank $d$ such that 
\begin{align*}
N\otimes k(x) &= N_x\\ 
\Phi(\phi^\ast N|_{X\times \boldB_{[r,r_1]}})&=N|_{X\times \boldB_{[r_1,r_2]}}\\
N\otimes_{\Acal_U^{[r,r_2]}}\Bcal_U^{[r,r_2]}&=\Ncal|_U.
\end{align*}
\end{theo}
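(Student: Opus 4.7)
The plan is to prove this as a parametrized analogue of Kedlaya--Liu's local \'etaleness argument, with Lemma~\ref{bouningLemma} replacing the standard maximum-modulus statement that is not available for points of higher rank.

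First I would reduce to the case $X=\Spa(A,A^+)$ affinoid and $\Ncal$ free, with an explicit basis $e_1,\dots,e_d$ in which $\Phi$ is represented by a matrix $A_\Phi\in\GL_d(\Bcal_X^{[r_1,r_2]})$. A basis of the lattice $N_x\subset\Ncal\otimes k(x)$ yields a change-of-basis matrix $U_x\in\GL_d(\Bcal_X^{[r,r_2]}\otimes k(x))$ with respect to which the Frobenius matrix $D_x=U_x^{-1}A_\Phi\phi(U_x)$ lies in $\GL_d(\Acal_X^{[r_1,r_2]}\otimes k(x))$. Using the density statements from Lemma~\ref{rigptRobba} and its remark, together with the liftability of finitely many elements of $k(x)$ to functions on a small affinoid neighborhood of $x$, I lift $U_x$ and $D_x$ to matrices $U_0\in\GL_d(\Bcal_V^{[r,r_2]})$ and $D\in\GL_d(\Acal_V^{[r_1,r_2]})$ on some affinoid $V\ni x$. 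Passing to the basis obtained via $U_0$, the Frobenius matrix becomes $F=U_0^{-1}A_\Phi\phi(U_0)$, and every entry of $FD^{-1}-I_d$ vanishes at $x$.

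Next I would fix a value $s$ with $r_1\le s\le r_2$ and apply Lemma~\ref{bouningLemma} to shrink $V$ to a smaller affinoid $U\ni x$ on which all entries of $FD^{-1}-I_d$ become arbitrarily small; in particular, $w_s(FD^{-1}-I_d)<c\cdot h^{1/(p-1)}$ for any preassigned $c<1$, where $h=(w_s(D)w_s(D^{-1}))^{-1}$, while also arranging $s^{2(p-1)}\ge c^p$. Lemma~\ref{approxLemma} then produces $U^{(1)}\in\GL_d(\Bcal_U^{[s,s^{1/p}]})$ in whose basis the new Frobenius error is divisible by $p$ and has $w_s<1$. Iterating (the estimates of Lemma~\ref{approxLemma} show that each step reduces the error by a definite factor tending to zero in the $p$-adic topology), the partial products $\prod_k U^{(k)}$ converge in $\GL_d(\Bcal_U^{[s,s^{1/p}]})$, and the limit basis exhibits $\Phi$ as preserving an $\Acal_U^{[s,s^{1/p}]}$-lattice which specializes to $N_x$ at $x$.

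To extend the integral lattice from a small annulus to the full range $[r,r_2]$ I would use the Frobenius: since $\Phi$ is an isomorphism, the integral structure on any circle near $u=0$ determines, by iterated application of $\Phi$ and $\phi$-pullback, the integral structure on a chain of overlapping annuli covering $[r,r_2]$, which glue (compatibility being automatic at the fibre $x$ by construction of $N_x$) to give the required $\Acal_U^{[r,r_2]}$-lattice $N$. The main obstacle will be the bookkeeping in the shrinking step: one must choose the parameter $\lambda$ of Lemma~\ref{bouningLemma}, the constant $c$, and the radius $s$ compatibly, so that both hypotheses of Lemma~\ref{approxLemma} hold simultaneously and the subsequent iteration converges uniformly on a fixed affinoid $U$ without further shrinking per iterate. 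A secondary subtlety---where the adic-space formalism is essential---is that the point $x$ and the shrunken neighborhood $U$ need not correspond to rigid-analytic data, so the definition of $w_s$ and the use of the supremum norm $|\cdot|_X$ in Lemma~\ref{bouningLemma} (taken over classical points) must be reconciled with valuations at higher-rank points such as $x$.
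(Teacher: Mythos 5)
Your proposal is correct and follows essentially the same route as the paper: lift the Frobenius matrix of $N_x$ and the change-of-basis matrix to a neighbourhood, shrink via Lemma~\ref{bouningLemma} so that $FD^{-1}-I_d$ is small, apply Lemma~\ref{approxLemma}, and extend the resulting lattice across the full annulus by Frobenius gluing. The only cosmetic difference is that you iterate Lemma~\ref{approxLemma}; as stated it already outputs an integral Frobenius matrix $(I_d+W)D$ with $W$ divisible by $p$ (the iteration is internal to its proof), so a single application suffices.
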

\begin{proof}
In this formulation the proof is the same as the proof of \cite[Theorem 7.4]{KedlayaLiu}. Let us sketch the argument.

We may assume that $X=\Spa(A,A^+)$ is affinoid.
Choose a basis $\underline{e}_x$\ of $N_x$ and denote by $D_0\in \GL_d(\Acal_X^{[r,r_2]}\otimes k(x))$ the matrix of $\Phi$ in this basis. After shrinking $X$ if necessary we may lift the matrix $D$ to a matrix with coefficients in $\Gamma(X,\Acal_X^{[r,r_2]})$. Localizing further we may assume that $D$ is invertible over $\Gamma(X,\Acal_X^{[r,r_2]})$.

Fixing a basis $\underline{b}$ of $\Ncal$ we denote by $S\in \GL_d({\bf B}_A^{[r,r]})$ the matrix of $\Phi$ in this basis. Further we denote by $V$ a lift of the change of basis matrix from the basis $\underline{e}_x$ to the basis $\underline{b}\mod x$.\\
Applying Lemma $\ref{bouningLemma}$ we can shrink $X$ such that the valuation of the matrix $V^{-1}S\phi(V)-D$ is arbitrary small, as the valuation vanishes at $x$.
Now we can apply Lemma $\ref{approxLemma}$ with the matrix $F=V^{-1}S\phi(V)$ and obtain a matrix 
\[U\in \GL_d({\bf B}_A^{[r,r_1]}).\]
Then the basis of $\Ncal$ obtained from $\underline{b}$ by base change via the matrix $VU$ defines the desired integral model $N$ on $X\times\boldB_{[r,r_1]}$.
Gluing $N$ and $\phi^\ast N$ on $X\times \boldB_{[r_1,r_1]}$ using $\Phi$ we obtain the desired model. 
\end{proof}

\begin{prop}\label{latticedecompletion}
Let $X\in\Ad^{\rm lft}_{\Q_p}$ and $\mathcal{N}$ be a $\phi$-module over $\Bcal_X^{[r,r_2]}$. Let $x\in X$ such that $\iota_x^\ast\mathcal{N}$ is r-\'etale in the sense of Definition $\ref{phimodonannulus}$. Then $\mathcal{N}\otimes k(x)$ admits an $\Acal_X^{[r,r_2]}$-lattice that is $\Phi$-stable in the sense of $(\ref{etonannulus1})$.
\end{prop}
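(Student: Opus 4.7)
The plan is to reduce to the argument in the proof of Theorem~\ref{loket} by approximating the lattice data over $\widehat{k(x)}$ by data defined over $k(x)$ that can then be lifted to an open neighborhood of $x$ in $X$. Let $\widehat{N}_x\subset\iota_x^\ast\mathcal{N}$ be the given $\Phi$-stable $\Acal_{\widehat{k(x)}}^{[r,r_2]}$-lattice and fix a basis of it; write $\hat{D}\in\GL_d(\Acal_{\widehat{k(x)}}^{[r,r_2]})$ for the matrix of $\Phi$ in this basis, which by the $r$-\'etaleness hypothesis induces an isomorphism on the integral subring over $[r_1,r_2]$. After shrinking $X$ to an affinoid neighborhood $\Spa(A,A^+)$ of $x$, pick a basis $\underline{b}$ of $\mathcal{N}$ over $\Bcal_X^{[r,r_2]}$, let $S\in\GL_d(\Bcal_X^{[r_1,r_2]})$ be the matrix of $\Phi$ in this basis, and let $\hat{V}\in\GL_d(\Bcal_{\widehat{k(x)}}^{[r,r_2]})$ be the change of basis matrix, so that $\hat{V}^{-1}S\phi(\hat{V})=\hat{D}$.

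Next, I would use the density statement in Lemma~\ref{rigptRobba} and the remark following it---namely that $\Bcal_X^{[r,r_2]}\otimes k(x)$ and $\Acal_X^{[r,r_2]}\otimes k(x)$ sit densely inside their completed counterparts---to approximate $\hat V$ and $\hat D$ by matrices $V_0\in\GL_d(\Bcal_X^{[r,r_2]}\otimes k(x))$ and $D_0\in\GL_d(\Acal_X^{[r,r_2]}\otimes k(x))$. Since invertibility and the property of inducing an isomorphism on the annular integral subring are open in the $w_s$-topology, $D_0$ retains both properties. Because the entries of $V_0$ and $D_0$ lie in the uncompleted $k(x)$-tensor products, they are residues at $x$ of sections defined on an affinoid neighborhood of $x$ in $X$; after shrinking $\Spa(A,A^+)$ we lift them to matrices $V\in\GL_d(\Bcal_X^{[r,r_2]})$ and $D\in\GL_d(\Acal_X^{[r,r_2]})$ on the whole neighborhood, arranging after a further shrink that $D$ is invertible on $\Spa(A,A^+)$.

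At this point the proof of Theorem~\ref{loket} applies almost verbatim. Setting $F=V^{-1}S\phi(V)$, the matrix $F-D$ reduces at $x$ to $V_0^{-1}S|_x\phi(V_0)-D_0$, which is as small as we wish in the $w_s$-seminorm by the approximation step; the component of $F-D$ which does vanish at $x$ can be made uniformly small on a small enough neighborhood via Lemma~\ref{bouningLemma}. Both contributions can therefore be driven below the threshold required by Lemma~\ref{approxLemma}, which then produces $U\in\GL_d(\Bcal_X^{[r,r_1]})$ such that the basis $\underline{b}\,VU$ defines an $\Acal_X^{[r,r_1]}$-integral, $\Phi$-compatible lattice on $X\times\boldB_{[r,r_1]}$. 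Gluing with its Frobenius pullback over the overlap $X\times\boldB_{[r_1,r_1]}$ as in the last step of the proof of Theorem~\ref{loket} yields a $\Phi$-stable $\Acal_X^{[r,r_2]}$-lattice $N\subset\mathcal{N}$ on the neighborhood, whose fiber $N\otimes k(x)\subset\mathcal{N}\otimes k(x)$ is the desired lattice satisfying~(\ref{etonannulus1}).

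The main obstacle I anticipate is the quantitative control of $F-D$: unlike in the proof of Theorem~\ref{loket}, where the analogous error vanishes identically at $x$ so that Lemma~\ref{bouningLemma} alone suffices, here the residue of $F-D$ at $x$ is nonzero and is controlled solely by the quality of the approximations of $\hat V$ and $\hat D$. These approximations must therefore be chosen accurately enough that, together with the neighborhood variation bounded by Lemma~\ref{bouningLemma}, the total $w_s$-size of $FD^{-1}-I_d$ stays below $c\,h^{1/(p-1)}$; this interaction between the density argument on one hand and the Newton-type iteration of Lemma~\ref{approxLemma} on the other is the key point that has to be managed.
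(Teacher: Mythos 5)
Your proposal is workable, but it takes a genuinely different route from the paper's proof. The paper argues purely in the fiber at $x$, with no norms and no iteration: writing $E=\widehat{k(x)}$ and taking the given $\Phi$-stable lattice $N\subset\iota_x^\ast\Ncal$ over $\Acal_E^{[r,r_2]}$, it forms $\mathfrak{N}=N\cap(\Ncal\otimes k(x))$, uses density of $\Ncal\otimes k(x)$ in $\iota_x^\ast\Ncal$ together with openness of $N\setminus\mathfrak{m}N$ to see that $\mathfrak{N}$ surjects onto $N/\mathfrak{m}N$, lifts a basis of $N/\mathfrak{m}N$ to elements $f_1,\dots,f_d\in\mathfrak{N}$, and then applies Nakayama twice (over $\Acal_E^{[r,r_2]}$, and over $\Bcal_X^{[r,r_2]}\otimes k(x)$ via its maximal ideals) to conclude that the $f_i$ span a lattice whose Frobenius matrix lies in $\GL_d(\Acal_E^{[r,s]})\cap\GL_d(\Bcal_X^{[r,s]}\otimes k(x))=\GL_d(\Acal_X^{[r,s]}\otimes k(x))$. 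The proposition is thus deliberately a statement about the fiber only, and the spreading out is left entirely to Theorem~\ref{loket}. You instead merge the two steps: approximate $(\hat V,\hat D)$ by data over $k(x)$, lift to a neighbourhood, and rerun the Newton iteration of Lemma~\ref{approxLemma}. This does yield the combination of Proposition~\ref{latticedecompletion} and Theorem~\ref{loket} needed for Corollary~\ref{etaleopen}, but it is quantitatively heavier, and the point you flag yourself is real: Lemma~\ref{bouningLemma} only controls the part of $F-D$ vanishing at $x$, so to bound the other part you must convert smallness of $(F-D)(x)$ in the fiber norm into smallness of $w_s(F-D)$ on a neighbourhood. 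This needs upper semicontinuity of the fiberwise Gauss norm, which one gets from $w_s=\sup_{|u|=s}|\cdot|$ and quasi-compactness of $\boldB_{[s,s]}$ (a tube-lemma argument); the paper's soft fiber-only proof sidesteps this entirely.
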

\begin{proof}
So simplify the notations, we write $F=k(x)$ and $E=\widehat{k(x)}$. Further we write $\mathfrak{m}\subset E^+$ for the maximal ideal of elements with valuation strict less than one.\\
Let $N$ be an $\Acal_{E}^{[r,r_2]}$-lattice in $\iota_x^\ast\mathcal{N}$ such that 
\begin{equation}\label{etonannulus2}
\Phi(\phi^\ast N|_{\{x\}\times \boldB_{[r,r_1]}})=N|_{\{x\}\times \boldB_{[r_1,r_2]}}.
\end{equation}
Consider the $\Acal_X^{[r,r_2]}\otimes k(x)$-module $\mathfrak{N}=N\cap \mathcal{N}\otimes k(x)\subset \iota_x^\ast\mathcal{N}$.
We claim that the canonical map
\[\mathfrak{N}/(\mathfrak{m}N \cap\mathcal{N}\otimes k(x))\longrightarrow N/\mathfrak{m}N\]
is an isomorphism: Indeed it is injective by definition and it remains to check surjectivity.
But $\mathcal{N}\otimes k(x)$ is dense in $\iota_x^\ast\mathcal{N}$ and $N\backslash \mathfrak{m}N\subset \iota_x^\ast\Ncal$ is open.
Hence we can approximate a basis of $N$ by elements in $\mathfrak{N}$. As $\mathfrak{m}$ contains a fundamental system of neighbourhoods of $0$, the surjectivity follows.

Now we can lift a basis of $N/\mathfrak{m}N$ to elements $f_1,\dots,f_d\in \mathfrak{N}$. Consider the (free) $\Acal_X^{[r,r_2]}\otimes k(x)$-submodule $\mathfrak{N}'$ of $\mathcal{N}\otimes k(x)$ that is generated by $f_1,\dots,f_d$.

By Nakayama's Lemma $f_1,\dots,f_d$ generate $N$ over $\Acal_E^{[r,r_2]}$ and hence also $\iota_x^\ast\mathcal{N}$ over $\Bcal_E^{[r,r_2]}$.
Now the maximal ideals $\mathfrak{m}_a$ of $\Bcal_X^{[r,r_2]}\otimes k(x)$ are parametrized by the Galois orbits in $\{a\in \bar F\mid r\leq v_F(a)\leq r_2\}$.
It follows that the images of $f_1,\dots,f_d$ generate $\mathcal{N}\otimes k(x)$ modulo each maximal ideal $\mathfrak{m}_a$,
as they generate $\iota_x^\ast\Ncal$ modulo the closure of $\mathfrak{m}_a$ in $\Bcal_E^{[r,r_2]}$. Hence $f_1,\dots,f_d$ generate $\mathcal{N}\otimes k(x)$ by Nakayama's Lemma.

If we denote by $B$ the matrix of $\Phi$ in the basis $f_1,\dots,f_d$, then the above implies
\[B\in\GL_d(\Acal_E^{[r,s]})\cap\GL_d(\Bcal_X^{[r,s]}\otimes k(x))=\GL_d(\Acal_X^{[r,s]}\otimes k(x)).\]
This finishes the proof of the Proposition.
\end{proof}

\begin{cor}\label{etaleopen}
Let $X$ be an adic space locally of finite type over $\Q_p$ and $(\Ncal,\Phi)$ a family of $\phi$-modules over $\Bcal_X^R$.\\
\noindent {\rm (i)} Let $x\in X$. Then $\iota_x^\ast\Ncal$ is \'etale if and only if $\Ncal\otimes k(x)$ is \'etale, i.e. if $\Ncal\otimes k(x)$ admits a $\Phi$-stable $\Acal_X^\dagger \otimes k(x)$-lattice.\\
\noindent {\rm (ii)} The subset $\{x\in X\mid \iota_x^\ast \Ncal\ \text{is \'etale}\}$ is open.\\
\noindent {\rm (iii)} The family $\Ncal$ is \'etale if and only if $\iota_x^\ast\Ncal$ is \'etale for all $x\in X$.
\end{cor}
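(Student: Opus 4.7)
The strategy is to assemble the three parts from the propositions just proved: Proposition \ref{latticedecompletion} handles descent from the completed residue field to the uncompleted coefficient ring on a closed annulus; Theorem \ref{loket} spreads a pointwise lattice out to an open neighborhood; Proposition \ref{etaleonannulus} upgrades a lattice on a closed annulus to a lattice over the whole Robba ring.

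For part (i), one direction is immediate: if $\mathcal{N}\otimes k(x)$ carries a $\Phi$-stable $\mathcal{A}_X^\dagger\otimes k(x)$-lattice, then base changing along the dense inclusion $\mathcal{A}_X^\dagger\otimes k(x)\hookrightarrow \mathcal{A}_{\widehat{k(x)}}^\dagger$ (the remark after Lemma \ref{rigptRobba}) yields a $\Phi$-stable lattice in $\iota_x^\ast\mathcal{N}$. For the converse, suppose $\iota_x^\ast\mathcal{N}$ is \'etale. After shrinking $X$ to an affinoid neighborhood of $x$, choose $r\in p^\Q\cap[0,1)$ small enough that $\mathcal{N}$ admits a model $\mathcal{N}_r$ on $\mathbb{U}_{\geq r}$ and that the $\Phi$-stable $\mathcal{A}_{\widehat{k(x)}}^\dagger$-lattice is already defined on $\mathbb{U}_{\geq r}$. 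Restricting to the closed subannulus $\boldsymbol{B}_{[r,r_2]}$ (with $r_2=r^{1/p^2}$) puts us in the setup of Proposition \ref{latticedecompletion}: we obtain a $\Phi$-stable $\mathcal{A}_X^{[r,r_2]}\otimes k(x)$-lattice $\mathfrak{N}$ in $\mathcal{N}_r|_{[r,r_2]}\otimes k(x)$. Iteratively gluing $\mathfrak{N}$ with $\Phi(\phi^\ast\mathfrak{N})$ over successive annuli, exactly as in the proof of Proposition \ref{etaleonannulus}, produces a $\Phi$-stable $\mathcal{A}_X^{[r,1)}\otimes k(x)$-lattice, and hence the desired $\mathcal{A}_X^\dagger\otimes k(x)$-lattice.

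For part (ii), pick $x$ with $\iota_x^\ast\mathcal{N}$ \'etale. By (i) and the argument above, we obtain a $\Phi$-stable $\mathcal{A}_X^{[r,r_2]}\otimes k(x)$-lattice $N_x$ in the restriction of $\mathcal{N}_r$ to the closed annulus. This is precisely the hypothesis of Theorem \ref{loket}, which yields an open neighborhood $U$ of $x$ and a $\Phi$-stable $\mathcal{A}_U^{[r,r_2]}$-lattice $N$ extending $N_x$. Applying Proposition \ref{etaleonannulus} to $N$ promotes this to a $\Phi$-stable $\mathcal{A}_U^\dagger$-lattice in $\mathcal{N}|_U$, so $\mathcal{N}|_U$ is \'etale and in particular every point of $U$ is \'etale. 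Hence the \'etale locus is open.

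For part (iii), if $\mathcal{N}$ is \'etale then base change along each $\iota_x$ shows every fiber is \'etale. Conversely, if every $\iota_x^\ast\mathcal{N}$ is \'etale, then (ii) provides, for each $x\in X$, an open neighborhood $U_x$ on which $\mathcal{N}|_{U_x}$ admits a $\Phi$-stable $\mathcal{A}_{U_x}^\dagger$-lattice; these cover $X$ and witness the \'etaleness of $\mathcal{N}$. The only nontrivial step is the descent in part (i), but this is already taken care of by Proposition \ref{latticedecompletion} combined with the Robba-ring rebuilding of Proposition \ref{etaleonannulus}; no further estimates are needed here.
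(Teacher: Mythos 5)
Your proof is correct and follows exactly the paper's route: the paper's own argument is a one-line assembly of Proposition \ref{latticedecompletion} (descent of the lattice from $\widehat{k(x)}$ to $k(x)$ on a closed annulus), Theorem \ref{loket} (spreading the lattice to an open neighbourhood), and Proposition \ref{etaleonannulus} (rebuilding the lattice over the whole Robba ring by Frobenius gluing), after reducing to the case where $\Ncal$ admits a model over $\Ubb_{\geq r}$. Your write-up simply spells out the same steps in more detail.
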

\begin{proof}
We may assume (locally on $X$) that the family $\Ncal$ admits a model over $\Ubb_{\geq r}$. The claim now follows from Proposition $\ref{latticedecompletion}$, Theorem $\ref{loket}$ and Proposition $\ref{etaleonannulus}$.
\end{proof}
\begin{rem}\label{intgleichwa}
Note that it is not clear whether a family of $\phi$-modules over the Robba ring is \'etale if and only if it is \'etale in all rigid points.
This would be the case if one could establish a formalism of slope filtrations as in \cite{Kedlaya} in the fibers over all points of $X$, not just the rigid analytic ones.
\end{rem}
\begin{rem}
As in Example $\ref{noBerko}$ above we find that the \'etale locus is not a Berkovich space in general. Using the same notations as in example $\ref{noBerko}$, one can show that the family on $X$ obtained from the family of filtered isocrystals, by applying the construction in section $5$ and restricting to the boundary, is \'etale exactly over $X^{\rm wa}$.  
\end{rem}

\begin{prop}\label{etalegeometric}
Let $f:X\rightarrow Y$ be a morphism of adic spaces locally of finite type and let $\mathcal{N}_Y$ be a $\phi$-module over $\Bcal_Y^R$. Denote by $\mathcal{N}_X=f^\ast\Ncal_Y$ the pullback of $\mathcal{N}_Y$ to a $\phi$-module over $\Bcal_X^R$. Let
\begin{align*}
U&=\{x\in X\mid \mathcal{N}_X\ \text{is \'etale at}\ x\}\\
V&=\{y\in Y\mid \mathcal{N}_Y\ \text{is \'etale at}\ y\}.
\end{align*}
Then $f^{-1}(V)=U$.
\end{prop}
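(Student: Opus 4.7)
The inclusion $f^{-1}(V)\subseteq U$ follows by direct base change of lattices. Fix $x\in X$ with $y=f(x)\in V$; by Corollary~\ref{etaleopen}(i), $\iota_y^\ast\Ncal_Y$ admits a $\Phi$-stable $\Acal_{\widehat{k(y)}}^\dagger$-lattice $N$. The morphism $f$ together with the point $x$ induces a continuous isometric embedding of complete valued fields $\widehat{k(y)}\hookrightarrow \widehat{k(x)}$, hence compatible maps $\Acal_{\widehat{k(y)}}^\dagger\to \Acal_{\widehat{k(x)}}^\dagger$ and $\Bcal_{\widehat{k(y)}}^R\to \Bcal_{\widehat{k(x)}}^R$. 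Under the canonical identification
\[
\iota_x^\ast\Ncal_X\ \cong\ \iota_y^\ast\Ncal_Y\widehat{\otimes}_{\Bcal_{\widehat{k(y)}}^R}\Bcal_{\widehat{k(x)}}^R,
\]
the scalar extension $N\widehat{\otimes}_{\Acal_{\widehat{k(y)}}^\dagger}\Acal_{\widehat{k(x)}}^\dagger$ is a $\Phi$-stable lattice in $\iota_x^\ast\Ncal_X$, so $x\in U$ by Corollary~\ref{etaleopen}(i) applied at $x$.

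For the reverse inclusion $U\subseteq f^{-1}(V)$, fix $x\in U$ and set $y=f(x)$. Writing $E=\widehat{k(y)}$, $F=\widehat{k(x)}$, and $M=\iota_y^\ast\Ncal_Y$, the same identification gives $\iota_x^\ast\Ncal_X\cong M\widehat{\otimes}_{\Bcal_E^R}\Bcal_F^R$. By Corollary~\ref{etaleopen}(i) the hypothesis $x\in U$ is equivalent to this base change being \'etale, and we must show that $M$ is already \'etale, i.e. $y\in V$. This reduces the proposition to the following pointwise assertion: given an isometric embedding $E\hookrightarrow F$ of complete valued non-archimedean fields over $\Q_p$ and a $\phi$-module $M$ over the classical Robba ring $\Bcal_E^R$, if $M\widehat{\otimes}_{\Bcal_E^R}\Bcal_F^R$ admits a $\Phi$-stable $\Acal_F^\dagger$-lattice, then $M$ admits a $\Phi$-stable $\Acal_E^\dagger$-lattice.

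The pointwise statement is the main obstacle. It is where Kedlaya's slope filtration theorem \cite{Kedlaya} enters: any $\phi$-module over the classical Robba ring admits a canonical slope filtration, and is \'etale precisely when this filtration is concentrated in slope $0$. Since the Newton slopes can be read off from a matrix representative of $\Phi$, they are invariant under the isometric base change $\Bcal_E^R\to\Bcal_F^R$; hence \'etaleness of $M\widehat{\otimes}_{\Bcal_E^R}\Bcal_F^R$ forces all slopes of $M$ to vanish, and $M$ is itself \'etale. (As an alternative route, one can attempt to descend the lattice directly by intersecting a $\Phi$-stable $\Acal_F^\dagger$-lattice in $M\widehat{\otimes}_{\Bcal_E^R}\Bcal_F^R$ with $M$ inside the base change, using $\Acal_E^\dagger = \Acal_F^\dagger\cap\Bcal_E^R$; but showing that this intersection still generates $M$ ultimately reduces to the same slope invariance.) Combined with Corollary~\ref{etaleopen}(i) applied at $y$, this yields $y\in V$ and completes the proof.
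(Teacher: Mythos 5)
The first inclusion $f^{-1}(V)\subseteq U$ is fine and agrees with the paper. The problem is in the converse direction. Your reduction sends the question to the fibers at $x$ and $y$ themselves and then invokes Kedlaya's slope filtration theorem over $\Bcal^R_{\widehat{k(y)}}$ to conclude that \'etaleness descends along $\Bcal^R_{\widehat{k(y)}}\to\Bcal^R_{\widehat{k(x)}}$. But $\widehat{k(y)}$ is the completed residue field of an \emph{arbitrary} point of an adic space: it need not be discretely valued and may carry a higher-rank valuation, and the slope formalism of \cite{Kedlaya} is not available there. The paper is explicit about this: Remark $\ref{intgleichwa}$ states that it is not known whether one can establish a slope filtration formalism in the fibers over non-rigid points, and that precisely this is why one cannot test \'etaleness at rigid points alone. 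So the "main obstacle" you isolate is attacked with a tool that does not exist in this generality; your parenthetical alternative (intersecting the lattice with $M$) is also not carried out, and you yourself concede it reduces to the same unproven slope invariance.

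The paper avoids slopes entirely by exploiting that $U$ is \emph{open} and $f$ is locally of finite type: one may replace $x$ by a point $x'\in U\cap f^{-1}(y)$ whose residue field $E=k(x')$ is a \emph{finite} (after enlargement, Galois) extension of $F=k(y)$. One then takes a $\Phi$-stable $\Acal_E^{[r,r_2]}$-lattice upstairs, replaces it by the lattice generated by its $H$-translates ($H={\rm Gal}(E/F)$), and passes to $H$-invariants $N'=N^H$, which is a $\Phi$-stable finitely generated $\Acal_F^{[r,r_2]}$-submodule generating $\Ncal\otimes k(y)$; freeness of $N'$ is then checked via a formal model and Nakayama. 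This finite Galois descent is the missing idea in your argument: without first moving $x$ to make the residue field extension finite, there is no descent mechanism for the lattice, and without slope theory there is no indirect characterization of \'etaleness to fall back on.
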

\begin{proof}
First it is obvious that $f^{-1}(V)\subset U$. Let $x\in U$ mapping to $y\in Y$. We have to show $y\in V$.
As above we may reduce to the case that $\Ncal_Y$ (and hence also $\Ncal_X$) admits a model over $\Ubb_{\geq r}$ and we may even work over $X\times \boldB_{[r,r_2]}$ and $Y\times\boldB_{[r,r_2]}$.\\
As $U$ is open, we may replace $x$ by a point in $U\cap f^{-1}(y)$ such that $E=k(x)$ is finite over $F=k(y)$.
Enlarging $E$ if necessary we may assume that the extension is finite and Galois with Galois group $H$.

Fix an $\Acal_E^{[r,r_2]}$-lattice in $\mathcal{N}\otimes k(x)$ that satisfies the condition $(\ref{etonannulus2})$.
Let $e_1,\dots,e_d$ be a basis of this lattice. We can consider the $\Acal_E^{[r,r_2]}$-lattice $N$ which is generated by the translates of the $e_i$ by the action of $H$.
Now $N$ is stable under the action of $H$ and we may take the invariants $N'=N^H$ which are a finitely generated $\Acal_F^{[r,r_2]}$-submodule that also satisfies condition $(\ref{etonannulus2})$ and which generates $\Ncal\otimes k(y)$ as $N$ contains a neighborhood of $0$. We claim that $N'$ is free.

As $N'$ is finitely generated, it is defined in a neighborhood \[U=\Spa(A,A^+)\subset Y\] of $y$ and generates $\Ncal|_U$. Especially $N'$ is freely generated by exactly $d$ elements in every rigid analytic point of $U\times\boldB_{[r,r_2]}$. Now we view $N'$ as a coherent module on the formal model $\Spf A^+\times \Xcal$ of $U\times\boldB_{[r,r_2]}$, where $\Xcal$ denotes the canonical formal model of $\boldB_{[r,r_2]}$ with special fiber $\Spec(A^+/A^{++})\times(\Abb^1_k\cup\Abb^1_k)$. Here $A^{++}\subset A^+$ denotes the ideal of topologically nilpotent elements. We consider the restriction $\bar N'$ of $N'$ to this special fiber and find that it is freely generated by $d$ elements in every closed point of the special fiber. It follows that the reduction of $N'\subset \Ncal_Y\otimes k(y)$ modulo the maximal ideal $\mfrak_{F^+}$ of $F^+$ is free of rank $d$ on $\Spec \kappa_F \times (\Abb^1_k\cup\Abb^1_k)$. By Nakayama's lemma we can lift $d$ generators of the special fiber to generators of $N'$. As $N'\otimes_{F^+}F$ is free of rank $d$, these generators can not satisfy any relations.
\end{proof}

\section{The period morphism}

In this section we deal with an integral model for the stack $\Dfrak_\nu$.
We recall from \cite{phimod} the definition of a stack of integral models $\widehat{\Ccal}_{K,\nu}$ and the definition of the period morphism
\[\Pi(\Xcal):\widehat{\Ccal}_{K,\nu}(\Xcal)\longrightarrow \Dfrak_\nu(\Xcal^{\rig})\]
for a $p$-adic formal scheme $\Xcal$.
First we recall from \cite{phimod} the definitions of the stacks $\widehat{\Ccal}_K$ and $\widehat{\Ccal}_{K,\nu}$ on the category ${\rm Nil}_p$ of $\Z_p$-schemes on which $p$ is locally nilpotent.

For a $\Z_p$-algebra $R$ such that $p^nR=0$ for some $n$ we set $\widehat{\Ccal}_K(R)$ to be the groupoid of pairs $(\Mfrak,\Phi)$, where $\Mfrak$ is an $R_W[[u]]$-module that is fpqc-locally on $\Spec\,R$ free over $R_W[[u]]$ and $\Phi:\Mfrak\rightarrow \Mfrak$ is a $\phi$-linear map such that
\[E(u)\Mfrak\subset \Phi(\phi^\ast \Mfrak)\subset \Mfrak.\] 
Here we write $R_W=R\otimes_{\Z_p}W$ and 
\[\phi:R_W[[u]]\longrightarrow R_W[[u]]\]
is the morphism that is the identity on $R$, the natural Frobenius on $W$ and that takes $u$ to $u^p$. 

Given a dominant coweight $\nu$ that satisfies the extra condition $(\ref{specialcochar})$, there is a closed substack $\widehat{\Ccal}_{K,\nu}\subset\widehat{\Ccal}_K$, where the $\Spf\,\Ocal_F$-valued points (for finite extensions $F$ of $\Q_p$ containing the reflex field of $\nu$) are the pairs $(\Mfrak,\Phi)$ such that the reduction modulo $E(u)$ of the filtration
\[E(u)\Phi(\phi^\ast\Mfrak)\subset E(u)\Mfrak\subset \Phi(\phi^\ast\Mfrak)\]
is of type $\nu$. See \cite[3.d]{phimod} for the precise definition of $\widehat{\Ccal}_{K,\nu}$.\\
These definitions imply that for a $p$-adic formal scheme $\Xcal$ topologically of finite type over $\Z_p$ we have a map
\[\widehat{\Ccal}_{K,\nu}(\Xcal)\longrightarrow \Cfrak_\nu(\Xcal^{\ad})\longrightarrow \Dfrak_\nu(\Xcal^{\ad})\]
which will be referred to as the \emph{period map}, see also \cite[5.b.4]{phimod}. Here the first arrow is given by analytification of a pair $(\Mfrak,\Phi)$.

The motivation to consider these stacks comes again from the theory of $p$-adic Galois representations.
By Kisin's classification of finite flat group schemes of $p$-power order (cf. \cite[1]{Kisin}), if $p>2$, the pairs $(\Mfrak,\Phi)\in\widehat{\Ccal}_K(R)$ correspond to finite flat group schemes over $\Spec\ \Ocal_K$ with an action of $R$, provided $R$ is finite and $\Mfrak$ is of projective dimension $1$ over $W[[u]]$. This motivates our definition of the stack $\widehat{\Ccal}_K$. If $p=2$ then there is a similar classification but it only applies to connected objects, where a $\phi$-module $(\Mfrak,\Phi)$ is called connected if $\Phi$ is topologically nilpotent (see \cite[Theorem 1.3.9]{Kisinp=2}).

Fixing a cocharacter $\nu$, a $\Spf\ \Ocal_F$-valued point of $\widehat{\Ccal}_{K,\nu}$ (for some $F$ containing the reflex field of $\nu$) defines a $p$-divisible group over $\Spec\,\Ocal_K$ with action of $\Ocal_F$. The idea of the definition of the stack $\widehat{\Ccal}_{K,\nu}$ is that the Hodge-filtration on the rational Dieudonn\'e module of this $p$-divisible group is of type $\nu$.

First we study what the period morphism does on $\Spf\,\Ocal_F$ valued points. We write $\Dfrak_K$ for the disjoint union of the $\Dfrak_\nu$ for all (miniscule) cocharacters $\nu$ as in $(\ref{specialcochar})$. The definitions imply that $\widehat{\Ccal}_{K,\nu}$ maps to $\Dfrak_\nu\subset\Dfrak_K$ under the period morphism.\\
We make the following observations that are already contained in Kisin's paper \cite{crysrep}.
\begin{prop}\label{periodim}
Let $F$ be a finite extension of $\Q_p$ with ring of integers $\Ocal_F$. A filtered $\Phi$-module $(D,\Phi,\Fcal^\bullet)\in\Dfrak_K(F)$ is weakly admissible if and only if it is in the image of the period morphism, i.e. there is an object $x=(\Mfrak,\Phi)\in \widehat{\Ccal}_K(\Ocal_F)$ such that 
\[(D,\Phi,\Fcal^\bullet)=\Pi(\Spf\, \Ocal_F)(x).\] 
\end{prop}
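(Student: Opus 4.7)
My plan is to identify the period morphism on $\Ocal_F$-valued points with Kisin's equivalence between Breuil-Kisin modules and $G_K$-stable $\Ocal_F$-lattices in crystalline representations of Hodge-Tate weights in $\{0,1\}$, and then to combine this with the theorem of Colmez-Fontaine. Condition $(\ref{specialcochar})$ on $\nu$ forces the Hodge-Tate weights to lie in $\{0,1\}$, which is precisely the range in which Kisin's integral $p$-adic Hodge theory applies.

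For the implication ``in the image $\Rightarrow$ weakly admissible'', I would take $(\Mfrak,\Phi)\in\widehat{\Ccal}_K(\Ocal_F)$, base-change to $\Bcal^{[0,1)}_F$, and use the isomorphism $\Cfrak_\nu\cong\Dfrak_\nu$ from Section 5 to identify the image with $(D,\Phi,\Fcal^\bullet)=\Pi(\Spf\,\Ocal_F)(\Mfrak,\Phi)$. By Kisin's main theorem in \cite{crysrep}, this filtered $\phi$-module coincides with $D_{\rm cris}(V)$, where $V$ is the crystalline $G_K$-representation reconstructed from $\Mfrak$ via $V_{\bf B}$ after inverting $p$. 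Weak admissibility then follows from \cite[Theorem A]{ColmezFont}.

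Conversely, given weakly admissible $(D,\Phi,\Fcal^\bullet)$, I would apply \cite[Theorem A]{ColmezFont} to produce a crystalline $G_K$-representation $V$ with $D_{\rm cris}(V)=D$, choose any $G_K$-stable $\Ocal_F$-lattice $T\subset V$ (which exists by compactness of $G_K$), and invoke Kisin's classification to attach to $T$ a Breuil-Kisin module $(\Mfrak,\Phi)$ with $\Mfrak$ free of rank $d$ over $W[[u]]\otimes_{\Z_p}\Ocal_F$ and cokernel of $\Phi$ annihilated by $E(u)$, i.e.\ an object of $\widehat{\Ccal}_K(\Ocal_F)$. The compatibility between Kisin's functor and $D_{\rm cris}$ then gives $\Pi(\Spf\,\Ocal_F)(\Mfrak,\Phi)=(D,\Phi,\Fcal^\bullet)$.

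The main obstacle I anticipate is verifying that the Breuil-Kisin module $\Mfrak$ attached to $T$ is genuinely free over $W[[u]]\otimes_{\Z_p}\Ocal_F$ of rank $d$, rather than merely of projective dimension at most $1$; here one uses that $\Ocal_F$ is a discrete valuation ring together with the bound on the Hodge-Tate weights. The case $p=2$ may additionally require the variant results of \cite{Kisinp=2}, since the classification works most cleanly for connected Breuil-Kisin modules in that case.
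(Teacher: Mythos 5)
Your proposal is correct and takes essentially the same route as the paper, whose entire proof is a citation of \cite[Corollary 10.4.8, 10.4.9]{survey} --- i.e.\ precisely the combination of the Colmez--Fontaine theorem with Kisin's dictionary between Breuil--Kisin modules of height $\leq 1$ and $G_K$-stable lattices in crystalline representations with Hodge--Tate weights in $\{0,1\}$, including the freeness of $\Mfrak$ over $\Ocal_F\otimes_{\Z_p}W[[u]]$ that you rightly flag. Your only superfluous worry is the case $p=2$: the functor $T\mapsto\Mfrak(T)$ of \cite{crysrep} and its compatibility with $D_{\rm cris}$ work for all $p$, the restriction to connected objects at $p=2$ being relevant only for the comparison with finite flat group schemes and $p$-divisible groups, which this proposition does not use.
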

\begin{proof}
This is a consequence of \cite[Corollary 10.4.8, 10.4.9]{survey}.
\end{proof}
Further we remark the following connection between the image of an $\Ocal_F$-valued point $(\Mfrak,\Phi)$ of $\widehat{\Ccal}_K$ and the Galois representation on the Tate module of the $p$-divisible group associated to $(\Mfrak,\Phi)$.
\begin{prop}
Let $F$ be a finite extension of $\Q_p$ with ring of integers $\Ocal_F$ and $x=(\Mfrak,\Phi)\in\widehat{\Ccal}_K(\Ocal_F)$. Write $V_x=T_p(\Gcal_x)\otimes_{\Z_p}\Q_p$ for the rational Tate-module of the associated $p$-divisible group and $(D,\Phi,\Fcal^\bullet)\in\Dfrak_K(F)$ for the image of $x$ under the period morphism.
Then there is a natural isomorphism of filtered $\Phi$-modules 
\[D_{\rm cris}(V_x(-1))\cong (D,\Phi,\Fcal).\]
\end{prop}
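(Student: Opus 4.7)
The plan is to deduce the isomorphism from three ingredients already in the literature: Kisin's reconstruction of the Galois representation from $(\Mfrak,\Phi)$, the construction of the period morphism (in particular the comparison in Lemma~\ref{lemxi} and the definition of $\underline{D}$), and Fontaine's classical crystalline comparison for $p$-divisible groups. Write $\Gcal_x$ for the $p$-divisible group over $\Spec\,\Ocal_K$ attached to $x=(\Mfrak,\Phi)$ by Kisin's classification, and let $M(\Gcal_x)$ denote its (contravariant) rational Dieudonn\'e module over $K_0$, equipped with the Hodge filtration on $M(\Gcal_x)\otimes_{K_0}K$.

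First I would identify $(D,\Phi,\Fcal^\bullet)$ with $M(\Gcal_x)$. By construction, $D=\Mfrak/u\Mfrak[1/p]$ carries the Frobenius induced by $\Phi$, and by Kisin's comparison (see \cite[Theorem~1.2.8]{Kisin} and the discussion in \cite[1.2]{crysrep}) this recovers the rational contravariant Dieudonn\'e module of $\Gcal_x$ as $\phi$-module. The filtration $\Fcal^\bullet$ on $D\otimes_{K_0}K$ is defined through the map $\xi$ of Lemma~\ref{lemxi} by pulling back $E(u)\Mfrak\subset \im\Phi$; applied to the universal situation this is exactly the Hodge filtration of $\Gcal_x$ (by the same reference). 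This matches $M(\Gcal_x)$ as a filtered $\phi$-module.

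Next I would identify the Galois representations. Kisin's theorem (cf.\ \cite[Proposition~A.6]{Kisin} or \cite[Theorem~0.2]{crysrep}) attaches to $(\Mfrak,\Phi)$ a $\Z_p$-lattice $T_\Mfrak$ inside a $G_{K_\infty}$-stable $\Q_p$-vector space via $T_\Mfrak=(\Mfrak\otimes_{W[[u]]}\widetilde{\bf A})^{\Phi=\id}$ (equivalently via the formula $(\ref{GKinftyrepn})$ after inverting $p$), and identifies this canonically with the dual of the Tate module of $\Gcal_x$ restricted to $G_{K_\infty}$; by full faithfulness of $G_K$-representations attached to $p$-divisible groups under restriction to $G_{K_\infty}$ (\cite[Theorem~0.2]{crysrep}, or \cite[Theorem~3.4.3]{Breuil}), this identification extends uniquely to a $G_K$-equivariant one. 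Hence $V_x$ and the $\Q_p$-representation built from $\Mfrak$ via the period rings coincide up to the standard duality/twist accounting for the covariant versus contravariant conventions.

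Finally I would invoke the crystalline comparison for $p$-divisible groups of Fontaine--Messing (see e.g.\ \cite[Corollary~10.4.9]{survey}, cited already in the proof of Proposition~\ref{periodim}): it provides a natural isomorphism of filtered $\phi$-modules
\[
D_{\rm cris}(V_p(\Gcal_x)^\vee)\cong M(\Gcal_x),
\]
which, after combining with the duality $V_p(\Gcal_x)^\vee \cong V_x^\vee$ and the standard identity $D_{\rm cris}(V^\vee)\cong D_{\rm cris}(V(-1))^\vee$ (or tracking conventions directly via the Tate twist, where the appearance of $(-1)$ reflects that our $D$ is the covariant filtered $\phi$-module coming from the Hodge filtration on $\Mfrak/u\Mfrak$ rather than its dual) yields the asserted isomorphism $D_{\rm cris}(V_x(-1))\cong (D,\Phi,\Fcal^\bullet)$. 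The only genuine subtlety is bookkeeping with Tate twists and duality to confirm that the twist by $(-1)$ is the correct one; everything else is an assembly of the cited results.
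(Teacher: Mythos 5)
Your strategy is sound but genuinely different from the one in the paper. You go through Dieudonn\'e theory: identify the image $(D,\Phi,\Fcal^\bullet)$ of the period morphism with the filtered Dieudonn\'e module of $\Gcal_x$, and then invoke the crystalline comparison theorem for $p$-divisible groups. The paper never mentions the Dieudonn\'e module: it uses Proposition $\ref{periodim}$ to know that $D$ is weakly admissible, lets $V=V_{\rm cris}(D)$ be the crystalline representation attached to it by Colmez--Fontaine \cite[Theorem A]{ColmezFont}, and then proves $V_x(-1)\cong V$ by showing that both restrictions to $G_{K_\infty}$ are canonically the representation $V_{\bf B}(\Mfrak\otimes_{{\bf A}^{[0,1)}}{\bf B})$ of $(\ref{GKinftyrepn})$ (by \cite[Proposition 1.1.13]{Kisin} on one side and \cite[Proposition 11.3.3]{survey} on the other), and concluding by full faithfulness of restriction to $G_{K_\infty}$ on crystalline representations \cite[Corollary 2.1.14]{crysrep}. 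The paper's route keeps everything inside the $\phi$-module formalism already set up and isolates the Tate twist in a single cited comparison; your route makes the geometric content (Hodge filtration of the $p$-divisible group) more visible but imports the Fontaine--Messing/Faltings comparison and all its normalization issues.

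Two caveats on your write-up. First, your middle paragraph does not parse as stated: full faithfulness of restriction to $G_{K_\infty}$ lets you promote a $G_{K_\infty}$-equivariant map between two \emph{crystalline $G_K$-representations} to a $G_K$-equivariant one, but $T_{\Mfrak}[1/p]=(\Mfrak\otimes\widetilde{\bf B})^{\Phi=\id}$ carries no $G_K$-action a priori, so there is nothing to ``extend''; one must first produce a $G_K$-representation on the other side (this is exactly what $V_{\rm cris}(D)$ does in the paper). Fortunately this paragraph is redundant in your scheme, since Fontaine--Messing already compares $V_x$ with the Dieudonn\'e module directly. Second, the identification of $\underline{D}(\Mfrak)$ (with the filtration coming from $E(u)\Mfrak\subset\im\Phi$ via Lemma $\ref{lemxi}$) with the filtered Dieudonn\'e module is a genuine input that needs a precise citation (it is in \cite[1.4 and Appendix A]{Kisin}, not at the reference you give), and the Tate twist in the statement is produced exactly by this bookkeeping, so ``deferring the twist'' is deferring the only nontrivial normalization in the argument.
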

\begin{proof}
As $(D,\Phi,\Fcal^\bullet)$ is weakly admissible, there is a crystalline representation $V=V_{\rm cris}(D)$ associated to it, by \cite[Theorem A]{ColmezFont}. By \cite[Corollary 2.1.14]{crysrep} the restriction functor to $G_{K_\infty}={\rm Gal}(\bar K/K_\infty)$ is fully faithful on the category of crystalline $G_K$-representation. Hence it is enough to show that 
\[V_x(-1)|_{G_{K_\infty}}\cong V|_{G_{K_\infty}}.\]
But with the notations of $(\ref{GKinftyrepn})$ we have isomorphisms 
\[V_x(-1)|_{G_{K_\infty}}\longrightarrow V_{{\bf B}}(\Mfrak\otimes_{{\bf A}^{[0,1)}}{{\bf B}})\longrightarrow  V|_{G_{K_\infty}},\]
where the first isomorphism follows from \cite[Proposition 1.1.13]{Kisin} and the second follows from \cite[Proposition 11.3.3]{survey}.
\end{proof}
\begin{rem}
By Kisin's classification, the objects $(\Mfrak,\Phi)\in\widehat{\Ccal}_K(\Spf\,\Ocal_F)$ correspond to $p$-divisible groups over $\Spec\,\Ocal_K$, see \cite[Corollary 2.2.22]{Kisin}. In the above Proposition we use implicitly that every crystalline representation with Hodge-Tate weights in $\{0,1\}$ arises as the Tate-module of some $p$-divisible group over $\Spec\,\Ocal_K$. This was shown by Breuil for $p\neq 2$ in \cite[Theorem 5.3.2]{pdivBreuil} and by Kisin in \cite[Theorem 0.3]{crysrep} for arbitrary $p$.
\end{rem}
Denote by $E$ the reflex field of the cocharacter $\nu$. We now define a stack $\Dfrak_\nu^{\rm int}$ on the category ${\Ad}^{\lft}_E$ which will be the image of the period morphism in a sense specified below.
We will write 
\[\Acal_X^{[0,1)}=\pr_{X,\ast}\Ocal_{X\times\Ubb}^+\]
for the sheafified version of the ring ${\bf A}^{[0,1)}$ on an adic space $X$.
\begin{defn}\label{height1modules}
A \emph{$\phi$-module of height $1$} over $\Acal_X^{[0,1)}$ is an $\Acal_X^{[0,1)}$-module $\Mfrak$ that is locally on $X$ free of finite rank with an injection $\Phi:\phi^\ast\Mfrak\hookrightarrow \Mfrak$ such that $E(u)\coker \Phi=0$.
\end{defn}
\begin{defn}
Let $\Dfrak_\nu^{\rm int}$ denote the groupoid that assigns to an adic space $X\in\Ad_E^{\rm lft}$ the groupoid of $(D,\Phi,\Fcal^\bullet)\in\Dfrak_\nu(X)$
such that locally on $X$ there exists a $\phi$-module $\Mfrak$ of height $1$ over $\Acal_X^{[0,1)}$ such that 
\[\Mfrak\otimes_{\Acal_X^{[0,1)}}\Bcal_X^{[0,1)}\cong \underline{\Mcal}(D,\Phi,\Fcal^\bullet),\]
where $\underline{\Mcal}(D)$ is the vector bundle on $X\times\Ubb$ defined in $(\ref{mapM})$.
\end{defn}
\begin{theo}\label{maintheoint}
\noindent {\rm (i)} Let $X\in\Ad_E^{\rm lft}$ and $(D,\Phi,\Fcal^\bullet)\in\Dfrak_\nu(X)$ be a family of filtered $\phi$-modules on $X$. Then $(D,\Phi,\Fcal^\bullet)\in\Dfrak_\nu^{\rm int}(X)$ if and only if the family 
\[\underline{\Mcal}(D)\otimes_{\Bcal_X^{[0,1)}}\Bcal_X^R\] 
of $\phi$-modules over the Robba ring is \'etale.\\
\noindent {\rm (ii)} The groupoid $\Dfrak_\nu^{\rm int}$ is an open substack of $\Dfrak_\nu^{\rm wa}$ and $\Dfrak_\nu^{\rm int}(F)=\Dfrak_\nu^{\rm wa}(F)$ for any finite extension $F$ of $E$.  
\end{theo}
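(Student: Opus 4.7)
My plan is to establish part (i) first (which is a structural characterization relating integrality to étaleness of the associated Robba ring $\phi$-module) and then deduce part (ii) by combining part (i) with the openness of the étale locus (Corollary \ref{etaleopen}) and the integrality of weakly admissible filtered $\phi$-modules at rigid points (Proposition \ref{periodim}).

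For the direction $\Rightarrow$ of (i), suppose $\Mfrak$ is a $\phi$-module of height $1$ over $\Acal_X^{[0,1)}$ with $\Mfrak\otimes_{\Acal_X^{[0,1)}}\Bcal_X^{[0,1)}\cong \underline{\Mcal}(D,\Phi,\Fcal^\bullet)$. All roots of $E(u)$ have valuation $|\pi|$, so $E(u)$ is a unit in $\Acal_X^{[r,1)}$ for every $r\in p^\Q\cap(|\pi|,1)$. Since $\coker\Phi$ is killed by $E(u)$, the restriction of $\Phi$ to $X\times\Ubb_{\geq r}$ is an isomorphism $\phi^\ast\Mfrak\xrightarrow{\sim}\Mfrak$. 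Thus $\Mfrak|_{X\times\Ubb_{\geq r}}$ is a $\Phi$-stable $\Acal_X^{[r,1)}$-lattice in $\underline{\Mcal}(D)|_{X\times\Ubb_{\geq r}}$ in the sense of Definition \ref{phimodonannulus}, and Proposition \ref{etaleonannulus} promotes this to an étale structure on $\underline{\Mcal}(D)\otimes\Bcal_X^R$. For the converse direction, assume that $\underline{\Mcal}(D)\otimes\Bcal_X^R$ is étale. Working locally on $X$, pick a $\Phi$-stable free $\Acal_X^{[r,1)}$-lattice $N\subset\underline{\Mcal}(D)|_{X\times\Ubb_{\geq r}}$ with $r\in(|\pi|,1)\cap p^\Q$. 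We must construct an $\Acal_X^{[0,1)}$-lattice $\Mfrak\subset\underline{\Mcal}(D)$ satisfying $E(u)\Mfrak\subset\Phi(\phi^\ast\Mfrak)\subset\Mfrak$ and agreeing with $N$ near the boundary. The data $N$ and the map $\xi$ of Lemma \ref{lemxi} (the latter controlling the behaviour near $u=0$ via the identification $i^\ast\underline{\Mcal}(D)=D$) are linked by $\Phi$-stability: on overlap annuli the two lattices are forced to coincide, giving a coherent subsheaf of $\underline{\Mcal}(D)$ on $X\times\Ubb$. Local freeness over $\Acal_X^{[0,1)}$ is then verified by the same technique as in Section 5, reducing to the universal case and invoking Proposition \ref{lokalfreifrei} together with a Nakayama argument on the special fibre of a formal model.

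For part (ii), the openness of $\Dfrak_\nu^{\rm int}$ inside $\Dfrak_\nu$ is immediate from part (i) combined with Corollary \ref{etaleopen}: membership in $\Dfrak_\nu^{\rm int}$ at a point $x\in X$ is equivalent to étaleness of the associated $\phi$-module over $\Bcal_{\widehat{k(x)}}^R$, which cuts out an open locus. To see $\Dfrak_\nu^{\rm int}\subset \Dfrak_\nu^{\rm wa}$, note that every rigid analytic point of $\Dfrak_\nu^{\rm int}$ lies in the image of the period map (pull back the integral lattice to an $\Ocal_F$-point of $\widehat{\Ccal}_{K,\nu}$), hence is weakly admissible by Proposition \ref{periodim}. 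Since $\Dfrak_\nu^{\rm wa}$ is by Proposition \ref{maxopen} the maximal open substack with weakly admissible rigid points, $\Dfrak_\nu^{\rm int}$ is forced into it. For the equality $\Dfrak_\nu^{\rm int}(F)=\Dfrak_\nu^{\rm wa}(F)$ for finite $F/E$: any weakly admissible $(D,\Phi,\Fcal^\bullet)\in\Dfrak_\nu^{\rm wa}(F)$ arises from some $(\Mfrak,\Phi)\in\widehat{\Ccal}_{K,\nu}(\Ocal_F)$ by Proposition \ref{periodim}, and this $(\Mfrak,\Phi)$ provides the required integral model, so $(D,\Phi,\Fcal^\bullet)\in\Dfrak_\nu^{\rm int}(F)$; the reverse inclusion was just established.

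The main obstacle lies in the converse direction of (i), where one must manufacture the integral model $\Mfrak$ on the whole open unit disc in families. The construction is a relative version of Kisin's in \cite{crysrep}, but verifying local freeness of $\Mfrak$ over $\Acal_X^{[0,1)}$ in the absence of a simple formal structure on $X$ requires the same kind of compatible-trivialisation argument (density of $\GL_d$ on nested annuli, then patching via Proposition \ref{lokalfreifrei}) that was needed in Section 5 to produce $\underline{\Mcal}(D)$ itself. Everything else in (ii) is formal: it merely assembles the openness of the étale locus, the characterization of the weakly admissible locus in terms of its rigid points, and the classical correspondence between $\Ocal_F$-points of $\widehat{\Ccal}_{K,\nu}$ and weakly admissible filtered $\phi$-modules over $F$.
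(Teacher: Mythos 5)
Your part (ii) is essentially the paper's argument and is fine: openness of the \'etale locus (Corollary~\ref{etaleopen}), the maximality statement of Proposition~\ref{maxopen}, and the rigid-point identification via Proposition~\ref{periodim} give what is needed (the paper additionally invokes Proposition~\ref{etalegeometric} to see that the condition is fpqc-local, which you should add). The gaps are in part (i). For the implication $\Rightarrow$ your argument rests on the claim that $E(u)$ is a unit in $\Acal_X^{[r,1)}$ for $r>|\pi|$; this is false. On the annulus $r\leq|u|<1$ one has $|E(u)|=|u|^{e}<1$ (the lower-order coefficients of the Eisenstein polynomial $E$ are bounded by $|p|<|u|^e$), so $E(u)$ is topologically nilpotent in $\Acal_X^{[r,1)}=\pr_{X,\ast}\Ocal^{+}_{X\times\Ubb_{\geq r}}$, and $E(u)^{-1}$, of absolute value $|u|^{-e}>1$, lies in $\Bcal_X^{[r,1)}$ but not in the sheaf of integral elements. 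Hence the restriction of the height-$1$ lattice $\Mfrak$ to a boundary annulus satisfies only $E(u)N\subset\Phi(\phi^{\ast}N)\subset N$ --- the finite-height condition --- and not the \'etale condition $\Phi(\phi^{\ast}N)=N$ of Definition~\ref{phimodonannulus}; concretely, $\det\Phi$ on such a lattice has absolute value a power of $|u|^{e}$, strictly less than $1$ whenever $\nu$ is nontrivial, whereas an \'etale lattice forces $|\det\Phi|\equiv 1$ near the boundary. Passing from a finite-height lattice to an \'etale one is genuinely the content of the slope-zero property (at a rigid point it goes through the \'etaleness of $\Mfrak\otimes{\bf A}$ over the $p$-adic completion ${\bf A}$ of $W((u))$, in which $E(u)$ \emph{is} a unit, together with Kedlaya's theory); so this direction, although true and dismissed by the paper as obvious, is not established by your reduction.

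For the converse in (i), the sentence ``on overlap annuli the two lattices are forced to coincide, giving a coherent subsheaf'' does not actually define the lattice over the inner disc. The paper's mechanism (Proposition~\ref{intmodel}) is: restrict to $X\times\boldB_{[0,r_2]}$, form $M'_r=\Mcal_r\cap N_r$, pass to the saturation $M_r=(M'_r\otimes_{{\bf A}_A^{[0,r_2]}}{\bf A}_A^{[r,r_2]})\cap M'_r[1/p]$, prove local freeness of $M_r$ by specialising to rigid points (where \cite[Lemma~1.3.13]{crysrep} applies) together with a Nakayama argument on the reduction modulo $\varpi_E$, and only then propagate $M_r$ to all of $X\times\Ubb$ by iteratively gluing with $\phi^{\ast}M_r$ along $\Phi$. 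You correctly locate the difficulty (freeness of the resulting $\Acal_X^{[0,1)}$-module) and the right tools (reduction to the reduced universal case, Proposition~\ref{lokalfreifrei}, Nakayama), but the intersection-and-saturation step that produces the candidate lattice in the first place is missing, so the proof of (i)($\Leftarrow$) is incomplete as written.
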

\begin{proof}
\noindent (i) The one implication is obvious. Assume that the family is \'etale.
As being \'etale may be checked at points and is compatible with base change we can reduce to the universal case and assume that $X$ is reduced.
Locally on $X$ there is $0<r<1$ and an $\Acal_X^{[r,1)}$-lattice $N$ in 
\[\underline{M}(D)\otimes_{\Bcal_X^{[0,1)}}\Bcal_X^{[r,1)}=\underline{\Mcal}(D)|_{X\times \Ubb_{\geq r}} \]
such that 
\[\Phi(\phi^\ast N)=N|_{X\times \Ubb_{\geq r_1}},\]
where we again write $r_i=r^{1/p^i}$.
We also write $N$ for the restriction of $N$ to the annulus $X\times\boldB_{[r,r_2]}$.

By Proposition $\ref{intmodel}$ below we can (locally on $X$) construct a free $\Acal_X^{[0,r_2]}$-module 
\[M_r\subset \Mcal(D)\otimes_{\Bcal_X^{[0,1)}}\Bcal_X^{[0,r_2]} \]
such that 
\[E(u)M_r\subset \Phi(\phi^\ast M_r|_{X\times \boldB_{[r,r_1]}})\subset M_r.\]
By gluing $M_r$ with $\phi^\ast M_r|_{X\times \boldB_{[r_2,r_3]}}$ using the isomorphism $\Phi$, we can extend $M_r$ to $\Acal_X^{[0,r_3]}$. Repeating this procedure yields the desired $\phi$-module of height $1$ over $\Acal_X^{[0,1)}$. The freeness of this module follows by the same argument as in Proposition $\ref{etaleonannulus}$\\
\noindent (ii) By Corollary $\ref{etaleopen}$ the condition of being \'etale is equivalent to being \'etale at all points, which is an open condition. The fact that this notion is fpqc-local follows from Proposition $\ref{etalegeometric}$. The final fact is a consequence of \cite[Theorem 1.3.8]{crysrep}.
\end{proof}
Recall that we write  
\[{\bf A}_A^{[r,s]}=\Gamma(\Spa(A,A^\circ),\Acal_{\Spa(A,A^\circ)}^{[r,s]})=A^\circ\widehat{\otimes}_{\Z_p}{\bf A}^{[r,s]}=A^\circ\langle T/s, r/T \rangle,\]
for a complete $\Q_p$-algebra $A$ topologically of finite type.
\begin{prop}\label{intmodel}
Let $X=\Spa\,(A,A^+)$ be a reduced affinoid adic space of finite type over $E$. 
Let $r>|\pi|$ with $r\in p^\Q$ and set $r_i=r^{1/p^i}$. Let $\Mcal_r$ be a free vector bundle on $X\times\boldB_{r_2}$ such that $E(u)\Mcal_r\subset \Phi(\phi^\ast\Mcal_r|_{X\times \boldB_{r_2}})\subset \Mcal_r$. Assume that there exists a free ${\bf A}_A^{[r,r_2]}=A^+\langle T/r_2,r/T\rangle$ submodule 
\[N_r\subset \Ncal_r:=\Mcal_r\otimes_{{\bf A}_A^{[0,r_2]}}{\bf A}_A^{[r,r_2]}\]
of rank $d$, containing a basis of $\Ncal_r$ such that 
\[\Phi(\phi^\ast(N_r\otimes_{{\bf A}_A^{[r,r_2]}} {\bf A}_A^{[r,r_1]}))=N_r\otimes_{{\bf A}_A^{[r,r_2]}} {\bf A}_A^{[r_1,r_2]}. \]
Then locally on $X$ there exists a free ${\bf A}_A^{[0,r_2]}$-submodule $M_r\subset \Mcal_r$ of rank $d$, containing a basis of $\Mcal_r$ such that
\begin{equation}\label{phicond}
\Phi:\phi^\ast(M_r\otimes_{{\bf A}_A^{[0,r_2]}}{\bf A}_A^{[0,r_1]})\longrightarrow M_r
\end{equation}
is injective with cokernel killed by $E(u)$.
\end{prop}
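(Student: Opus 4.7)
The main observation underlying the proof is the identity
\[
{\bf A}_A^{[0,r_2]} \;=\; {\bf B}_A^{[0,r_2]} \cap {\bf A}_A^{[r,r_2]}
\]
(intersection taken inside ${\bf B}_A^{[r,r_2]}$), which follows from the maximum modulus principle on $\boldB_{r_2}$: a section of $\Ocal_{X \times \boldB_{r_2}}$ is power-bounded on the full disc if and only if it is power-bounded on the boundary annulus, since the supremum of a holomorphic function on the closed disc is attained on the boundary $\{|u|=r_2\} \subset \boldB_{[r,r_2]}$. Consequently, it suffices to find a ${\bf B}_A^{[0,r_2]}$-basis of $\Mcal_r$ in which the matrix of $\Phi$ is integral on the annulus $\boldB_{[r,r_2]}$; integrality on the full disc then follows automatically, and the cokernel condition is inherited from the hypothesis $E(u)\coker\Phi = 0$.

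Working locally on $X$, the plan is to propagate the integral structure of $N_r$ inward using the $\Phi$-structure. The hypothesis $|\pi| < r$ makes $E(u)$ a unit on $\boldB_{[r,r_1]}$, so the height-$1$ condition forces $\Phi \colon \phi^\ast(\Mcal_r|_{[r^p,r]}) \xrightarrow{\sim} \Mcal_r|_{[r,r_1]}$ to be an isomorphism of free modules there, under which $\phi^\ast(N_r|_{[r, r_1]})$ is identified with $N_r|_{[r_1, r_2]}$ by hypothesis. Inductively, one builds ${\bf A}_A^{[r^{p^n},r_2]}$-lattices $L^{(n)}$ in $\Mcal_r|_{[r^{p^n},r_2]}$ with $L^{(0)} = N_r$, each $L^{(n+1)}$ extending $L^{(n)}$ inward: the innermost layer on $[r^{p^{n+1}}, r^{p^n}]$ is defined by transporting the appropriate layer of $L^{(n)}$ via $\Phi^{-1}$ into $\phi^\ast(\Mcal_r|_{[r^{p^{n+1}}, r^{p^n}]})$ and descending along the finite locally free (rank $p$) morphism $\phi$. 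Since $r^{p^n}\to 0$, the nested annuli exhaust $\boldB_{r_2}$, and the compatible system $(L^{(n)})$ glues to a coherent subsheaf $M_r \subset \Mcal_r$ on the full disc.

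Freeness of $M_r$ over ${\bf A}_A^{[0,r_2]}$ then follows as in the proof of Proposition \ref{etaleonannulus}: each $L^{(n)}$ is free after possibly further shrinking $X$, using Proposition \ref{lokalfreifrei}, and a compatible system of trivializations of the $L^{(n)}$ exists by a Gruson-style density argument. The containment of a ${\bf B}_A^{[0,r_2]}$-basis of $\Mcal_r$ in $M_r$ follows from the fact that $N_r$ contains a basis of $\Ncal_r$, propagated inward by the $\Phi$-iso, and the $\Phi$-compatibility of $M_r$ with cokernel killed by $E(u)$ is built into the construction.

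The principal obstacle lies in the inductive step when the annulus $[r^{p^{n+1}}, r^{p^n}]$ contains a root of $E(u)$, i.e., when $|\pi|$ falls in this range: there $\Phi$ fails to be an isomorphism on the full annulus, and the direct transfer-and-descend recipe breaks down. The strategy is to perform the descent only on the complement of the zero-dimensional vanishing locus of $E(u)$, where $\Phi$ is still an isomorphism, and then to extend the resulting coherent sheaf across the finite exceptional set using the a priori global existence of $\Mcal_r$ as a free module on $\boldB_{r_2}$ together with the uniqueness of integral extensions of a coherent module across a zero-dimensional locus on a regular rigid-analytic space.
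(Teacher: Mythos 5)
Your opening reduction is fine: for reduced $X$ the maximum modulus principle does give ${\bf A}_A^{[0,r_2]}={\bf B}_A^{[0,r_2]}\cap{\bf A}_A^{[r,r_2]}$, so integrality of the matrix of $\Phi$ (and of $E(u)$ times its inverse) on the boundary annulus would propagate to the whole disc. The problem is that your construction of the lattice does not go through, for two separate reasons, both located in the inward propagation. First, the descent step fails. To extend the lattice from $[r,r_2]$ to $[r^p,r]$ you must exhibit $\Phi^{-1}(N_r|_{[r,r_1]})\subset\phi^\ast(\Mcal_r|_{[r^p,r]})$ in the form $\phi^\ast L$ for a lattice $L$ over ${\bf A}_A^{[r^p,r]}$. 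But $\phi^\ast$ is injective and far from surjective on lattices: already in rank one over a point, a lattice $g(u^p)\Ocal^+$ in the image of $\phi^\ast$ has spectral norm $\rho\mapsto|g|_{\rho^p}$ whose slopes (as a piecewise linear function of $\log\rho$) are divisible by $p$, so for instance $u\cdot\Ocal^+$ on $\boldB_{[r,r_1]}$ does not descend along $\phi$. The hypothesis $\Phi(\phi^\ast(N_r|_{[r,r_1]}))=N_r|_{[r_1,r_2]}$ identifies $\Phi^{-1}$ of the \emph{outer} layer of $N_r$ with a $\phi$-pullback, but puts no constraint whatsoever on $\Phi^{-1}(N_r|_{[r,r_1]})$, which is what you need to descend. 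This asymmetry is exactly why $\Phi$-lattices propagate outward toward the boundary (as in Proposition $\ref{etaleonannulus}$ and the proof of Theorem $\ref{maintheoint}$) but not inward toward the origin.

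Second, the repair you propose for the annulus containing the zeros of $E(u)$ cannot work. The locus $V(E(u))\subset X\times\boldB_{r_2}$ is finite over $X$, hence a divisor of codimension one, not a locus across which coherent extensions are unique: $\Ocal$, $E(u)\Ocal$ and $E(u)^{-1}\Ocal$ all agree on its complement. Worse, the structure of the lattice along $V(E(u))$ is precisely the essential content of the proposition --- it is the only place where $E(u)\coker\Phi=0$ is a nontrivial condition, and it is where the Hodge filtration of the associated filtered $\phi$-module is encoded --- so it cannot be recovered ``for free'' from the complement. The paper's proof avoids propagation altogether: it forms $M'_r=\Mcal_r\cap N_r$, saturates to $M_r=(M'_r\otimes_{{\bf A}_A^{[0,r_2]}}{\bf A}_A^{[r,r_2]})\cap M'_r[1/p]$, obtains finite generation from noetherianity of $A^+\langle T/r_2\rangle$, verifies freeness and the $\Phi$-condition after base change to every finite flat quotient of $A^+$ by Kisin's pointwise argument, and then concludes by reduction modulo $\varpi_E$ and Nakayama. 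If you want to salvage a constructive approach, you would have to replace $N_r$ by a possibly smaller $\Phi$-stable lattice whose inverse images under $\Phi$ do descend, which is in effect what the intersection accomplishes.
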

\begin{proof}
The idea of the proposition is to give a relative version of the proof of \cite[Lemma 1.3.13]{crysrep}.\\
We also write $\Mcal_r$ for the global sections of the vector bundle. Write $M'_r=\Mcal_r\cap N_r\subset \Ncal_r$. This is an $A^+\langle T/r_2\rangle$-module. Further we set
\[M_r=(M'_r\otimes_{{\bf A}_A^{[0,r_2]}}{\bf A}_A^{[r,r_2]})\cap M'_r[\tfrac{1}{p}]\subset \Ncal_r.\]
Then $M_r$ is a finitely generated $A^+\langle T/r_2\rangle$-module as the ring is noetherian, and we claim that it is locally on $X$ free and satisfies $(\ref{phicond})$.
By the construction of $M_r$ the map in $(\ref{phicond})$ exists and for all quotients $A^+\rightarrow \Ocal_F$ onto a finite flat $\Z_p$-algebra, the $\Ocal_F\langle T/r_2\rangle$-module $M_r\otimes_{A^+}\Ocal_F$ is free and satisfies the condition in $(\ref{phicond})$ by the argument in the proof of \cite[Lemma 1.3.13]{crysrep}.
It now follows that the reduction $\bar M_r$ of $M_r$ modulo $\varpi_E$ defines a vector bundle on $\Spec(A^+/\varpi_EA^+)^{\rm red}\times \Abb^1$. Hence, after localization on $X$, we may assume that $\bar M_r$ it is free. As $M_r$ is $\Ocal_E$-flat (by construction it has no $\varpi_E$-power torsion) the claim follows by Nakayama's Lemma after lifting a basis of $\bar M_r$ to $M_r$.
\end{proof}
We now want to show that the stack $\Dfrak_\nu^{\rm int}$ is the image of the period map defined by Pappas and Rapoport in the sense of (a slightly weakened version) of \cite[Conjecture 5.3]{phimod}.

As the stacks $\Dfrak_\nu$, $\Dfrak_\nu^{\rm wa}$ and $\Dfrak_\nu^{\rm int}$ are Artin stacks, it makes sense to extend them to the category of all adic spaces (not necessarily of finite type) over the reflex field of $\nu$.

Let $F$ be a complete topological field over $\Q_p$ and $v_F:F^\times\rightarrow \Gamma_F$ a continuous valuation. As usual we write $F^+=\{a\in F\mid v_F(a)\leq 1\}$ for the ring of integral elements.

We say that $F$ is \emph{topologically finitely generated} if there exist finitely many elements $f_1,\dots,f_m\in F$ such that $\Frac(\Q_p[f_1,\dots,f_m])$ is dense in $F$.

We say that $F$ is \emph{of} $p$\emph{-adic type} if for any $f_1,\dots,f_m\in F$ the topological closure $A$ of the $\Q_p$-subalgebra $\Q_p[f_1,\dots,f_m]$ in $F$ is Tate, i.e. a quotient of some $\Q_p\langle T_1,\dots, T_{m'}\rangle$, and the intersection of $F^+$ with $A$ is precisely the ring of power bounded elements $A^\circ\subset A$.

If $F$ is topologically finitely generated and of $p$-adic type, then there is an adic space $X$ of finite type over $\Q_p$ such that $F$ is the completion of the residue field at a point of $X$: If $f_1,\dots,f_m$ are topological generators and if $A$ denotes the closure of $\Q_p[f_1,\dots,f_m]$, then the restriction of $v_F$ to $A$ defines a point in $\Spa(A,A^\circ)$ such that the completion of its residue field is $F$.  
Conversely, if $X$ is an adic space, then the completion of the residue field at a point $x\in X$ is topologically finitely generated and of $p$-adic type.

\begin{theo}
\noindent {\rm (i)} Let $F$ be a topologically finitely generated field of $p$-adic type and $(D,\Phi,\Fcal^\bullet)\in \Dfrak_\nu(F)$. Then there exists $(\Mfrak,\Phi)\in\widehat{\Ccal}_{K,\nu}(\Spf\,F^+)$ such that $\Pi(\Spf\,F^+)(\Mfrak,\Phi)=(D,\Phi,\Fcal^\bullet)$ if and only if 
\[\underline{\Mcal}(D)\otimes_{\Bcal_F^{[0,1)}}\Bcal_F^R\]
is \'etale if and only if $\Spa(F,F^+)\rightarrow \Dfrak_\nu$ factors over $\Dfrak_\nu^{\rm int}$.\\
\noindent {\rm (ii)} Let $X\in\Ad_{\Q_p}^{\rm lft}$ and $f:X\rightarrow \Dfrak_\nu$ a morphism defined by $(D,\Phi,\Fcal^\bullet)$. Then $f$ factors over $\Dfrak_\nu^{\rm int}$ if and only if there exists a covering $U_i$ of $X$ and formal models $\mathcal{U}_i$ of $U_i$ together with $(\Mfrak_i,\Phi_i)\in\widehat{\Ccal}_{K,\nu}(\mathcal{U}_i)$
such that
\[\Pi(\mathcal{U}_i)(\Mfrak_i,\Phi_i)=(D,\Phi,\Fcal^\bullet)|_{U_i}.\] 
\end{theo}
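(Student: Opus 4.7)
The plan is to prove part (i) by reducing to the finite-type case already handled by Theorem $\ref{maintheoint}$, and then derive part (ii) as a sheafified version. For part (i), the key structural input is that since $F$ is topologically finitely generated of $p$-adic type, any morphism $\Spa(F,F^+)\rightarrow \Dfrak_\nu$ factors through an affinoid adic space $X_0=\Spa(A,A^\circ)\in\Ad_{\Q_p}^{\rm lft}$ with $F=\widehat{k(x)}$ for some point $x\in X_0$, via a family $(D_0,\Phi_0,\Fcal_0^\bullet)\in \Dfrak_\nu(X_0)$ whose pullback along $\iota_x$ is $(D,\Phi,\Fcal^\bullet)$. The equivalence between the \'etaleness of $\underline{\Mcal}(D)\otimes \Bcal_F^R$ and $\Spa(F,F^+)\rightarrow \Dfrak_\nu$ factoring through $\Dfrak_\nu^{\rm int}$ at the point $x$ then follows from Theorem $\ref{maintheoint}$(i) applied to $X_0$, combined with the fact that both conditions are compatible with the base change $\iota_x$.

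For the equivalence with existence of $(\mathfrak{M},\Phi)\in\widehat{\Ccal}_{K,\nu}(\Spf F^+)$, the direct implication is nearly tautological: by construction of the period map, $\mathfrak{M}$ yields a $\phi$-module of height $1$ over $\Acal_F^{[0,1)}\cong F^+_W[[u]]$ whose scalar extension to $\Bcal_F^{[0,1)}$ is $\underline{\Mcal}(D,\Phi,\Fcal^\bullet)$, placing $(D,\Phi,\Fcal^\bullet)$ in $\Dfrak_\nu^{\rm int}(F)$. For the converse, I would use Corollary $\ref{etaleopen}$(ii) to shrink $X_0$ to an open neighborhood of $x$ on which $\underline{\Mcal}(D_0)\otimes \Bcal_{X_0}^R$ is \'etale everywhere, then apply Proposition $\ref{intmodel}$ (following the argument from the proof of Theorem $\ref{maintheoint}$(i)) to produce a free $\phi$-module of height $1$ over $\Acal_{X_0}^{[0,1)}\cong A^\circ\widehat{\otimes}_{\Z_p}W[[u]]$. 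Extending scalars from $A^\circ$ to $F^+$ (equivalently, taking the stalk at $x$ and completing) yields the desired $\mathfrak{M}\in\widehat{\Ccal}_K(\Spf F^+)$; the type-$\nu$ condition on $\coker\Phi$ needed for membership in $\widehat{\Ccal}_{K,\nu}$ follows from the assumption $(\ref{specialcochar})$ on $\nu$ combined with the identification of $\Fcal^\bullet$ via Lemma $\ref{lemxi}$.

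Part (ii) is the globalization. The backward direction is essentially immediate: the analytifications of the $\mathfrak{M}_i$ are $\phi$-modules of height $1$ over $\Acal_{U_i}^{[0,1)}$ whose tensor product with $\Bcal_{U_i}^{[0,1)}$ recovers $\underline{\Mcal}(D,\Phi,\Fcal^\bullet)|_{U_i}$, hence $(D,\Phi,\Fcal^\bullet)|_{U_i}\in\Dfrak_\nu^{\rm int}(U_i)$. For the forward direction, once we know $\underline{\Mcal}(D)\otimes\Bcal_X^R$ is \'etale (by Theorem $\ref{maintheoint}$(i)), we work locally on $X=\Spa(A,A^\circ)$ and apply Proposition $\ref{intmodel}$ to produce an integral lattice on a small annulus, then extend by iterated gluing with Frobenius pullbacks—as in the proof of Theorem $\ref{maintheoint}$(i)—to obtain a free $\phi$-module $\mathfrak{M}$ of height $1$ over $A^\circ\widehat{\otimes}_{\Z_p}W[[u]]$. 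Reading this as a sheaf on the formal model $\mathcal{U}=\Spf A^\circ$ produces the required object of $\widehat{\Ccal}_{K,\nu}(\mathcal{U})$, and $\Pi(\mathcal{U})(\mathfrak{M},\Phi)=(D,\Phi,\Fcal^\bullet)|_U$ holds by construction of the period map. The main obstacle is verifying that the iterated gluing produces an honestly free module rather than merely a coherent sheaf; this is handled exactly as in Proposition $\ref{etaleonannulus}$, via approximation of trivializations combined with Proposition $\ref{lokalfreifrei}$ on vanishing of vector bundles on products of closed polydiscs with annuli.
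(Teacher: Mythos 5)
Your strategy is essentially the paper's: embed $\Spa(F,F^+)$ as the completed residue field of a point of a finite type space, extend the family, and invoke Theorem $\ref{maintheoint}$; then, for (ii), convert a height-$1$ $\phi$-module over $\Acal_X^{[0,1)}$ into an object of $\widehat{\Ccal}_{K,\nu}$ on a formal model. Part (i) and the backward direction of (ii) are fine as written (for (i) the auxiliary space is the closure of $\Q_p[f_1,\dots,f_m]$ in $F$, hence an integral domain, so no reducedness issues arise there).

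The step that fails is your forward direction of (ii) for general $X$. You propose to apply Proposition $\ref{intmodel}$ directly over $X=\Spa(A,A^\circ)$ and to read the resulting free $A^\circ\widehat{\otimes}_{\Z_p}W[[u]]$-module as a sheaf on $\mathcal{U}=\Spf A^\circ$. This only works when $A$ is reduced, for two reasons. First, Proposition $\ref{intmodel}$ is stated and proved only for reduced affinoid spaces: its proof proceeds by checking assertions at rigid analytic points, which does not control nilpotents. Second, and more seriously, $\Spf A^\circ$ is not a formal model of $X$ when $A$ is non-reduced: if $f\in A$ is nilpotent then $p^{-n}f\in A^\circ$ for all $n$, so $A^\circ$ is not topologically of finite type over $\Z_p$ and the generic fiber of $\Spf A^\circ$ does not recover $X$ (the paper flags exactly this pathology of $\Ocal_X^+$ at the beginning of Section 8). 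The paper's proof therefore splits into two cases: for reduced $X$ the statement is indeed just a restatement of Theorem $\ref{maintheoint}$ together with the identification of free $\Acal_X^{[0,1)}$-modules with free $A^+\widehat{\otimes}_{\Z_p}W[[u]]$-modules; for general $X$ one instead chooses locally a formal model $\mathcal{Y}\rightarrow\mathcal{X}$ of the classifying morphism to the reduced universal family $X_\nu$ and pulls back the $\phi$-module over $\Ocal_{\mathcal{X}}\widehat{\otimes}_{\Z_p}W[[u]]$ along this map. You need this (or an equivalent) additional step to cover non-reduced $X$.
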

\begin{proof}
\noindent (i) If $(\Mfrak,\Phi)$ exists, then it is obvious that 
\[\underline{\Mcal}(D)\otimes_{\Bcal_F^{[0,1)}}\Bcal_F^R\]
is \'etale. Conversely, assume this is the case. By the above remark we may embed the field $F$ into a space $X$ of finite type and (after localizing) assume that $(D,\Phi,\Fcal^\bullet)$ extends to a family on $X$. Then \'etaleness is equivalent to the fact that $X\rightarrow \Dfrak_\nu$ factors locally around $x$ over $\Dfrak_\nu^{\rm int}$ by Theorem $\ref{maintheoint}$ and this also implies the existence of a lattice of $\phi$-modules of height $1$ over $\Acal_X^{[0,1)}$ locally around $x$. The fiber of this lattice at $x$ then defines the point $(\Mfrak,\Phi)\in\widehat{\Ccal}_{K,\nu}(\Spf\,F^+)$. \\
\noindent (ii) If $X$ is reduced this is just a restatement of Theorem $\ref{maintheoint}$, using the fact that a free $\Acal_X^{[0,1)}$-module on an reduced, affinoid adic space $X=\Spa(A,A^+)$ is the same as a free $A^+\widehat{\otimes}_{\Z_p}W[[u]]$-module and $\Spf\,A^+$ is a formal model for $X$. The general case follows by locally choosing a formal model $f:\mathcal{Y}\rightarrow \mathcal{X}$ for the morphism to the universal family and pulling back the family of $\phi$-modules over $\Ocal_{\mathcal{X}}\widehat{\otimes}_{\Z_p}W[[u]]$ along $f$.  
\end{proof}

\section{Families of crystalline representations}

In the previous section we have constructed an open substack of the stack of filtered $\phi$-modules on which there exists an integral model (in the sense of Kisin's paper \cite{crysrep}) for the (weakly admissible) filtered $\phi$-module, if the type of the filtration is given by a miniscule coweight. Now we want to construct an open substack of the weakly admissible locus on which there exists a family of crystalline representations giving rise to the restriction of the universal filtered $\phi$-module.

\subsection{Some sheaves of period rings}
First we have to define some sheaves of topological rings which are relative versions of the period rings recalled in section 2. Recall that an adic space $X$ comes along with an open and integrally closed subsheaf $\Ocal_X^+$ of the structure sheaf $\Ocal_X$. Unfortunately this subsheaf is only well behaved if $X$ is reduced: if $f\in\Ocal_X$ is nilpotent, then $p^{-n}f\in\Ocal_X^+$ for all $n$. Hence we will consider the integral versions of the period sheaves only for reduced spaces.
It will turn out that this is enough, as we have a reduced universal case.

The ring $R$ introduced in section 2 is equipped with a valuation ${\rm val}_R$ given by
\[{\rm val}_R((x_0,x_1,x_2,\dots))=v_p(x_0),\]
where $x_i\in\Ocal_{\mathbb{C}_p}$, and $x_i^p=x_{i-1}$ for all $i$. The topology defined by the valuation coincides with the canonical topology of $R$, and $R$ is complete
with respect to this topology.

Let $A^+$ be a reduced, $p$-adically complete, flat $\Z_p$-algebra topologically of finite type over $\Z_p$. We define
\[A^+\widehat{\otimes}W(R)=\lim\limits_{\substack{\longleftarrow \\ i\geq 0}}A^+\widehat{\otimes}_{\Z_p}W_i(R),\]
where $A^+\widehat{\otimes}_{\Z_p}W_i(R)$ is the completion of $A^+\otimes_{\Z_p}W_i(R)$ with respect to the discrete topology on $A^+/p^iA^+$
and the natural topology on the truncated Witt vectors $W_i(R)=W(R)/p^iW(R)$.
\begin{lem}\label{basicproperty}
\noindent {\rm (i)} Let $A^+$ and $B^+$ be two $\Z_p$-algebras as above and assume $B^+=A^+/I$ for some (closed) ideal $I\subset A^+$. Then
\[B^+\widehat{\otimes}W(R)=A^+\widehat{\otimes}W(R)/I(A^+\widehat{\otimes}W(R)).\]
\noindent {\rm (ii)} If $B^+$ is a finite $A^+$-algebra, then
\[B^+\widehat{\otimes}W(R)=(A^+\widehat{\otimes}W(R))\otimes_{A^+}B^+.\]
\noindent {\rm (iii)} Let $X$ be an reduced, affinoid adic space of finite type over $\Q_p$ and $X=\bigcup U_i$ be a covering by open, affinoid subspaces.
Write $A^+=\Gamma(X,\Ocal_X^+)$. Further set $A_i^+=\Gamma(U_i,\Ocal_X^+)$ and $A_{ij}^+=\Gamma(U_i\cap U_j,\Ocal_X^+)$. Then
\[0\longrightarrow A^+\widehat{\otimes}W(R)\longrightarrow \prod_i A_i^+\widehat{\otimes}W(R)\substack{\longrightarrow \\ \longrightarrow }\prod_{i,j}A_{ij}^+\widehat{\otimes}W(R)\]
is exact.
\end{lem}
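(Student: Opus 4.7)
The plan is to reduce all three statements, via the defining inverse limit $A^+\widehat{\otimes}W(R)=\varprojlim_i A^+\widehat{\otimes}_{\Z_p}W_i(R)$, to analogous statements at each truncation $W_i(R)$, then pass to the inverse limit. Two general facts will be used throughout: the transition maps $A^+\widehat{\otimes}_{\Z_p}W_{i+1}(R)\to A^+\widehat{\otimes}_{\Z_p}W_i(R)$ are surjective, so the inverse system is Mittag--Leffler; and $W_i(R)$ is flat over $\Z/p^i\Z$, because $R$ is a perfect $\mathbb{F}_p$-algebra and hence $W(R)$ is $p$-torsion free. I will also use without comment that $A^+$ is noetherian, being topologically of finite type and flat over $\Z_p$.

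For (i), I would first establish at each fixed $i$ the identity
\[B^+\widehat{\otimes}_{\Z_p}W_i(R)=(A^+\widehat{\otimes}_{\Z_p}W_i(R))/I\cdot(A^+\widehat{\otimes}_{\Z_p}W_i(R)).\]
Noetherianness of $A^+$ makes $I$ finitely generated and therefore closed in the $p$-adic topology, so quotient commutes with the completion defining $\widehat{\otimes}_{\Z_p}W_i(R)$. Surjectivity of the transition maps then lets the quotient be computed in the inverse limit, giving (i).

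For (ii), choose a finite presentation $(A^+)^m\to(A^+)^n\to B^+\to 0$. Completed tensor product commutes with finite direct sums, so applying $-\widehat{\otimes}_{\Z_p}W_i(R)$ produces the cokernel $B^+\widehat{\otimes}_{\Z_p}W_i(R)$; applying $-\otimes_{A^+}(A^+\widehat{\otimes}_{\Z_p}W_i(R))$ instead produces the cokernel $B^+\otimes_{A^+}(A^+\widehat{\otimes}_{\Z_p}W_i(R))$ by right exactness. The two cokernels agree by the five lemma, and the inverse limit gives (ii).

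For (iii), I would use the sheaf property of $\Ocal_X^+$ for reduced affinoid coverings (Huber's analogue of Tate acyclicity) to get exactness of the Čech sequence $0\to A^+\to\prod_i A_i^+\rightrightarrows\prod_{i,j}A_{ij}^+$. At each truncation level, tensor with $W_i(R)$ over $\Z/p^i\Z$; the flatness of $W_i(R)$ preserves left-exactness. After completing term-by-term (again using that the $\Ocal_X^+$-modules involved are well-behaved on reduced affinoids), the exact Čech sequence persists. Finally, the inverse limit over $i$ preserves the left-exactness and yields (iii).

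The main obstacle in each of the three parts is justifying that completion, taken with respect to the product of the $p$-adic topology on $A^+$ and the canonical topology on $W_i(R)$, commutes with the algebraic operation in question (quotient by a closed ideal, cokernel of a map of finite free modules, finite equalizer sequence). In every case this reduces to a topological flatness input for $W_i(R)$, guaranteed because $R$ is perfect of characteristic $p$ so that $W(R)$ is flat over $\Z_p$; once this is in hand the remaining arguments are formal manipulations of inverse limits.
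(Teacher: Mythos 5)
Your proposal follows essentially the same route as the paper: parts (i) and (ii) are treated there as immediate from the definition of the completed tensor product (your finite-presentation and Mittag--Leffler bookkeeping just makes this explicit), and for (iii) the paper likewise starts from the sheaf property of $\Ocal_X^+$, reduces modulo $p^i$, tensors with $W_i(R)$, completes, and passes to the limit. The only step both you and the paper leave implicit is the exactness of the Čech sequence after reduction modulo $p^i$, so nothing in your argument diverges from or falls short of the paper's own proof.
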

\begin{proof}
\noindent (i), (ii) These are direct consequences of the definition.\\
\noindent (iii) As $\Ocal_X^+$ is a sheaf, we find that 
\[0\longrightarrow A^+\longrightarrow \prod_i A_i^+\substack{\longrightarrow \\ \longrightarrow }\prod_{i,j}A_{ij}^+\]
is exact. This sequence is still exact if we reduce modulo $p^i$ and tensorize with $W_i(R)$. Further it stays exact if we complete with respect to the topology on $W_i(R)$. The claim follows from this.
\end{proof}
Using this Lemma, we can define a sheaf $\Ocal_X^+\widehat{\otimes}W(R)$ on a reduced, affinoid adic space $X$ of finite type over $\Q_p$ such that
\[\Gamma(U,\Ocal_X^+\widehat{\otimes}W(R))=\Gamma(U,\Ocal_X^+)\widehat{\otimes}W(R)\]
for all affinoid open subsets $U\subset X$: For an arbitrary open subset $V\subset X$ we define 
\[\Gamma(V,\Ocal_X^+\widehat{\otimes}W(R))=\lim\limits_{\substack{ \longleftarrow \\ V\supset U}}\Gamma(U,\Ocal_X^+)\widehat{\otimes}W(R),\]
where the limit is taken over all open, affinoid subspaces $U\subset V$.
Further we can extend this construction to all reduced adic spaces locally of finite type. Similarly we can define the sheaf $\Ocal_X^+\widehat{\otimes} W(\Frac R)$.

If $X\in\Ad^{\lft}_{\Q_p}$, then we can consider the sheaf $\Ocal_X\widehat{\otimes}W(\Frac R)[1/p]$, which is obtained from $\Ocal_X^+\widehat{\otimes}W(R)$ by inverting $p$. This sheaf also makes sense if $X$ is not reduced, where it is defined to be the restriction of $\Ocal_Y\widehat{\otimes}W(\Frac R)[1/p]$ to $X$ for some closed embedding of $X$ into a reduced space $Y$.

Given a section $f$ of $\Ocal_X\widehat{\otimes}W(\Frac R)[1/p]$, we say that $f$ vanishes at a rigid point $x\in X$ if 
\[f(x)\in (\Ocal_X\widehat{\otimes}W(\Frac R)[1/p])\otimes k(x)\]
is zero. We use the same terminology for the other sheaves of period rings that we are going to construct.
\begin{lem}\label{localizprinc1}
Let $A^+=\Z_p\langle T_1,\dots,T_n\rangle$ and $X=\Spa(A^+[1/p],A^+)$. \\
\noindent {\rm (i)}  The canonical map
\[A^+\widehat{\otimes}W(R)\longrightarrow \prod k(x)^+\otimes_{\Z_p}W(R),\]
where the product runs over all rigid points $x\in X$, is an injection.\\
\noindent {\rm (ii)} Let 
\[\alpha: (A^+\widehat{\otimes}W(R))^{m_1}\longrightarrow (A^+\widehat{\otimes}W(R))^{m_2} \]
be an $A^+\widehat{\otimes}W(R)$-linear map such that $\coker\, \alpha$ vanishes at all rigid points of $X$. Then $\coker\, \alpha=0$.
\end{lem}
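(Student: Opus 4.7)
The plan is to prove (i) by working modulo each $p^i$ and exploiting the explicit description of $A^+\widehat{\otimes}W(R)$ as a completed power series ring in $T_1,\dots,T_n$, and then to deduce (ii) from (i) via a Fitting-ideal argument.

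For (i), since $A^+\widehat{\otimes}W(R)=\varprojlim_i A^+\widehat{\otimes}_{\Z_p}W_i(R)$, it suffices to prove injectivity modulo each $p^i$. First I would use that $A^+/p^i=(\Z/p^i)[T_1,\dots,T_n]$ is a free $\Z/p^i$-module with basis $\{T^\alpha\}$, so that $A^+\widehat{\otimes}_{\Z_p}W_i(R)$ identifies with the set of formal series $\sum_\alpha a_\alpha T^\alpha$, $a_\alpha\in W_i(R)$, tending to $0$ in the natural (valuation) topology of $W_i(R)$; evaluation at a rigid point $x$ with $T_j(x)=t_j\in k(x)^+$ sends such a series to $\sum_\alpha a_\alpha t^\alpha\in k(x)^+\otimes W_i(R)$. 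Two ingredients then combine: the classical injectivity of $A^+\hookrightarrow\prod_x k(x)^+$ coming from the Jacobson property of the Tate algebra $A^+[1/p]$, and the $\Z_p$-flatness of $W_i(R)$, which preserves this injectivity upon tensoring. The non-formal step is the transfer to the completion, which I would handle by induction on $n$; the base case $n=1$ is a Weierstrass-type statement for convergent series in $W_i(R)[[T]]$ that vanish at every $t\in k(x)^+$, and the inductive step fixes one variable at a time at rigid-point values.

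For (ii), after choosing bases the map $\alpha$ is given by a matrix and the finitely presented cokernel $C=\coker\,\alpha$ has $0$-th Fitting ideal $\mathrm{Fitt}_0(C)\subset A^+\widehat{\otimes}W(R)$ generated by the maximal minors (so necessarily $m_1\geq m_2$ if the hypothesis is to be non-vacuous). Since Fitting ideals commute with base change, the hypothesis says that $\mathrm{Fitt}_0(C)$ generates the unit ideal in each fibre $k(x)^+\otimes W(R)$, and it suffices to prove $\mathrm{Fitt}_0(C)=A^+\widehat{\otimes}W(R)$. Writing $\mathrm{Fitt}_0(C)=(g_1,\dots,g_N)$ and choosing, for each rigid $x$, elements $h_j^{(x)}\in k(x)^+\otimes W(R)$ with $\sum_j g_jh_j^{(x)}=1$ in the fibre, the plan is to produce global lifts $h_1,\dots,h_N\in A^+\widehat{\otimes}W(R)$ such that $1-\sum_j g_jh_j$ vanishes at every rigid point, and then invoke (i) to conclude $1=\sum_j g_jh_j$ globally, whence $1\in\mathrm{Fitt}_0(C)$.

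The hard part will be the simultaneous lifting of the fibre-wise data $(h_j^{(x)})_x$ to a single global $N$-tuple $(h_j)$: this is a gluing/descent problem for which part (i) only provides a uniqueness statement and not existence. I would attempt this by inductive approximation modulo $p^i$, exploiting the $p$-adic completeness of $A^+\widehat{\otimes}W(R)$, the noetherianity of the Tate algebra $A^+$, and the explicit convergent power-series representation from the proof of (i) to produce approximate global lifts and refine them step by step in the $p$-adic topology.
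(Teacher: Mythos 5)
Both halves of your argument reduce the lemma to an unproven ``hard step'' which is in fact the entire content of the statement, so there is a genuine gap. In (i), the step you flag as ``non-formal'' --- a Weierstrass-type vanishing statement for convergent series in $W_i(R)[[T]]$, plus an induction on $n$ --- is not a standard result ($W_i(R)$ is non-noetherian and, for $i\geq 2$, not even a domain), and your chosen direction of expansion makes it genuinely awkward: evaluating $\sum_\alpha a_\alpha T^\alpha$ at $t\in k(x)^+$ lands in $k(x)^+\otimes_{\Z_p}W_i(R)$, where the elements $t^\alpha$ satisfy many relations, so a nonzero coefficient $a_{\alpha_0}$ cannot be detected termwise. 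The paper's proof transposes the expansion: after dividing by a power of $p$ (using $p$-torsion-freeness) one reduces mod $p$ and writes $\bar f=\sum_{i\in I}\bar f_i\otimes b_i$ with $(b_i)$ an $\Fbb_p$-basis of $R$ and $\bar f_i\in\Fbb_p[T_1,\dots,T_n]$ honest polynomials. A nonzero $\bar f_i$ is nonzero at some closed point, and the $b_i$ remain linearly independent after any base change of the polynomial factor; no induction on $n$ and no Weierstrass theory is needed.

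In (ii), the Fitting-ideal reformulation is fine, but the ``simultaneous lifting'' of the fibrewise partitions of unity $(h_j^{(x)})_x$ to a global $N$-tuple is exactly what you cannot do: as you note yourself, part (i) gives uniqueness and not existence, and the fibrewise data at distinct rigid points are completely unrelated, so an approximation modulo $p^i$ has nothing to converge to. The paper avoids any lifting. One first reduces surjectivity of $\alpha$ to surjectivity of $\bar\alpha=\alpha\bmod p$ ($p$-adic completeness of the target), and then applies Nakayama's lemma over $\Fbb_p[T_1,\dots,T_n]\widehat{\otimes}_{\Fbb_p}R$ with respect to the ideal $I$ generated by the elements of $R$ of positive valuation: $I$ lies in every maximal ideal, and the quotient is $\bar\Fbb_p[T_1,\dots,T_n]$, a noetherian Jacobson ring over which a finitely generated module vanishing at all closed points is zero. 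If you want to salvage your route, you should replace the gluing step by such a Nakayama-type reduction; as written, the proposal does not prove the lemma.
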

\begin{proof}
\noindent (i) Let $f\in A^+\widehat{\otimes}W(R)$. After dividing by some power of $p$, we may assume that 
\[f\mod p=\bar f\in\Fbb_p[T_1,\dots,T_n]\widehat{\otimes}_{\Fbb_p}R\]
is non zero. We need to check that the image of $\bar f$ in $\prod \kappa(x)\otimes_{\Fbb_p} R$ is non zero, where the product runs over all closed points $x$ of $\Spec\,\Fbb_p[T_1,\dots,T_n]$.\\
Fix an $\Fbb_p$-basis $(b_i)_{i\in I}$ of $R$, then 
\[\bar f=\sum_{i\in I}\bar f_i\otimes b_i,\]
with $\bar f_i\in\Fbb_p[T_1,\dots,T_n]$ and the sets $\{i\in I\mid \bar f_i\neq 0\ \text{and}\ {\rm val}_R(b_i)\leq r\}$ are finite for all $r\in \R$.
As $\bar f\neq 0$ there exists an index $i\in I$ such that $\bar f_i\neq 0$. Hence there exists a closed point $x\in\Spec\,\Fbb_p[T_1,\dots,T_n]$ such that $\bar f_i(x)\neq 0$. It follows that the image of $\bar f$ in $\prod \kappa(x)\otimes_{\Fbb_p} R$ does not vanish.\\
\noindent (ii) Again we can reduce the claim modulo $p$ and obtain a map
\[\bar\alpha: (\Fbb_p[T_1,\dots,T_n]\widehat{\otimes}R)^{m_1}\longrightarrow (\Fbb_p[T_1,\dots,T_n]\widehat{\otimes}R)^{m_2}\]
such that $\coker\, \bar\alpha$ vanishes at all closed points $x\in\Spec\,\Fbb_p[T_1,\dots,T_n]$. Now $\coker\, \alpha$ is a finitely generated module over $\Fbb_p[T_1,\dots,T_n]\widehat{\otimes}_{\Fbb_p}R$. We denote by $I$ the ideal of $\Fbb_p[T_1,\dots,T_n]\widehat{\otimes}_{\Fbb_p}R$ which is generated by all elements in $R$ with positive valuation. Then $I$ is contained in every maximal ideal and by Nakayama's lemma $\coker\bar\alpha$ vanishes if its reduction modulo $I$ vanishes. But $(\Fbb_p[T_1,\dots,T_n]\widehat{\otimes}_{\Fbb_p}R)/I=\Fbb_p[T_1,\dots,T_n]\otimes_{\Fbb_p}\bar\Fbb_p$ and every finitely generated module over this ring clearly vanishes if it vanishes modulo all maximal ideals of $\Fbb_p[T_1,\dots,T_n]$.  
\end{proof}
\begin{rem}\label{torsionlocaliz}
The same proofs as above also apply to $p$-torsion modules, i.e. if $M$ is a finitely presented $(A^+\widehat{\otimes}W(R))/p^m$-module which vanishes at all the rigid analytic points of $X$, i.e. $M\otimes_{\Z_p}k(x)^+=0$ for all rigid analytic points  $x\in X$, then $M=0$. Further, if $M\subset N$ is an inclusion of finitely presented $(A^+\widehat{\otimes}W(R))/p^m$-modules and $f\in M$ such that 
\[f(x)\in N\otimes_{\Z_p}k(x)^+\subset M\otimes_{\Z_p}k(x)^+\]
for all rigid points $x$, then $f\in N$.
\end{rem}
\begin{cor}\label{localizprinc2}
Let $X$ be an affinoid adic space of finite type over $\Q_p$. By Noether normalization there exists a finite morphism $f:X\rightarrow Y$, where $Y=\Spa(\Q_p\langle T_1,\dots, T_n\rangle,\Z_p\langle T_1,\dots,T_n \rangle)$ is a polydisc. Then the canonical map
\[\Gamma(X,\Ocal_X\widehat{\otimes}W(R)[\tfrac{1}{p}])\longrightarrow \prod\Gamma(f^{-1}(y),\Ocal_X)\otimes_{\Q_p}W(R)[\tfrac{1}{p}]\]
is an injection, where the product runs over all rigid analytic points of $Y$. Further a finitely presented $\Ocal_X\widehat{\otimes}W(\Frac R)[1/p]$-module vanishes if it vanishes at all rigid points of $Y$.
\end{cor}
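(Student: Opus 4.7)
The proof reduces both assertions to Lemma \ref{localizprinc1} via the finite morphism $f:X\to Y$ supplied by Noether normalization and the change-of-ring identity from Lemma \ref{basicproperty}(ii). First, since $\Ocal_X\widehat{\otimes}W(R)[1/p]$ was defined via closed embeddings into reduced ambient adic spaces, I may assume $X$ is reduced. With $A=\Gamma(X,\Ocal_X)$, $A^+=\Gamma(X,\Ocal_X^+)$, $B=\Q_p\langle T_1,\dots,T_n\rangle$, and $B^+=\Z_p\langle T_1,\dots,T_n\rangle$, the finiteness of $f$ combined with the excellence (hence Japanese property) of $B^+$ ensures that $A^+$ is a finite $B^+$-module. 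Lemma \ref{basicproperty}(ii) then yields $A^+\widehat{\otimes}W(R)=A^+\otimes_{B^+}(B^+\widehat{\otimes}W(R))$ as an ordinary tensor product; and because each fiber $A\otimes_B k(y)$ at a rigid $y\in Y$ is finite-dimensional over $\Q_p$, the ring $\Gamma(f^{-1}(y),\Ocal_X)\otimes_{\Q_p}W(R)[1/p]$ identifies with $(A^+\otimes_{B^+}k(y)^+)\otimes_{\Z_p}W(R)[1/p]$.

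I would establish the second (module-vanishing) assertion first, since injectivity will follow from it. Let $M$ be a finitely presented $\Ocal_X\widehat{\otimes}W(\Frac R)[1/p]$-module that vanishes at every rigid $y\in Y$. Via the identification above, $M$ becomes a finitely presented $B\widehat{\otimes}W(\Frac R)[1/p]$-module, and its fiber at every rigid $y\in Y$ is still zero because the $X$-fiber of $M$ is an $(A\otimes_B k(y))\otimes W(\Frac R)[1/p]$-module on which $B\widehat{\otimes}W(\Frac R)[1/p]$ acts through the quotient $k(y)\otimes W(\Frac R)[1/p]$. Combining Lemma \ref{localizprinc1}(ii) with Remark \ref{torsionlocaliz} (to reduce modulo powers of $p$, pass to the limit, and then invert $p$) yields $M=0$. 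The injectivity assertion is then immediate: given $g$ in the kernel, clear denominators so that $g\in A^+\widehat{\otimes}W(R)$, and pick a presentation $(B^+)^m\to(B^+)^r\to A^+\to 0$ (using that $B^+$ is noetherian). The cyclic $B\widehat{\otimes}W(R)[1/p]$-submodule $N\subset A\widehat{\otimes}W(R)[1/p]$ generated by $g$ is a quotient of $(B\widehat{\otimes}W(R)[1/p])^r$ modulo an explicit finitely generated relation submodule, hence finitely presented, and vanishes at every fiber over a rigid $y\in Y$; applying the module statement gives $N=0$, so $g=0$.

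The main obstacle is verifying that Lemma \ref{localizprinc1}(ii) extends cleanly to finitely presented modules over $B\widehat{\otimes}W(\Frac R)[1/p]$ rather than the $(B^+\widehat{\otimes}W(R))/p^m$-setting in which it was stated. The Nakayama argument of that lemma turned on the fact that elements of $R$ of positive valuation lie in every maximal ideal of $\Fbb_p[T_1,\dots,T_n]\widehat{\otimes}_{\Fbb_p}R$; to cover the $p$-inverted setting one clears denominators and argues inductively modulo $p^m$, with Remark \ref{torsionlocaliz} furnishing each inductive step. A minor secondary point is checking that the finite presentation of $A^+$ over $B^+$ passes through $\widehat{\otimes}\,W(R)$ compatibly with fiberwise reduction at rigid points, which is a direct consequence of Lemma \ref{basicproperty}(ii).
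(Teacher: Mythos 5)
Your overall strategy is the paper's: the paper's entire proof is the observation that $\Ocal_X$ is a finite $\Ocal_Y$-module, so that Lemma \ref{basicproperty}(ii) identifies $\Gamma(X,\Ocal_X\widehat{\otimes}W(R)[1/p])$ with $A\otimes_B\bigl(B\widehat{\otimes}W(R)[1/p]\bigr)$ and everything reduces to Lemma \ref{localizprinc1} over the polydisc. Your treatment of the module-vanishing assertion is correct and is exactly this reduction.

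Your derivation of injectivity from the module statement, however, contains a step that fails. You assert that the cyclic $B\widehat{\otimes}W(R)[1/p]$-submodule $N$ generated by $g$ is finitely presented because $A^+$ admits a finite presentation over $B^+$; but that presentation presents the \emph{ambient} module $A\widehat{\otimes}W(R)[1/p]$, not $N$. As a cyclic module, $N$ is the quotient of the rank-one free module by $\mathrm{Ann}(g)$, and there is no reason for this annihilator to be finitely generated over the non-noetherian ring $B\widehat{\otimes}W(R)[1/p]$. Moreover, even granting finite presentation, the hypothesis of the vanishing criterion is not verified for $N$: the vanishing of the image of $g$ in each fiber $M\otimes k(y)$ of the ambient module does not give $N\otimes k(y)=0$, since $N\otimes k(y)\rightarrow M\otimes k(y)$ need not be injective (tensoring is only right exact). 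The intended argument uses instead the second assertion of Remark \ref{torsionlocaliz} (with the roles of the two modules as intended there): choose a finite presentation $(B^+)^s\xrightarrow{\ \beta\ }(B^+)^r\rightarrow A^+\rightarrow 0$, so that by Lemma \ref{basicproperty}(ii) one has $A^+\widehat{\otimes}W(R)=\coker(\beta\otimes\id)$; after clearing denominators, lift $g$ to $\tilde g\in(B^+\widehat{\otimes}W(R))^r$; the fiberwise vanishing of $g$ says precisely that $\tilde g(y)$ lies in the image of $\beta\otimes\id$ in each rigid fiber, whence $\tilde g\in\im(\beta\otimes\id)$ by that remark applied modulo powers of $p$, i.e.\ $g=0$. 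A minor further point: your opening "reduction" to reduced $X$ is not a logical reduction (proving the statement for a reduced ambient space does not prove it for a closed subspace), but it is harmless here since the integral sheaves of this subsection are only defined for reduced spaces, so reducedness is implicit in the statement.
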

\begin{proof}
As $\Ocal_X$ is a finite $\Ocal_Y$-module, this is consequence of lemma $\ref{localizprinc1}$.
\end{proof}

Let $X\in\Ad^{\lft}_{\Q_p}$ be a reduced space. We define the sheaf $\widetilde{\Acal}_X$ by taking the closure of $\Ocal_X^+\otimes_{\Z_p}\widetilde{\bf A}$ in $\Ocal_X^+\widehat{\otimes}W(\Frac R)$. By this we mean that for an affinoid open $U\subset X$ we define $\Gamma(U,\widetilde{\Acal}_X)$ to be the topological closure of 
\[\Gamma(U,\Ocal_X^+)\otimes_{\Z_p}\widetilde{\bf A}\subset \Gamma(U,\Ocal_X^+\widehat{\otimes}W(\Frac R)).\]
Similarly we define the sheaves $\widetilde{\Acal}_X^{[0,1)}$ and $\widetilde{\Bcal}_X$ by taking the closures of
\begin{align*}
\Gamma(U,\Ocal_X^+)\otimes_{\Z_p}\widetilde{\bf A}^{[0,1)}&\subset \Gamma(U,\Ocal_X^+\widehat{\otimes}W(R)),\ \ \text{respectively}\\
\Gamma(U,\Ocal_X)\otimes_{\Q_p}\widetilde{\bf B}&\subset \Gamma(U,\Ocal_X\widehat{\otimes}W(\Frac R)[\tfrac{1}{p}]).
\end{align*}
Further we define the sheaf $\Acal_X$ as follows: If $X=\Spa(A,A^+)$ is an affinoid adic space, then
\[\Gamma(X,\Acal_X):=\Gamma(X,\Acal_X^{[0,1)})[\tfrac{1}{u}]^\wedge=(A^+\otimes_{\Z_p}W)((u))^\wedge,\]
where $(-)^\wedge$ means $p$-adic completion. As in the construction of $\Ocal_X^+\widehat{\otimes}W(R)$ we can extend this definition to a sheaf $\Acal_X$ on $X$.\\
Finally we define $\Bcal_X=\Acal_X[1/p]$.
Again we can consider $\Bcal_X$ and $\widetilde{\Bcal}_X$ on any (not necessarily reduced) adic space $X$.
By construction all these sheaves are endowed with an $\Ocal_X$ (resp. $\Ocal_X^+$) linear continuous Frobenius $\phi$. Further we have an action of $G_{K_\infty}$ on $\widetilde{\Acal}_X$ and $\widetilde{\Bcal}_X$.
\begin{rem}
The obvious analogue of Lemma $\ref{localizprinc1}$ and Corollary $\ref{localizprinc2}$ also holds true for the subsheaves of $\Ocal_X\widehat{\otimes}W(\Frac R)[1/p]$ introduced above.
\end{rem}
\begin{lem}\label{enoughrigpts}
Let $A^+=\Z_p\langle T_1,\dots,T_n\rangle$ and $X=\Spa(A^+[1/p],A^+)$. Let $S\in\{\Z_p,W,{\bf A}^{[0,1)},{\bf A},\widetilde{\bf A}^{[0,1)},\widetilde{\bf A}\}$ be a closed subring of $W(\Frac R)$ and let $f\in A^+\widehat{\otimes}W(\Frac R)$ such that $f(x)\in k(x)^+\otimes_{\Z_p}S$ for all rigid points $x\in X$. Then $f$ is in the closure of $A^+\otimes_{\Z_p}S\subset A^+\widehat{\otimes}W(\Frac R)$.
\end{lem}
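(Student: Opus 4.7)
The topology on $A^+\widehat{\otimes}W(\Frac R)$ is the inverse limit of the topologies on the $A^+\widehat{\otimes}_{\Z_p}W_m(\Frac R)$, so it suffices to check, for each $m\geq 1$, that the image $f_m$ of $f$ in $A^+\widehat{\otimes}_{\Z_p}W_m(\Frac R)$ lies in the closure of $A^+\otimes_{\Z_p}S/p^m S$. Since $\Frac R$ is a perfect field of characteristic $p$, each $W_m(\Frac R)$ is a free $\Z/p^m$-module, and for each of the six rings $S$ listed, the inclusion $S/p^m S\hookrightarrow W_m(\Frac R)$ can be split as $\Z/p^m$-modules (with their natural topologies), yielding a topological direct sum decomposition
\[W_m(\Frac R) = S/p^m S \oplus T_m.\]

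Taking completed tensor products with $A^+$ respects this splitting, so $f_m = g_m + h_m$ with $g_m$ in the closure of $A^+\otimes_{\Z_p}S/p^m S$ and $h_m\in A^+\widehat{\otimes}_{\Z_p}T_m$; it thus suffices to show $h_m=0$. By hypothesis $f(x)\in k(x)^+\otimes_{\Z_p}S$ for every rigid point $x$ of $X$, hence $f_m(x)\in k(x)^+\otimes_{\Z_p}S/p^m S$. Since $k(x)^+$ is torsion-free over $\Z_p$ and thus $k(x)^+/p^m$ is flat over $\Z/p^m$, the direct sum decomposition propagates after evaluation at $x$, so $g_m(x)$ lands in the $S/p^m S$-summand and we conclude $h_m(x)=0$ for every rigid $x$. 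The analogue of Lemma $\ref{localizprinc1}$(i) for $W(\Frac R)$ in place of $W(R)$ (equivalently, the $\Frac R$-version of Remark $\ref{torsionlocaliz}$) then forces $h_m=0$, so $f_m$ is in the closure of $A^+\otimes_{\Z_p}S/p^m S$. Passing to the inverse limit over $m$ completes the proof.

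The main obstacle is producing the $\Z/p^m$-linear splitting $W_m(\Frac R)=S/p^m S\oplus T_m$ in a way that respects the topologies, so that the completed tensor product with $A^+$ actually decomposes. For $S\in\{\Z_p,W\}$ this is essentially immediate from the fact that $W_m(\Frac R)$ is a free $W_m(k)$-module (which uses that $\Frac R/k$ is a perfect extension). For $S\in\{{\bf A}^{[0,1)},{\bf A}\}$, the image of $u$ is the Teichm\"uller lift $[\underline{\pi}]$, whose reduction is transcendental in $\Frac R$, so monomials in $u$ are $W_m$-linearly independent in $W_m(\Frac R)$; one can extend a $W_m$-basis of $S/p^m S$ (consisting of such monomials and, in the case of ${\bf A}$, their negative powers) to a $W_m$-basis of $W_m(\Frac R)$, yielding the desired splitting. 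The tilde-variants are obtained in the same way after adjoining the extra generators coming from the closure of the maximal unramified extension. The verification of the $W(\Frac R)$-analogue of Lemma $\ref{localizprinc1}$ is formal: the proof in the $W(R)$ case only uses the $\Fbb_p$-vector space structure on the reduction mod $p$ together with the existence of enough closed points of $\Spec(A^+/p) = \Spec \Fbb_p[T_1,\dots,T_n]$, and both ingredients are still available when $R$ is replaced by $\Frac R$.
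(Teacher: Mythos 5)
Your proof is correct and rests on the same two pillars as the paper's: a direct-sum splitting of $W(\Frac R)$ (resp.\ its truncations) along the subring $S$, followed by the ``enough rigid points'' injectivity of Lemma $\ref{localizprinc1}$/Remark $\ref{torsionlocaliz}$ to kill the component transverse to $S$. The difference is in how the $p$-power filtration is handled. The paper reduces modulo $p$ once, where the splitting is just the extension of an $\Fbb_p$-basis of $\bar S\subset\Frac R$ to one of $\Frac R$, and then climbs back up by $p$-adic approximation; you instead work modulo $p^m$ for every $m$ and pass to the inverse limit. Your route buys a cleaner limit argument (in particular you never need to check that the hypothesis ``$f(x)\in k(x)^+\otimes S$'' survives the subtract-and-divide-by-$p$ iteration, a point the paper leaves implicit), but it pays for this by requiring a \emph{topological} $\Z/p^m$-linear splitting $W_m(\Frac R)=S/p^mS\oplus T_m$ for each $m$, which is genuinely more delicate than the mod-$p$ statement; your sketch via Teichm\"uller-type lifts of a topological $\Fbb_p$-basis (orthogonal for the valuation, so that the completed tensor product with $A^+$ decomposes) does work, and is at the same level of rigor as the paper's own use of the basis $(b_i)_{i\in I}$. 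If you wanted to shorten your argument you could simply do the $m=1$ case by your method and then quote the paper's $p$-adic approximation device to conclude.
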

\begin{proof}
By $p$-adic approximation we may reduce this claim modulo $p$. Write $\bar S=S/pS$. Fix an $\Fbb_p$-basis $(b_i)_{i\in I}$ of $\Frac R$ such that $(b_i)_{i_\in J}$ is a basis of $\bar S$ for a certain subset $J\subset I$. We can expand the reduction $\bar f$ of $f$ modulo $p$ in a series
\[\bar f=\sum_{i\in I}\bar f_i\otimes b_i.\]
Our assumption implies that $\bar f_i(x)=0$ for all $i\notin J$ and all closed points $x\in\Spec\,\Fbb_p[T_1,\dots,T_n]$. It follows that $\bar f_i=0$ for $i\notin J$.
\end{proof}
\begin{rem}
With the same notation as in the lemma, write $A=A^+[1/p]$ and let $B$ be a finite $A$-algebra. The the above implies that 
\[f\in B\widehat{\otimes}W(\Frac R)[1/p]\]
lies in the closure of $B\otimes_{\Q_p}{\bf B}$ inside $B\widehat{\otimes}W(\Frac R)[1/p]$, if 
\[f(x)\in (B\otimes_A k(x))\otimes_{\Q_p}{\bf B}\] 
for all rigid points $x\in \Spa(A,A^+)$. The same statement is also true, if we replace ${\bf B}$ by any of the other rings of the lemma.
\end{rem}
\begin{cor}\label{invariantsheaves}
Let $X$ be an adic space locally of finite type over $\Q_p$ and $x\in X$ be a rigid point. Then 
\begin{align*}
\widetilde{\Bcal}_X\otimes k(x)&=\widetilde{\bf B}\otimes_{\Q_p}k(x), & \Bcal_X\otimes k(x)&={\bf B}\otimes_{\Q_p}k(x).
\end{align*} Further 
\begin{align*}
\widetilde{\Bcal}_X^{\phi=\id}&= \Ocal_X, & \widetilde{\Bcal}_X^{G_{K_\infty}}&=\Bcal_X.
\end{align*}
If $X$ is reduced, a similar statement is true for the sheaf of integral rings.
\end{cor}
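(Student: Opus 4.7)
The plan is to prove the four assertions in sequence, each reducing via Lemma \ref{enoughrigpts} to the classical identities $\widetilde{\bf B}^{\phi=\id}=\Q_p$ and $\widetilde{\bf B}^{G_{K_\infty}}={\bf B}$ recalled in section~2.

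First I would identify the fibers at rigid points. The claim is local and, because $\widetilde{\Bcal}_X$ and $\Bcal_X$ on a non-reduced $X$ are defined by restriction from a reduced ambient space, I may assume $X=\Spa(A,A^+)$ is a reduced affinoid of finite type over $\Q_p$. If $x\in X$ is rigid, then $k(x)$ is a finite extension of $\Q_p$, so $k(x)\otimes_{\Q_p}\widetilde{\bf B}$ is a finite free $\widetilde{\bf B}$-module, hence $p$-adically complete, hence closed inside $k(x)\otimes_{\Q_p}W(\Frac R)[1/p]$. The continuous specialization map $A\widehat{\otimes}W(\Frac R)[1/p]\rightarrow k(x)\otimes_{\Q_p}W(\Frac R)[1/p]$ carries the dense subring $A\otimes_{\Q_p}\widetilde{\bf B}$ into $k(x)\otimes_{\Q_p}\widetilde{\bf B}$, and therefore carries its closure $\widetilde{\Bcal}_X$ into this closed target. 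The reverse inclusion is obvious. The same argument with ${\bf B}$ in place of $\widetilde{\bf B}$ handles $\Bcal_X$.

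Next I would treat the two invariance statements together. The inclusions $\Ocal_X\subset\widetilde{\Bcal}_X^{\phi=\id}$ and $\Bcal_X\subset\widetilde{\Bcal}_X^{G_{K_\infty}}$ are immediate from the classical identities and the triviality of both actions on $\Ocal_X$. For the reverse inclusions, let $f$ be an invariant section; localizing and applying Noether normalization via Corollary \ref{localizprinc2}, I reduce to the case $A^+=\Z_p\langle T_1,\dots,T_n\rangle$, and after multiplying by a suitable power of $p$ I may assume $f\in\widetilde{\Acal}_X$. By the first step, for each rigid $x\in X$ the fiber $f(x)$ lies in $k(x)^+\otimes_{\Z_p}\widetilde{\bf A}$. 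Flatness of $k(x)$ over $\Q_p$ gives
\[
(\widetilde{\bf B}\otimes_{\Q_p}k(x))^{\phi=\id}=k(x), \qquad
(\widetilde{\bf B}\otimes_{\Q_p}k(x))^{G_{K_\infty}}={\bf B}\otimes_{\Q_p}k(x),
\]
so invariance places $f(x)$ in $k(x)$ (resp.\ in ${\bf B}\otimes_{\Q_p}k(x)$) and intersection with the integral piece places it in $k(x)^+$ (resp.\ in $k(x)^+\otimes_{\Z_p}{\bf A}$). Lemma \ref{enoughrigpts} with $S=\Z_p$ (resp.\ $S={\bf A}$) then shows that $f$ lies in the closure of $A^+\otimes_{\Z_p}\Z_p=A^+$ (resp.\ of $A^+\otimes_{\Z_p}{\bf A}$, whose closure in $A^+\widehat{\otimes}W(\Frac R)$ coincides with $\Acal_X$ by construction). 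Re-dividing by the initial power of $p$ gives $f\in\Ocal_X$ (resp.\ $f\in\Bcal_X$). The statement for the integral sheaves when $X$ is reduced is obtained by running the same argument without inverting $p$.

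The main obstacle is the bookkeeping around the various $p$-adic closures: one must verify that $\widetilde{\Bcal}_X$, $\Bcal_X$ and their integral subsheaves specialize correctly at rigid points (Step~1), and that after Noether normalization and clearing denominators one is genuinely in the setting of Lemma \ref{enoughrigpts}, i.e.\ in $A^+\widehat{\otimes}W(\Frac R)$ over a polydisc. Once this is set up, everything reduces cleanly to the fiberwise classical identities, and the integral case for reduced $X$ comes along for free since the entire argument already runs at the integral level.
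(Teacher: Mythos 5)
Your proposal follows the paper's own argument: identify the fibers at rigid points, then use Noether normalization together with the injection into the product over rigid points (Corollary \ref{localizprinc2}) and Lemma \ref{enoughrigpts} to reduce the invariants computation to the classical identities $\widetilde{\bf B}^{\phi=\id}=\Q_p$ and $\widetilde{\bf B}^{G_{K_\infty}}={\bf B}$ in finite-dimensional fibers. The only imprecision is the phrase ``I reduce to the case $A^+=\Z_p\langle T_1,\dots,T_n\rangle$'': a reduced affinoid is only \emph{finite over} a polydisc, so one must retain the finite $B$-algebra $A$ and apply the remark following Lemma \ref{enoughrigpts} for finite algebras, computing invariants over rigid points $y$ of the polydisc in the finite-dimensional $k(y)$-algebra $A\otimes_B k(y)$ rather than in a field $k(x)$ --- which is exactly how the paper runs the argument.
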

\begin{proof}
The first point is a direct consequence of Lemma $\ref{basicproperty}$. The second point is local on $X$ so we may assume that $X=\Spa(A,A^+)$ is affinoid.
By the Noether normalization theorem $X$ is finite over a polydisc $Y=\Spa(B,B^+)$. As $A$ is finite over $B$ we find that 
\[\Gamma(X,\widetilde{\Bcal}_X)=\Gamma(Y,\widetilde{\Bcal}_Y)\otimes_BA,\]
As in Corollary $\ref{localizprinc2}$ we have
\begin{equation}\label{inclusion}
\begin{aligned}
\Gamma(X,\widetilde{\Bcal}_X)\hookrightarrow &\prod\Gamma(f^{-1}(y),\Ocal_X)\otimes_{\Q_p}\widetilde{\bf B}\\
 =&\prod (A\otimes_Bk(y))\otimes_{\Q_p}\widetilde{\bf B},
\end{aligned}
\end{equation}
where the product runs over all rigid points $y\in Y$, and this inclusion is compatible with the action of $\phi$ and $G_{K_\infty}$. But, as $A\otimes_B k(y)$ is a finite dimensional $k(y)$-vector space we find that  
\[\big((A\otimes_Bk(y))\otimes_{\Q_p}\widetilde{\bf B}\big)^{G_{K_\infty}}=(A\otimes_{B}k(y))\otimes_{\Q_p}{\bf B}.\]
By Lemma $\ref{enoughrigpts}$ the image of $\Gamma(X,\Bcal_X)\subset \Gamma(X,\widetilde{\Bcal}_X)$ under $(\ref{inclusion})$ is identified with 
\[\Gamma(X,\widetilde{\Bcal}_X)\cap\left(\prod((A\otimes_Bk(y))\otimes_{\Q_p}\widetilde{\bf B}\right)^{G_{K_\infty}}=\Gamma(X,\widetilde{\Bcal}_X)^{G_{K_\infty}}\]
and the claim follows. The same argument applies to the other equality as well.
\end{proof}
\begin{rem}
If $x\in X$ is any point, and $\mathscr{R}$ is any of the sheaves considered above, is makes sense to define the sheaf $\iota_x^\ast\mathscr{R}$ on $\Spa(k(x),k(x)^+)$: We define it to be the completion of $\mathscr{R}\otimes k(x)$ with respect to its canonical topology. Sometimes we will also write $\mathscr{R}_{\widehat{k(x)}}$ for $\iota_x^\ast\mathscr{R}$.
\end{rem}

We now need to construct a relative version of Fontaine's period ring $B_{\rm cris}$. Again we first consider the case of a reduced space $X$ locally of finite type over $\Q_p$.
The map $\theta: W(R)\rightarrow \Ocal_{\mathbb{C}_p}$ extends to a map
\[\theta_X:\Ocal_X^+\widehat{\otimes}W(R)\longrightarrow \Ocal_X^+\widehat{\otimes}_{\Z_p}\Ocal_{\mathbb{C}_p},\]
where the completed tensor product on the right denotes the $p$-adic completion. We then define $\Ocal_X^+\widehat{\otimes}A_{\rm cris}$ to be the $p$-adic completion of the divided power envelope of $\Ocal_X^+\widehat{\otimes}W(R)$ with respect to $\ker \theta_X$. Finally we define 
\begin{align*}
\Ocal_X\widehat{\otimes}B_{\rm cris}^+&=\Ocal_X^+\widehat{\otimes}A_{\rm cris}[1/p] &  \Ocal_X\widehat{\otimes}B_{\rm cris}&=\Ocal_X\widehat{\otimes}B_{\rm cris}[1/t].
\end{align*}
These sheaves again make sense for any adic space $X\in\Ad_{\Q_p}^{\lft}$. Further there is a continuous $\Ocal_X$-linear Frobenius $\phi$ on $\Ocal_X\widehat{\otimes}B_{\rm cris}$ and a filtration on $(\Ocal_X\widehat{\otimes}B_{\rm cris})\otimes_{K_0}K$.
\begin{rem}
The notation $A^+\widehat{\otimes}A_{\rm cris}$ might be a little misleading, as this is not the completion of the ordinary tensor product over $\Z_p$ for the $p$-adic topology. The reason that it is not enough to consider this completion is that it does not contain $A^+[[u]]$, where $u=[\underline{\pi}]$. 
\end{rem}
\begin{lem}\label{Acrislocaliz}
Let $A^+=\Z_p\langle T_1,\dots,T_n\rangle$ and $A=A^+[1/p]$. Further write $X=\Spa (A,A^+)$.\\
\noindent {\rm (i)} The canonical maps
\begin{align*}
A^+\widehat{\otimes}A_{\rm cris}&\longrightarrow \prod k(x)^+\otimes_{\Z_p}A_{\rm cris},\\ 
A\widehat{\otimes}B_{\rm cris}&\longrightarrow \prod k(x)\otimes_{\Q_p}B_{\rm cris},
\end{align*}
where the products runs over all rigid points of $X$, are injections.\\
\noindent {\rm (ii)} A finitely presented $A^+\widehat{\otimes}A_{\rm cris}$-module is zero if it vanishes at all rigid points of $X$.\\
\noindent {\rm (iii)}  A finitely presented $A\widehat{\otimes}B_{\rm cris}$-module is zero if it vanishes at all rigid points of $X$.  \\
\noindent {\rm (iv)} Let $f\in A^+\widehat{\otimes} A_{\rm cris}$ such that $f(x)\in k(x)^+\otimes_{\Z_p}W\subset k(x)^+\otimes_{\Z_p}A_{\rm cris}$ for all rigid points $x\in X$. Then $f\in A^+\otimes_{\Z_p}W\subset A^+\widehat{\otimes}A_{\rm cris}$.
\end{lem}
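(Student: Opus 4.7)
The strategy is to leverage the explicit description of $A_{\rm cris}$ as the $p$-adic completion of the divided-power envelope $D = W(R)\langle \xi\rangle$ of $W(R)$ along the principal ideal $\ker\theta = (\xi)$, where $\xi = [\underline{p}] - p$. Since $\xi$ is a non-zerodivisor, $D$ is free as a $W(R)$-module on $\{\xi^{[n]}\}_{n \geq 0}$, and the same holds after the flat base change to $A^+ \widehat\otimes W(R)$. Consequently, for every $k \geq 1$,
\[ (A^+ \widehat\otimes A_{\rm cris})/p^k \;=\; \bigoplus_{n \geq 0}(A^+ \widehat\otimes W(R))/p^k \cdot \xi^{[n]}, \]
and evaluation at a rigid point respects this PD-decomposition. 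This reduces each of the four claims, modulo $p^k$, to corresponding statements about $A^+ \widehat\otimes W(R)$ provided by Lemma~\ref{localizprinc1}, Remark~\ref{torsionlocaliz}, and Lemma~\ref{enoughrigpts}.

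For (i), let $f \in A^+ \widehat\otimes A_{\rm cris}$ vanish at every rigid point and expand $f \equiv \sum_n f_n \xi^{[n]} \pmod{p^k}$ as a finite sum. Vanishing of $f(x)$ forces $f_n(x) = 0$ for each $n$, hence $f_n \equiv 0 \pmod{p^k}$ by Remark~\ref{torsionlocaliz}; letting $k\to\infty$ and using $p$-adic separatedness gives $f=0$, and the $B_{\rm cris}$-version follows by clearing a denominator $p^at^b$. Part (iv) is settled by the same expansion: the hypothesis $f(x)\in k(x)^+\otimes W$ forces $f_n(x)=0$ for $n\geq 1$ by uniqueness of the PD-decomposition, so $f_n\equiv 0\pmod{p^k}$ by Remark~\ref{torsionlocaliz}, while Lemma~\ref{enoughrigpts} applied to $f_0$ with $S=W$ places $f_0 \bmod p^k$ in the image of $A^+ \otimes_{\Z_p} W$. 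Since $k$ is finite and $W$ is finite free over $\Z_p$, the ring $A^+ \otimes_{\Z_p} W$ is already $p$-adically complete, and passage to the limit yields $f \in A^+ \otimes_{\Z_p} W$.

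For (ii), I would present the module as the cokernel of a matrix $\alpha$, reduce mod $p$, and apply a Nakayama argument generalizing Lemma~\ref{localizprinc1}(ii). The right Nakayama ideal is $J = I \cdot (A_{\rm cris}/p) + (\xi^{[n]} : n \geq 1)$, where $I\subset R$ is the ideal of positive-valuation elements; using the PD-identity $(\xi^{[n]})^p = \tfrac{(np)!}{(n!)^p}\xi^{[np]}$ and Legendre's formula one checks that each $\xi^{[n]}$ with $n\geq 1$ is nilpotent modulo $p$, so $J$ lies in the Jacobson radical of $(A^+/p)\widehat\otimes A_{\rm cris}/p$, and the quotient is $(A^+/p)\otimes_{\Fbb_p}\bar\Fbb_p$. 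The Nakayama argument of Lemma~\ref{localizprinc1}(ii) then transfers verbatim to give $\bar M=0$, and $p$-adic Nakayama in $A^+\widehat\otimes A_{\rm cris}$ lifts this to $M=0$. For (iii), one clears denominators in a presentation to obtain a map $\alpha'$ over $A^+\widehat\otimes A_{\rm cris}$ with $\coker(\alpha')[1/pt]=M$, and reduces to (ii) by applying a torsion analog (Remark~\ref{torsionlocaliz}-style) to the successive $p^k$- and $t^k$-torsion subquotients of $\coker\alpha'$. The main technical obstacle I anticipate lies precisely in this last step: the hypothesis on $M$ only implies that $\coker(\alpha')\otimes k(x)^+$ is $pt$-power torsion at each rigid $x$, and combining the PD-decomposition mod $p^k$ with the localization at $t$ cleanly enough to propagate the vanishing through the filtration will require some bookkeeping; once this is in place, all other ingredients are direct descendants of the $W(R)$-level results already proved.
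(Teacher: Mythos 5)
Your reduction to the $W(R)$-level results is the right instinct and is also what the paper does, but the structural claim you build everything on is false: the divided power envelope of the principal ideal $\ker\theta=(\xi)$ in $W(R)$ is \emph{not} free as a $W(R)$-module on $\{\xi^{[n]}\}_{n\geq 0}$. Since $\xi$ already lies in $W(R)$, one has the nontrivial $W(R)$-linear relation $\xi^{2}\cdot\xi^{[0]}-2!\cdot\xi^{[2]}=0$ (and, surviving modulo $p^{k}$ for every $k\geq 2$, the relation $\xi^{p}\cdot\xi^{[0]}-p!\cdot\xi^{[p]}=0$). Freeness of a PD envelope on the divided powers holds for a PD \emph{polynomial} variable, not for the divided powers of an element of the base ring. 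Consequently your asserted direct sum decomposition of $(A^{+}\widehat\otimes A_{\rm cris})/p^{k}$ fails, the expansion $f\equiv\sum_{n}f_{n}\xi^{[n]}$ is not unique, and the key deductions in (i) and (iv) --- ``vanishing of $f(x)$ forces $f_{n}(x)=0$ for each $n$'' and ``$f(x)\in k(x)^{+}\otimes W$ forces $f_{n}(x)=0$ for $n\geq 1$ by uniqueness of the PD-decomposition'' --- do not follow. This is a genuine gap, not a bookkeeping issue.

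The paper's proof circumvents this entirely: modulo $p^{m}$, any given element $f$ (or the finitely many matrix entries of a presentation) lies in a subalgebra of $(A^{+}\widehat\otimes A_{\rm cris})/p^{m}$ that is \emph{finitely presented} over $(A^{+}\widehat\otimes W(R))/p^{m}$, hence is a quotient of $\bigl(\Z_p\langle T_1,\dots,T_s\rangle\widehat\otimes W(R)\bigr)/p^{m}$ (extra variables for the finitely many divided powers involved) by a finitely generated ideal; Remark $\ref{torsionlocaliz}$ and Lemma $\ref{enoughrigpts}$ then apply directly to that finitely presented module, with no freeness or uniqueness of expansion required. Your Legendre-formula computation showing $(\xi^{[n]})^{p}\equiv 0\pmod p$ for $n\geq 1$ is correct and your Nakayama ideal for (ii) is reasonable, but to repair the argument you should replace the direct-sum decomposition by the finite-presentation reduction throughout; for (iii) the paper simply clears denominators by a power of $t$ in a presentation and discards torsion, so the difficulty you anticipate there largely evaporates once (ii) is in place.
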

\begin{proof}
\noindent (i) It suffices to show the statement for $A^+$. Let $f\in A^+\widehat{\otimes}A_{\rm cris}$ such that $f(x)=0$ for all rigid analytic points $x\in X$. We show that 
\[f\mod p^m\in (A^+\widehat{\otimes}A_{\rm cris})/p^m \]
 vanishes for all $m\geq 1$.
But $(A^+\widehat{\otimes}A_{\rm cris})/p^m$ is the quotient of the divided power envelope of $A^+\widehat{\otimes}W(R)$ with respect to $\ker \,\theta_X$ by $p^m$ and $f$ lies in a finitely presented $(A^+\widehat{\otimes}W(R))/p^m$-subalgebra. That means that $f$ lies in the quotient of 
\[\big(\Z_p\langle T_1,\dots,T_s\rangle\widehat{\otimes}W(R)\big)/p^m\]
by some finitely generated ideal and vanishes at all the rigid points of $\Spa(\Q_p\langle T_1,\dots,T_s\rangle,\Z_p\langle T_1,\dots,T_s\rangle)$ for some $s\geq n$. By Remark $\ref{torsionlocaliz}$ is follows that $f=0\mod p^m$.\\
\noindent (ii) Let 
\[\alpha (A^+\widehat{\otimes}A_{\rm cris})^{m_1}\longrightarrow (A^+\widehat{\otimes}A_{\rm cris})^{m_2}\]
be a morphism whose cokernel vanishes at all rigid points. We show that its reduction modulo powers of  $p$ vanish. 
The entries of the matrix of $\alpha\mod p^m$ lie in a finitely presented $(A^+\widehat{\otimes} W(R))/p^m$-algebra and hence a similar argument as above shows that $\alpha$ is surjective modulo powers of $p$.\\
\noindent (iii)  This follows from (ii) after multiplying a presentation with an arbitrary power of $t$ and neglecting the $t$-torsion part of the quotient of two $A^+\widehat{\otimes}A_{\rm cris}$-lattices.\\
\noindent (iv)  By Lemma $\ref{enoughrigpts}$ it suffices to show that $f\in A^+\widehat{\otimes} W(R)$. Reducing the claim modulo powers of $p$ and applying the same argument as above this follows from Remark $\ref{torsionlocaliz}$.
\end{proof}
\begin{rem}
With the notations of the lemma above, let $B$ be a finite $A$-algebra. Then Lemma $\ref{Acrislocaliz}$ (iii) implies that a finitely presented $B\widehat{\otimes}B_{\rm cris}$-module vanishes if it vanishes at all rigid points of $\Spa(A,A^+)$. If $B$ is reduced this is  equivalent to its vanishing at all rigid points of $\Spa(B,B^+)$.\\
Further the analogue of (iv) also holds for $B$, i.e. if $f\in B\widehat{\otimes}B_{\rm cris}$ such that $f(x)\in (k(x)\otimes_AB)\otimes_{\Q_p} K_0\subset (k(x)\otimes_AB)\otimes_{\Q_p}B_{\rm cris}$ for all rigid points $x$, then $f\in B\otimes_{\Q_p}K_0$.
\end{rem}
\begin{cor}\label{Bcrisinvariants}
Let $X\in\Ad^{\rm lft}_{\Q_p}$ and $x\in X$ be a rigid point. Then
\[(\Ocal_X\widehat{\otimes}B_{\rm cris})\otimes k(x)=k(x)\otimes_{\Q_p}B_{\rm cris}.\]
Further we have
\begin{align*}
(\Ocal_X\widehat{\otimes}B_{\rm cris})^{G_K}&=\Ocal_X\otimes_{\Q_p}K_0 \\
 \Fil^0(\Ocal_X\widehat{\otimes}B_{\rm cris})^{\phi=\id}&=\Ocal_X.
\end{align*}
\end{cor}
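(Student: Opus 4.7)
The plan is to prove the three identities in turn, in each case first establishing the fiber at a rigid point and then applying the localization/approximation machinery of Lemmas \ref{basicproperty}, \ref{enoughrigpts} and \ref{Acrislocaliz}.

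First I would handle the fiber computation $(\Ocal_X\widehat{\otimes}B_{\rm cris})\otimes k(x)=k(x)\otimes_{\Q_p}B_{\rm cris}$ for a rigid point $x\in X$. Working locally we may assume $X=\Spa(A,A^+)$ is a reduced affinoid of finite type, and the point $x$ corresponds to a continuous surjection $A^+\twoheadrightarrow k(x)^+$ with $k(x)^+$ finite over $\Z_p$. By Lemma \ref{basicproperty}(i),(ii) this quotient induces $k(x)^+\widehat{\otimes}W(R)=k(x)^+\otimes_{\Z_p}W(R)$, and since the divided power envelope and $p$-adic completion commute with quotients by closed ideals (and with tensoring up along a finite flat extension of $\Z_p$), the same formula persists for $A_{\rm cris}$: $k(x)^+\widehat{\otimes}A_{\rm cris}=k(x)^+\otimes_{\Z_p}A_{\rm cris}$. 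Inverting $p$ and $t$ yields the first claim. For non-reduced $X$, embed into a reduced space as in the definition of $\Ocal_X\widehat{\otimes}B_{\rm cris}$.

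Next, for $(\Ocal_X\widehat{\otimes}B_{\rm cris})^{G_K}=\Ocal_X\otimes_{\Q_p}K_0$, I follow the strategy of Corollary \ref{invariantsheaves}. The inclusion $\supset$ is clear because $B_{\rm cris}^{G_K}=K_0$. For $\subset$, by Noether normalization reduce to the case where $X$ is finite over a polydisc $Y=\Spa(\Q_p\langle T_1,\dots,T_n\rangle,\Z_p\langle T_1,\dots,T_n\rangle)$. The analogue of Corollary \ref{localizprinc2} (which for $\Ocal_X\widehat{\otimes}B_{\rm cris}$ is exactly Lemma \ref{Acrislocaliz}(iii) plus the injectivity coming from (i)) gives a $G_K$-equivariant injection
\[\Gamma(X,\Ocal_X\widehat{\otimes}B_{\rm cris})\hookrightarrow \prod_{y}(A\otimes_B k(y))\otimes_{\Q_p}B_{\rm cris},\]
the product running over rigid points $y\in Y$. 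Each $k(y)$-algebra $A\otimes_B k(y)$ is finite-dimensional, so taking $G_K$-invariants on the right gives $\prod_y (A\otimes_B k(y))\otimes_{\Q_p}K_0$. Hence any $G_K$-invariant $f$ on the left satisfies $f(y)\in(A\otimes_Bk(y))\otimes_{\Q_p}K_0$ for every rigid $y\in Y$, and by Lemma \ref{Acrislocaliz}(iv) (together with the remark following it) this forces $f\in A\otimes_{\Q_p}K_0$.

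Finally, for $\Fil^0(\Ocal_X\widehat{\otimes}B_{\rm cris})^{\phi=\id}=\Ocal_X$, I use the fundamental exact sequence $0\to\Q_p\to \Fil^0 B_{\rm cris}\xrightarrow{1-\phi} B_{\rm cris}\to 0$ of $p$-adic Hodge theory. Fiberwise at a rigid point $x$ one gets $(\Fil^0(k(x)\otimes_{\Q_p}B_{\rm cris}))^{\phi=\id}=k(x)\otimes_{\Q_p}\Q_p=k(x)$, by flatness of $k(x)/\Q_p$. Then the same Noether normalization and the localization statement of Lemma \ref{Acrislocaliz}(iv) (applied with the closed subring $\Z_p\subset W(\Frac R)$ in Lemma \ref{enoughrigpts}) forces a $\phi$-invariant section of $\Fil^0(\Ocal_X\widehat{\otimes}B_{\rm cris})$ to lie in $\Ocal_X$. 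The reverse inclusion is immediate since $\Ocal_X=\Fil^0\Ocal_X\subset \Fil^0(\Ocal_X\widehat{\otimes}B_{\rm cris})$ and $\phi$ acts as the identity on $\Ocal_X$.

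The main obstacle I anticipate is the careful bookkeeping needed to extend Lemma \ref{Acrislocaliz}(iv) from $A_{\rm cris}$ (with target subring $W$) to the situation needed for the $G_K$-invariants (target subring the closure of $A\otimes_{\Q_p}K_0$ in $A\widehat{\otimes}B_{\rm cris}$) and for the $\phi$-invariants in $\Fil^0$ (target subring $A$ itself). This amounts to identifying appropriate closed subrings of $W(\Frac R)[1/p]$ to apply Lemma \ref{enoughrigpts}, reducing modulo powers of $p$ and $t$, and invoking Remark \ref{torsionlocaliz}; the routine but somewhat delicate step is checking that every fiberwise-vanishing finitely presented module really vanishes in the non-noetherian rings $\Ocal_X\widehat{\otimes}B_{\rm cris}$.
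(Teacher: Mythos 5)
Your proposal is correct and follows essentially the same route as the paper: reduce to a reduced affinoid, Noether-normalize to a finite map onto a polydisc, use the injection of Lemma \ref{Acrislocaliz} into the product over rigid points of the polydisc (compatibly with $\phi$, $G_K$ and the filtration), compute the invariants fiberwise by classical $p$-adic Hodge theory, and descend via Lemma \ref{Acrislocaliz}(iv), Lemma \ref{enoughrigpts} and the subsequent remark. The paper's own proof is just a one-line reference to the argument of Corollary \ref{invariantsheaves}, so your write-up is a faithful (and more detailed) expansion of it.
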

\begin{proof}
By applying lemma $\ref{Acrislocaliz}$, the proof is the same as in Corollary $\ref{invariantsheaves}$. We reduce to the case where $X$ is affinoid and use the fact that the map
\[\Gamma(X,\Ocal_X\widehat{\otimes}B_{\rm cris})\longrightarrow \prod \Gamma(f^{-1}(y),\Ocal_X)\otimes_{\Q_p}B_{\rm cris}\]
is an injection compatible with $\phi$ and the filtration for a finite morphism $f:X\rightarrow Y$ to a polydisc, where the product is taken over all 
rigid points of $Y$.
\end{proof}

The following diagram summarizes the sheaves of integral period rings on a reduced space $X$. All maps are continuous inclusions.
\begin{equation} \label{periodsheaves}
\begin{aligned}
\begin{xy}
\xymatrix{
\Ocal_X^+\widehat{\otimes}A_{\rm cris} \\
\Ocal_X^+\widehat{\otimes}W(R)\ar@_{(->}[u]\ar@^{(->}[d] & \widetilde{\Acal}_X^{[0,1)}\ar@_{(->}[l]\ar@^{(->}[d] & \Acal_X^{[0,1)}\ar@_{(->}[l]\ar@^{(->}[d]\\
\Ocal_X^+\widehat{\otimes}W(\Frac R) & \widetilde{\Acal}_X\ar@_{(->}[l] & \Acal_X. \ar@_{(->}[l]
}
\end{xy}
\end{aligned}
\end{equation}
Let $\Ecal$ be a vector bundle on an adic space $X$. As $\Ocal_X$ is a sheaf of topological rings, the sections $\Gamma(U,\Ecal)$ of $\Ecal$ over an open subset $U$ have a natural topology. If $G$ is a topological group acting on $\Ecal$ it thus makes sense to ask whether this action is continuous.

\begin{defn}
Let $X$ be an adic space. A \emph{family of crystalline representations over} $X$ is a vector bundle $\Ecal$ on $X$ endowed with a continuous $G_K$-action such that
\[D_{\rm cris}(\Ecal):=(\Ecal\widehat{\otimes}B_{\rm cris})^{G_K} :=\big(\Ecal\otimes_{\Ocal_X}(\Ocal_X\widehat{\otimes}_{\Q_p}B_{\rm cris})\big)^{G_K}\]
is locally on $X$ free of rank $d={\rm rk}\,\Ecal$.
\end{defn}

Let $\Ecal$ be a vector bundle on $X$ with a continuous $G_K$-action. If $x\in X$ is a rigid point then we have an obvious embedding
\[D_{\rm cris}(\Ecal)\otimes k(x)\hookrightarrow D_{\rm cris}(\Ecal\otimes k(x)),\]
compatible with the action of $\phi$ and the filtration, with equality if $\Ecal$ is crystalline.
By a result of Berger and Colmez (see \cite[Corollary 6.33]{BergerColmez}) a family $\Ecal$ of $G_K$-representations on a reduced adic space locally of finite type $X$ is crystalline if and only if the representations on $\Ecal\otimes k(x)$ are crystalline for all rigid analytic points $x\in X$.
\begin{lem}\noindent {\rm (i)} Let $f:X\rightarrow Y$ be a morphism in $\Ad_{\Q_p}^{\lft}$ and $\Ecal$ a vector bundle on $Y$ with continuous $G_K$-action. If $\Ecal$ is crystalline, then so is $f^\ast \Ecal$.\\
\noindent {\rm (ii)} Let $(f_i:X_i\rightarrow X)_i$ be an fpqc cover in $\Ad_{\Q_p}^{\lft}$ and $\Ecal$ be a vector bundle on $\Ecal$ with continuous $G_K$-action. Then $\Ecal$ is crystalline if and only if all the $f_i^\ast \Ecal$ are crystalline.
\end{lem}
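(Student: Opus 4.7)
The plan is to prove part (i) by establishing a base change isomorphism $f^{\ast}D_{\rm cris}(\Ecal)\xrightarrow{\sim} D_{\rm cris}(f^{\ast}\Ecal)$ when $\Ecal$ is crystalline, and then to deduce part (ii) from (i) together with fpqc descent. The main tool is the natural $G_K$-equivariant, $\Ocal_Y\widehat{\otimes}B_{\rm cris}$-linear comparison map (with trivial $G_K$-action on the source)
\[
\beta_\Ecal:D_{\rm cris}(\Ecal)\otimes_{\Ocal_Y\otimes_{\Q_p}K_0}(\Ocal_Y\widehat{\otimes}_{\Q_p}B_{\rm cris})\longrightarrow \Ecal\widehat{\otimes}_{\Q_p}B_{\rm cris},
\]
induced by the inclusion $D_{\rm cris}(\Ecal)\hookrightarrow \Ecal\widehat{\otimes}B_{\rm cris}$. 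The key intermediate claim, and the main technical point, is that $\beta_\Ecal$ is an isomorphism whenever $\Ecal$ is crystalline on $Y$.

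To prove this, observe that both source and target of $\beta_\Ecal$ are locally free of rank $d$ over $\Ocal_Y\widehat{\otimes}B_{\rm cris}$ (the source by the assumption on $D_{\rm cris}(\Ecal)$, the target since $\Ecal$ is locally free of rank $d$ over $\Ocal_Y$), so $\coker\beta_\Ecal$ is finitely presented. Since a surjection of locally free modules of the same finite rank is an isomorphism, it suffices to check $\coker\beta_\Ecal=0$. By Corollary~$\ref{Bcrisinvariants}$, at each rigid point $y\in Y$ the specialization $\beta_\Ecal\otimes k(y)$ is identified with the natural $B_{\rm cris}$-linear map for the $d$-dimensional crystalline $k(y)$-linear $G_K$-representation $\Ecal\otimes k(y)$ (whose crystallinity follows from the Berger-Colmez result recalled above, after reduction to the reduced case), and this is the classical comparison isomorphism. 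The localization principle of Lemma~$\ref{Acrislocaliz}$\,(iii) then forces $\coker\beta_\Ecal=0$, so $\beta_\Ecal$ is an isomorphism. Extending scalars along $f^{-1}(\Ocal_Y\widehat{\otimes}B_{\rm cris})\to \Ocal_X\widehat{\otimes}B_{\rm cris}$ and taking $G_K$-invariants, using $(\Ocal_X\widehat{\otimes}B_{\rm cris})^{G_K}=\Ocal_X\otimes_{\Q_p}K_0$ from Corollary~$\ref{Bcrisinvariants}$, then yields $f^{\ast}D_{\rm cris}(\Ecal)\xrightarrow{\sim}D_{\rm cris}(f^{\ast}\Ecal)$, proving (i).

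For part (ii), the ``only if'' direction is part (i). For ``if'', each $D_{\rm cris}(f_i^{\ast}\Ecal)$ is locally free of rank $d$ on $X_i$, and the naturality of $D_{\rm cris}$ equips this family with canonical descent data. Fpqc descent of coherent sheaves (Section~$2$) produces a coherent $\Ocal_X\otimes K_0$-module $\Mcal$, locally free of rank $d$. To identify $\Mcal$ with $D_{\rm cris}(\Ecal)$, descend the inclusions $D_{\rm cris}(f_i^{\ast}\Ecal)\hookrightarrow f_i^{\ast}\Ecal\widehat{\otimes}B_{\rm cris}$ to an inclusion $\Mcal\hookrightarrow \Ecal\widehat{\otimes}B_{\rm cris}$ whose image is $G_K$-invariant, giving $\Mcal\hookrightarrow D_{\rm cris}(\Ecal)$; conversely, any local section of $D_{\rm cris}(\Ecal)$ pulls back to a $G_K$-invariant section of $f_i^{\ast}\Ecal\widehat{\otimes}B_{\rm cris}$, which lies in $D_{\rm cris}(f_i^{\ast}\Ecal)=f_i^{\ast}\Mcal$ and descends into $\Mcal$ by the sheaf property. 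The main obstacle throughout is establishing the iso $\beta_\Ecal$ in part (i), which rests on the localization principle of Lemma~$\ref{Acrislocaliz}$ combined with the classical crystalline comparison theorem at finite extensions of $\Q_p$.
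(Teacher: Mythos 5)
Your proof is correct and rests on the same foundations as the paper's: the localization principle for $\Ocal\widehat{\otimes}B_{\rm cris}$-modules (Lemma~$\ref{Acrislocaliz}$ via Noether normalization), the classical comparison isomorphism at rigid points, and fpqc descent of coherent sheaves for part (ii). The organization of part (i) differs slightly. You first prove that the comparison map $\beta_\Ecal$ is an isomorphism of locally free $\Ocal_Y\widehat{\otimes}B_{\rm cris}$-modules over $Y$ itself (surjectivity from vanishing of the finitely presented cokernel at rigid fibers, injectivity from equality of ranks), and only then pull back and take $G_K$-invariants using $(\Ocal_X\widehat{\otimes}B_{\rm cris})^{G_K}=\Ocal_X\otimes_{\Q_p}K_0$. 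The paper instead works directly on $X$: after a finite map $g:X\rightarrow Z$ to a polydisc it embeds everything into $\prod_z(f^\ast\Ecal\otimes k(z))\otimes_{\Q_p}B_{\rm cris}$ and identifies $f^\ast D_{\rm cris}(\Ecal)$ with $f^\ast\Ecal\widehat{\otimes}B_{\rm cris}\cap\prod_z\bigl((f^\ast\Ecal\otimes k(z))\otimes_{\Q_p}B_{\rm cris}\bigr)^{G_K}=D_{\rm cris}(f^\ast\Ecal)$ by comparing dimensions fiberwise. Your version has the advantage of isolating a reusable statement (the relative crystalline comparison isomorphism), from which base change is formal; the paper's version avoids having to discuss the comparison map globally. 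Part (ii) is the same in both: descend the $D_{\rm cris}(f_i^\ast\Ecal)$ (which requires the base-change compatibility from (i) applied to the projections $X_i\times_X X_j\rightarrow X_i$) and then identify the descended module with $D_{\rm cris}(\Ecal)$ by a two-sided inclusion. One small correction: the crystallinity of the fibers $\Ecal\otimes k(y)$ does not need the Berger--Colmez theorem (which is the converse implication, pointwise to family on reduced spaces); it follows immediately from the embedding $D_{\rm cris}(\Ecal)\otimes k(y)\hookrightarrow D_{\rm cris}(\Ecal\otimes k(y))$ recalled just before the lemma together with the rank hypothesis on $D_{\rm cris}(\Ecal)$.
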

\begin{proof}
\noindent (i) We want to check that $f^\ast D_{\rm cris}(\Ecal)=D_{\rm cris}(f^\ast\Ecal)$. The claim is local on $X$ and we may choose a finite morphism $g:X\rightarrow Z$ to a polydisc. Then we have
\begin{align*} \Gamma(X,f^\ast D_{\rm cris}(\Ecal))&\subset \Gamma(X,D_{\rm cris}(f^\ast\Ecal))\\ &\subset \Gamma(X,f^\ast\Ecal\widehat{\otimes}B_{\rm cris})\subset \prod (f^\ast\Ecal\otimes k(z))\otimes_{\Q_p}B_{\rm cris},
\end{align*}
where the product is taken over all closed points of $Z$. The claim follows as $f^\ast D_{\rm cris} (\Ecal)$ is identified with 
\[f^\ast\Ecal\widehat{\otimes}B_{\rm cris}\cap \prod \big((f^\ast\Ecal\otimes k(z))\otimes_{\Q_p}B_{\rm cris}\big)^{G_K},\]
by a similar argument as in the proof of Corollary $\ref{invariantsheaves}$, as $f^\ast D_{\rm cris}(\Ecal)=D_{\rm cris}(f^\ast\Ecal)$ is true in the fibers over the rigid points $z\in Z$, by comparing dimensions.\\
\noindent (ii) By definition we have a $G_K$-equivariant descent datum on the vector bundles $f_i^\ast\Ecal$ and hence also on $f_i^\ast \Ecal\widehat{\otimes}B_{\rm cris}$. By $G_K$-equivariance this induces a descent datum on the $D_{\rm cris}(f_i^\ast\Ecal)$ and by fpqc descent we find a $\Ocal_X\otimes_{\Q_p}K_0$ submodule 
\[D\subset \Ecal\widehat{\otimes}B_{\rm cris}\]
which is locally on $X$ free of rank $d={\rm rk}\,\Ecal$ over $\Ocal_X\otimes_{\Q_p}K_0$ and on which $G_K$ acts trivial. The verification that $D=D_{\rm cris}(\Ecal)$ is similar as in the first part of the lemma.
\end{proof}

\begin{lem}\label{Dcrisvolltreu}
\noindent {\rm (i)} Let $\Ecal$ be a family of crystalline representations on an adic space $X$. Then the canonical morphism
\[\Ecal\longrightarrow V_{\rm cris}(D_{\rm cris}(\Ecal))=\Fil^0(D_{\rm cris}(\Ecal)\widehat{\otimes}_{\Q_p}B_{\rm cris})^{\Phi=\id}\]
is an isomorphism. \\
\noindent {\rm (ii)} Let $\Ecal_1$, $\Ecal_2$ be families of crystalline representations on an adic space $X$. If $D_{\rm cris}(\Ecal_1)\cong D_{\rm cris}(\Ecal_2)$, then $\Ecal_1\cong \Ecal_2$.
\end{lem}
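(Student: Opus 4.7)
The plan is to deduce (ii) formally from (i), and to prove (i) by establishing a relative comparison isomorphism and then extracting $\Phi$-fixed sections in filtration degree zero via Corollary \ref{Bcrisinvariants}.

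For (i), I would first upgrade the tautological inclusion $D_{\rm cris}(\mathcal{E})\subset \mathcal{E}\widehat{\otimes}_{\Q_p}B_{\rm cris}$ to an isomorphism
$$\iota\colon D_{\rm cris}(\mathcal{E})\otimes_{K_0}\bigl(\Ocal_X\widehat{\otimes}_{\Q_p}B_{\rm cris}\bigr)\xrightarrow{\;\sim\;}\mathcal{E}\widehat{\otimes}_{\Q_p}B_{\rm cris}$$
of $\Ocal_X\widehat{\otimes}B_{\rm cris}$-modules, compatible with $\Phi$, with the filtration after $-\otimes_{K_0}K$, and with the $G_K$-action. By the definition of a crystalline family $D_{\rm cris}(\mathcal{E})$ is locally free of rank $d=\rk\mathcal{E}$ over $\Ocal_X\otimes_{\Q_p}K_0$, so source and target are both locally free of rank $d$ over $\Ocal_X\widehat{\otimes}B_{\rm cris}$. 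To show $\iota$ is an isomorphism I reduce to $X=\Spa(A,A^+)$ affinoid and, by Noether normalisation, fix a finite morphism $f\colon X\to Y$ to a polydisc. Lemma \ref{Acrislocaliz}(iii), together with the remark on finite $A$-algebras following it, reduces the vanishing of $\coker\iota$ to its vanishing after specialising at every rigid point $y\in Y$, and hence at every rigid point $x\in f^{-1}(y)\subset X$. At such an $x$, $\mathcal{E}\otimes k(x)$ is a crystalline $G_K$-representation on a finite-dimensional $k(x)$-vector space; using the identification $D_{\rm cris}(\mathcal{E})\otimes k(x)=D_{\rm cris}(\mathcal{E}\otimes k(x))$ noted in the paragraph preceding the lemma, $\iota\otimes k(x)$ is the classical comparison isomorphism and hence an iso.

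With $\iota$ in hand, the second step is to take $\Fil^0(-)^{\Phi=\id}$ of both sides. Since $\mathcal{E}$ is locally free over $\Ocal_X$ and both $\Fil^\bullet$ and $\Phi$ on $\mathcal{E}\widehat{\otimes}_{\Q_p}B_{\rm cris}$ come purely from the second factor, local trivialisations of $\mathcal{E}$ reduce the calculation of $\Fil^0(\mathcal{E}\widehat{\otimes}B_{\rm cris})^{\Phi=\id}$ to the rank-one case, which by Corollary \ref{Bcrisinvariants} is exactly $\mathcal{E}\otimes_{\Ocal_X}\Fil^0(\Ocal_X\widehat{\otimes}B_{\rm cris})^{\Phi=\id}=\mathcal{E}$. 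Chasing the composition shows that the resulting identification $\mathcal{E}\xrightarrow{\sim}V_{\rm cris}(D_{\rm cris}(\mathcal{E}))$ is the canonical map of the statement. Part (ii) then follows formally: an isomorphism $\alpha\colon D_{\rm cris}(\mathcal{E}_1)\xrightarrow{\sim}D_{\rm cris}(\mathcal{E}_2)$ of filtered $\Phi$-modules induces $V_{\rm cris}(\alpha)$ and, combined with (i), gives $\mathcal{E}_1\cong V_{\rm cris}(D_{\rm cris}(\mathcal{E}_1))\xrightarrow{V_{\rm cris}(\alpha)}V_{\rm cris}(D_{\rm cris}(\mathcal{E}_2))\cong\mathcal{E}_2$.

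The main obstacle is the first step: one has to know that the localisation and vanishing principles of Lemma \ref{Acrislocaliz} do apply to the coherent $A\widehat{\otimes}B_{\rm cris}$-module $\coker\iota$, and that the pointwise identification $D_{\rm cris}(\mathcal{E})\otimes k(x)=D_{\rm cris}(\mathcal{E}\otimes k(x))$ is at our disposal (which it is, precisely because $\mathcal{E}$ is crystalline as a family). Once $\iota$ is established, the extraction of $\Phi$-fixed sections in filtration degree zero is a formal consequence of the local freeness of $\mathcal{E}$ and the fact that $\Phi$ and $\Fil^\bullet$ are pulled back from the coefficient sheaf $\Ocal_X\widehat{\otimes}B_{\rm cris}$ alone, where the invariants were already computed in Corollary \ref{Bcrisinvariants}.
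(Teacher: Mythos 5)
Your proposal is correct and follows essentially the same route the paper takes: the paper's proof is a one-line reduction to the classical argument via the computation $\Fil^0(\Ocal_X\widehat{\otimes}B_{\rm cris})^{\phi=\id}=\Ocal_X$ of Corollary \ref{Bcrisinvariants}, and you have simply written out the relative comparison isomorphism and the extraction of $\Fil^0(-)^{\Phi=\id}$ that this reduction implicitly requires (checking the cokernel of $\iota$ on rigid fibres via Lemma \ref{Acrislocaliz} and Noether normalisation, exactly in the style of the paper's neighbouring arguments). The only point left tacit is that surjectivity of $\iota$ between locally free modules of the same rank already forces injectivity, which is standard.
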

\begin{proof}
As in the classical case (i.e. $X=\Spa(\Q_p,\Z_p)$) this is an easy consequence of $\Fil^0(\Ocal_X\widehat{\otimes}B_{\rm cris})^{\phi=\id}=\Ocal_X$, see \cite[5.2, Theorem (iv)]{Fontaine} for example.
\end{proof}

\subsection{\'Etale $\phi$-modules and $G_{K_\infty}$-representations}

Before we construct a universal crystalline representation we first discuss families of \'etale $\phi$-modules and their relation with families of $G_{K_\infty}$-representations.

\begin{defn}
A \emph{family of} $G_{K_\infty}$\emph{-representations} on an adic space $X\in\Ad_{\Q_p}^{\rm lft}$ is a vector bundle $\Ecal$ on $X$ endowed with a continuous action of $G_{K_\infty}$.
\end{defn}

\begin{defn} Let $X$ be an adic space over $\Q_p$.\\
\noindent (i) Let $\mathscr{R}$ be one of the sheaves $\Acal_X$, $\Bcal_X$, $\widetilde{\Acal}_X$ or $\widetilde{\Bcal}_X$. A \emph{$\phi$-module} over $\mathscr{R}$ is an $\mathscr{R}$-module $N$ that is locally on $X$ free of finite rank over $\mathscr{R}$ together with an isomorphism $\Phi:\phi^\ast N\rightarrow N$.\\
\noindent (ii) Assume that $X$ is reduced. A $\phi$-module over $\Bcal_X$ (resp. $\widetilde{\Bcal}_X$) is called \emph{\'etale}, if it is induced from a $\phi$-module over $\Acal_X$ (resp. $\widetilde{\Acal}_X$).
\end{defn} 

It is not true that every family of \'etale $\phi$-modules comes from a family of $G_{K_\infty}$-representations. However, if $X$ is an adic space and $\mathcal{N}$ is a family of \'etale $\phi$-modules on $X$, we want to show that there is an open subset on which $\mathcal{N}$ is induced from a family of $G_{K_\infty}$-representations.
The main tool is an easy approximation argument which is already contained in \cite[Section 5]{KedlayaLiu} in a slightly different context.

\begin{lem}\label{phieq}
Let $X=\Spa(A,A^+)$ be an reduced, affinoid adic space and fix an element $x\in\Gamma(X,\widetilde{\Acal}_X)=A^+\widehat{\otimes}_{\Z_p}\widetilde{\bf A}$.
Then the equation 
\[y-\phi^{-1}(y)=x\]
has a solution in $\Gamma(X,\widetilde{\Acal}_X)=A^+\widehat{\otimes}_{\Z_p}\widetilde{\bf A}$. Moreover, if $x$ is divisible by $p$, we can choose this solution such that $y$ is divisible by $p$.
\end{lem}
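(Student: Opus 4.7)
The plan is to use a Hensel-style successive $p$-adic approximation, reducing the problem to a single mod $p$ calculation. Granted the mod $p$ version, I will build $y_n\in\widetilde{\Acal}_X$ inductively with $y_n-\phi^{-1}(y_n)\equiv x\pmod{p^{n+1}}$ as follows: given $y_n$, write $y_n-\phi^{-1}(y_n)-x=p^{n+1}z_n$, find $\bar\delta\in\widetilde{\Acal}_X/p$ solving $\bar\delta-\phi^{-1}(\bar\delta)=-\bar z_n$, lift arbitrarily to $\delta\in\widetilde{\Acal}_X$, and set $y_{n+1}=y_n+p^{n+1}\delta$. Since $y_{n+1}-y_n\in p^{n+1}\widetilde{\Acal}_X$ and the ring is $p$-adically complete, the sequence $(y_n)$ converges to the desired solution $y$.

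The essential step is thus the mod $p$ statement. Writing $E=\widetilde{\bf A}/p$, the ring $\widetilde{\Acal}_X/p$ is the closure of $(A^+/p)\otimes_{\Fbb_p}E$ inside $(A^+/p)\widehat{\otimes}\Frac R$. The field $E$ is perfect, so $\phi^{-1}$ acts on it as the $p$-th root map; moreover it is separably closed, being a complete valued field containing $k((u))^{\rm sep}$ as a dense subfield (Krasner's lemma). Consequently every Artin--Schreier equation $\bar s-\bar s^{1/p}=\bar t$ with $\bar t\in E$ has a solution in $E$, and for $\bar t$ with $v_R(\bar t)>0$ the unique solution with $v_R(\bar s)>0$ satisfies $v_R(\bar s)=p\,v_R(\bar t)$.

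For a finite algebraic tensor $\bar x=\sum a_i\otimes\bar t_i\in(A^+/p)\otimes E$ I will solve componentwise, producing $\bar y=\sum a_i\otimes\bar s_i\in(A^+/p)\otimes E\subset\widetilde{\Acal}_X/p$. For general $\bar x$ in the closure, I will approximate $\bar x$ by algebraic tensors $\bar x^{(n)}$ and assemble a solution as a convergent series, exploiting the valuation behaviour above: by decomposing the increments $\bar x^{(n+1)}-\bar x^{(n)}$ into a finite piece (with bounded $v_R$-valuation) and a tail whose $v_R$-valuation tends to infinity, and choosing the canonical small Artin--Schreier solutions on the tail, the partial solutions $\bar y^{(n)}$ will form a Cauchy sequence in $(A^+/p)\widehat{\otimes}\Frac R$ whose limit is the desired $\bar y\in\widetilde{\Acal}_X/p$.

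The hard part of the plan, which is the relative analogue of the approximation arguments in \cite[Section 5]{KedlayaLiu}, will be making this convergent-series construction genuinely precise: one needs a sufficiently structured description of elements of $\widetilde{\Acal}_X/p$ so that the termwise Artin--Schreier solutions do reassemble in the closure. By contrast, the ``moreover'' clause is immediate from the main assertion: if $x=pz$ with $z\in\widetilde{\Acal}_X$, applying the existence result to $z$ gives $\tilde y\in\widetilde{\Acal}_X$ with $\tilde y-\phi^{-1}(\tilde y)=z$, and then $y:=p\tilde y$ is visibly divisible by $p$ and solves $y-\phi^{-1}(y)=x$.
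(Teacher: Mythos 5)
Your proposal is correct and follows essentially the same route as the paper: reduce by $p$-adic approximation to the mod $p$ case, solve the resulting Artin--Schreier equation termwise in the (separably closed) residue field of $\widetilde{\bf A}$ using an expansion $\bar x=\sum f_i\otimes \bar x_i$, and use the valuation estimate $v_R(\bar s)=p\,v_R(\bar t)$ for the small solutions to guarantee convergence of the reassembled series, exactly as in the paper's appeal to \cite[Lemma 5.1]{KedlayaLiu}. The only difference is cosmetic ordering (the paper does the tensor expansion at the integral level after first treating $A^+=\Z_p$, while you do it mod $p$), and your treatment of the ``moreover'' clause is the intended one.
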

\begin{proof}
First, if $x\in k((u))^{\rm sep}$, then the equation $y-\phi^{-1}(y)=x$ has a solution in $k((u))^{\rm sep}$, as the field is separably closed. If ${\rm val}_R(x)\geq 0$, then we can choose $y$ such that ${\rm val}_R(y)\geq p\,{\rm val}_R(x)$. Hence, by approximation, there is a solution of the equation for all $x$ in the closure of $k((u))^{\rm sep}\subset \Frac R$. The claim follows for $A^+=\Z_p$ by $p$-adic approximation from the modulo $p$-case.

The general statement follows if we expand $x=\sum_i f_i\otimes x_i$ and solve the equations $y_i-\phi^{-1}(y_i)=x_i$. The above claim on the valuations of the solutions modulo $p$ guarantees that the series will converge in $A^+\widehat{\otimes}_{\Z_p}\widetilde{\bf A}$, compare also \cite[Lemma 5.1]{KedlayaLiu}.
\end{proof}

\begin{prop}
Let $X$ be a reduced adic space locally of finite type over $\Q_p$ and $x\in X$ and let $(\mathcal{N},\Phi)$ be an \'etale $\phi$-module of rank $d$ on $X$. \\
\noindent {\rm (i)} If 
\[\dim_{\widehat{k(x)}}(\iota_x^\ast\Ncal)^{\Phi=\id}=d,\]
then $\dim_{k(x)}(\mathcal{N}\otimes k(x))^{\Phi=\id}=d$. \\
\noindent {\rm (ii)} The set 
\[U=\{x\in X\mid \dim_{\widehat{k(x)}}(\iota_x^\ast\mathcal{N})^{\Phi=\id}=d\}\]
is open in $X$ and $\mathcal{N}|_U^{\Phi=\id}$ is a family of $G_{K_\infty}$-representations on $U$.
\end{prop}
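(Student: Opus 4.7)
The plan is to establish part (ii) first, from which (i) follows by restricting the constructed sections to the fiber at $x$. The proof is an approximation-and-correction argument based on Lemma~\ref{phieq}.

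First, I would work locally, reducing to a reduced affinoid neighbourhood $X = \Spa(A,A^+)$ of $x_0 \in U$ over which the \'etale $\widetilde{\Acal}_X$-lattice $\widetilde{\mathcal{M}} \subset \mathcal{N}$ is free. Fixing a basis encodes $\Phi$ as a matrix $B \in \GL_d(\Gamma(X,\widetilde{\Acal}_X))$. The hypothesis that $(\iota_{x_0}^\ast\mathcal{N})^{\Phi=\id}$ has $\widehat{k(x_0)}$-dimension $d$ means these invariants span $\iota_{x_0}^\ast\mathcal{N}$ over $\widetilde{\Bcal}_{\widehat{k(x_0)}}$, and a choice of basis is encoded by a matrix $E_0 \in \GL_d(\widetilde{\Bcal}_{\widehat{k(x_0)}})$ with $E_0 = B|_{x_0}\,\phi(E_0)$. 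Using the density of $\Gamma(X,\widetilde{\Bcal}_X)\otimes k(x_0)$ in $\widetilde{\Bcal}_{\widehat{k(x_0)}}$ (the analogue for $\widetilde{\Bcal}$ of the density remarks after Lemma~\ref{rigptRobba}), I would approximate $E_0$ by $E' \in M_d(\Gamma(X,\widetilde{\Bcal}_X))$ and, after shrinking $X$ using continuity of $\det$, assume $E'$ is invertible on $X$.

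Next comes the correction. Setting $C := E'^{-1}B\,\phi(E')$, we have $C|_{x_0}$ close to $I$; after further shrinking and rescaling we may assume $C - I \in p^N M_d(\Gamma(X,\widetilde{\Acal}_X))$ for a large $N$. The columns of $E'(I+Z)$ are $\Phi$-invariant on $X$ precisely when $Z - C\phi(Z) = C-I$. By Lemma~\ref{phieq} one can solve $w - \phi^{-1}(w) = \eta$ in $\Gamma(X,\widetilde{\Acal}_X)$ while preserving $p$-divisibility; setting $W := -\phi^{-1}(w)$ converts this to a solution of $W - \phi(W) = \eta$. I would then construct $Z = \sum_{k\geq 0} W_k$ iteratively by $W_0 - \phi(W_0) = C-I$ and $W_{k+1} - \phi(W_{k+1}) = (C-I)\phi(W_k)$; since each application of $C-I$ multiplies by an extra factor of $p^N$, the terms $W_k$ lie in $p^{(k+1)N}M_d(\Gamma(X,\widetilde{\Acal}_X))$ and the series converges $p$-adically. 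The matrix $E'' := E'(I+Z)$ then satisfies $E'' = B\phi(E'')$; being close to $E_0$ at $x_0$, it is invertible on a neighbourhood $V \ni x_0$, and its columns provide $d$ linearly independent $\Phi$-invariants in $\Gamma(V,\mathcal{N})$. Restricting to any $y\in V$ gives $d$ linearly independent elements of $(\iota_y^\ast\mathcal{N})^{\Phi=\id}$, so $V \subset U$ and $U$ is open.

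The continuous $G_{K_\infty}$-action on $\widetilde{\Bcal}_X$ commutes with $\phi$, so it stabilizes the locally free sheaf $\mathcal{N}|_U^{\Phi=\id}$ and promotes it to a family of $G_{K_\infty}$-representations, completing (ii). Part (i) is then immediate: for $x\in U$, the sections from the proof of (ii) restrict to $d$ elements of $(\mathcal{N}\otimes k(x))^{\Phi=\id}$, and $k(x)$-linear independence is inherited because their further images in $\iota_x^\ast\mathcal{N}$ remain close to the prescribed basis and hence independent. The main obstacle is the correction step: one needs to globalize an essentially pointwise piece of linear algebra to a full neighbourhood of $x_0$, and this relies on the $p$-adic surjectivity of $1-\phi$ furnished by Lemma~\ref{phieq} together with careful control of the $p$-adic topology on $\widetilde{\Acal}_X$ to guarantee convergence of the iterative scheme throughout $V$.
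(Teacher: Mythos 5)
Your proof is correct and follows essentially the same route as the paper: approximate a $\Phi$-fixed basis of the completed fiber by sections over a neighbourhood (using density and Lemma~\ref{bouningLemma}-type shrinking to make the matrix of $\Phi$ congruent to the identity), then correct it exactly via the surjectivity of $1-\phi$ on $\widetilde{\Acal}_X$ from Lemma~\ref{phieq} and a $p$-adically convergent iteration. The only differences are cosmetic — you prove (ii) first and deduce (i), and you solve the twisted equation $Z-C\phi(Z)=C-I$ by an explicit series rather than the paper's successive basis changes improving divisibility from $p^n$ to $p^{2n}$ — neither of which changes the substance of the argument.
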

\begin{proof}
\noindent (i) Let $\bar f_1,\dots,\bar f_d\in \iota_x^\ast\mathcal{N}$ be a basis on which $\Phi$ acts as the identity.
As $\mathcal{N}\otimes k(x)$ is dense in $\iota_x^\ast\mathcal{N}$ we can approximate this basis by elements $\bar g_1,\dots,\bar g_d\in\mathcal{N}\otimes k(x)$.
Let $\bar B\in \GL_d(\widetilde{\Bcal}_X\otimes k(x))$ denote the matrix of $\Phi$ in this basis. As the $g_i$  are close to the basis $f_i$, the entries of $B$ are close to the entries of the matrix of $\Phi$ in the basis $(f_i)$ and hence the entries of $B-\id$ are arbitrary small
and we may assume that they are divisible by $p$. 
Let $V=\Spa(A,A^+)$ be an affinoid neighbourhood of $x$ such that $\bar B$ lifts to a matrix $B\in \Gamma(V,\GL_d(\widetilde{\Bcal}_X))$ such that $B-\id$ is divisible by $p$. 
Let $(\mathcal{N}',\Phi')$ denote the free $\phi$-module over $\widetilde{\Bcal}_V$ with basis $g_1,\dots,g_d$ on which $\Phi'$ acts by $B$.
By Lemma $\ref{phieq}$ we may choose a matrix $X\in {\rm Mat}_{d\times d}(p\,\Gamma(V,\widetilde{\Acal}_V))$ such that $X-\phi^{-1}(X)=B-\id$.
Changing the basis $g_1,\dots, g_d$ by $\id+X$ the matrix of $\Phi'-\id$ in the new basis is divisible by $p^2$.
As $A$ is $p$-adically complete this process converges to a $\Phi'$-stable basis of $\mathcal{N'}$.
The claim now follows from the equality of $\phi$-modules
\[(\mathcal{N}\otimes k(x),\Phi)=(\mathcal{N}'\otimes k(x),\Phi').\]

\noindent (ii)
By the first step of the proposition we find
\[U=\{x\in X\mid \dim_{k(x)}(\mathcal{N}\otimes k(x))^{\Phi=\id}=d\}.\]
Let $x\in X$ and $ f_1,\dots, f_d$ be lifts of generators of 
\[(\mathcal{N}\otimes k(x))^{\Phi=\id}\]
to some affinoid neighbourhood $V$. After shrinking $V$ we may assume that $f_1,\dots,f_d$ are linearly independent and generate an \'etale lattice in $\mathcal{N}|_V$.
Denote by $B$ the matrix of $\Phi$ in this basis. Then $B-\id$ vanishes at $x$ and has bounded entries. Hence by Lemma $\ref{bouningLemma}$ we may shrink $V$ such that the entries of $B-\id$ are divisible by $p$. By Lemma $\ref{phieq}$ we can choose a matrix $X$ which is divisible by $p$ such that $X-\phi^{-1}(X)=B-\id$.
Then, changing the basis by $\id+X$ we find a basis such that the matrix of $\Phi-\id$ is divisible by $p^2$.
This process converges to a $\Phi$-stable basis of $\mathcal{N}|_V$. This yields the claim.
\end{proof}

\begin{cor}\label{GKinftyvb}
Let $\mathcal{N}$ be an \'etale $\phi$-module of rank $d$ over $\Bcal_X$. The set 
\[U=\{x\in X\mid \dim_{\widehat{k(x)}}\Hom_{\Bcal_{\widehat{k(x)}},\Phi}(\iota_x^\ast\mathcal{N},\widetilde{\Bcal}_{\widehat{k(x)}})=d\}\]
is open and the sheaf $\sheafHom_{\Bcal_U,\Phi}(\mathcal{N}|_U,\widetilde{\Bcal}_U)$ is a vector bundle on $U$.
Further, for a $\Phi$-stable $\Acal_X$-lattice $\mathfrak{N}\subset \mathcal{N}$, the $\Ocal_U^+$-module $\sheafHom_{\Acal_U,\phi}(\mathfrak{N}|_U,\widetilde{\Acal}_U)$ is locally on $U$ free of rank $d$. 
\end{cor}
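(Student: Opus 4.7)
The plan is to reduce the corollary to the preceding proposition via duality. For any $\phi$-module $\mathcal{N}$ of rank $d$ over $\Bcal_X$, setting $\mathcal{N}^\vee := \sheafHom_{\Bcal_X}(\mathcal{N}, \Bcal_X)$, the canonical isomorphism $\sheafHom_{\Bcal_X}(\mathcal{N}, \widetilde{\Bcal}_X) \cong \mathcal{N}^\vee \otimes_{\Bcal_X} \widetilde{\Bcal}_X$ identifies $\Phi$-equivariant homomorphisms with $\Phi$-invariants:
\[
\sheafHom_{\Bcal_X, \Phi}(\mathcal{N}, \widetilde{\Bcal}_X) = (\mathcal{N}^\vee \otimes_{\Bcal_X} \widetilde{\Bcal}_X)^{\Phi=\id}.
\]
First I would set $\mathcal{M} := \mathcal{N}^\vee \otimes_{\Bcal_X} \widetilde{\Bcal}_X$ and note that, since $\mathcal{N}$ is \'etale over $\Bcal_X$, so is $\mathcal{N}^\vee$, and hence $\mathcal{M}$ is an \'etale $\phi$-module of rank $d$ over $\widetilde{\Bcal}_X$. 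The same duality applied in the fiber at $x$ identifies $\Hom_{\Bcal_{\widehat{k(x)}}, \Phi}(\iota_x^\ast \mathcal{N}, \widetilde{\Bcal}_{\widehat{k(x)}})$ with $(\iota_x^\ast \mathcal{M})^{\Phi=\id}$, so that $U$ coincides with $\{x \in X : \dim_{\widehat{k(x)}}(\iota_x^\ast \mathcal{M})^{\Phi=\id} = d\}$.

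Applying the previous proposition to $\mathcal{M}$ yields immediately that $U$ is open and that $\mathcal{M}^{\Phi=\id}|_U$ is locally on $U$ free of rank $d$ over $\widetilde{\Bcal}_X^{\phi=\id} = \Ocal_X$ (using Corollary \ref{invariantsheaves}). This exhibits $\sheafHom_{\Bcal_U, \Phi}(\mathcal{N}|_U, \widetilde{\Bcal}_U) = \mathcal{M}^{\Phi=\id}|_U$ as a vector bundle of rank $d$ on $U$.

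For the integral assertion, I would repeat the construction with $\mathfrak{N}$ in place of $\mathcal{N}$: the condition $\Phi(\phi^\ast \mathfrak{N}) = \mathfrak{N}$ is preserved by dualisation, so $\mathfrak{N}^\vee$ is \'etale over $\Acal_X$, and $\mathfrak{M} := \mathfrak{N}^\vee \otimes_{\Acal_X} \widetilde{\Acal}_X$ is a $\Phi$-stable $\widetilde{\Acal}_X$-lattice inside $\mathcal{M}$, with $\sheafHom_{\Acal_U, \phi}(\mathfrak{N}|_U, \widetilde{\Acal}_U) = \mathfrak{M}^{\Phi=\id}|_U$. The iterative construction of a $\Phi$-stable basis in the proof of the preceding proposition is already integral: its key input, Lemma \ref{phieq}, produces a solution $y$ divisible by $p$ whenever $x$ is, so successive corrections of an integral approximate basis converge to a $\Phi$-stable basis of $\mathfrak{M}$ on a neighborhood of each point of $U$. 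Combined with $\widetilde{\Acal}_X^{\phi=\id} = \Ocal_X^+$ (Corollary \ref{invariantsheaves}, applicable because $X$ is reduced by the definition of an \'etale $\phi$-module), this provides the desired local $\Ocal_U^+$-basis of $\mathfrak{M}^{\Phi=\id}|_U$.

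The main obstacle is precisely this integral refinement: one must verify that the convergence argument of the previous proposition can be executed inside $\mathfrak{M}$, rather than only in $\mathcal{M}$, so that the resulting $\Phi$-fixed basis is genuinely an $\Ocal_U^+$-basis and not merely an $\Ocal_U$-basis. Everything else is a direct translation through the dualisation $\mathcal{N} \mapsto \mathcal{N}^\vee \otimes \widetilde{\Bcal}_X$.
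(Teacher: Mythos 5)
Your proposal is correct and is exactly the paper's argument: the paper's entire proof reads ``This is the dual statement of the Proposition above,'' and your dualisation $\mathcal{N}\mapsto\mathcal{N}^\vee\otimes_{\Bcal_X}\widetilde{\Bcal}_X$, identifying $\Phi$-equivariant homomorphisms with $\Phi$-invariants of an \'etale $\phi$-module over $\widetilde{\Bcal}_X$, is precisely the intended reduction. Your care about the integral refinement is well placed but already covered by the proposition's proof, since the approximating basis there can be taken inside the \'etale lattice and Lemma~$\ref{phieq}$'s divisibility clause keeps every correction $\id+X$ integral.
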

\begin{proof}
This is the dual statement of the Proposition above.
\end{proof}

The next Proposition shows that the open locus constructed in the Corollary above is compatible with base change and hence this construction is \emph{geometric}.
\begin{prop}\label{maxrankgeometric}
Let $f:X\rightarrow Y$ be a morphism of adic spaces in $\Ad_{\Q_p}^{\rm lft}$ with $Y$ reduced and let $\mathcal{N}_Y$ be an \'etale $\Bcal_Y$-module of rank $d$.
Write $\mathcal{N}_X=f^\ast\mathcal{N}_Y$ for the pullback to $X$ and let 
\begin{align*}
V&=\{y\in Y\mid \dim_{\widehat{k(y)}}\Hom_{\Bcal_{\widehat{k(y)}},\Phi}(\iota_y^\ast\mathcal{N}_Y,\widetilde{\Bcal}_{\widehat{k(y)}})=d\}\\
U&=\{x\in X\mid \dim_{\widehat{k(x)}}\Hom_{\Bcal_{\widehat{k(x)}},\Phi}(\iota_x^\ast\mathcal{N}_X,\widetilde{\Bcal}_{\widehat{k(x)}})=d\}.
\end{align*}
Then $U=f^{-1}(V)$ and $\sheafHom_{\Bcal_U,\Phi}(\mathcal{N}_X|_U,\widetilde{\Bcal}_U)=f^\ast \sheafHom_{\Bcal_V,\Phi}(\mathcal{N}_Y|_V,\widetilde{\Bcal}_V)$.
\end{prop}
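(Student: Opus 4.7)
The plan is to follow the two-step strategy used in Proposition \ref{etalegeometric}: the inclusion $f^{-1}(V) \subset U$ comes from flat base change of coefficients, while the reverse inclusion $U \subset f^{-1}(V)$ requires a Galois descent. The identification of the two sheaves will then follow once $U = f^{-1}(V)$ is known and both sides are already known to be vector bundles of the same rank $d$ on $U$ by Corollary \ref{GKinftyvb}.

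For the first step, given $y \in V$ and any $x \in X$ with $f(x) = y$, I would use the natural base change map
\[\Hom_{\Bcal_{\widehat{k(y)}},\Phi}(\iota_y^\ast\mathcal{N}_Y,\widetilde{\Bcal}_{\widehat{k(y)}})\otimes_{\widehat{k(y)}}\widehat{k(x)} \hookrightarrow \Hom_{\Bcal_{\widehat{k(x)}},\Phi}(\iota_x^\ast\mathcal{N}_X,\widetilde{\Bcal}_{\widehat{k(x)}}),\]
obtained by composing an $\widehat{k(y)}$-linear $\Phi$-equivariant map with the base change of sheaves along $\Spa(\widehat{k(x)},\widehat{k(x)}^+)\to \Spa(\widehat{k(y)},\widehat{k(y)}^+)$. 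The left hand side has dimension $d$ over $\widehat{k(x)}$ by hypothesis; the right hand side has dimension at most $d$; hence they agree and $x \in U$.

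The main work is the reverse inclusion $U \subset f^{-1}(V)$. Fix $x \in U$ and $y = f(x)$. Using that $U$ is open we may first replace $x$ by a point of $U\cap f^{-1}(y)$ with $k(x)/k(y)$ finite, exactly as in the proof of Proposition \ref{etalegeometric}. Passing to the Galois closure (which amounts to replacing $X$ by an étale base change to a finite Galois extension of $k(y)$; the property of lying in $U$ is preserved since it was established in Corollary \ref{GKinftyvb} via conditions on fibers, hence is stable under such base changes), we may arrange that $k(x)/k(y)$ is finite Galois with group $H$. Then
\[W := \Hom_{\Bcal_{\widehat{k(x)}},\Phi}(\iota_x^\ast\mathcal{N}_X,\widetilde{\Bcal}_{\widehat{k(x)}})\]
carries a semi-linear action of $H$ induced by the action of $H$ on $\widehat{k(x)}$ together with the identification $\iota_x^\ast\mathcal{N}_X = \iota_y^\ast\mathcal{N}_Y\otimes_{\Bcal_{\widehat{k(y)}}}\Bcal_{\widehat{k(x)}}$ and the corresponding identification for $\widetilde{\Bcal}$. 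Inspection shows that $W^H = \Hom_{\Bcal_{\widehat{k(y)}},\Phi}(\iota_y^\ast\mathcal{N}_Y,\widetilde{\Bcal}_{\widehat{k(y)}})$. Standard Galois descent for a semi-linear action of a finite Galois group on a finite dimensional vector space gives $\dim_{\widehat{k(y)}}W^H = \dim_{\widehat{k(x)}}W = d$, hence $y\in V$.

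Having established $U = f^{-1}(V)$, I conclude with the sheaf identification. Both $\sheafHom_{\Bcal_U,\Phi}(\mathcal{N}_X|_U,\widetilde{\Bcal}_U)$ and $f|_U^\ast\sheafHom_{\Bcal_V,\Phi}(\mathcal{N}_Y|_V,\widetilde{\Bcal}_V)$ are vector bundles of rank $d$ on $U$ by Corollary \ref{GKinftyvb}, and there is a tautological map from the pull-back to the sheaf on $U$ induced by base change. The first step shows that this map is an isomorphism at every point $x \in U$, and a morphism of vector bundles of the same rank that is an isomorphism on all fibers must be an isomorphism. The main obstacle is the Galois descent in the second step: one has to verify carefully that the semi-linear $H$-action on $W$ really comes from the base change formula for our sheaves $\Bcal_?$ and $\widetilde{\Bcal}_?$, i.e.\ that the relevant completions are compatible with taking $H$-invariants, so that the identifications $\iota_y^\ast \mathscr{R} \otimes_{\widehat{k(y)}} \widehat{k(x)} \cong \iota_x^\ast\mathscr{R}$ used above are meaningful and $\Phi$-equivariant.
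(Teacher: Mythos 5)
Your argument for $U=f^{-1}(V)$ is essentially the paper's: the easy inclusion via the injective base change map plus a dimension count, and the hard inclusion by replacing $x$ with a point of $U\cap f^{-1}(y)$ whose residue field is finite over $\widehat{k(y)}$ and then descending the Hom-space. The paper phrases the descent simply as "a descent datum on $\iota_{x'}^\ast\mathcal{N}_X$ compatible with $\Phi$" rather than passing explicitly to a Galois closure, but the mechanism is the same as in Proposition $\ref{etalegeometric}$, and your explicit semi-linear Galois descent --- together with the observation you rightly flag, that for a finite extension no completion issues arise, so that $\iota_{x'}^\ast\mathcal{N}_X=\iota_y^\ast\mathcal{N}_Y\otimes_{\widehat{k(y)}}\widehat{k(x')}$ and the invariants of $\widetilde{\Bcal}_{\widehat{k(x')}}$ under $H$ are $\widetilde{\Bcal}_{\widehat{k(y)}}$ --- is a valid way to carry it out.

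The sheaf identification is where you diverge from the paper, and where there is a gap. You invoke Corollary $\ref{GKinftyvb}$ to assert that $\sheafHom_{\Bcal_U,\Phi}(\mathcal{N}_X|_U,\widetilde{\Bcal}_U)$ is already known to be a vector bundle of rank $d$ on $U$, and (implicitly) that its fiber at $x$ computes the completed Hom-space appearing in the definition of $U$. But that corollary, and the proposition it dualizes, are proved under the hypothesis that the base is \emph{reduced}; indeed the very notion of an \'etale $\phi$-module over $\Bcal_X$ is only defined for reduced $X$. Proposition $\ref{maxrankgeometric}$ assumes only that $Y$ is reduced, and its role in the paper is precisely to transport the construction from the reduced universal case to arbitrary, possibly non-reduced, $X$ --- so one cannot presuppose the conclusion of Corollary $\ref{GKinftyvb}$ on $X$ without circularity. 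The paper instead proves the equality directly: locally it chooses a finite morphism $g:X\rightarrow Z$ to a polydisc, observes that the tautological inclusion of $\Hom_{\Bcal_V,\Phi}(\mathcal{N}_Y|_V,\widetilde{\Bcal}_V)\widehat{\otimes}_{\Gamma(V,\Ocal_Y)}\Gamma(U',\Ocal_X)$ into $\Hom_{\Bcal_{U'},\Phi}(\mathcal{N}_X|_{U'},\widetilde{\Bcal}_{U'})$ becomes an equality in the fibers $g^{-1}(z)$ over all rigid points $z\in Z$ by comparing dimensions over $\Q_p$, and concludes by the localization-at-rigid-points principle. Your fiberwise vector-bundle argument is fine whenever $X$ happens to be reduced; to cover the general case you need to replace the appeal to Corollary $\ref{GKinftyvb}$ by something like this Noether-normalization step.
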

\begin{proof}
The inclusion $f^{-1}(V)\subset U$ is obvious. Let $x\in U$ mapping to $y\in Y$. We have to show $y\in V$, i.e.
\[\dim_{\widehat{k(y)}}\Hom_{\Bcal_{\widehat{k(y)}},\Phi}(\iota_y^\ast\mathcal{N}_Y,\widetilde{\Bcal}_{\widehat{k(y)}})=d.\]
The set $U$ is open and $f$ is (locally) of finite type, hence there exists a point $x'\in U\cap f^{-1}(y)$ such that $\widehat{k(x')}$ is a finite extension of $\widehat{k(y)}$. In this case 
\[\iota_{x'}^\ast\mathcal{N}_X=(\iota_y^\ast\mathcal{N}_Y)\otimes_{\widehat{k(y)}}\widehat{k(x')},\]
where we do not need to complete the tensor product as the extension is finite.
Hence we have a descent datum on $\iota_{x'}^\ast\mathcal{N}_X$ which is compatible with the action of $\Phi$. It follows that
\[\Hom_{\Bcal_{\widehat{k(x')}},\Phi}(\iota_{x'}^\ast\mathcal{N},\widetilde{\Bcal}_{\widehat{k(x')}})\]
descends to $\widehat{k(y)}$ which yields the claim on the dimension.
To prove the last claim we again chose locally on $X$ a finite morphism $g:X\rightarrow Z$ to a polydisc. The claim follows from the fact that (assuming locally that $V$ is affinoid and $U'\subset U$ is affinoid) the inclusion
\[\Hom_{\Bcal_V,\Phi}(\Ncal_Y|_V,\widetilde{\Bcal}_V)\widehat{\otimes}_{\Gamma(V,\Ocal_Y)}\Gamma(U',\Ocal_X)\subset \Hom_{\Bcal_{U'},\Phi}(\Ncal_X|_{U'},\widetilde{\Bcal}_{U'})\]
induces an equality in the fibers $g^{-1}(z)$ of $g$ for all rigid points $z\in Z$, which can be seen by comparing dimensions over $\Q_p$.
\end{proof}

\subsection{Construction of a crystalline family}

Now we want to use the results of the preceding subsection to construct an open substack $\Dfrak_\nu^{\rm adm}\subset \Dfrak_\nu^{\rm int}$ on which there exists a family of crystalline representations giving rise to the restriction of the universal filtered $\phi$-module.
The main point is to show that a family of $G_{K_\infty}$-representations and an extension of the associated \'etale $\phi$-module to a certain $\phi$-module on the open unit disc "glue" together to a family of $G_K$-representations.

\begin{prop}\label{crystallinefamily}
Let $X\in\Ad_{\Q_p}^{\lft}$ be reduced and $(\Mfrak,\Phi)$ be a $\phi$-module of height $1$ and rank $d$ over $\Acal_X^{[0,1)}$ with associated filtered $\phi$-module $(D,\Phi,\Fcal^\bullet)$. Then 
\begin{align*}
U&:=\{x\in X\mid {\rm rk}_{k(x)^+}\Hom_{\Acal_X^{[0,1)}\otimes k(x),\Phi}(\Mfrak\otimes k(x), \widetilde{\Acal}_X^{[0,1)}\otimes k(x))=d\}\\
&\,= \{x\in X\mid \dim_{\widehat{k(x)}}\Hom_{\Bcal_{\widehat{k(x)}},\Phi}(\iota_x^\ast (\Mfrak\otimes_{\Acal_X^{[0,1)}}\Bcal_X),\widetilde{\Bcal}_{\widehat{k(x)}})=d\}
\end{align*}
 and this set is open and 
\[\sheafHom_{\Acal_U^{[0,1)},\Phi}(\Mfrak|_U,\widetilde{\Acal}_U^{[0,1)})\otimes_{\Z_p}\Q_p=\sheafHom_{\Bcal_U,\Phi}(\Mfrak|_U\otimes_{\Acal_U^{[0,1)}}\Bcal_U,\widetilde{\Bcal}_U)\]
is a vector bundle on $U$.
\end{prop}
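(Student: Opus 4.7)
The plan is to reduce Proposition~\ref{crystallinefamily} to the étale case already treated in Corollary~\ref{GKinftyvb} and Proposition~\ref{maxrankgeometric}. The main work is to pass from the height-$1$ module $\Mfrak$ to its associated étale $\phi$-module over $\Bcal_X$ and to identify the two Hom-sheaves that appear on the two sides of the asserted equality.

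First I would construct the associated étale $\phi$-module. Set
\[\mathfrak{N} := \Mfrak \otimes_{\Acal_X^{[0,1)}} \Acal_X, \qquad \Ncal := \mathfrak{N}[1/p] = \Mfrak \otimes_{\Acal_X^{[0,1)}} \Bcal_X.\]
Since $E(u) = u^e + p\cdot(\text{lower order terms})$ becomes a unit in $\Acal_X = \Acal_X^{[0,1)}[1/u]^{\wedge}$ (where $u$ is invertible), the height-$1$ condition $E(u)\coker\Phi = 0$ forces $\Phi$ to be an isomorphism on $\mathfrak{N}$. Thus $\Ncal$ is an étale $\phi$-module over $\Bcal_X$ with $\Phi$-stable integral lattice $\mathfrak{N}$, and likewise in every fiber $\iota_x^\ast \Ncal$.

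Second, the key technical point is the identification of sheaves on $X$
\[\sheafHom_{\Acal_X^{[0,1)},\Phi}(\Mfrak, \widetilde{\Acal}_X^{[0,1)}) \otimes_{\Z_p} \Q_p \;\cong\; \sheafHom_{\Bcal_X,\Phi}(\Ncal, \widetilde{\Bcal}_X),\]
together with its analog at every point of $X$. The inclusion $\subseteq$ is immediate by extension of scalars. For the reverse inclusion, given a $\Phi$-equivariant $g: \Ncal \to \widetilde{\Bcal}_X$, I would argue as follows. Since $\Mfrak$ is finitely generated over $\Acal_X^{[0,1)}$, a bounded power of $p$ sends $g(\Mfrak)$ into $\widetilde{\Acal}_X$; to improve this to $\widetilde{\Acal}_X^{[0,1)} = \widetilde{\Acal}_X \cap (\Ocal_X^+ \widehat{\otimes} W(R))$ one picks a basis $e_1,\dots,e_d$ of $\Mfrak$ with matrix $A$ of $\Phi$, and rewrites the defining equation $A \cdot (g(e_i))_i = \phi((g(e_i))_i)$ using the relation $E(u)\Mfrak \subset \Phi(\phi^\ast\Mfrak)$ to express each $g(e_i)$ as a $\phi$-iterate of values in $g(\Mfrak)$ divided by a bounded element; combined with $u \mapsto [\underline{\pi}] \in W(R)$ being integral and $\phi$ preserving $W(R)$, this forces $p^N g(\Mfrak) \subset \widetilde{\Acal}_X^{[0,1)}$ for some $N$. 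This is the sheaf-theoretic version of the argument behind \cite[Proposition~1.1.13]{Kisin}; in families the approximations go through using the definition of $\widetilde{\Acal}_X^{[0,1)}$ as a topological closure inside $\Ocal_X^+ \widehat{\otimes} W(R)$ together with the reducedness of $X$.

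With the Hom identification in hand, the two descriptions of $U$ in the proposition visibly coincide. Applying Corollary~\ref{GKinftyvb} to $\Ncal$ (with its $\Phi$-stable lattice $\mathfrak{N}$) shows that $U$ is open and that $\sheafHom_{\Bcal_U,\Phi}(\Ncal|_U, \widetilde{\Bcal}_U)$ is a vector bundle of rank $d$ on $U$; the rationalization of the integral Hom is then also locally free of rank $d$. Compatibility of all this with base change (needed for reducing from the general case to the universal reduced case, and also implicit in the equivalence of the two characterizations of $U$) follows from Proposition~\ref{maxrankgeometric}. The principal obstacle is the integral-to-rational comparison in the second step: at a single point it is Kisin's theorem, but globalizing requires tracking the topology on $\widetilde{\Acal}_X^{[0,1)}$ and $\widetilde{\Bcal}_X$ and verifying that the approximation remains uniform on quasi-compact opens; this in turn is controlled by the injectivity results Lemma~\ref{localizprinc1} and Corollary~\ref{localizprinc2} that have already been set up in the preceding subsection.
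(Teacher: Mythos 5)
Your proposal is correct and follows essentially the same route as the paper: the set $U$ and the vector bundle structure come from the \'etale theory (Corollary $\ref{GKinftyvb}$ applied to $\Mfrak\otimes_{\Acal_X^{[0,1)}}\Acal_X$), and the heart of the matter is showing that every $\Phi$-equivariant map into $\widetilde{\Acal}_X$ (resp. $\widetilde{\Bcal}_X$) restricts on $\Mfrak$ to a map into $\widetilde{\Acal}_X^{[0,1)}$, which the paper does by invoking Kisin's pointwise overconvergence result at rigid analytic points and then globalizing using reducedness of $X$. The one small correction: the globalization step rests on Lemma $\ref{enoughrigpts}$ (membership in the closed subring at all rigid points implies global membership), not merely on the injectivity statements of Lemma $\ref{localizprinc1}$ and Corollary $\ref{localizprinc2}$, so your in-families rerun of Kisin's bootstrap can be replaced by this cleaner pointwise reduction.
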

\begin{proof}
We have a canonical injection
\begin{equation}\label{overconvergence}
\sheafHom_{\Acal_X^{[0,1)},\Phi}(\Mfrak,\widetilde{\Acal}_X^{[0,1)})\hookrightarrow \sheafHom_{\Acal_X,\Phi}(\Mfrak\otimes_{\Acal_X^{[0,1)}}\Acal_X,\widetilde{\Acal}_X).
\end{equation}
By Corollary $\ref{GKinftyvb}$ the set 
\begin{align*}
U'&=\{x\in X\mid \dim_{\widehat{k(x)}}\Hom_{\Bcal_{\widehat{k(x)}},\Phi}(\iota_x^\ast (\Mfrak\otimes_{\Acal_X^{[0,1)}}\Bcal_X),\widetilde{\Bcal}_{\widehat{k(x)}})=d\}
\end{align*}
is open and the sheaf 
\[\sheafHom_{\Acal_X,\Phi}(\Mfrak\otimes_{\Acal_X^{[0,1)}}\Acal_X,\widetilde{\Acal}_X)\] 
restricts to a locally free $\Ocal_X^+$-module on $U'$.\\
We claim that $U=U'$ and that $(\ref{overconvergence})$ is an isomorphism on $U$.
The inclusion $U\subset U'$ is obvious and we have to prove the converse. Let 
\begin{align*}
f\in&\ \Gamma(V,\sheafHom_{\Acal_X,\Phi}(\Mfrak\otimes_{\Acal_X^{[0,1)}}\Acal_X,\widetilde{\Acal}_X))\\
&= \Hom_{\Gamma(V,\Acal_X),\Phi}(\Gamma(V,\Mfrak\otimes_{\Acal_X^{[0,1)}}\Acal_X),\Gamma(V,\widetilde{\Acal}_X))
\end{align*}
for some affinoid $V\subset U'$. This equality is true, as $\Mfrak$ is locally free. For some $m\in \Gamma(V,\Mfrak)\subset \Gamma(V,\Mfrak\otimes_{\Acal_X^{[0,1)}}\Acal_X)$ we then have 
\[f(m)(x)\in\widetilde{\Acal}_X^{[0,1)}\otimes k(x)\]
for all rigid analytic points $x\in V$ by \cite[Corollary 2.1.4]{crysrep}, and hence we find that $f(m)\in \Gamma(V,\widetilde{\Acal}_X^{[0,1)})$ by Lemma $\ref{enoughrigpts}$, as $X$ is reduced. It follows that $f$ restricts to an element of 
\[\Hom_{\Gamma(X,\Acal_X^{[0,1)}),\Phi}(\Gamma(V,\Mfrak),\Gamma(V,\widetilde{\Acal}_X^{[0,1)})).\]
\end{proof}

\begin{prop}\label{GKinftytoGK}
Let $X$ be a reduced space and $(\Mfrak,\Phi)$ be a $\phi$-module of height $1$ and rank $d$ over $\Acal_X^{[0,1)}$ such that 
\[{\rm rk}_{k(x)^+}\Hom_{\Acal_X^{[0,1)}\otimes k(x),\Phi}(\Mfrak\otimes k(x), \widetilde{\Acal}_X^{[0,1)}\otimes k(x))=d\]
for all $x\in X$. Write $(D,\Phi,\Fcal^\bullet)$ for the filtered $\phi$-module associated to $(\Mfrak,\Phi)$ by the period morphism. There is a canonical isomorphism of $\Ocal_X$-modules
\begin{equation}\label{VcrisVB}
\sheafHom_{\Acal_X^{[0,1)},\Phi}(\Mfrak,\widetilde{\Acal}_X^{[0,1)})\otimes_{\Z_p}\Q_p\longrightarrow \sheafHom_{\Phi,\Fil}(D,\Ocal_X\widehat{\otimes}B_{\rm cris}).\end{equation}
Further $\mathcal{E}=V_{\rm cris}^\ast(D):=\sheafHom_{\Phi,\Fil}(D,\Ocal_X\widehat{\otimes}B_{\rm cris})$ is a vector bundle of rank $d$ on $X$ with a continuous action of $G_K$ such that there is a canonical isomorphism of filtered $\phi$-modules
\[\sheafHom_{\Ocal_X[G_K]}(\mathcal{E},\Ocal_X\widehat{\otimes}B_{\rm cris})\cong (D,\Phi,\Fcal^\bullet).\]
\end{prop}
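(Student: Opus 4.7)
The plan is to imitate, in families, Kisin's construction from \cite[\S 1.2]{crysrep}, bootstrapping off Proposition~\ref{crystallinefamily} (which gives local freeness of the left-hand side of~(\ref{VcrisVB})) and the fibrewise case at rigid points (where Kisin's result applies directly). First I would construct the map in~(\ref{VcrisVB}) as follows. The period-morphism construction of Section~5, in particular Lemma~\ref{lemxi}, yields a $\phi$-compatible injection $p^\ast D\hookrightarrow\Mfrak[1/\lambda]$ with cokernel killed by $\lambda$, where $\lambda$ is as in~(\ref{lambda}). Since $\lambda$ differs from a power of $t$ by a unit of $\Ocal_X\widehat{\otimes}B_{\rm cris}$, this becomes a $\phi$-equivariant isomorphism after tensoring over $\Acal_X^{[0,1)}$ with $\Ocal_X\widehat{\otimes}B_{\rm cris}$. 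Given $f\in\sheafHom_{\Acal_X^{[0,1)},\Phi}(\Mfrak,\widetilde{\Acal}_X^{[0,1)})$ I would extend scalars along the inclusion $\widetilde{\Acal}_X^{[0,1)}\hookrightarrow\Ocal_X^+\widehat{\otimes}A_{\rm cris}$ from diagram~(\ref{periodsheaves}), invert $p$, and precompose with the isomorphism $D\otimes_{K_0}(\Ocal_X\widehat{\otimes}B_{\rm cris})\cong \Mfrak\otimes_{\Acal_X^{[0,1)}}(\Ocal_X\widehat{\otimes}B_{\rm cris})$ induced by $p^\ast\xi$, thereby producing a $\phi$-compatible morphism $D\to\Ocal_X\widehat{\otimes}B_{\rm cris}$.

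Next I would check that the resulting map is filtration-compatible and an isomorphism. For filtration compatibility, both source and target are coherent $\Ocal_X$-modules, so after reduction to the universal (reduced) case $X=\Res_{K_0/\Q_p}\GL_d\times\Gr_{K,\nu}$ it suffices to check at rigid analytic points of $X$, where the assertion is precisely Kisin's original comparison, because the filtration on $D$ is \emph{defined} by pulling back through $p^\ast\xi\bmod E(u)$ the filtration $E(u)\Mfrak\subset\Phi(\phi^\ast\Mfrak)$. That the map is an isomorphism is then proved in the same way: by Proposition~\ref{crystallinefamily} the left hand side is locally free of rank $d$ over $\Ocal_X$; using Corollary~\ref{Bcrisinvariants} and Lemma~\ref{Acrislocaliz} the right hand side injects into the product over rigid analytic points of its fibres, where Kisin's theorem supplies the isomorphism; a dimension/fibre comparison on a reduced space closes the argument. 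In particular $\Ecal=V_{\rm cris}^\ast(D)$ is then locally free of rank $d$.

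The $G_K$-action on $\Ecal$ is inherited from the continuous $G_K$-action on $\Ocal_X\widehat{\otimes}B_{\rm cris}$; continuity with respect to the topology induced by local freeness follows from the construction of the period sheaves in Section~8.1 and the fact that $\Ecal$ is recognised as a $\Phi$-, filtration- and $G_K$-stable subsheaf of $\sheafHom_{\Ocal_X}(D,\Ocal_X\widehat{\otimes}B_{\rm cris})$. To recover $(D,\Phi,\Fcal^\bullet)$ I would then apply $\sheafHom_{\Ocal_X[G_K]}(-,\Ocal_X\widehat{\otimes}B_{\rm cris})$ to $\Ecal$: the evaluation map $D\otimes_{K_0}(\Ocal_X\widehat{\otimes}B_{\rm cris})\to\sheafHom_{\Ocal_X}(\Ecal,\Ocal_X\widehat{\otimes}B_{\rm cris})$ is $G_K$-equivariant, and taking $G_K$-invariants on both sides, together with Corollary~\ref{Bcrisinvariants} giving $(\Ocal_X\widehat{\otimes}B_{\rm cris})^{G_K}=\Ocal_X\otimes_{\Q_p}K_0$, yields $D$ back; the recovery of $\Phi$ and $\Fcal^\bullet$ is the analogous statement for $\phi=\id$ invariants and the filtration, again verified at rigid points and propagated by reducedness.

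The main obstacle will be the filtration-compatibility and isomorphism claims in the relative setting, because both rest on transporting Kisin's pointwise comparison through the relative period sheaves; this requires the density and injectivity results of Section~8.1 (Lemmas~\ref{enoughrigpts}, \ref{Acrislocaliz} and Corollaries~\ref{invariantsheaves}, \ref{Bcrisinvariants}) in an essential way, combined with the local freeness established in Proposition~\ref{crystallinefamily}. Once these structural results are in place, assembling the continuous $G_K$-representation $\Ecal$ and its identification with $V_{\rm cris}^\ast(D)$ is essentially a formal manipulation of the diagram~(\ref{periodsheaves}).
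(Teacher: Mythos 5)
Your proposal is correct and takes essentially the same route as the paper's proof: the map $(\ref{VcrisVB})$ is obtained by extending scalars along $\widetilde{\Acal}_X^{[0,1)}\hookrightarrow\Ocal_X\widehat{\otimes}B_{\rm cris}$ and precomposing with the injection $\xi$ of Lemma $\ref{lemxi}$, and the filtration-compatibility, bijectivity, and the recovery of $(D,\Phi,\Fcal^\bullet)$ via the evaluation map followed by taking $G_K$-invariants are all verified at rigid analytic points and propagated by reducedness through the localization results of Section 8.1 (Lemmas $\ref{enoughrigpts}$, $\ref{Acrislocaliz}$ and Corollary $\ref{Bcrisinvariants}$). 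This is exactly the paper's argument, which likewise reduces everything to Kisin's pointwise comparison.
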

\begin{proof}
The point of this proposition is that \cite[Proposition 2.1.5]{crysrep} (see also \cite[Proposition 11.3.3]{survey}) works also in the relative case. We give the definition of the map and refer to \cite{survey} for the details.\\
Write 
\[\Mcal=\Mfrak\otimes_{\Acal_X^{[0,1)}}\Bcal_X^{[0,1)}\]
for the vector bundle on $X\times\Ubb$ associated with $\Mfrak$.\\
First the map $\widetilde{\bf A}^{[0,1)}\hookrightarrow W(R)\hookrightarrow B_{\rm cris}$ (compare also $(\ref{periodsheaves})$) induces injections
\begin{align*}
\sheafHom_{\Acal_X^{[0,1)},\Phi}(\Mfrak,\widetilde{\Acal}_X^{[0,1)})\otimes_{\Z_p}\Q_p& \hookrightarrow \sheafHom_{\Acal_X^{[0,1)},\Phi}(\Mfrak,\Ocal_X\widehat{\otimes}B_{\rm cris})\\ &\hookrightarrow \sheafHom_{\Bcal_X^{[0,1)},\Phi}(\Mcal,\Ocal_X\widehat{\otimes}B_{\rm cris}).
\end{align*}
Secondly consider the map
\begin{align*}
\sheafHom_{\Bcal_X^{[0,1)},\Phi}(\Mcal,\Ocal_X\widehat{\otimes}B_{\rm cris}) & \rightarrow \sheafHom_{\Bcal_X^{[0,1)},\Phi}(D\otimes_{K_0}\Bcal_X^{[0,1)},\Ocal_X\widehat{\otimes}B_{\rm cris})\\ &=
\sheafHom_\Phi(D,\Ocal_X\widehat{\otimes}B_{\rm cris}),
\end{align*}
given by composing with the $\Phi$-compatible injection
\[\xi: D\otimes_{K_0}\Bcal_X^{[0,1)}\hookrightarrow \Mcal\]
of Lemma $\ref{lemxi}$. The resulting map $(\ref{VcrisVB})$ is injective, as the source is a vector bundle on a reduced space and the kernel vanishes at all rigid points. Further the map has image in 
\[\sheafHom_{\Phi,\Fil}(D,\Ocal_X\widehat{\otimes}B_{\rm cris})\subset \sheafHom_\Phi(D,\Ocal_X\widehat{\otimes}B_{\rm cris})\]
which can be seen as follows: The inclusion is true for all rigid points by the argument in the proof of \cite[Proposition 11.3.3]{survey} and, as $X$ is reduced, locally on $X$ the sections of  $\sheafHom_{\Phi,\Fil}(D,\Ocal_X\widehat{\otimes}_{\Q_p}B_{\rm cris})$ are identified with
\[\Hom_{\Phi}(D,\Ocal_X\widehat{\otimes}B_{\rm cris})\cap \prod \Hom_{\Phi,\Fil}(D\otimes k(x),k(x)\otimes_{\Q_p}B_{\rm cris}),\]
where the product runs over all rigid points $x\in X$.

Now $V_{\rm cris}^\ast(D)$ is a vector bundle on $X$ which has a continuous $G_K$-action induced from the action on $B_{\rm cris}$.

Finally the evaluation map
\[D\rightarrow \sheafHom_{\Ocal_X[G_K]}(\sheafHom_{\Phi,\Fil}(D,\Ocal_X\widehat{\otimes}B_{\rm cris}),\Ocal_X\widehat{\otimes}B_{\rm cris})\]
defines an isomorphism: The map is clearly injective, as $D$ is a coherent sheaf on a reduced space and the kernel vanishes at all rigid points. Further the cokernel of the induced map of finite $\Ocal_X\widehat{\otimes}B_{\rm cris}$-modules
\[D{\otimes}_{K_0}B_{\rm cris}\rightarrow \sheafHom_{\Ocal_X}(\sheafHom_{\phi,\Fil}(D,\Ocal_X\widehat{\otimes}B_{\rm cris}),\Ocal_X\widehat{\otimes}B_{\rm cris})\]
vanishes at all rigid points and by the usual argument we conclude that it is an isomorphism. The claim follows from this after taking $G_K$-invariants on both sides.
\end{proof}

\begin{theo}
The groupoid that assigns to an adic space $X\in\Ad_{\Q_p}^{\lft}$ the groupoid of triples $(D,\Phi,\Fcal^\bullet)\in\Dfrak_\nu^{\rm int}(X)$ such that
\[\dim_{\widehat{k(x)}}(\iota_x^\ast \Mfrak\otimes_{\Acal^{[0,1)}_{\widehat{k(x)}}}\widetilde{\Bcal}_{\widehat{k(x)}})^{\Phi=\id}=d \]
for all $x\in X$ and any choice of an integral model $\Mfrak\subset \underline{\Mcal}(D)$ in a some neighbourhood $U$ of $x$ {\rm (}where $\underline{\Mcal}(-)$ is the map $(\ref{mapM})${\rm )}, 
is an open substack $\Dfrak_\nu^{\rm adm}\subset \Dfrak_\nu^{\rm int}$. For any finite extension $F$ of $\Q_p$ we have $\Dfrak_\nu^{\rm adm}(F)=\Dfrak_\nu^{\rm int}(F)=\Dfrak_\nu^{\rm wa}(F)$. Further there exists a family of crystalline representations $\Ecal_\nu^{\rm univ}$ on $\Dfrak_\nu^{\rm adm}$ such that
\[D_{\rm cris}(\Ecal_\nu^{\rm univ})=(D,\Phi,\Fcal^\bullet)|_{\Dfrak_\nu^{\rm adm}}.\]
\end{theo}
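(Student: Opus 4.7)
The plan is to proceed in three stages: openness of $\Dfrak_\nu^{\rm adm}$ inside $\Dfrak_\nu^{\rm int}$, the equality with $\Dfrak_\nu^{\rm wa}$ on finite extensions, and the construction of the universal crystalline family.

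For openness, I would reduce to the universal case. Since $\Dfrak_\nu^{\rm int}$ is an Artin stack it admits a smooth presentation by an adic space $X$ locally of finite type, and replacing $X$ by its underlying reduced subspace (which is legitimate because the universal case over $X_\nu$ is reduced, so all of the integral period sheaves of Section~8 are well defined) I may suppose $X$ is reduced and that an integral model $\Mfrak$ of height $1$ exists on $X$ with $\Mfrak \otimes_{\Acal_X^{[0,1)}} \Bcal_X^{[0,1)} \cong \underline{\Mcal}(D,\Phi,\Fcal^\bullet)$. Proposition~\ref{crystallinefamily} then identifies the two rank-$d$ conditions in the statement and shows that the locus where they hold is open, and moreover that $\sheafHom_{\Acal_X^{[0,1)},\Phi}(\Mfrak,\widetilde{\Acal}_X^{[0,1)}) \otimes_{\Z_p} \Q_p$ is a vector bundle of rank $d$ on it. Independence of the choice of $\Mfrak$ is immediate from the second, intrinsic formulation of the condition (which only involves $\underline{\Mcal}(D)\otimes \Bcal$), and fpqc-locality follows from Proposition~\ref{maxrankgeometric} applied to the induced \'etale $\phi$-module over $\Bcal_X$. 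Together these give a well-defined open substack $\Dfrak_\nu^{\rm adm}\subset \Dfrak_\nu^{\rm int}$.

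For the equality $\Dfrak_\nu^{\rm adm}(F) = \Dfrak_\nu^{\rm int}(F) = \Dfrak_\nu^{\rm wa}(F)$ for any finite extension $F$ of the reflex field, the second equality is Theorem~\ref{maintheoint}(ii). For the first, given a weakly admissible triple $(D,\Phi,\Fcal^\bullet) \in \Dfrak_\nu^{\rm wa}(F)$, Colmez--Fontaine produces a $d$-dimensional crystalline $G_K$-representation $V$ with $D_{\rm cris}(V) \cong D$. Restricting to $G_{K_\infty}$ and using Kisin's comparison between the integral model $\Mfrak$ and the \'etale $\phi$-module of $V|_{G_{K_\infty}}$ (as used in the proof of Proposition~\ref{GKinftytoGK}, compare \cite[Proposition~2.1.5]{crysrep} or \cite[Proposition~11.3.3]{survey}), one gets precisely the rank-$d$ identity
\[\Hom_{\Acal_F^{[0,1)},\Phi}(\Mfrak,\widetilde{\Acal}_F^{[0,1)})\otimes_{\Z_p}\Q_p \cong V^\vee\]
so the point lies in $\Dfrak_\nu^{\rm adm}(F)$.

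Finally, to construct $\Ecal_\nu^{\rm univ}$, I would again work on a smooth, reduced presentation $X \to \Dfrak_\nu^{\rm adm}$ equipped locally with an integral model $\Mfrak$. The rank hypothesis of Proposition~\ref{GKinftytoGK} is then satisfied on all of $X$, so the proposition produces a vector bundle
\[\Ecal_X := V_{\rm cris}^\ast(D) = \sheafHom_{\Phi,\Fil}\bigl(D,\, \Ocal_X \widehat{\otimes}_{\Q_p} B_{\rm cris}\bigr)\]
of rank $d$ on $X$ with a continuous $G_K$-action, and a canonical isomorphism of filtered $\phi$-modules
\[\sheafHom_{\Ocal_X[G_K]}(\Ecal_X,\Ocal_X\widehat{\otimes}B_{\rm cris}) \cong (D,\Phi,\Fcal^\bullet).\]
Dualising via the usual crystalline duality gives a family of crystalline representations $\Ecal_X^\vee$ (with $D_{\rm cris}$ the dual of the right-hand side above) and Lemma~\ref{Dcrisvolltreu} shows that, up to such a dualisation, $\Ecal_X$ is intrinsic to $(D,\Phi,\Fcal^\bullet)$. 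Consequently the local families glue along the smooth presentation of $\Dfrak_\nu^{\rm adm}$ into a global family $\Ecal_\nu^{\rm univ}$ with the stated property $D_{\rm cris}(\Ecal_\nu^{\rm univ}) = (D,\Phi,\Fcal^\bullet)|_{\Dfrak_\nu^{\rm adm}}$.

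The main obstacle I anticipate is the descent of $\Ecal_X$ from a smooth presentation to the stack: the integral period sheaves $\Ocal_X^+\widehat{\otimes}W(R)$, $\widetilde{\Acal}_X^{[0,1)}$, etc.\ are only defined for reduced $X$, so the construction is rigid only in its rational incarnation $V_{\rm cris}^\ast$, and one must verify that this rational object is canonically attached to the filtered $\phi$-module (not to any auxiliary choice of $\Mfrak$). This is where Lemma~\ref{Dcrisvolltreu} is essential: it shows that a family of crystalline representations is determined by its $D_{\rm cris}$, so the universal family is forced upon us and the descent datum for the smooth cover is automatic from that for $(D,\Phi,\Fcal^\bullet)$ itself.
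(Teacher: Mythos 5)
Your proposal is correct and follows essentially the same route as the paper: reduce to the reduced universal case, invoke Propositions \ref{crystallinefamily}, \ref{maxrankgeometric} and \ref{GKinftytoGK} for openness, fpqc-locality and the construction of $V_{\rm cris}^\ast(D)$, and use Lemma \ref{Dcrisvolltreu} to see that the family is intrinsic to $(D,\Phi,\Fcal^\bullet)$ and hence descends (equivalently, is defined by pullback from the universal case). You in fact spell out two points the paper's proof leaves implicit --- the dualisation needed to pass from $V_{\rm cris}^\ast(D)$ to a family with the stated $D_{\rm cris}$, and the equality $\Dfrak_\nu^{\rm adm}(F)=\Dfrak_\nu^{\rm wa}(F)$ via Colmez--Fontaine and Kisin's comparison --- which is a welcome clarification rather than a deviation.
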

\begin{proof}
It is clear that the condition
\[\dim_{\widehat{k(x)}}(\iota_x^\ast \Mfrak\otimes_{\Acal^{[0,1)}_{\widehat{k(x)}}}\widetilde{\Bcal}_{\widehat{k(x)}})^{\Phi=\id}=d\]
is independent of the choice of an integral model in a neighbourhood of $x$. Further this condition is fpqc-local by Proposition $\ref{maxrankgeometric}$.
By the Proposition above we can define the sheaf $\Ecal_\nu^{\rm univ}$ in the universal case, i.e the open subspace of $\Res_{K_0/\Q_p}\GL_d\times\Gr_{K,\nu}$ defined by the condition in the theorem, as this is reduced. This vector bundle is well defined as $D_{\rm cris}$ commutes with pullbacks and as two crystalline representations $\Ecal_1$ and $\Ecal_2$ are isomorphic if $D_{\rm cris}(\Ecal_1)$ and $D_{\rm cris}(\Ecal_2)$ are isomorphic (see Lemma \ref{Dcrisvolltreu}). Now $\Ecal_\nu^{\rm univ}$ can be defined in general by pullback from the universal case.
\end{proof}

\subsection{Universality of the admissible locus}
We now want to show that the substack constructed above with its crystalline family is indeed the \emph{stack of crystalline representations}.
The main point is to show that the $\phi$-module associated to the restriction of a crystalline family to $G_{K_\infty}$ is overconvergent.

Let $\nu$ be a dominant cocharacter of the algebraic group $\Res_{K/\Q_p}\GL_d$ as in $(\ref{specialcochar})$, defined over $E$.
We say that a family of crystalline representations $\Ecal$ on $X$ has Hodge-Tate weights $\nu$ if the filtration on $D_{\rm cris}(\Ecal)\otimes_{K_0}K$ is of type $\nu$. 
\begin{theo}
The groupoid 
\[\underline{\rm Rep}_{\rm cris}^{\nu}:X \mapsto\left\{
{\begin{array}{*{20}c}
\text{ families of crystalline representations on}\ X\\ \text{with Hodge-Tate weights}\ \nu
\end{array}}\right\}\]
on the category $\Ad_{E}^{\rm lft}$ is isomorphic to the stack $\Dfrak_\nu^{\rm adm}$ and hence is an open substack of $\Dfrak_\nu^{\rm wa}$. 
Especially it is an Artin stack over $\Q_p$. Further it is contained in the image of the period morphism
and contains all rigid analytic points of $\Dfrak_\nu^{\rm wa}$.
\end{theo}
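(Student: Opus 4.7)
The plan is to exhibit mutually inverse functors between $\underline{\rm Rep}_{\rm cris}^\nu$ and $\Dfrak_\nu^{\rm adm}$ and then read off the remaining assertions from the already-proved structural results. The forward functor is $\Ecal \mapsto (D_{\rm cris}(\Ecal),\Phi,\Fcal^\bullet)$, where $\Phi$ comes from the Frobenius on $B_{\rm cris}$ and $\Fcal^\bullet$ from the inclusion $B_{\rm cris} \hookrightarrow B_{\rm dR}$; by definition of a crystalline family the underlying $\Ocal_X \otimes_{\Q_p} K_0$-module is locally free of the correct rank, and the Hodge--Tate weight hypothesis gives that the filtration is of type $\nu$. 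The inverse is the construction $V_{\rm cris}^\ast(D) = \sheafHom_{\Phi,\Fil}(D, \Ocal_X \widehat{\otimes} B_{\rm cris})$ from Proposition \ref{crystallinefamily}/Proposition \ref{GKinftytoGK}, which produces a rank-$d$ vector bundle with continuous $G_K$-action and satisfies $D_{\rm cris}(V_{\rm cris}^\ast(D)) \cong (D,\Phi,\Fcal^\bullet)$ by design. That the two functors are mutually inverse then follows from Lemma \ref{Dcrisvolltreu}: the canonical map $\Ecal \to V_{\rm cris}(D_{\rm cris}(\Ecal))$ is an isomorphism.

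The substantive point is that $D_{\rm cris}(\Ecal)$ actually lands in the open substack $\Dfrak_\nu^{\rm adm}$, not merely in $\Dfrak_\nu$. I would verify this in two steps. First, one must produce locally on $X$ a $\phi$-module $\Mfrak$ of height $1$ over $\Acal_X^{[0,1)}$ whose generic fiber is $\underline{\Mcal}(D_{\rm cris}(\Ecal))$ -- equivalently, that the étale $\phi$-module over $\Bcal_X$ associated with the $G_{K_\infty}$-restriction $\sheafHom_{\Ocal_X[G_{K_\infty}]}(\Ecal, \widetilde{\Bcal}_X)$ is overconvergent. Second, one must check that the rank condition $\dim_{\widehat{k(x)}}(\iota_x^\ast\Mfrak\otimes\widetilde{\Bcal}_{\widehat{k(x)}})^{\Phi=\id}=d$ holds at every $x\in X$. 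The rank condition then holds automatically, since at every point the $G_{K_\infty}$-representation obtained from $\iota_x^\ast \Ecal$ recovers an étale $\phi$-module of the correct dimension of $\Phi$-fixed vectors in $\widetilde{\Bcal}_{\widehat{k(x)}}$-coefficients.

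The main obstacle is therefore a relative version of Kisin's overconvergence theorem for families of crystalline representations. I would prove it by reduction to the reduced universal case (i.e.\ pulling back from $\Res_{K_0/\Q_p}\GL_d \times \Gr_{K,\nu}$, which is reduced), arguing as follows: the overconvergence is a statement about the existence of a certain $\Acal_X^{[0,1)}$-lattice inside an object defined over $\Bcal_X$, and by Lemma \ref{enoughrigpts}, Lemma \ref{Acrislocaliz}, and Corollary \ref{Bcrisinvariants} sections of the relevant integral period sheaves are detected by rigid analytic points. Over each rigid point we apply Kisin's classical result \cite[Theorem 0.2, Corollary 2.1.4]{crysrep} to produce the height-$1$ integral model fiberwise, and the detection-by-rigid-points principle combined with the Berger--Colmez criterion glues these into a global integral model. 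Once this is in place, Proposition \ref{crystallinefamily} and Proposition \ref{GKinftytoGK} transport the object into $\Dfrak_\nu^{\rm adm}(X)$.

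The remaining assertions of the theorem are then formal. Openness of $\underline{\rm Rep}_{\rm cris}^\nu$ inside $\Dfrak_\nu^{\rm wa}$ and its Artin structure are inherited from the already-established openness of $\Dfrak_\nu^{\rm adm} \subset \Dfrak_\nu^{\rm int} \subset \Dfrak_\nu^{\rm wa}$. Containment in the image of the period morphism follows from $\Dfrak_\nu^{\rm adm} \subset \Dfrak_\nu^{\rm int}$ together with Theorem \ref{maintheo3}. Finally, for a rigid analytic point $x$ of $\Dfrak_\nu^{\rm wa}$ the fiber $(D,\Phi,\Fcal^\bullet)$ is a weakly admissible filtered $\phi$-module over the finite extension $k(x)$, hence equals $D_{\rm cris}$ of a crystalline $G_K$-representation by \cite[Theorem A]{ColmezFont}; Kisin's overconvergence theorem then forces $x \in \Dfrak_\nu^{\rm adm}(k(x))$, giving the last assertion.
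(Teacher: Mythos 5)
Your overall skeleton matches the paper: injectivity of $D_{\rm cris}$ via Lemma \ref{Dcrisvolltreu}, essential surjectivity onto $\Dfrak_\nu^{\rm adm}$ reduced to (a) overconvergence of the family and (b) the pointwise rank condition, and the remaining assertions read off from the openness of $\Dfrak_\nu^{\rm adm}\subset\Dfrak_\nu^{\rm int}\subset\Dfrak_\nu^{\rm wa}$ and from Colmez--Fontaine plus Kisin at rigid points. The gap is in your treatment of the one genuinely hard step, the relative overconvergence. You propose to apply Kisin's theorem fiberwise at the rigid analytic points and then ``glue'' the resulting height-$1$ lattices using the detection-by-rigid-points lemmas and Berger--Colmez. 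This does not work as stated. The lemmas of the form \ref{enoughrigpts} and \ref{Acrislocaliz} only detect whether an \emph{already given} global section lies in an integral subsheaf; they do not manufacture a global, finitely generated, $\Phi$-stable $\Acal_X^{[0,1)}$-lattice out of an unrelated collection of fiberwise lattices. More pointedly, the paper's own Remark \ref{intgleichwa} states that it is \emph{not known} whether \'etaleness at all rigid points implies \'etaleness of the family; the openness results of Section 6 (Corollary \ref{etaleopen}) require \'etaleness at \emph{every} point of the adic space, including non-rigid ones where $\widehat{k(x)}$ is not finite over $\Q_p$ and Kisin's classical theorem does not apply. So a purely fiberwise-at-rigid-points argument cannot reach the conclusion.

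What the paper does instead (Proposition \ref{overconvforfam}) is a direct global construction: after reducing to $X$ reduced, one observes that $\Mcal=\underline{\Mcal}(D)$ sits between $\Bcal_X^{[0,1)}\cdot D$ and $\lambda^{-1}\Bcal_X^{[0,1)}\cdot D$ inside $\Ecal\widehat{\otimes}B_{\rm cris}$, that the $G_{K_\infty}$-action on $\Mcal$ is trivial (a vanishing statement on a reduced space, legitimately checked at rigid points), and that the induced map $\Mcal\otimes_{\Bcal_X^{[0,1)}}\widetilde{\Bcal}_X^{[r,s]}\rightarrow\Ecal\otimes_{\Ocal_X}\widetilde{\Bcal}_X^{[r,s]}$ is an isomorphism. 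A $G_{K_\infty}$-stable integral lattice $\Ecal^+\subset\Ecal$ then yields, by taking $G_{K_\infty}$-invariants, a \emph{global} integral structure on $\Mcal$ over an annulus $[r,s]$ near the boundary, and Theorem \ref{maintheoint} (via Propositions \ref{etaleonannulus} and \ref{intmodel}) propagates this to a height-$1$ model over the whole open disc. This use of the global lattice $\Ecal^+$ and the identification of $\Mcal$ with $\Ecal$ after extension of scalars to $\widetilde{\Bcal}_X^{[r,s]}$ is the idea your proposal is missing; without it, step (a) of your argument is not established. Your handling of the rank condition and of the concluding formal assertions is otherwise consistent with the paper.
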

\begin{proof}
The injectivity of the morphism $D_{\rm cris}$ from the stack of crystalline representations with Hodge-Tate weights $\nu$ to $\Dfrak_\nu$ follows from Lemma \ref{Dcrisvolltreu}. The rest of the claim is an immediate consequence of the propositions below.
\end{proof}

\begin{prop}\label{overconvforfam}
The morphism 
\[D_{\rm cris}:\underline{\rm Rep}_{\rm cris}^{\nu}\longrightarrow \Dfrak_\nu\]
factors over the image of the period morphism $\Dfrak_\nu^{\rm int}\subset \Dfrak_\nu$.
\end{prop}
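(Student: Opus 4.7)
The plan is to use Theorem \ref{maintheoint}(i) to reduce the question to \'etaleness of a family of $\phi$-modules over the Robba ring, then verify this pointwise at rigid analytic points and extend by openness plus density. Concretely, given a crystalline family $\Ecal$ on $X\in\Ad_E^{\rm lft}$ with $D=D_{\rm cris}(\Ecal)$, Theorem \ref{maintheoint}(i) yields $D\in\Dfrak_\nu^{\rm int}(X)$ if and only if the family
\[\underline{\Mcal}(D)\otimes_{\Bcal_X^{[0,1)}}\Bcal_X^R\]
of $\phi$-modules over the relative Robba ring is \'etale. By Corollary \ref{etaleopen}(ii)--(iii) its \'etale locus $U\subseteq X$ is open, and I will show $U=X$.

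First I would verify $x\in U$ at every rigid analytic point $x\in X$. There $\widehat{k(x)}$ is a finite extension of $\Q_p$, and, after passing to a reduced affinoid neighbourhood of $x$, the theorem of Berger--Colmez \cite[Corollary 6.33]{BergerColmez} gives that $\iota_x^\ast\Ecal$ is a classical crystalline $G_K$-representation on a finite-dimensional $\widehat{k(x)}$-vector space. Because both $D_{\rm cris}$ (by Corollary \ref{Bcrisinvariants}) and $\underline{\Mcal}$ (by its base-change-compatible construction in section 5) commute with specialization at rigid points, the fibre $\iota_x^\ast(\underline{\Mcal}(D)\otimes\Bcal_X^R)$ is canonically identified with the classical Robba-ring $\phi$-module that Kisin attaches to $\iota_x^\ast\Ecal$. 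This is \'etale by Kisin's overconvergence theorem, i.e., by the equivalence between weak admissibility of a filtered $\phi$-module and \'etaleness of its associated Robba-ring $\phi$-module from \cite{crysrep}. Hence $x\in U$.

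Next I would deduce $U=X$ from density of rigid points: in an adic space locally of finite type over $E$, every non-empty closed subset contains a rigid analytic point (the closed subset being itself an adic space locally of finite type, hence having non-empty affinoid opens, whose underlying affinoid algebras are Jacobson and therefore have maximal ideals). Since $X\setminus U$ is closed and, by the preceding step, contains no rigid point, it must be empty.

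The main obstacle is the base-change compatibility at rigid points used in step one. Explicitly, one must verify $D_{\rm cris}(\iota_x^\ast\Ecal)=\iota_x^\ast D_{\rm cris}(\Ecal)$ and that $\iota_x^\ast(\underline{\Mcal}(D)\otimes\Bcal_X^R)$ coincides with the classical Robba-ring $\phi$-module that Kisin attaches to $D\otimes k(x)$. Both are implicit in the constructions of sections 5 and 8.1 but require a brief verification---using Corollary \ref{Bcrisinvariants} (together with the localization arguments of section 8.1) for the former and the explicit description of $\underline{\Mcal}$ via global sections of vector bundles on annuli for the latter. Granted these, the proposition reduces to Berger--Colmez plus Kisin's overconvergence theorem at classical points plus the density argument above.
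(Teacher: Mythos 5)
Your reduction via Theorem \ref{maintheoint}(i) and your verification of \'etaleness at rigid analytic points (Berger--Colmez plus Kisin's overconvergence theorem) are fine, but the final density step contains a genuine gap that breaks the argument. It is \emph{not} true that every non-empty closed subset of an adic space locally of finite type contains a rigid analytic point: the closure $\overline{\{\eta\}}$ of the Gauss point $\eta$ of the closed unit disc is a non-empty closed subset consisting of $\eta$ and its rank-$2$ specializations, none of which is rigid. (A closed subset in the adic topology is not a Zariski-closed adic subspace, so your parenthetical justification does not apply.) This is precisely the phenomenon the paper is built around: Remark \ref{intgleichwa} states explicitly that it is \emph{not} known whether a family of $\phi$-modules over the Robba ring that is \'etale at all rigid points is \'etale, and Example \ref{unramchars} exhibits an open substack ($\Dfrak_\nu^{\rm adm}\subsetneq\Dfrak_\nu^{\rm int}$, with complement $\overline{\{\eta\}}$) that contains every rigid point without being everything. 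So ``the \'etale locus is open and contains all rigid points'' does not yield ``the \'etale locus is all of $X$''; you must check \'etaleness at the non-rigid points as well. Do not confuse this with the legitimate principle used repeatedly in the paper that a \emph{coherent} module on a reduced affinoid vanishes if it vanishes at all rigid points --- that is an algebraic (Jacobson) statement about finitely generated modules, not a topological statement about open subsets.

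The paper's proof avoids this by producing the integral lattice globally rather than fibrewise: after reducing to $X$ reduced, one sandwiches $\Mcal=\underline{\Mcal}(D)$ between $\Bcal_X^{[0,1)}\cdot(\Ecal\widehat{\otimes}B_{\rm cris})^{G_K}$ and $\lambda^{-1}$ times it inside $\Ecal\widehat{\otimes}B_{\rm cris}$, observes that $G_{K_\infty}$ acts trivially on $\Mcal$, identifies $\Mcal\otimes_{\Bcal_X^{[0,1)}}\Bcal_X^{[r,s]}$ with $\bigl(\Ecal\otimes_{\Ocal_X}\widetilde{\Bcal}_X^{[r,s]}\bigr)^{G_{K_\infty}}$ near the boundary (using the coherent-module localization principle), and then lets a $G_{K_\infty}$-stable $\Ocal_X^+$-lattice $\Ecal^+\subset\Ecal$ cut out a $\Phi$-stable integral model over the whole of $X$, after which Theorem \ref{maintheoint} applies. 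To repair your argument you would need something of this form --- a construction of the \'etale lattice valid at every point of $X$ --- rather than a passage from rigid points to all points.
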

\begin{proof}
Let $X$ be an adic space and $\Ecal$ a family of crystalline representations over $X$. Write $D_{\rm cris}(\Ecal)=(D,\Phi,\Fcal^\bullet)$ for the associated filtered $\phi$-module on $X$. As $\Dfrak_\nu^{\rm int}\subset \Dfrak_\nu$ is open and factorizing over an open subspace may be checked point wise, we may assume that $X$ is reduced.
We consider the inclusions 
\begin{align*}
 \Bcal_X^{[0,1)}(\Ecal\widehat{\otimes}B_{\rm cris})^{G_K}&\subset \Mcal=\underline{\Mcal}(D) \\ 
 & \subset \lambda^{-1}\Bcal_X^{[0,1)}(\Ecal\widehat{\otimes}B_{\rm cris})^{G_K}\subset \Ecal\widehat{\otimes}B_{\rm cris}, 
\end{align*}
where $\lambda$ is the element defined in $(\ref{lambda})$.\\
Then the $G_{K_\infty}$-action on $\Mcal\subset \Ecal\widehat{\otimes}_{\Q_p}B_{\rm cris}$ is trivial as this is true for all rigid analytic points $x\in X$.
Hence 
\begin{equation*}\label{equation1}
\Mcal\otimes_{\Bcal_X^{[0,1)}}\Bcal_X^{[r,s]}\subset (\Ecal{\otimes}_{\Ocal_X}\widetilde{\Bcal}_X^{[r,s]})^{G_{K_\infty}}
\end{equation*}
for some $0<r<r^{1/p^2}\leq s<1$ near the boundary and where 
\[\widetilde{\Bcal}_X^{[r,s]}=\widetilde{\Acal}_X^{[0,1)}\widehat{\otimes}_{\Acal_X^{[0,1)}}\Bcal_X^{[r,s]}.\]
Now 
\[\Mcal\otimes_{\Bcal_X^{[0,1)}}\widetilde{\Bcal}_X^{[r,s]}=\Ecal\otimes_{\Ocal_X}\widetilde{\Bcal}_X^{[r,s]},\]
as the cokernel vanishes at all rigid analytic points and $X$ is reduced.
We find that a $G_{K_\infty}$-invariant $\Ocal_X^+$ lattice $\Ecal^+\subset \Ecal$ (which exists locally on $X$) defines an integral structure in 
\[\Mcal\otimes_{\Bcal_X^{[0,1)}}\Bcal_X^{[r,s]}=\big(\Mcal\otimes_{\Bcal_X^{[0,1)}}\widetilde{\Bcal}_X^{[r,s]}\big)^{G_{K_\infty}}=\big(\Ecal\otimes_{\Ocal_X}\widetilde{\Bcal}_X^{[r,s]}\big)^{G_{K_\infty}}.\]
Now the claim follows from Theorem $\ref{maintheoint}$.
\end{proof}

\begin{prop}
The morphism 
\[D_{\rm cris}:\underline{\rm Rep}_{\rm cris}^{\nu}\longrightarrow \Dfrak_\nu\]
factors over the admissible locus $\Dfrak_\nu^{\rm adm}\subset \Dfrak_\nu$.
\end{prop}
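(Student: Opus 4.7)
The plan is to pass from families to fibers and reduce the admissibility condition to the classical case handled by Kisin, exploiting the integral comparison already set up in the proof of Proposition~\ref{overconvforfam}.

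First, let $\Ecal$ be a family of crystalline representations on $X \in \Ad_E^{\rm lft}$ with $D_{\rm cris}(\Ecal) = (D,\Phi,\Fcal^\bullet)$. By Proposition~\ref{overconvforfam} we already have $(D,\Phi,\Fcal^\bullet) \in \Dfrak_\nu^{\rm int}(X)$, so only the pointwise admissibility condition remains to be checked. Since $\Dfrak_\nu^{\rm adm} \subset \Dfrak_\nu^{\rm int}$ is open and cut out by a pointwise condition, one can work locally around an arbitrary $x \in X$, and may assume $X$ reduced. After shrinking around $x$, I would take the integral model $\Mfrak \subset \underline{\Mcal}(D)$ of height $1$ and a $G_{K_\infty}$-invariant $\Ocal_X^+$-lattice $\Ecal^+ \subset \Ecal$ furnished by the proof of Proposition~\ref{overconvforfam}, whose central output is the identification
\[\Mfrak \otimes_{\Bcal_X^{[0,1)}} \widetilde{\Bcal}_X^{[r,s]} \;=\; \Ecal \otimes_{\Ocal_X} \widetilde{\Bcal}_X^{[r,s]}\]
of $\Phi$-modules on some boundary annulus $[r,s]$.

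Next, I would extend this to an isomorphism of $\Phi$-modules
\[\Mfrak \otimes_{\Acal_X^{[0,1)}} \widetilde{\Bcal}_X \;\xrightarrow{\ \sim\ }\; \Ecal \otimes_{\Ocal_X} \widetilde{\Bcal}_X,\]
where on the right the Frobenius acts trivially on the $\Ecal$-factor and as $\phi$ on $\widetilde{\Bcal}_X$. Pulling back along $\iota_x$ then yields
\[\iota_x^\ast \Mfrak \otimes_{\Acal^{[0,1)}_{\widehat{k(x)}}} \widetilde{\Bcal}_{\widehat{k(x)}} \;\cong\; \iota_x^\ast \Ecal \otimes_{\widehat{k(x)}} \widetilde{\Bcal}_{\widehat{k(x)}},\]
and taking $\Phi$-invariants on the right, together with the fiberwise form of Corollary~\ref{invariantsheaves} giving $\widetilde{\Bcal}_{\widehat{k(x)}}^{\phi=\id} = \widehat{k(x)}$, collapses the right hand side to $\iota_x^\ast\Ecal$, a $\widehat{k(x)}$-vector space of dimension $d = \rk\,\Ecal$. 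This is precisely the admissibility condition at $x$, and it is independent of the choice of integral model $\Mfrak$ since any two such models become canonically isomorphic after inverting $E(u)$ and the $\Phi$-invariants inside $\widetilde{\Bcal}_{\widehat{k(x)}}$ cannot shrink under isogeny.

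The main technical obstacle is the extension from $\widetilde{\Bcal}_X^{[r,s]}$ to $\widetilde{\Bcal}_X$ in the second step. These two period sheaves are built from different ambient rings (the former adapted to an annulus near the boundary of the open unit disc, the latter obtained as the closure of $\Ocal_X \otimes \widetilde{\bf B}$ inside $\Ocal_X \widehat{\otimes} W(\Frac R)[1/p]$), so the identity has to be propagated by carefully tracking compatible integral structures from $\Mfrak$ and $\Ecal^+$ along the inclusions $\Acal_X^{[0,1)} \hookrightarrow \widetilde{\Acal}_X \hookrightarrow \widetilde{\Bcal}_X$. At every rigid point of $X$, the identity is known via Kisin's classical comparison $V \otimes_{\Q_p} \widetilde{\bf B} \cong N \otimes_{\bf B} \widetilde{\bf B}$ (cf.~\cite[Proposition~1.1.13]{Kisin}); its propagation to arbitrary $x \in X$ then follows from the density and vanishing principles of Lemma~\ref{enoughrigpts} and the reducedness reduction already exploited in the proof of Proposition~\ref{overconvforfam}.
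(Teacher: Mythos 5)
Your proposal is correct and follows essentially the same route as the paper: after reducing to $X$ reduced and invoking Proposition~\ref{overconvforfam} for the integral model $\Mfrak$, both arguments establish the $\Phi$-equivariant isomorphism $\Ecal\otimes_{\Ocal_X}\widetilde{\Bcal}_X\cong\Mfrak\otimes_{\Acal_X^{[0,1)}}\widetilde{\Bcal}_X$ by checking it at rigid analytic points (via Kisin's classical comparison) and propagating by reducedness, then take $\Phi$-invariants to obtain the dimension condition defining $\Dfrak_\nu^{\rm adm}$. The only cosmetic difference is that the paper writes down the canonical map over $\widetilde{\Bcal}_X$ directly and checks its kernel and cokernel fiberwise, rather than first extending the identity from the boundary annulus $\widetilde{\Bcal}_X^{[r,s]}$.
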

\begin{proof}
Let $X$ be an adic space and $\Ecal$ a family of crystalline representations on $X$ with $D_{\rm cris}(\Ecal)=(D,\Phi,\Fcal^\bullet)$. Again we may assume that $X$ is reduced. By Proposition $\ref{overconvforfam}$ we may choose locally on $X$ a $\phi$-module $(\Mfrak,\Phi)$ of height 1 over $\Acal_X^{[0,1)}$ (compare Definition $\ref{height1modules}$) which is an integral model for $(D,\Phi,\Fcal^\bullet)$. Then the canonical map
\[\Ecal\longrightarrow (\Mfrak\otimes_{\Acal_X^{[0,1)}}\widetilde{\Bcal}_X)^{\Phi=\id}\]
is an isomorphism of $G_{K_\infty}$-representations. Indeed, it is injective, as $\Ecal$ is a vector bundle on a reduced space and the kernel vanishes at all rigid analytic points. Further the cokernel of the induced map of finite $\widetilde{\Bcal}_X$-modules
\[\Ecal_X\otimes_{\Ocal_X}\widetilde{\Bcal}_X\longrightarrow \Mfrak\otimes_{\Acal_X^{[0,1)}}\widetilde{\Bcal}_X\]
vanishes at all rigid points of $x$. As $X$ is reduced this map is an isomorphism by the argument of Corollary $\ref{invariantsheaves}$. The claim now follows after taking $\Phi$-invariants in both sides.
\end{proof}

We end this section by giving two examples of a crystalline families and the corresponding filtered $\phi$-modules.
\begin{expl}\label{unramchars}
Let $X=\partial\boldB=\Spa(\Q_p\langle T,T^{-1}\rangle,\Z_p\langle T,T^{-1}\rangle)$ and $K=\Q_p$. Consider the $1$-dimensional filtered $\phi$-module $D=\Ocal_X$
with $\Phi$ given by multiplication with $T$ and $\Fcal^i=D$ if $i\leq 0$ and $\Fcal^i=0$ if $i>0$ \footnote{This example is due to G. Chenevier in the context of overconvergent $(\phi,\Gamma)$-modules, see \cite{KedlayaLiu} for example.}.
Then this family is weakly admissible and has an integral model on $X$. The fiber at a rigid point $a\in\Ocal_{\bar\Q_p}^\times$ gives rise to the unramified character mapping the Frobenius to $a$.
But this does not define a continuous character at the Gauss point $\eta\in X$, as the character $\Z\rightarrow k(\eta)^\times$ mapping $1\in\Z$ to $T$ has no continuous extension to $\widehat{\Z}$. In fact the admissible locus is given by
\[X^{\rm adm}=X\backslash \overline{\{\eta\}}\cong \coprod \Ubb,\]
where the disjoint union of open unit discs is indexed by the closed points of $\Gbb_{m,\,\Fbb_p}$.\\
Especially we see that $\Dfrak_\nu^{\rm adm}\subsetneq \Dfrak_\nu^{\rm int}$ in general.
\end{expl}
\begin{expl}
In this example we want to show that in general the admissible locus is not just a disjoint union of deformation spaces of residual Galois representations (or rather some quotient of this), as in the first example.\\
Let $X=\boldB=\Spa(\Q_p\langle T\rangle,\Z_p\langle T\rangle)$ and $K$ a ramified extension of $\Q_p$ with $E(u)\in\Z_p[u]$ the minimal polynomial of some uniformizer. Consider the integral family given by  $\Mfrak=\Acal_X^{[0,1)} e_1\oplus \Acal_X^{[0,1)} e_2$ and 
\[\Phi\begin{pmatrix}e_1 \\ e_2\end{pmatrix}=\begin{pmatrix} 1 & T \\ 0 & E(u)\end{pmatrix}\begin{pmatrix} e_1 \\ e_2 \end{pmatrix}.\]
For a rigid point $x$ this is an extension of the cyclotomic character with the trivial character which splits over the origin.
But any neighbourhood of the origin contains points, where this extension does not split, and the representations on the punctured unit disc are isomorphic.
Hence the whole family is admissible and the residual representation is non constant. 
\end{expl}

\medskip
\noindent
\address{Mathematisches Institut der Universit\"at Bonn\\ Endenicher Allee 60, 53115 Bonn, Germany}\\
\email{hellmann@math.uni-bonn.de}

\end{document}